\newtheorem{hypo}{Hypothesis}[section]
\newtheorem{theorem}{Theorem}
\newtheorem{defi}[theorem]{Definition}
\newtheorem{lemma}[theorem]{Lemma}
\newtheorem{proposition}[theorem]{Proposition}
\newtheorem{rmk}[theorem]{Remark}
\newcommand{\zerarcounters}{\setcounter{equation}{0}}
\newcommand{\ZZZ}{\mathds{Z}}
\newcommand{\CCC}{\mathds{C}}
\newcommand{\NNN}{\mathds{N}}
\newcommand{\RRR}{\mathds{R}}
\newcommand{\TTT}{\mathds{T}}
\newcommand{\uno}{\mathds{1}}
\newcommand{\HH}{{\mathcal H}}
\newcommand{\calA}{{\mathcal A}}
\newcommand{\BB}{{\mathcal B}}
\newcommand{\CCCC}{{\mathcal C}}
\newcommand{\DD}{{\mathcal D}}
\newcommand{\calF}{{\mathcal F}}
\newcommand{\calG}{{\mathcal G}}
\newcommand{\calH}{{\mathcal H}}
\newcommand{\calL}{{\mathcal L}}
\newcommand{\MM}{{\mathcal M}}
\newcommand{\calO}{{\mathcal O}}
\newcommand{\calP}{{\mathcal P}}
\newcommand{\calQ}{{\mathcal Q}}
\newcommand{\RR}{{\mathcal R}}
\newcommand{\SSSS}{{\mathcal S}}
\newcommand{\TT}{{\mathcal T}}
\newcommand{\calU}{{\mathcal U}}
\newcommand{\VV}{{\mathcal V}}
\newcommand{\gote}{{\mathfrak e}}
\newcommand{\gots}{{\mathfrak s}}
\newcommand{\gotD}{{\mathfrak D}}
\newcommand{\und}{\underline}
\newcommand{\ol}{\overline}
\newcommand{\Fullbox}{{\rule{2.0mm}{2.0mm}}}
\newcommand{\EP}{\hfill\Fullbox\vspace{0.2cm}}
\newcommand{\prova}{\noindent{\it Proof. }}
\newcommand{\e}{\varepsilon}
\newcommand{\al}{\alpha}
\newcommand{\de}{\delta}
\newcommand{\be}{\beta}
\newcommand{\z}{\zeta}
\newcommand{\n}{\nu}
\newcommand{\m}{\mu}
\newcommand{\x}{\xi}
\newcommand{\ka}{\kappa}
\newcommand{\g}{\gamma}
\newcommand{\h}{\eta}
\newcommand{\la}{\lambda}
\newcommand{\La}{\Lambda}
\newcommand{\f}{\varphi}
\newcommand{\s}{\sigma}
\newcommand{\del}{\partial}
\newcommand{\oo}{\omega}
\newcommand{\ff}{\mathtt{f}}
\newcommand{\CC}{\boldsymbol{\Sigma}}
\newcommand{{\resonance}}{relevant self-energy cluster }
\newcommand{\ii}{{\rm i}}
\def\ins#1#2#3{\vbox to0pt{\kern-#2 \hbox{\kern#1 #3}\vss}\nointerlineskip}
\begin{document}



\title{\bf KAM for quasi-linear forced hamiltonian NLS}

\author{\bf 
R. Feola 
\\
\small
 SISSA, Trieste, rfeola@sissa.it; 
\footnote{
This research was partially supported by the European Research Council under
FP7 ``Hamiltonian PDEs and small divisor problems: a dynamical systems approach'' grant n. 306414-HamPDEs;
partially supported by PRIN 2012 ``Variational and perturbative aspects of nonlinear differential problems''.
}}

%


\date{}

\maketitle

\begin{abstract}
In this paper we prove the existence of quasi-periodic, small-amplitude, solutions for quasi-linear Hamiltonian perturbations 
of the non-linear Schr\"odinger equation on the torus in presence of a quasi-periodic forcing. In particular we prove that such solutions are linearly stable. 
The proof is based on a Nash-Moser implicit function theorem and on
 a reducibility result on the 
 linearized operator
in a neighborhood of zero.
The proof of the reducibility relies on  changes of coordinates such as 
diffeomorphisms of the torus,  pseudo-differential operators and a KAM-reducibility arguments.
Due to the multiplicity of the eigenvalues we obtain a block-diagonalization.
%
\end{abstract}

\tableofcontents

\zerarcounters
\setcounter{equation}{0}
\section{Introduction and Main result}
\label{sec:1}

In the theory of Hamiltonian partial differential equation an important matter is about the existence of quasi-periodic solutions. 
This topic has been widely studied in literature using different approach. The classical results on semi-linear PDE's (where the non-linearity does not contains derivatives), have been obtained using KAM theory, see for instance \cite{W,K1,KP}, ora {\em via} Nash-Moser theory \cite{CW}.
In this paper we  study the existence of \emph{reducible quasi-periodic solutions} for the hamiltonian NLS equation with unbounded perturbations: 
\begin{equation}\label{mega}
iu_{t}=u_{xx}+mu+   \e \ff(\oo t,x,u,u_{x},u_{xx}),
\quad x\in\TTT:=\RRR/2\pi\ZZZ, 
\end{equation}
where $\e>0$ is a small parameter, $m>0$ and the nonlinearity is quasi-periodic in time with
diophantine frequency vector $\oo\in\RRR^{d}$ and $\ff(\f,x,z)$, with $\f\in\TTT^{d}$, $z=(z_{0},z_{1},z_{2})\in\CCC^{3}$
is in $C^{q}(\TTT^{d+1}\times\CCC^{3};\CCC)$ in the real sense (i.e. as function of ${\rm Re}(z)$ and ${\rm Im}(z)$).
Note that our case is \emph{quasi-linear}, i.e. our non-linearity contains space derivatives of order $\de=n$, where $n$ is
the order of the highest derivative appearing in the linear constant coefficients term.
The Hamiltonian non linear Schr\"odinger equation (NLS) is one of the most studied model in the literature.
KAM theory for PDE's was in fact first developed for the semi-linear NLS with Dirichelet boundary conditions. 
Under this assumption it is known that the linearized operator at some approximate solution 
has simple eigenvalues. 
It is also known that 
extending the classical theory to  the circle is not completely trivial
since the linearized operator has 
 multiple eigenvalues.
Moreover in our case we have to deal also with the difficulties arising from
unbounded non-linearities. It turns out that dealing with 
 these two difficulties at the same time requires subtle analysis, already in the case of equation \eqref{mega} with only one derivative.
In order to clarify this point we first discuss
the main ideas needed in order to deal with unbounded non-linearities.

The first result in the case of unbounded perturbation is due to Kuksin in \cite{Ku2} for a class of KdV-type equations, where $\de<n-1$ (\emph{non-critical} unbounded perturbations) and one has simple eigenvalues.
Concerning the NLS equation we mention the results in \cite{ZGY} (reversible case) and in \cite{LY} (hamiltonian case).
These two works are about the NLS in presence of one derivative in the non-linearity, i.e. $\de=n-1$ and with Dirichelet boundary conditions.
In order to deal with this problem (\emph{critical} unbounded perturbations) the authors uses 
 an appropriate generalization of the ideas developed in \cite{Ku2}. The main point is that one has to deal with time-depending
 scalar homological equation, whose solvability is the content of the so called \emph{Kuksin's Lemma}.
 In the case of the circle (double eigenvalues) one would get a time-dependent matricial homological equation. 
   We also mention \cite{BBiP1}-\cite{BBiP2} where a KAM theory is developed to study the case
 of a ''weaker'' dispersion law in the derivative Klein-Gordon equation. 

The ideas used to deal with the case $\de\leq n-1$ do not apply if $\de=n$. 
 For fully non-linear cases the first results are on the existence of periodic solutions,
 see \cite{IPT} on water waves, and \cite{Ba1} and \cite{Ba2} for  Kirkhoff and Benjamin-Ono equations. 
 These results have been obtained by using a Nash-Moser iterative scheme combined 
 with tecniques of pseudo-differential calculus.
The main point is that the linearized operator has the form $\del_{t}+\DD$ where $\DD$ is a differential operator of order $n$ with non-constant coefficients. The breakthrough idea in \cite{IPT} is  conjugate $\DD$ to an operator of the form $\gotD+\RR$ where
 $\gotD$ has constant coefficients and   $\RR$ is a regularizing pseudo-differential operator of order $k$ sufficiently large (i.e. $\del_{x}^{k}\circ\RR$ is bounded).
 For periodic solutions this is enough to invert the linearized operator by Neumann series since $\gotD^{-1} \RR$ is bounded. In the case of quasi-periodic solutions this is not true and substantial new ideas are required.
A very efficient strategy has been developed in a series of papers by Baldi, Berti and Montalto (see \cite{BBM},\cite{BBM11}) mostly on the KdV equation
 which were recently extended to the NLS in \cite{FP}. 
  The aim of  the present paper is to 
  extend the result of \cite{FP} to the case of
  Hamiltonian non-linearities. 
  

We  now briefly describe the general strategy, which is essentially the same adopted in \cite{FP}. 
Here we focus on   the differences 
we have to deal with in order to obtain the result.

\smallskip
\indent  {\bf Nash-Moser scheme.}
The first ingredient is a generalized implicit function theorem with parameters (in our case the frequency $\oo$).
 This  is a well-established iterative scheme which allows to find zeros of a functional provided that one can prove invertibility of its linearization in a neighborhood of the origin. 
 This is fairly standard material and is based on a formal definition of {\em good parameters} where the algorithm runs through. We restate it in 
Proposition \ref{teo4} in order to adapt to our notation.

 \smallskip
 
 \indent{\bf Inversion of the linearized operator.} An efficient way to prove bounds on the inverse of a linear operator is to diagonalize it: the so called reducibility. 
 In our case, the linearized operator at some approximate solution has double eigenvalues, and in addition to this, it is 
 a second order pseudo-differential operator with  
 non constant coefficients. In Section \ref{sec:3ham} and \ref{sec:4ham} we show that it is possible to
obtain a $2\times 2$ block-diagonal reduction of such linearized operator.
This is actually the content of Proposition \ref{teo2ham}.   The proof is divided in two steps:
 \begin{enumerate}
\item Since we are dealing with unbounded non-linearities, before performing diagonalization, we need to apply some changes of variables in order 
to reduce the  operator to a constant coefficients unbounded operator plus a 
smoothing reminder. How to do this is shown in Section \ref{sec:3ham}. 
 The results are detailed in Lemmata \ref{lem:3.88} and \ref{lem:3.9ham}. 
We remark that
this is a common feature of the above-mentioned  literature. 
Indeed a similar result can be founded in Section $3$ in \cite{FP}.
In the present paper there are some differences with respect to the analysis in \cite{FP}.
First of all we need to adapt the changes of coordinates used in Section $3$ of \cite{FP} 
in order to preserves the Hamiltonian structure of the linearized operator. Secondly 
we need to give a better asymptotic expansion 
of the eigenvalues. As one can see in Lemma \ref{lem:3.88} we need to conjugate our linearized operator
to an operator which is diagonal plus a remainder which ``gain'' one derivatives. 
The analysis in \cite{FP} provides only a remainder which is bounded.
This better approximation of the eigenvalues is necessary here in order to impose the non degeneracy condition 
required in Section \ref{sec:4ham}. 
\item The previous step gives  a good understanding of the  eigenvalues of the matrix which we are diagonalizing. Then by imposing the Second Melnikov conditions 
(quantitative bounds on the difference of eigenvalues)
one diagonalizes by a linear KAM-like scheme.  
 Roughly speaking we need to prove the invertibility of an operator of the form
 $L=D+ \e M$ where 
 $D$ is diagonal with respect to the exponential basis, $M$ a bounded operator on the Sobolev space $H^{s}(\TTT)$ and where $\e>0$ is a small parameter. The analysis of Section \ref{sec:3ham} guarantes that 
 $D$ has the form
  \begin{equation}\label{equaquattro}
 D_{j}^{j}(l)=\ii \oo\cdot l+\ii m_{2} j^{2}+\ii m_{1}j, \quad l\in \ZZZ^{d}, \quad j\in \ZZZ, \quad \oo\in \RRR^{d}\;\; d\geq1,
 \end{equation}
 for some positive  constants $m_{2}=1+O(\e)$ and $m_{1}=O(\e)$.
The idea of reducibility is to find a change of coordinates such that $D+\e M$
is conjugated to and operator of the form
 $D_++\e^2 M_+$ where $D_+$ is again diagonal in the exponentials basis.  
 The equation which defines the change of variables is called the {\emph {homological equation} } while the operators $D,D_+$ are called the \emph{normal form}. 
If one defines $A$ as the generator of the \emph{quasi-}identically transformation, the homological equation has the form
\begin{equation}\label{equacinque}
{\rm ad}(D)[A]:=[D, A]=\e M-\e [M],
\end{equation}
where $[M]$ is a suitable linear operator.
It is clear that the eigenvalues of the adjoint operator ${\rm ad}(D)$ involves the differences of the 
eigenvalues of $D$: here one imposes on $\oo$ the so called
the Second Mel'nikov conditions,  
 which are  lower bounds of the form
\begin{equation}\label{equasei}
|\psi_{j,k}(\ell)|:=|\oo\cdot l+m_{2}(j^{2}-k^{2})+m_{1}(j-k)|\geq\frac{\g|j^{2}-k^{2}|}{1+|l|^{\tau}}.
\end{equation}
It turns out that, if $\ell\neq0$ and $j\neq \pm k$, for ``many'' $\oo$ the bounds \eqref{equasei} holds true. 
For $\ell=0$ and $j=k=0$ one has that $\psi_{j,k}(\ell)=0$. 
For $\ell=0$ and $j=-k$ one has that $\psi_{j,k}(\ell)\sim O(\e)$.
In order to prove 
that \eqref{equacinque} has a solution  we need to 
 impose that $[M]_{j}^{k}(\ell)=M_{j}^{k}(\ell)$
with $\ell=0$ and $j=\pm k$. This is why we can get only  a block diagonal reduction to a $2\times2$ block diagonal time independent matrix. This is an important difference w.r.t. \cite{FP}. 
Actually it is know that reducibility arguments are difficult in the case of operator of the form $L=D+\e M$
where  $D$
 has ``multiple'' eigenvalues. For instance we mention \cite{elikuk}, \cite{grepatu}, where
 the authors deal with a problem of multiple eigenvalues 
 in the more difficult case of unbounded multiplicity, but for semi linear equations. 
 Here we just have that the multiplicity 
 is \emph{two}. 
 
 \noindent
 The Second Melnikov conditions which we require are explicitly stated in Proposition \ref{teo2ham}.
 There is another  important difference with respect to \cite{FP}. It is the presence of 
 a correction of order one to the eigenvalues of the linearized operator, i.e. the constant $m_{1}\neq0$
 in \eqref{equaquattro}.
 In \cite{FP} the constant $m_1$ is zero.
 The presence of such correction implies that for $\ell\neq0$ and $j=\pm k$ we need to 
 require  a weaker condition with respect to \eqref{equasei} (see the definition of $\calO_{\infty}^{2\g}$ in \eqref{martina10ham}). This is needed in order to perform the measure estimates.
\end{enumerate}
Once we have diagonalized  the bounds on the inverse follow from bounds on the eigenvalues, see Proposition \ref{inverseofl}. 

\noindent In the Nash-Moser scheme we need to invert the operator linearized at each approximate solution, namely we perform the diagonalization procedure infinitely many times.

\smallskip
{ \bf Measure estimates.} Now we collect all the Melnikov conditions that we have imposed in the previous steps. In order to conclude the proof we need to show that these conditions are fulfilled for a positive measure set of parameters.  The first basic requirement is to prove that we may impose each {\em single} non-resonance condition by only removing a small set of parameters. In our case this is a non trivial problem which we overcome by imposing a non-degeneracy condition (see Hypothesis \ref{hyp3ham}) and by considering vectors $\oo$ as in \eqref{dio}. 
Then we need to show that the union of the resonant sets is still small, this requires proving a  ''summability'' condition. This is the most delicate part of the paper where substantial new ideas are needed,  see  Section 6 for a more 
detailed comparison  with the case of single eigenvalues \cite{FP}.

\noindent
We consider the equation \eqref{mega}
with
diophantine frequency vector
\begin{equation}\label{dio}
\begin{aligned}
&\oo
\in\Lambda:=\left[\frac{1}{2},\frac{3}{2}\right]^{d}\subset\RRR^{d}, 
\;\; |{\oo}\cdot\ell|\geq \frac{\g_{0}}{|\ell|^{\tau_0 }}, \; \forall\; \ell\in\ZZZ^{d}\backslash\{0\}.
\end{aligned}
\end{equation}
For instance one can fix $\tau_0=d+1$.
We are interested in the existence of quasi-periodic solution of (\ref{mega})  in $H^{s}$, for some $s$,
for a positive measure sets of $\oo$ that is a function $\mathtt{u}(t,x)=u(\oo t,x)$ where
$$
u(\f,x) : \TTT^{d}\times\TTT\to\CCC.
$$
 In other words we look for non-trivial 
$(2\pi)^{d+1}-$periodic solutions $u(\f,x)$ of
\begin{equation}\label{mega2ham}
i\oo\cdot\del_{\f}u=u_{xx}+ \mathtt{m} u+\e  \ff(\f,x,u,u_{x},u_{xx})
\end{equation}
in the Sobolev space $H^{s}:=H^{s}(\TTT^{d}\times\TTT; \CCC):=$
\begin{equation}\label{1.1}
\Big\{u(\f,x)=\!\!\!\sum_{(\ell,k)\in\ZZZ^{d}\times\ZZZ}\!\!\!u_{\ell,k}e^{i(\ell\cdot\f+k\cdot x)} :
\|u\|^{2}_{s}:=\sum_{i\in\ZZZ^{d+1}}|u_{i}|^{2}\langle i\rangle^{2s}<+\infty
\Big\}.
\end{equation}
where $s>\gots_{0}:=(d+2)/2>(d+1)/2$, 
$i=(\ell,k)$ and $\langle i\rangle:=\max(|\ell|, |k|,1)$, $|\ell|:=\max\{|\ell_{1}|,\ldots,
|\ell_{d}|\}$. For $s\geq\gots_{0}$ $H^{s}$ is a Banach Algebra and 
$H^{s}(\TTT^{d+1})\hookrightarrow C(\TTT^{d+1})$ continuously. We are moreover interested in 
studying the linear stability of the possible solutions.

We assume that $\ff(\f,x,z)$, with $\f\in\TTT^{d}$, $z=(z_{0},z_{1},z_{2})\in\CCC^{3}$
is such that
\begin{equation*}
 \ff(\f,x,u,u_{x},u_{xx})=f_{1}(\f,x,\x,\h,\x_{x},\h_x,\x_{xx},\h_{xx})+
if_{2}(\f,x,\x,\h,\x_{x},\h_x,\x_{xx},\h_{xx}),
\end{equation*}
 where we set
$u=\x+i\h$, with $\x(\f,x),\h(\f,x)\in H^{s}(\TTT^{d+1};\RRR)$ for some $s\geq0$,
and where
\begin{equation}\label{CONTROLLA}
f_{i}(\f ,x, \x_{0},\h_{0},\x_{1},\h_{1},\x_{2},\h_{2}) : \TTT^{d+1}\times\RRR^{6}\to\RRR, \quad i=1,2.
\end{equation}
for some $q\in\NNN$ large enough.
In this paper we assume moreover the following:

\begin{hypo}\label{hyp2ham}
Assume that $\ff$ is 
such that
\begin{equation}\label{hamham}
\ff(\oo t,x, u, u_{x},u_{xx})=\del_{\bar{z}_{0}}G(\oo t,x, u,u_{x})-\frac{d}{d x}[\del_{\bar{z}_{1}}G(\oo t,x,u,u_{x})]
\end{equation}
with $\del_{\bar{z}_{i}}=\del_{\x_{i}}+i\del_{\h_{i}}$, $i=0,1$, and
\begin{equation}
G(\oo t, x,u, u_{x}):=F(\oo t, x, \x,\h,\x_{x},\h_{x}) : \TTT^{d+1}\times\RRR^{4} \to \RRR, 
\end{equation}
of class $C^{q+1}$. 
\end{hypo}

\begin{hypo}\label{hyp3ham}
Assume that $\ff$ is 
such that
\begin{equation}\label{nondeg}
\frac{1}{(2\pi)^{d+1}}\int_{\TTT^{d+1}}(\del_{\bar{z}_{1}}\ff)(\f,x,0,0,0)d x d\f=\gote\neq0.
\end{equation}
\end{hypo}

\noindent
Hypothesis \ref{hyp3ham} si quite technical and we will see in the following where we need it. On the contrary 
Hypothesis \ref{hyp2ham} is quite natural and it implies that
the equation (\ref{mega}) can be rewritten as an Hamiltonian PDE 
\begin{equation}\label{mega3}
u_{t}=i \del_{\bar{u}}\calH(u), \quad \calH(u)=\int_{\TTT}|u_{x}|^{2}+m|u|^{2}+
\e G(\oo t,x,u,u_{x})
\end{equation}
with respect to the non-degenerate symplectic form
\begin{equation}\label{simplectic}
\Omega(u,v):={\rm Re}\int_{\TTT} i u\bar{v}d x, \quad u,v\in H^{s}(\TTT^{d+1}; \CCC),
\end{equation}
where $\del_{\bar{u}}$ is the $L^{2}-$gradient with respect the complex scalar product. The main result of the paper is the following.

\begin{theorem}\label{teo1}
There exists $s:=s(d,\tau_{0})>0$, $q=q(d)\in\NNN$ such that
for every nonlinearity $\ff\in C^{q}(\TTT^{d+1}\times\RRR^{6};\CCC)$ that satisfies
Hypotheses \ref{hyp2ham} and \ref{hyp3ham} if $\e\leq \e_{0}(s,d)$ small enough, then there exists a Lipschitz map
$$
u(\e,\oo) : [0,\e_{0}]\times \Lambda\to H^{s}(\TTT^{d+1};\CCC)
$$
($\Lambda$ defined in \eqref{dio})
such that, if $\oo\in \CCCC_{\e}\subset \Lambda$, $u(\e,\oo)$ is a solution of (\ref{mega2ham}). 
Moreover,
the set $\CCCC_{\e}\subset\Lambda$ is a Cantor set of asymptotically full Lebesgue measure, i.e.
\begin{equation}\label{asy}
|\CCCC_{\e}|\to 1 \quad  as \quad \e\to0,
\end{equation}
and
 $||u(\e,\oo)||_{s}\to0$ as $\e\to0$. In addiction, $u(\e,\oo)$ is {\rm linearly stable}.
\end{theorem}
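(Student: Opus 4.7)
The plan is to apply the Nash--Moser implicit function theorem of Proposition \ref{teo4} to the functional
\[
\calF(u,\oo) := i\oo\cdot\del_{\f}u - u_{xx} - \mathtt{m}\, u - \e\, \ff(\f,x,u,u_{x},u_{xx})
\]
on a scale of Sobolev spaces $H^{s}$, with the parameter $\oo$ ranging in the diophantine set $\Lambda$ of \eqref{dio}. The scheme produces, inductively, a sequence of approximate solutions $u_{n}(\e,\oo)$ converging to a true solution provided that at each step one can invert the linearized operator $d_{u}\calF(u_{n})$ on a set of ``good'' parameters with quantitative tame estimates in the Sobolev norms. The Hamiltonian Hypothesis \ref{hyp2ham} guarantees that each $u_{n}$ is real (symmetry of the functional under the symplectic form \eqref{simplectic}), while smallness $\|u(\e,\lambda)\|_{s}\to0$ will follow from the fact that the scheme is seeded at $u_{0}=0$ and that $\calF(0,\oo)=O(\e)$.

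The key input is the invertibility of the linearized operator, supplied by Proposition \ref{teo2ham}. I would use it as a black box, and the strategy underlying it has two stages. First, exploiting the pseudo-differential structure I apply a finite sequence of symplectic (so as to preserve the Hamiltonian form) changes of variables, Lemmata \ref{lem:3.88} and \ref{lem:3.9ham}, which conjugate $d_{u}\calF(u_{n})$ to an operator of the form $\oo\cdot\del_{\f}+\gotD+\RR$, where $\gotD$ is a Fourier multiplier of order $2$ and $\RR$ is a smoothing remainder of order $k=1$. Second, since the unperturbed symbol $k^{2}+\mathtt{m}$ has the double eigenvalue structure $k\leftrightarrow -k$ on the circle, a linear KAM iteration reduces the conjugated operator not to a diagonal but to a $2\times 2$ \emph{block-diagonal} time-independent operator, under the second Melnikov conditions encoded in the set $\calO_{\infty}^{2\g}$ of \eqref{martina10ham}. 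The bounds on the inverse then follow from Proposition \ref{inverseofl}.

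Gluing these ingredients into the Nash--Moser scheme yields a Lipschitz family $u(\e,\la):[0,\e_{0}]\times\Lambda\to H^{s}$ which solves \eqref{mega2ham} on the intersection $\CCCC_{\e}$ of all the first and second Melnikov sets imposed along the iteration. Linear stability is a direct byproduct: at the final solution the linearized operator has been conjugated to a $2\times 2$ block-diagonal time-independent Hamiltonian operator, whose spectrum is purely imaginary, so all Sobolev norms of solutions of the linearized equation are bounded uniformly in time.

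The hard part is the measure estimate \eqref{asy}. One has to show that the measure of the parameters excluded at all the (infinitely many) steps of the reducibility scheme tends to zero with $\e$. The single-condition estimate is already delicate in the quasi-linear setting: I would handle it by combining the diophantine assumption \eqref{dio} with the non-degeneracy Hypothesis \ref{hyp3ham}, which guarantees a non-trivial $\oo$-dependence of the asymptotic eigenvalues through the mean of $\del_{\bar z_{1}}\ff$ at the origin. The genuine obstacle is the summability of the resonant sets across infinitely many block-vs-block Melnikov conditions: because of the double eigenvalues the elementary difference-of-eigenvalues estimates are much weaker than in the simple-eigenvalue case of \cite{FP}, and one needs a substantially refined argument (the content of Section~6) to control the union. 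Once this summability is established, $|\CCCC_{\e}|\to 1$ as $\e\to 0$ follows and the theorem is proved.
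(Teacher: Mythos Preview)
Your proposal is correct and follows essentially the same approach as the paper: assemble Proposition~\ref{teo4} (Nash--Moser), Proposition~\ref{teo2ham} (block-diagonal reducibility via the regularization of Lemmata~\ref{lem:3.88}--\ref{lem:3.9ham} followed by the KAM scheme), Lemma~\ref{inverseofl} (tame inverse), and Proposition~\ref{measurebruttebrutte} (measure of $\CCCC_\e$), then read off linear stability from the purely imaginary spectrum of the block-diagonal normal form together with the boundedness of the conjugating maps $W_i^{\pm1}$. The only small detail you leave implicit is the explicit coupling $\g=\e^{a}$ with $a\in(0,1)$ needed to simultaneously satisfy the smallness $\e\g^{-1}<\epsilon_0$ and make the excluded measure vanish, and the transfer of the $\|v(t)\|_{H^s_x}$-conservation of Remark~\ref{ciccio} back to $h(t)=W_2 v(t)$ via the bounds \eqref{eq:4.9madonna}.
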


\zerarcounters
\section{Functional Setting and scheme of the proof}\label{sec2}

\subsection{Scale of Sobolev spaces}
%
%
\noindent
For a function $f : \Lambda\to E$ where $\Lambda\subset \RRR^{n}$ and $(E,\|\cdot\|_{E})$ is a Banach space
we define
\begin{eqnarray}\label{lnorm2}
{\it sup \phantom{g}norm:} \; \|f\|_{E}^{sup}&\!\!\!\!\!\!:=\!\!\!\!\!\!&\|f\|^{sup}_{E,\Lambda}:=\sup_{\la\in\Lambda}\|f(\oo)\|_{E},\\
 {\it Lipschitz \phantom{g} semi\!-\!norm:} \;
 \|f\|_{E}^{lip}&\!\!\!\!\!\!:=\!\!\!\!\!\!&\|f\|_{E,\Lambda}^{lip}:=
\sup_{\substack{\oo_{1},\oo_{2}\in\Lambda \\ \oo_{1}\neq\oo_{2}}}
\frac{\|f(\oo_{1})-f(\oo_{2})\|_{E}}{|\la_{1}-\la_{2}|}\nonumber
\end{eqnarray}
%
and for $\g>0$ the weighted  Lipschitz norm
\begin{equation}\label{lnorm}
\|f\|_{E,\g}:=\|f\|_{E,\Lambda,\g}:=\|f\|^{sup}_{E}+\g\|f\|_{E}^{lip}.
\end{equation}
In the paper we will work with parameter families of functions in the spaces $H^{s}$ defined in \eqref{1.1}.
Note that the for $s\geq0$ $H^{s}$ is a scale of Banach spaces, i.e.

$$
\forall  s \leq s', \  \ H^{s'} \subseteq H^s \ \ {\rm and} \  \ \|u\|_s \leq \|u \|_{s'} \, , \ \forall u \in H^{s'} \,.
$$
We define
$ \HH := \cap_{s\geq 0} H^s $.
For a function
$u=u(\oo)\in {\rm Lip}(\Lambda,H^s)$
where $\Lambda\subset \RRR^{d}$ we  write $\|f\|_{H^{s},\g}:=\|f\|_{s,\g}$.

\paragraph{Smoothing operators.}
We define the subspaces of trigonometric polynomials
\begin{equation}\label{trig}
H_{{n}}=H_{N_n}:=\big\{u\in L^{2}(\TTT^{d+1}) : u(\f,x):=\sum_{|(\ell,j)|\leq N_{n}}u_{j}(\ell)e^{i(\ell\cdot\f+jx)}\big\}
\end{equation}
where $N_{n}:=N_{0}^{(\frac{3}{2})^{n}}$, and the orthogonal projection
$$
\Pi_{n}:=\Pi_{N_{n}}: L^{2}(\TTT^{d+1})\to H_{n}, \quad \Pi^{\perp}_{n}:=\uno-\Pi_{n}.
$$
We have the following classical result. 
\begin{lemma}\label{approdo}
Fo any $s\geq0$ and $\n\geq0$ there exists a constant $C:=C(s,\n)$ such that
\begin{equation}\label{smoothing}
\begin{aligned}
&\|\Pi_{n}u\|_{s+\n,\g}\leq C N_{n}^{\nu}\|u\|_{s,\g}, \;\; \forall u\in H^{s}, \\
& \|\Pi^{\perp}_{n}u\|_{s}\leq C N_{n}^{-\nu}\|u\|_{s+\nu}, \;\; \forall u\in H^{s+\nu}.
\end{aligned}
\end{equation}
\end{lemma}

\paragraph{Interpolation and Tame estimates.}
The following are results on the properties of algebra, tame product of the norms on the spaces $H^S$ 
introduced above. 

\begin{lemma}\label{A} Let $s_{0}>d/2$. Then
\begin{enumerate}
\item[(i)] {\bf Embedding.} $||u||_{L^{\infty}}\leq C(s_{0})||u||_{s_{0}}$, $\forall \; u\in H^{s_{0}}$.
\item[(ii)] {\bf Algebra.} $||uv||_{s_{0}}\leq C(s_{0})||u||_{s_{0}}||v||_{s_{0}}$, $\forall\; u,v\in H^{s_{0}}$.
\item[(iii)] {\bf Interpolation.} For $0\leq s_{1}\leq s\leq s_{2}$, $s=\la s_{1}+(1-\la)s_{2}$,
\begin{equation}\label{A1}
||u||_{s}\leq ||u||^{\la}_{s_{1}}||u||_{s_{2}}^{1-\la}, \quad \forall\; u\in H^{s_{2}}.
\end{equation}
Let $a,b\geq0$ and $p,q>0$. For all $u\in H^{a+p+q}$ and $v\in H^{b+p+q}$ one has
\begin{equation}\label{A2}
||u||_{a+p}||v||_{b+q}\leq ||u||_{a+p+q}||v||_{b}+||u||_{a}||v||_{b+p+q}.
\end{equation}
Similarly, for the $|u|^{\infty}_{s }:=\sum_{|\al|\leq s}||D^{\al}u||_{L^{\infty}}$ norm, one has
\begin{equation}\label{A3}
|u|^{\infty}_{s }\leq C(s_{1},s_{2})(|u|^{\infty}_{s_{1}})^{\la}(|u|_{s_{2}}^{\infty})^{1-\la}, 
\quad \forall\; u\in W^{s_{2},\infty},
\end{equation}
%
\begin{equation}\label{A4}
|u|^{\infty}_{a+p }|v|^{\infty}_{b+q }\leq C(a,b,p,q)(
|u|^{\infty}_{a+p+q }|v|^{\infty}_{b }+|u|^{\infty}_{a }|v|^{\infty}_{b+p+q }),
\quad \forall\; u\in W^{a+p+q,\infty}, \; v\in W^{b+p+q,\infty}
\end{equation}
\item[(iv)] {\bf Asymmetric tame product.} For $s\geq s_{0}$ one has
\begin{equation}\label{A5}
||uv||_{s}\leq C(s_{0})||u||_{s}||v||_{s_{0}}+C(s)||u||_{s_{0}}||v||_{s}, \quad \forall\; u,v\in H^{s}.
\end{equation}
\end{enumerate}
If $u:=u(\la)$ and $v:=v(\la)$ depend in a lipschitz way on $\la\in\Lambda\subset\RRR^{d}$,
all the previous statements hold also for the norms
$|\cdot|^{\infty}_{s }$, $||\cdot||_{s,\g}$ and  $|\cdot|^{\infty}_{s,\g }$.
\end{lemma}

\noindent
We omit the proof of the Lemmata \ref{approdo} and \ref{A}.
We refer the reader to the Appendix of \cite{FP}.

Along the paper we shall write also
\begin{equation*}
a\leq_{s} b \;\;\; \Leftrightarrow \;\;\; a\leq C(s) b \;\;\; {\rm for \; some \; constant}\;\; C(s)>0.
\end{equation*}
Moreover to indicate unbounded or regularizing spatial differential operator we shall write $O(\del_{x}^{p})$ for some 
$p\in \ZZZ$. More precisely we say that an operator $A$ is $O(\del_{x}^{p})$ if
\begin{equation}\label{pseudo}
A : H_{x}^{s}\to H_{x}^{s-p}, \quad \forall s\geq0.
\end{equation}
Clearly if $p<0$ the operator is regularizing.

\subsection{Hamiltonian structure}\label{odioham}

We introduce the following product spaces:
\begin{equation}\label{spaces}
\begin{aligned}
{\rm H}^{s}&:={\rm H}^{s}(\TTT^{d+1};\RRR)=H^{s}(\TTT^{d+1};\RRR)\times H^{s}(\TTT^{d+1};\RRR),\\
{\bf H}^{s}&:={\bf H}^{s}(\TTT^{d+1};\CCC)=H^{s}(\TTT^{d+1};\CCC)\times H^{s}(\TTT^{d+1};\CCC)\cap \calU,
\end{aligned}
\end{equation}
where
$$
\calU=\{(h^{+},h^{-})\in H^{s}(\TTT^{d+1};\CCC)\times H^{s}(\TTT^{d+1};\CCC)  \; : \; h^{+}=\ol{h^{-}}\}.
$$
There is a one-to-one correspondence between these two spaces given by  
${\rm H}^{s}\ni v=(v^{(1)},v^{(2)})\mapsto w=(u,\bar{u})\in{\bf H}^{s}$ with $u=v^{(1)}+i v^{(2)}$.
To simplify the notation, in the paper we use the same symbol  $v$ to indicate a function $v\in {\rm H}^{s}$
or $v\in{\bf H}^{s}$. We will use different symbols in some cases only to avoid confusion.

We also write ${\rm H}^{s}_{x}$ and ${\bf H}^{s}_{x}$ to denote the phase space of functions
in ${\rm H}^{s}(\TTT ;\RRR)=H^{s}(\TTT^{1};\RRR)\times H^{s}(\TTT^{1};\RRR)$ and 
${\bf H}^{s}(\TTT ;\CCC)=H^{s}(\TTT^{1};\CCC)\times H^{s}(\TTT^{1};\CCC)\cap \calU,
$
On the product spaces  ${\rm H}^{s}$ and ${\bf H}^{s}$ we define, with abuse of notation, the norms
\begin{equation}\label{spaces1}
\begin{aligned}
\|z\|_{{\rm H}^{s}}&:=\max\{\|z^{(i)}\|_{s}\}_{i=1,2}, \quad z=(z^{(1)},z^{(2)})\in{\rm H}^{s},\\
||w||_{{\bf H}^{s}}&:=\|z\|_{H^{s}(\TTT^{d+1};\CCC)}=\|z\|_{s}, \quad w=(z,\bar{z})\in{\bf H}^{s}, \quad
z=z^{(1)}+iz^{(2)}.
\end{aligned}
\end{equation}

For a function 
$u\in {\rm H}^{s}$
if we write
$u=\x+i\h$ 
one has that the equation
(\ref{mega2ham}) reads
\begin{equation}\label{realsyst}
\left\{
\begin{aligned}
\oo\cdot\del_{\f}\x &=\h_{xx}+\mathtt{m}\h+\e f_{2}(\f,x,\x,\h,\x_{x},\h_{x},\x_{xx},\h_{xx}),\\
-\oo\cdot\del_{\f}\h &=\x_{xx}+\mathtt{m}\x+\e f_{1}(\f,x,\x,\h,\x_{x},\h_{x},\x_{xx},\h_{xx}),
\end{aligned}\right.
\end{equation}
where $f_{i}$ for $i=1,2$ are defined in (\ref{CONTROLLA}). 
Equation \eqref{realsyst} is equivalent to 
equation \eqref{mega2ham}. 
Now we analyze its Hamiltonian structure.
Thanks to Hypotesis \ref{hyp2ham} we can write
\begin{equation}\label{realsyst2}
\dot{w}=\chi_{H}(w):=J\nabla H(w), \quad w=(\x,\h) \in {\rm H}^{s},\quad J=\left(\begin{matrix}0&1\\ -1& 0 \end{matrix}\right),
\end{equation}
If we consider the space ${\rm H}^{s}$ endowed with
the symplectic form
\begin{equation}\label{symform}
\tilde{\Omega}(w,v):=\int_{\TTT} w\cdot J v d x=(w,Jv)_{L^{2}\times L^{2}}, \quad \forall\; w,v\in 
{\rm H}^{s}
\end{equation}
where $\cdot$ is the usual $\RRR^{2}$ scalar product, then 
 $\chi_{H}$  is the Hamiltonian vector field generator by the hamiltonian function
\begin{equation}\label{realHam}
\begin{aligned}
&H : {\rm H}^{s}\to \RRR ,\qquad
H(w)=\frac{1}{2}\int_{\TTT}|w_{x}|^{2}+\mathtt{m}|w|^{2}+\e F(\oo t,x, w,w_{x}).
\end{aligned}
\end{equation}
Indeed, for any $w,v\in{\rm H}^{s}$
one has
$$
d H(w)[h]=(\nabla H(w), h )_{L^{2}(\TTT)\times L^{2}(\TTT)}=\tilde{\Omega}(\chi_{H}(u),h), 
$$

With this notation one has
\begin{equation}\label{eq12ham}
\begin{aligned}
f_{1}
:=
-\del_{\x}F+\del_{\x \x_{x}}F \x_{x}+\del_{\h \x_{x}}F\h_{x}+\del_{\x_{x}\x_{x}}F\x_{xx}+
\del_{\x_{x}\h_{x}}F\h_{xx},\\
f_{2}
:=
-\del_{\h}F+\del_{\x \h_{x}}F \x_{x}+\del_{\h \h_{x}}F\h_{x}+\del_{\x_{x}\h_{x}}F\x_{xx}+
\del_{\h_{x}\h_{x}}F\h_{xx},
\end{aligned}
\end{equation}
where all the functions are evaluated in $(\f,x,\x,\h,\x_{x},\h_{x},\x_{xx},\h_{xx})$.
One can check that the \eqref{realsyst} is equivalent to \eqref{mega2ham}. It is sufficient
to multiply by the constant $i$ the first equation and to add or subtract the second one,
one obtains
\begin{equation}\label{14}
\begin{aligned}
i\oo\cdot\del_{\f}u&=i\oo\cdot\del_{\f}\x-\oo\cdot\del_{\f}\h=
u_{xx}+\mathtt{m}u+\e\ff,\\
i\oo\cdot\del_{\f}\bar{u}&=i\oo\cdot\del_{\f}\x+\oo\cdot\del_{\f}\h
=-\bar{u}_{xx}-\mathtt{m}\bar{u}-\e\ol{\ff}
\end{aligned}
\end{equation}
The classical approach is to consider the ``double'' the  NLS in the product space 
$H^{s}(\TTT^{d+1};\CCC)\times H^{s}(\TTT^{d+1};\CCC)$ in the
complex independent variables $(u^{+},u^{-})$. One recovers the equation (\ref{mega2ham})
 by 
studying the system in the subspace $\calU=\{u^{+}=\ol{u^{-}}\}$ (see the (\ref{14})).

On the contrary we prefer to use the real coordinates, because we are working in a differentiable 
structure. To define a differentiable structure on complex variables 
is more 
less natural. 
Anyway, one can see in \cite{FP}
how to deal with this problem. There, the authors find an extension of the vector fields
on the complex plane that is merely differentiable.  The advantage of that approach, is to deal with
a diagonal linear operator. How we will see in the following of this paper, it is not 
necessary to apply the abstract Nash-Moser Theorem proved in  
\cite{FP}.

The phase space for the NLS is ${\rm H}^{1}:=H^{1}(\TTT;\RRR)\times H^{1}(\TTT;\RRR)$.
In general we have the following definitions:
\begin{defi}\label{hamilt}
We say that a time dependent linear vector field $\chi(t) : {\rm H}^{s}\to {\rm H}^{s}$ is \emph{Hamiltonian} if $\chi(t)=J\calA(t)$, where $J$ is defined in (\ref{realsyst2}) and $\calA(t)$
is a real linear operator that is self-adjoint with respect the real scalar product on $L^{2}\times L^{2}$.
The corresponding Hamiltonian has the form
$$
H(u):=\frac{1}{2}(\calA(t)u,u)_{L^{2}\times L^{2}}=\int_{\TTT}\calA(t)u\cdot u d x
$$
Moreover, if $\calA(t)=\calA(\oo t)$ is quasi-periodic in time, then the associated operator
$\oo\cdot\del_{\f}\uno-J\calA(\f)$ is called Hamiltonian.
\end{defi}

\begin{defi}\label{hamilt2}
We say that a map $A : {\rm H}^{1}\to{\rm H}^{1}$ is \emph{symplectic} if the symplectic form
$\tilde{\Omega}$ in (\ref{symform}) is preserved, i.e. 
\begin{equation}\label{hamilt3}
\tilde{\Omega}(Au,Av)=\tilde{\Omega}(u,v), \quad \forall\;\; u,v\in {\rm H}^{1}.
\end{equation}
If one has a family of symplectic maps $A(\f)$, $\forall\;\f\in\TTT^{d}$ then we say that 
the corresponding operator acting on quasi-periodic functions $u(\f,x)$
$$
(Au)(\f,x):=A(\f)u(\f,x),
$$
is symplectic.
\end{defi}

\begin{rmk}\label{hamilt4}
Note that in complex coordinates the phase space is ${\bf H}^{1}:=H^{1}(\TTT;\CCC)\times H^{1}(\TTT;\CCC)$. 
The definitions above are the same by using the symplectic form defined in (\ref{simplectic})
and the complex scalar product on $L^{2}$. 
\end{rmk}
\noindent

Let $w:=(\x,\h)\in {\rm H}^{s}$. We define the functional
\begin{equation}\label{totale}
\calF(\oo t,x, w):=D_{\oo}w+\e g(\oo t,x, w),\;\;\;\; D_{\oo}=\left(\begin{matrix} \oo\cdot\del_{\f} & -\del_{xx}-m \\ 
\del_{xx}+m & \oo\cdot\del_{\f}\end{matrix}\right), 
\end{equation}
where
\begin{equation}\label{13}
g(\oo t,x , w):=\left(\begin{matrix} -f_{2}(\f,x,\x,\h,\x_{x},\h_{x},\x_{xx},\h_{xx}) \\
f_{1}(\f,x,\x,\h,\x_{x},\h_{x},\x_{xx},\h_{xx})
\end{matrix}\right).
\end{equation}

Now it is more convenient to pass to the complex coordinates. In other words we identify an element
$V:=(v^{(1)},v^{(2)})\in {\rm H}^{s}$ with a function
$v:=v^{(1)}+iv^{(2)}\in H^{s}(\TTT^{d+1};\CCC)$.
Consider the linearized operator ${d}_{z}\calF(\oo t ,x,z)$ at some function $z$, and consider the system
\begin{equation}\label{linsyst}
\begin{aligned}
D_{\oo}V+\e { d}_{z}g(\oo t,x , z)V=0, \quad V\in {\rm H}^{s}.
\end{aligned}
\end{equation}
We introduce an invertible linear change of coordinate of the form
\begin{equation}\label{changelinear}
\begin{aligned}
T &: {\rm H}^{s} \to {\rm H}^{s},\\
TV:=&\left(\begin{matrix}\frac{i}{\sqrt{2}} & -\frac{1}{\sqrt{2}} \\ \frac{1}{\sqrt{2}} & -\frac{i}{\sqrt{2}}\end{matrix}\right)\left(\begin{matrix} v^{(1)} \\ v^{(2)} \end{matrix} \right)=\left(\begin{matrix}\frac{i}{\sqrt{2}} v \\
\frac{1}{\sqrt{2}} \bar{v} \end{matrix} \right), \qquad T^{-1}:=\left(\begin{matrix}-\frac{i}{\sqrt{2}} & \frac{1}{\sqrt{2}}\\ -\frac{1}{\sqrt{2}} & \frac{i}{\sqrt{2}}\end{matrix}\right).
\end{aligned}
\end{equation}
%
%

\noindent
We postponed the proof of the following Lemma in the Appendix \ref{techtech}:
\begin{lemma}\label{lemmaccio}
The transformation of coordinates $T$ defined in (\ref{changelinear}) is symplectic.
Moreover, a function $V:=(v^{(1)},v^{(2)})\in {\rm H}^{s}$ is a solution of the system
\begin{equation}\label{lemmaccio1}
d_{z}\calF(\oo t,x,z)V=0,
\end{equation}
if and only if the function
\begin{equation}\label{lemmaccio2}
\left(\begin{matrix}v \\ \bar{v}\end{matrix}\right):=T_{1}^{-1}T V, \quad v\in H^{s}(\TTT^{d+1};\CCC), \quad T_{1}^{-1}:=\left(\begin{matrix}-i\sqrt{2}&0 \\ 0& \sqrt{2} \end{matrix}\right)
\end{equation}
solves the system
\begin{equation}\label{lemmaccio3}
\calL(z) \left(\begin{matrix}v \\ \bar{v}\end{matrix}\right):= T_{1}^{-1}T d_{z}\calF(\oo t,x,z)T^{-1}T_{1} \left(\begin{matrix}v \\ \bar{v}\end{matrix}\right)=0
\end{equation}
In particular the operator $\calL(z) : H^{s}(\TTT^{d+1};\CCC)\times H^{s}(\TTT^{d+1};\CCC)\to
H^{s}(\TTT^{d+1};\CCC)\times H^{s}(\TTT^{d+1};\CCC)$ has the form
\begin{equation}\label{lemmaccio4}
\begin{aligned}
 {\calL}(z)
&= \omega\cdot \partial_\f \uno +i (E+A_2) \del_{xx} + i A_1\del_x +i(mE+ A_0)\,,
\end{aligned}
\end{equation}
where
\begin{equation}\label{lemmaccio44}
E=\begin{pmatrix}1&0\\0&-1\end{pmatrix}, \quad A_{i}=A_{i}(\f,x, z):=
\left(\begin{matrix}a_{i} & b_{i} \\ -\bar{b}_{i} &-\bar{a}_{i}
\end{matrix}\right)
\end{equation}
with for $i=0,1,2$, and $\forall z
\in H^{s}(\TTT^{d+1};\CCC)$,
\begin{equation}\label{5005}
\begin{aligned}
2a_{i}(\f,x)&:=\e(\del_{z_{i}}\ff)(\f,x,z(\f,x),z_{x}(\f,x),z_{xx}(\f,x)),\\
2b_{i}(\f,x)&:=\e(\del_{\bar{z}_{i}}\ff)(\f,x,z(\f,x),z_{x}(\f,x),z_{xx}(\f,x)),
\end{aligned}
\end{equation}
where we denoted $\del_{z_{i}}:=\del_{z^{(1)}_{i}}-i\del_{z^{(2)}_{i}}$ and $\del_{\bar{z}_{i}}:=\del_{z^{(1)}_{i}}+i\del_{z^{(2)}_{i}}$ for $i=0,1,2$.
\end{lemma}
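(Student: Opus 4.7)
The statement has three parts: (i) $T$ preserves $\tilde\Omega$; (ii) the linearized equation for $V$ transforms into $\calL(z)(v,\bar v)^{t}=0$; (iii) $\calL(z)$ has the claimed form. All three reduce to a chain of explicit linear-algebra computations; there is no real analytic obstacle.

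For (i), I would check the matrix identity $T^{t}JT=J$ with $J=\bigl(\begin{smallmatrix}0&1\\-1&0\end{smallmatrix}\bigr)$. A direct $2\times 2$ multiplication gives
\[
T^{t}JT=\begin{pmatrix} \tfrac{i}{\sqrt2}&\tfrac{1}{\sqrt2}\\-\tfrac{1}{\sqrt2}&-\tfrac{i}{\sqrt2}\end{pmatrix}\!\begin{pmatrix}\tfrac{1}{\sqrt2}&-\tfrac{i}{\sqrt2}\\-\tfrac{i}{\sqrt2}&\tfrac{1}{\sqrt2}\end{pmatrix}=J,
\]
so $\tilde\Omega(TV,TW)=\int V^{t}(T^{t}JT)W\,dx=\tilde\Omega(V,W)$.

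For (ii), set $M:=T_{1}^{-1}T=\bigl(\begin{smallmatrix}1&i\\1&-i\end{smallmatrix}\bigr)$, so $MV=(v,\bar v)^{t}$ with $v=v^{(1)}+iv^{(2)}$, and $M^{-1}=\tfrac12\bigl(\begin{smallmatrix}1&1\\-i&i\end{smallmatrix}\bigr)$. Multiplying \eqref{linsyst} by $M$ and inserting $M^{-1}M$ after $d_{z}g$, the equivalence of $d_{z}\calF V=0$ and $\calL(z)(v,\bar v)^{t}=0$ is automatic with $\calL(z):=M\,d_{z}\calF\,M^{-1}$.

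The content is therefore the identification of $\calL(z)$. I would split $D_{\omega}=\omega\!\cdot\!\del_{\f}\,\uno-(\del_{xx}+m)J$ and compute $MJM^{-1}=-iE$ by a direct $2\times 2$ multiplication; this immediately yields
\[
MD_{\omega}M^{-1}=\omega\!\cdot\!\del_{\f}\,\uno+iE\del_{xx}+imE,
\]
producing the ``unperturbed'' part of \eqref{lemmaccio4}. For the perturbation, write $d_{z}g=\sum_{i=0}^{2}\tilde B_{i}\del_{x}^{i}$ with
\[
\tilde B_{i}=\begin{pmatrix}-\del_{\x_{i}}f_{2}&-\del_{\h_{i}}f_{2}\\ \del_{\x_{i}}f_{1}&\del_{\h_{i}}f_{1}\end{pmatrix},
\]
and translate to Wirtinger derivatives via $\del_{\x_{i}}=\tfrac12(\del_{z_{i}}+\del_{\bar z_{i}})$, $\del_{\h_{i}}=\tfrac{i}{2}(\del_{z_{i}}-\del_{\bar z_{i}})$. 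A direct multiplication gives
\[
M\tilde B_{i}M^{-1}=\tfrac{i}{2}\begin{pmatrix}\del_{z_{i}}(f_{1}+if_{2})&\del_{\bar z_{i}}(f_{1}+if_{2})\\-\overline{\del_{\bar z_{i}}(f_{1}+if_{2})}&-\overline{\del_{z_{i}}(f_{1}+if_{2})}\end{pmatrix},
\]
using $\ff=f_{1}+if_{2}$ (so that $\del_{z_{i}}\ff$ appears in the $(1,1)$ entry, $\del_{\bar z_{i}}\ff$ in the $(1,2)$ entry) together with the identity $\overline{\del_{\bar z_{i}}h}=\del_{z_{i}}\bar h$ valid for arbitrary smooth $h$. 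Multiplying by $\e$ and invoking the definitions \eqref{5005} of $a_{i},b_{i}$ gives $\e\,M\tilde B_{i}M^{-1}=iA_{i}$ with $A_{i}$ of the form \eqref{lemmaccio44}. Summing over $i=0,1,2$ and adding the unperturbed part produces exactly \eqref{lemmaccio4}.

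The only subtle bookkeeping is keeping track of the complex conjugates in the $(2,1)$ and $(2,2)$ entries, i.e.\ verifying $-\del_{z_{i}}f_{2}-i\del_{z_{i}}f_{1}=-i\,\overline{\del_{\bar z_{i}}\ff}$ and analogously for the $(2,2)$ entry. Once this is in place the entire statement follows, so I do not expect any conceptual difficulty; the ``hard'' part is merely making sure the signs and the positions of the bars in $A_{i}$ come out as prescribed by \eqref{lemmaccio44}.
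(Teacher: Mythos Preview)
Your proposal is correct and follows essentially the same approach as the paper: both proofs reduce the lemma to direct $2\times2$ matrix computations, first verifying symplecticity of $T$ (you via $T^{t}JT=J$, the paper via the equivalent integral identity $\tilde\Omega(TW,TV)=\tilde\Omega(W,V)$), and then conjugating $d_{z}\calF$ by $M=T_{1}^{-1}T$. Your write-up is in fact considerably more explicit than the paper's, which after setting up $\del_{z_i}g$ simply states ``one can check with an explicit computation'' that $\calL(z)$ has the desired form; your use of Wirtinger derivatives to identify $M\tilde B_{i}M^{-1}=iA_{i}$ is exactly the bookkeeping that computation requires.
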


The operator $\calL$ has further property. It is clearly Hamiltonian with respect to the symplectic form in (\ref{simplectic})
 and the corresponding quadratic Hamiltonian 
has the form
\begin{equation}\label{linham}
\begin{aligned}
H(u,\bar{u})&=\int_{\TTT}(1+a_{2})|u_{x}|^{2}+\frac{1}{2}\left[b_{2}\bar{u}_{x}^{2}+\bar{b}_{2}u_{x}^{2}
\right]-\frac{i}{2} {\rm Im}(a_{1})(u_{x}\bar{u}-u\bar{u}_{x}) {\rm d}x\\
&+\int_{\TTT}-m|u|^{2}- {\rm Re}(a_{0})|u|^{2}-\frac{1}{2}(
b_{0}\bar{u}^{2}+\bar{b}_{0}u^{2}){\rm d}x.
\end{aligned}
\end{equation}
Note that the symplectic form $\Omega$ in (\ref{simplectic}) is equivalent to the $2-$form 
$\tilde{\Omega}$ in (\ref{symform}), i.e. given $u=u^{(1)}+iu^{(2)},v=v^{(1)}+iv^{(2)}\in H^{s}(\TTT^{d+1};\CCC)$,
one has
\begin{equation}\label{equival}
\Omega(u,w)={\rm Re}\int_{\TTT}iu\bar{v}d x=\int_{\TTT}(u^{(1)}v^{(2)}-v^{(1)}u^{(2)})d x=
\tilde{\Omega}(U,V),
\end{equation}
where we set $U=(u^{(1)},u^{(2)}), V=(v^{(1)},v^{(2)})\in H^{s}(\TTT^{d+1};\RRR)\times H^{s}(\TTT^{d+1};\RRR)$.
The (\ref{linham}) is the general form of a linear Hamiltonian operator as $\calL$, and,
the coefficients $a_{i}$ in \eqref{lemmaccio44} have the form
%
\begin{equation}\label{eq:1111}
\begin{aligned}
&a_{2}(\f,x)\in\RRR, \qquad\qquad\qquad 
a_{1}(\f,x)=\frac{\rm d}{{\rm d}x}a_{2}(\f,x)+i {\rm Im}(a_{1})(\f,x),\\
&b_{1}(\f,x)=\frac{\rm d}{{\rm d}x}b_{2}(\f,x),\qquad
a_{0}(\f,x)={\rm Re}(a_{0})(\f,x)+\frac{i}{2}\frac{\rm d}{{\rm d}x}{\rm Im}(a_1)(\f,x)
\end{aligned}
\end{equation}

\subsection{Scheme of the proof}
For better understanding, we divide the prof of Theorem \ref{teo1} in several propositions.
The strategy is essentially the same followed in \cite{BBM} e \cite{FP}. It is based on a Nash-Moser iteration. We consider the operator $\calF$ in (\ref{lemmaccio4}), our aim is to show that there exists
a  sequence of functions that converges, in some Sobolev space, to a \emph{solution} of (\ref{mega2ham}).

\begin{defi}[{\bf Good Parameters}]\label{invertibility}
Let $\nu=2$,  $\mu>0$, $N>1$ and set $\ka_{2}:=11\mu+25\nu$.
For any Lipschitz family ${ u}(\oo)\in H_{N}\times H_{N}$  with 
$||{ u}||_{\gots_{0}+\mu,\g}\leq1$,
we define the set of good parameters 
 $\oo\in\Lambda$  as:
\begin{subequations}\label{eq104}
\begin{align}
\calG_N({u }):= &\left\{\oo\in \Lambda \; : \;\; ||\calL^{-1}({u }){h }||_{\gots_{0},\g}\leq C(\gots_{0})\g^{-1}||{h }||_{\gots_{0}+\mu,\g},\right. \label{eq104b}\\
&  \; ||\calL^{-1}({u }){h }||_{s,\g}\leq C(s)\g^{-1}\left(||{h }||_{s+\mu,\g}+
||{u }||_{s+\mu,\g}||{h }||_{\gots_{0},\g}\right), \label{eq104a}\\
 &\forall \gots_{0}\leq s\leq \gots_{0}+\ka_{2}-\mu,\;\;\left.\text{ for all Lipschitz maps ${h }(\oo)$}\right\}.\nonumber
 \end{align}
\end{subequations} 
where $\calL$ is the linearized operator defined in \eqref{lemmaccio4}.
\end{defi}
\noindent
Clearly, Definition \ref{invertibility} depends on  $\mu$ and $ N$. For a better understanding of Definition
\ref{invertibility} we refer the reader to Proposition 1.6 in \cite{FP}. Roughly speaking a set $\calG_{N}$
is the  set of parameters $\oo$ for which some tame estimates hold for the inverse of the linearized operator. The constant $\mu$ represents the loss of regularity due to the presence of the small divisors. 
In Sections \ref{sec:4ham} and \ref{sec:5ham} we will give an explicit formulation of the set $\calG_{N}$.
It will turns out that the set $\calG_{N}$ are the sets of parameters $\oo$ such that
the eigenvalues of the linearized operator $\calL(u)$ satisfy non degeneracy conditions. 
We refer the reader to \eqref{martina10ham} and \eqref{primedimham}
for the explicit definition of such non degeneracy conditions.
In particular in Proposition \ref{measurebruttebrutte} in Section \ref{sec6ham}
we show that measure of these sets of parameters is large.


\begin{proposition}\label{teo4}
Fix $\g\leq\g_{0}, \mu>\tau> d$. There exist $q\in\NNN$, depending only on $\tau,d,\mu$, such that for any nonlinearity
$\ff\in C^{q}$ satisfying Hypotheses \ref{hyp2ham} and \ref{hyp3ham} the following holds. Let $\calF$ be defined in Definition
\ref{totale}, then there exists a small constant $\epsilon_0>0$ such that
for any $\e$ with
 $0<\e\g^{-1}<\epsilon_0$, there exist constants $C_{\star}, N_0\in \NNN$, a sequence of functions ${u}_{n}$ and a sequence of sets $\calG_{n}(\g,\tau,\mu)\equiv\calG_{n}\subseteq\Lambda$ defined inductively as
 $\calG_{0}:=\Lambda$ and $\calG_{n+1}:=\calG_{n}\cap \calG_{N_{n}}(u_{n})$
 such that ${ u}_{n} : \calG_{n}\to{ H}^{0}$, $||{ u}_{n}||_{\gots_{0}+\mu,\g}\leq1$ and 
\begin{equation}\label{teo41}
||{ u}_{n}-{ u}_{n-1}||_{\gots_{0}+\mu,\g}\leq C_{\star}\e \g^{-1} N_{n}^{-\ka}, \;\; \ka:=18+2\mu,
\end{equation}
with $N_{n}:=N_{0}^{(\frac{3}{2})^{n}}$ defined in \eqref{trig}.
Moreover the sequence converges in norm $||\cdot||_{\gots_{0}+\mu,\g}$ to a function $u_{\infty}$
such that
\begin{equation}\label{teo42}
\calF({ u}_{\infty})=0, \quad \forall\; \oo\in\calG_{\infty}:=\cap_{n\geq0}\calG_{n}.
\end{equation}
\end{proposition}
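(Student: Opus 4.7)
The plan is to run a standard Nash--Moser scheme based on a truncated Newton iteration, using the tame bounds from Definition \ref{invertibility} as the inductive black-box hypothesis, and closing the induction by a standard quadratic estimate combined with the smoothing inequalities \eqref{smoothing}. I will take $u_0=0$ and inductively define
\[
u_{n+1}:=u_n+h_{n+1},\qquad h_{n+1}:=-\Pi_n\,\calL(u_n)^{-1}\Pi_n\calF(u_n),
\]
defined on $\oo\in\calG_{n+1}=\calG_n\cap\calG_{N_n}(u_n)$, where the inverse $\calL(u_n)^{-1}$ exists with the tame bounds \eqref{eq104b}--\eqref{eq104a}. Note that $u_{n+1}\in H_{N_n}\times H_{N_n}$, so that the good-parameter set $\calG_{N_{n+1}}(u_{n+1})$ is well defined at the next step.

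First I will expand $\calF(u_{n+1})$ by Taylor: $\calF(u_{n+1})=\calF(u_n)+\calL(u_n)h_{n+1}+Q_n$, where $Q_n=Q(u_n,h_{n+1})$ is a quadratic remainder satisfying $\|Q_n\|_{s_0,\g}\lesssim \e\|h_{n+1}\|_{s_0,\g}^2$ and the usual tame high-norm estimate $\|Q_n\|_{s,\g}\lesssim\e(\|h_{n+1}\|_{s,\g}\|h_{n+1}\|_{s_0,\g}+\|u_n\|_{s,\g}\|h_{n+1}\|_{s_0,\g}^2)$. Substituting $\calL(u_n)h_{n+1}=-\Pi_n\calF(u_n)+[\calL(u_n),\Pi_n]\ldots$, the new defect splits as
\[
\calF(u_{n+1})=\Pi_n^\perp\calF(u_n)+(\calL(u_n)\Pi_n-\Pi_n\calL(u_n))\calL(u_n)^{-1}\Pi_n\calF(u_n)+Q_n.
\]
The first term is controlled by the smoothing bound $\|\Pi_n^\perp v\|_{s_0}\lesssim N_n^{-b}\|v\|_{s_0+b}$ from \eqref{smoothing}; the second is treated in the same spirit using that $\calL(u_n)$ is a differential operator of finite order so that the commutator with $\Pi_n$ is also controlled by $N_n^{-b}$ in terms of high norms; the third is the quadratic error. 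Setting $B_n:=\|\calF(u_n)\|_{s_0,\g}$ and $H_n:=\|\calF(u_n)\|_{s_0+\beta,\g}$ for an appropriate $\beta$ involving $\mu$, the scheme yields the two coupled inequalities
\[
B_{n+1}\lesssim N_n^{-\beta+\mu}H_n+\g^{-2}B_n^2 N_n^{2\mu}\,,\qquad H_{n+1}\lesssim N_n^{\mu}H_n+\g^{-2}H_nB_n N_n^{2\mu}.
\]
With $N_{n+1}=N_n^{3/2}$ and the choice $\kappa=18+2\mu$ (matched to $\beta$ large enough, which in turn fixes the regularity threshold $q$ and the loss index $\ka_2=11\mu+25\nu$ appearing in Definition \ref{invertibility}), a standard bookkeeping argument shows that both $B_n N_n^{\kappa}$ and $H_n$ remain bounded provided $B_0\lesssim\e$ is small enough. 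The bound $B_n\leq C_\star\e\g^{-1}N_{n+1}^{-\kappa}$ together with \eqref{eq104b} then gives \eqref{teo41}, which is summable and implies convergence of $u_n$ in $\|\cdot\|_{s_0+\mu,\g}$. Passing to the limit in $\calF(u_n)\to 0$ gives \eqref{teo42}.

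The two things requiring care are: (i) maintaining the a priori bound $\|u_n\|_{s_0+\mu,\g}\leq 1$ at each step, which is needed so that the tame estimates of Definition \ref{invertibility} apply, and follows from summing \eqref{teo41} and taking $\e\g^{-1}$ small enough; (ii) choosing the loss $\mu$, the index $\ka$ in \eqref{teo41}, and the high-norm threshold $\gots_0+\ka_2$ in a consistent way so that the two inequalities above close the iteration. The main conceptual obstacle is really hidden in Definition \ref{invertibility}: this proposition is conditional on the existence of good parameters with the prescribed tame bounds, which is the content of the later propositions about the diagonalization of $\calL$; granted those, the iteration here is a direct adaptation of the Nash--Moser scheme of \cite{FP,BBM} to the present functional $\calF$, with no genuinely new difficulty beyond the verification that the Hamiltonian/symplectic structure is compatible with the truncation $\Pi_n$, which is immediate since $\Pi_n$ is a Fourier projector commuting with the multiplication by constant matrices appearing in \eqref{lemmaccio4}.
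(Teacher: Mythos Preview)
Your outline is essentially the standard Nash--Moser scheme that the paper itself invokes: the paper does not give a self-contained proof of Proposition~\ref{teo4} but refers to the abstract result in \cite{FP} (Proposition~1.6 there), noting only that the one structural fact needed to apply it is Lemma~\ref{lemma2}, namely that the constant-coefficient part $D_{\oo}$ maps each $H_{n}$ to itself. Your truncated Newton step, Taylor decomposition of the defect, and coupled low/high-norm inequalities match the scheme in \cite{FP,BBM}, so the approach is the same.

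One remark on your last sentence: the point that matters for this proposition is \emph{not} compatibility of the Hamiltonian/symplectic structure with $\Pi_n$ --- the symplectic structure plays no role whatsoever in the Nash--Moser iteration itself, only later in the reducibility argument. What is actually needed, and what the paper singles out as Lemma~\ref{lemma2}, is that $D_{\oo}$ (a constant-coefficient Fourier multiplier) commutes with $\Pi_n$, so that $[\calL(u_n),\Pi_n]$ reduces to the commutator with the $O(\e)$ variable-coefficient part and hence enjoys the $N_n^{-b}$ smoothing bound you use. Your reference to ``multiplication by constant matrices appearing in \eqref{lemmaccio4}'' is misleading, since the matrices $A_i(\f,x,z)$ in \eqref{lemmaccio4} are \emph{not} constant; the constant part is $D_{\oo}$ in \eqref{totale}. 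With that clarification, your sketch is correct and aligned with the paper's (deferred) proof.
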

In the Nash-Moser scheme the main point is to invert, with appropriate bounds, $\calF$ linearized at any ${ u}_n$. 
Following the classical Newton scheme we define 
$$
{u}_{n+1}= { u}_n -\Pi_{N_{n+1}}d_{z}\calF^{-1}({ u}_n)\Pi_{N_{n+1}}\calF({ u}_{n}).
$$
In principle we do not know wether this definition is well posed since $d_{z}\calF({ u})$ 
may not be invertible.
To study the invertibility of the linearized operator is a problem substantially different for each equation.
In \cite{FP} the authors work in a reversible contest. Essentially the reversibility condition 
introduced there, guarantees that the linearized operator has simple eigenvalues. Hence
it is natural to try to diagonalized $d_{u}\calF$ in order to invert it.
Here, the situation is different. We have that the eigenvalues are multiple, then the diagonalization procedure is more difficult. 

For the proof of Proposition \ref{teo4} we refer the reader to the general result
proved in \cite{FP}. In that work is proved an abstract existence result based on a Nash-Moser scheme
on a scale of Banach spaces. Such result applies \emph{tame} functionals. In our case the functional $\calF$ satisfies such properties by Lemma \eqref{lemA2}.

The main step of our approach is to prove the invertibility of the linearized operator 
$d_{z}\calF(\oo t,x,z )$, at any $x\in{\rm H}^{s}$.
To do this, we will first prove the following diagonalization result on the operator $\calL$ defined in
 (\ref{lemmaccio4}):

 \begin{proposition}[{\bf Reducibility}]\label{teo2ham}
 Fix $\g\leq\g_{0}$ and $\tau>d$ and consider any $\ff\in C^{q}$ that satisfies Hypotheses \ref{hyp2ham} and \ref{hyp3ham}.
Then there exist
  $\h,q\in\NNN$, depending only on $d$, such that
 for $0\leq \e\leq \e_{0}$ with $\e_{0}$ small enough the following holds. Consider  any subset  $\Lambda_{o}\subseteq\Lambda\subseteq \RRR^{d}$ and  any  Lipschitz families $u(\oo) :\Lambda_{o} \to {\bf H}^{0}$ with $||u||_{\gots_{0}+\h,\g}\leq1$. Consider  the linear operator 
 $\calL : {\bf H}^{s}\to {\bf H}^{s}$ in (\ref{lemmaccio4}) computed at $u$. 
 then for all $\s=\pm 2,j\in \NNN$ 
 there exist Lipschitz map 
 $\Omega_{\s,\und{j}} :  \Lambda \to {\rm Mat}(2\times 2,\CCC)$
 of the form
  \begin{equation}\label{1.2.2bis}
\Omega_{\s,\und{j}}= -i \s ({m}_{2}j^{2} + {m}_{0}) \begin{pmatrix}1&0\\ 0 &1 \end{pmatrix} -i\s|{m}_{1}|j \begin{pmatrix}1&0\\ 0 &-1 \end{pmatrix} +i\s R_{\s,\und{j}},
\end{equation}
 
%
where $R_{\s,\und{j}}$ is a self-adjoint matrix and
\begin{equation}\label{1.2.2tris}
\begin{aligned}
&|{m}_{2}-1|_{\g}+|{m}_{0}-m|_{\g}\leq \e C,\quad 
|R_{j}^{k}|_{\g}\leq \frac{\e C}{\langle j\rangle}, \quad k=\pm j, \; j\in\ZZZ,\\
&\e c\leq |m_{1}|^{sup}\leq \e C, \quad |m_{1}|^{lip}\leq \e^{2}\g^{-1}C.
\end{aligned}
\end{equation}
for any $\s\in\CC$, $j\in \NNN\cup\{0\}$, here and in the following $\CC:=\left\{+1,-1\right\}$.
\noindent
Set
\begin{equation}\label{1.2.2quatuor}
\Omega_{\s,\underline{j}}:=\left(\begin{matrix}\Omega_{\s,j}^{\phantom{g}j} & \Omega_{\s,j}^{-j} \\
\Omega_{\s,-j}^{j} & \Omega_{\s,-j}^{-j} \end{matrix}\right),
\end{equation}
 Define  $\mu_{\s,j}$ and $\mu_{\s,-j}$ to be the eigenvalues  
  of $\Omega_{\s,\und{j}}$ 
 Define $\Lambda_{\infty}^{2\g}(u):=\SSSS_{\infty}^{2\g}(u)\cap\calO_{\infty}^{2\g}(u)$ with
 \begin{equation}\label{martina10ham}
 \begin{aligned}
\SSSS^{2\g}_{\infty}({ u})&:=\left\{\begin{array}{ll}
\oo\in\Lambda_{o} : &|\oo\cdot\ell\!+\!\mu_{\s,{j}}(\oo)-\!\mu_{\s',{j'}}(\oo)|\geq
\frac{2\g|\s j^{2}-\s' j'^{2}|}{\langle\ell\rangle^{\tau}}, \\
&\;\ell\in\ZZZ^{d}, \s,\s'\in\CC, j,j'\in\ZZZ 
\end{array}
\right\},\\
\calO_{\infty}^{2\g}(u)&:=\left\{\begin{array}{ll}
\oo\in\Lambda_{o} : &|{\oo}\cdot\ell+\mu_{\s,j}-
\mu_{\s,k}|\geq\frac{2\g}{\langle\ell\rangle^{\tau}\langle j\rangle},\\
&\ell\in\ZZZ^{d}\backslash\{0\}, j\in\ZZZ, k=\pm j, \s\in\CC
\end{array}
\right\},
\end{aligned}
\end{equation}
then we have:


\noindent
(i) for any $s\in(\gots_{0},q-\h)$, if $||z||_{\gots_{0}+\h}<+\infty$ there exist linear
bounded operators $W_{1},W_{2} : {\bf H}^{s}(\TTT^{d+1})\to{\bf H}^{s}(\TTT^{d+1})$ with
bounded inverse, such that $\calL({ u})$ satisfies
\begin{equation}\label{1.2.2ham}
\calL({\bf u})=W_{1}\calL_{\infty}W_{2}^{-1}, \; \calL_{\infty}=\oo\cdot\del_{\f}\uno+\DD_{\infty}, \;
\DD_{\infty}=diag_{(\s,j)\in\CC\times\ZZZ }\{\Omega_{\s,\underline{j}} \},
\end{equation}

\noindent
(ii) for any $\f\in\TTT^{d}$ one has
\begin{equation}\label{1.2.4ham}
W_{i}(\f), W_{i}^{-1}(\f) : {\bf H}^{s}_{x}\to {\bf H}^{s}_{x}, \quad i=1,2.
\end{equation}
with ${\bf H}^{s}_{x}:=H^{s}(\TTT;\CCC)\times  H^{s}(\TTT;\CCC)\cap \calU$ and such that
\begin{equation}\label{eq:4.9madonna}
||(W_{i}^{\pm1}(\f)-\uno){ h}||_{{\bf H}^{s}_{x}}\leq
\e\g^{-1} C(s)(||{ h}||_{{\bf H}^{s}_{x}}+||{ u}||_{s+\h+\gots_{0}}
||{ h}||_{{\bf H}^{1}_{x}}).
\end{equation}
\end{proposition}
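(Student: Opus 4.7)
The plan is to construct the conjugating operators $W_1,W_2$ as the composition of two families of transformations, corresponding to the two steps described in the Introduction.

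\textbf{Step 1: Regularization.} The first goal is to conjugate $\calL(u)$ to an operator of the form
\begin{equation*}
\omega\cdot\del_\f\uno + i m_2 E \del_{xx} + i m_1\Sigma\del_x + i m_0 E + \calR,
\end{equation*}
where $m_0,m_1,m_2$ are constants (in $\f,x$), $\Sigma=\begin{pmatrix}1&0\\0&-1\end{pmatrix}$, and $\calR$ is a bounded (in fact $O(\del_x^{-1})$) remainder. Following the scheme of \cite{FP,BBM} adapted to the Hamiltonian setting, this is done by iteratively applying symplectic changes of variables that act on the phase space, chosen so as to kill the $x$-dependence of the coefficients of the highest-order symbols. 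Concretely one performs, in this order: a multiplication operator built from the diagonalization of $E+A_2$; a quasi-periodic reparametrization of time; a change of variables of the form $u(\f,x)\mapsto u(\f,x+\beta(\f,x))$ to straighten the top symbol; a multiplication step to remove the order-one coefficient; and finally descent steps to remove the order-zero part up to a regularizing term. The Hamiltonian nature of $\calL$, together with the fact that all the conjugating maps are chosen symplectic (Definition \ref{hamilt2}), ensures that each conjugated operator remains Hamiltonian in the sense of Definition \ref{hamilt}, so that the final coefficients fit the structure \eqref{eq:1111}. Smallness and Lipschitz bounds on $m_0,m_1,m_2,\calR$ in \eqref{1.2.2tris} follow from the tame estimates on $A_0,A_1,A_2$ (which depend linearly on $\e$ through \eqref{5005}) and the standard tame estimates for compositions/flows, provided one requires $\e\g^{-1}$ small and $u$ bounded in $\|\cdot\|_{\gots_0+\eta,\g}$. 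The output of this step is the content of Lemmata \ref{lem:3.88}--\ref{lem:3.9ham} and yields a constant-coefficient unbounded part plus a $1$-regularizing remainder; this will be enough for the KAM step below.

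\textbf{Step 2: Block-diagonal KAM reducibility.} After Step 1 the diagonal part of the conjugated operator has entries, on each Fourier pair $(j,-j)$ in $x$,
\begin{equation*}
-i\s(m_2 j^2+m_0)\begin{pmatrix}1&0\\0&1\end{pmatrix}-i\s|m_1|j\begin{pmatrix}1&0\\0&-1\end{pmatrix},\qquad \s=\pm,
\end{equation*}
exactly matching \eqref{1.2.2bis} with $R_{\s,\underline j}=0$; the off-diagonal part is the $1$-regularizing Hamiltonian remainder $\calR$. The standard KAM iterative scheme is then run, but at each step we project the homological equation block-wise on the pairs $\{j,-j\}$ (rather than on single indices as in \cite{FP}) and we solve it imposing the second Melnikov conditions
\begin{equation*}
|\oo\cdot\ell+\mu_{\s,j}-\mu_{\s',j'}|\ge \tfrac{2\g|\s j^2-\s'j'^2|}{\langle\ell\rangle^\tau},\qquad |\oo\cdot\ell+\mu_{\s,j}-\mu_{\s,\pm j}|\ge \tfrac{2\g}{\langle\ell\rangle^\tau\langle j\rangle},
\end{equation*}
i.e.\ precisely the sets $\SSSS_\infty^{2\g}\cap\calO_\infty^{2\g}$ in \eqref{martina10ham}. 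The key observation is that resonances internal to a block $\{j,-j\}$ are \emph{not} required to be invertible: they are absorbed in the new block $\Omega_{\s,\underline j}$, which is only required to stay self-adjoint (preserving the Hamiltonian structure) and $O(\langle j\rangle^{-1})$ small. Thanks to the $O(\del_x^{-1})$ regularization of Step 1 and the usual quadratic Newton estimates, the sum $\sum_n R_{\s,\underline j}^{(n)}$ converges in the $|\cdot|_\g$ norm and satisfies the bound in \eqref{1.2.2tris}; the eigenvalues $\mu_{\s,\pm j}$ inherit Lipschitz estimates from Kirszbraun extension arguments.

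\textbf{Step 3: Assembly.} The operators $W_1,W_2$ are defined as the composition of all symplectic transformations from Step 1 with the limit of the KAM transformations from Step 2 (acting from left and from right, respectively, according to the Hamiltonian block-diagonalization conventions). Item (ii) follows from the fact that every single conjugation is local in $\f$ (multiplication, diffeomorphism of $x$, time-$1$ flow of an $x$-pseudo-differential vector field), so each $W_i(\f)$ acts on ${\bf H}^s_x$; estimate \eqref{eq:4.9madonna} is obtained by telescoping the bounds on each conjugation, all of which are $O(\e\g^{-1})$ close to the identity with the stated tame dependence on $\|u\|_{s+\eta+\gots_0}$.

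\textbf{Main obstacle.} The technically delicate point is Step 2: since the blocks $\Omega_{\s,\underline j}$ change along the KAM iteration, one must show that the eigenvalues $\mu_{\s,\pm j}$ vary in a Lipschitz way with bounded variation, and that the restrictions on the Melnikov conditions remain compatible at each step (the blocks ``wiggle'' inside each resonant cluster). This is where the weaker Melnikov condition defining $\calO_\infty^{2\g}$ is crucial, and where Hypothesis \ref{hyp3ham} (via the lower bound $|m_1|\ge c\e$ in \eqref{1.2.2tris}) enters to ensure that the two eigenvalues $\mu_{\s,j}$ and $\mu_{\s,-j}$ are actually split at scale $\e j$, so that the measure estimates of Section~6 can be performed.
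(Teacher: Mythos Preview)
Your plan matches the paper's approach: regularization (Lemma~\ref{lem:3.88}, seven symplectic steps $\TT_1,\ldots,\TT_7$) followed by a block-wise KAM reduction (Theorem~\ref{KAMalgorithmham}), assembled into $W_i=\VV_i\Phi_\infty$ in Lemma~\ref{lemma5.8ham}. Two small corrections: in the paper the space diffeomorphism comes \emph{before} the time reparametrization (you cannot make the top coefficient constant in $\f$ while it still depends on $x$), and an additional space translation $\TT_4$ is inserted to normalize the $x$-average of the first-order coefficient before extracting $m_1$; moreover the time reparametrization $\TT_3$ is \emph{not} local in $\f$, so item~(ii) is obtained via the separate dynamical-systems argument around \eqref{eq:6.4ham}--\eqref{eq:6.7ham} (and Lemma~\ref{lem:3.9ham}, which explicitly excludes $i=3$), rather than by your claim that ``every single conjugation is local in $\f$''.
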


\begin{rmk}\label{ciccio}
Note that function $ h(t)\in {\bf H}_{x}^{s}$  is a solution of the forced NLS 
\begin{equation}\label{1.2.5ham}
\calL(z) h=0
\end{equation}
if and only if the function $v(t):=(v_{1},v_{-1}):=W_{2}^{-1}(\oo t)[h(t)]\in {\bf H}^{s}_{x}$ 
solves the constant coefficients dynamical system
\begin{equation}\label{1.2.6ham}
\left(\begin{matrix} \del_{t}v_{1} \\ \del_{t} v_{-1}\end{matrix}\right) 
+\DD_{\infty}\left(\begin{matrix} v_{1} \\ v_{2} \end{matrix}\right)=\left(\begin{matrix} 0\\ 0 \end{matrix}\right), \quad
\dot{v}_{\s,\underline{j}}=-\Omega_{\s,\underline{j}}v_{\s,\underline{j}},\quad
(\s,j)\in\CC \times\ZZZ,
\end{equation}
where all the eigenvalues of the matrices $\Omega_{\s,\underline{j}}$ are purely imaginary. 
Moreover, since $\ol{\Omega_{\s,{j}}^{j}}=-\Omega_{\s,{j}}^{j}$ and 
$\ol{\Omega_{\s,-j}^{\phantom{g} j}}=-\Omega_{\s,j}^{\phantom{g} j}$
then one has
$$
\frac{d}{dt}(|v_{1,j}(t)|^{2}+|v_{1,-j}(t)|^{2})=0, \qquad |v_{\s,0}(t)|^{2}=constant
$$
and hence
\begin{equation}\label{1.2.7ham}
\begin{aligned}
||v_{1}(t)||^{2}_{H_{x}^{s}}&=\sum_{j\in\ZZZ}|v_{1,j}(t)|^{2}\langle j\rangle^{2s}\\
&=|v_{1,0}(t)|^{2}+
\sum_{j\in\NNN}(|v_{1,j}(t)|^{2}+|v_{1,-j}(t)|^{2})\langle j\rangle^{2s}\\
&=|v_{1,0}(0)|^{2}+\sum_{j\in\NNN}
(|v_{1,j}(0)|^{2}+|v_{1,-j}(0)|^{2}) \langle j\rangle^{2s}=||v_{1}(0)||^{2}_{H_{x}^{s}}.
\end{aligned}
\end{equation}
Eq. \eqref{1.2.7ham} means that the Sobolev norm in the space of functions depending on $x$, is constant in time.
\end{rmk}


Proposition \ref{teo2ham} is fundamental in order to prove Theorem \ref{teo1}. Of course one can try
to invert the linearized operator without diagonalize it. In addiction to this we are not able to completely diagonalize it due to the multiplicity of the eigenvalues. This is one of the main difference with respect to the
reversible case.
Anyway the result in Proposition \ref{teo2ham}
is enough to prove the stability of the possible solution. 
What we obtain is a block-diagonal operator with constant coefficients while in \cite{LY} the authors obtain
a normal form depending on time. Here most of the problems appear because we want to obtain a constant coefficient linear operator.
Another important difference between the case of single eigenvalues and double eigenvalues
stands in the set $\calO_{\infty}^{2\g}$ in \eqref{martina10ham}.
Indeed, as one can see in \eqref{martina10ham}, due to the multiplicity of the eigenvalues, we must impose a very weak non degeneracy condition on the eigenvalues.
Moreover, as we will see in Section   \ref{sec6ham}, the measure estimates in the Hamiltonian  case are more difficult with respect to the reversible one, and most of the problems
appear due to the presence of the set $\calO_{\infty}^{2\g}$. In order to overcame such problems we will use the additional Hypotheses \ref{hyp3ham}.
In Section \ref{sec:3ham} we will conjugate $\calL$ to a differential linear operator 
with constant coefficients plus a \emph{bounded} remainder,
 then, in Section \ref{sec:4ham} 
we complete block-diagonalize the operator.

%
Using the reducibility results of Proposition \ref{teo2ham} we are able to  prove (see Section \ref{sec:5ham}) the following result:

\begin{lemma}\label{inverseofl}({\bf Right inverse of $\calL$})
Under the hypotheses of Proposition \ref{teo2ham}, set
\begin{equation}\label{eq:4.4.18}
\z:=4\tau+\h+8.
\end{equation}
where $\h$ is fixed in Proposition \ref{teo2ham}. 
Consider a Lipschitz family $ u(\oo)$ with $\oo\in\Lambda_{o}\subseteq\Lambda$ such that
\begin{equation}\label{eq:4.4.19}
||u||_{\gots_{0}+\z,\g} \leq1.
\end{equation}
Define the set
\begin{equation}\label{primedimham}
\calP^{2\g}_{\infty}({ u}):=\left\{\begin{array}{ll}
\oo\in\Lambda_{o} : &|\oo\cdot\ell\!+\!\mu_{\s,{j}}(\oo)|\geq
\frac{2\g\langle j\rangle^{2}}{\langle\ell\rangle^{\tau}}, \\
&\;\ell\in\ZZZ^{d}, \s,\in\CC, j\in\ZZZ 
\end{array}
\right\}.
\end{equation}
There exists $\epsilon_{0}$, depending only on the data of the problem, such that
if $\e\g^{-1}<\epsilon_{0}$
then, for any $\oo\in\Lambda^{2\g}_{\infty}({u})\cap\calP^{2\g}_{\infty}({u})$ 
(see (\ref{martina10ham})),  and for any Lipschitz family
$g(\oo)\in {\bf H}^{s}$, 
the equation $\calL { h}:=\calL(\oo,{ u}(\oo)) h= { g}$,
admits a solution $h\in{\bf H}^{s}$
such that for $\gots_{0}\leq s \leq q-\mu$
\begin{equation}\label{eq:4.4.23}
||{ h}||_{s,\g}\leq C(s)\g^{-1}\left(||{ g}||_{s+2\tau+5,\g}+
||{ u}||_{s+\z,\g}||{ g}||_{\gots_{0},\g}
\right).
\end{equation}
\end{lemma}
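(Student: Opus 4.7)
The plan is to exploit the reducibility of Proposition \ref{teo2ham}: on $\Lambda^{2\g}_{\infty}(u)$ we have the factorization $\calL = W_{1}\calL_{\infty}W_{2}^{-1}$, so the equation $\calL h = g$ is equivalent to $\calL_{\infty}\widetilde h = \widetilde g$ with $\widetilde h := W_{2}^{-1}h$ and $\widetilde g := W_{1}^{-1}g$. The tame bound \eqref{eq:4.9madonna} for $W_{i}^{\pm 1}$ controls both changes of unknown with only low-order Sobolev losses, split into a linear part in $\|\cdot\|_{s,\g}$ and a product part $\|u\|_{s+\h+\gots_{0},\g}\|\cdot\|_{\gots_{0},\g}$. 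The core of the proof is then the explicit inversion of $\calL_{\infty}$ on $\calP^{2\g}_{\infty}(u)$.

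Since $\calL_{\infty} = \oo\cdot\del_{\f}\uno + \DD_{\infty}$ is block-diagonal in Fourier, the equation $\calL_{\infty}\widetilde h = \widetilde g$ decouples into independent $2\times 2$ linear systems, one for each $(\ell,\s,\und{j}) \in \ZZZ^{d}\times\CC\times(\NNN\cup\{0\})$:
$$
M_{\ell,\s,\und{j}}\,\widetilde h_{\ell,\s,\und{j}} = \widetilde g_{\ell,\s,\und{j}}, \qquad M_{\ell,\s,\und{j}} := i\oo\cdot\ell\,\uno + \Omega_{\s,\und{j}}.
$$
By \eqref{1.2.2bis}--\eqref{1.2.2tris} each $\Omega_{\s,\und{j}}$ is anti-Hermitian (since $R_{\s,\und{j}}$ is self-adjoint and the remaining summands are purely imaginary multiples of diagonal real matrices), hence so is $M_{\ell,\s,\und{j}}$, whose eigenvalues are the purely imaginary numbers $i\oo\cdot\ell + \mu_{\s,\pm j}$. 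Thus $\|M_{\ell,\s,\und{j}}^{-1}\|_{\rm op} = (\min_{k=\pm j}|\oo\cdot\ell + \mu_{\s,k}|)^{-1}$, and the Melnikov-type bound defining $\calP^{2\g}_{\infty}(u)$ gives
$$
\|M_{\ell,\s,\und{j}}^{-1}\|_{\rm op} \leq \frac{\langle\ell\rangle^{\tau}}{2\g\langle j\rangle^{2}}.
$$
Squaring and summing over $(\ell,\s,\und{j})$ with Sobolev weight $\langle(\ell,j)\rangle^{2s}$, and exploiting the gain $\langle j\rangle^{-2}$ to control the weight, yields a sup-norm bound of the form $\|\calL_{\infty}^{-1}\widetilde g\|_{s}^{sup} \lesssim \g^{-1}\|\widetilde g\|_{s+\tau}^{sup}$.

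For the Lipschitz-in-$\oo$ part, I would apply the resolvent identity $M(\oo_{1})^{-1} - M(\oo_{2})^{-1} = M(\oo_{1})^{-1}(M(\oo_{2})-M(\oo_{1}))M(\oo_{2})^{-1}$ together with the Lipschitz bounds on $m_{0},m_{1},m_{2}$ and on $R_{\s,\und{j}}$ supplied by \eqref{1.2.2tris} (which, by standard perturbation theory for $2\times 2$ matrices, transfer to Lipschitz bounds on the eigenvalues $\mu_{\s,\pm j}$). The second application of the Melnikov bound contributes a further factor $\langle\ell\rangle^{\tau}$, which is the origin of the $2\tau$ derivative loss appearing in \eqref{eq:4.4.23}. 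Composing the three tame estimates---two for $W_{i}^{\pm 1}$, one for $\calL_{\infty}^{-1}$---and choosing $\z = 4\tau + \h + 8$ to absorb all the intermediate losses gives the target bound \eqref{eq:4.4.23}, with the characteristic tame splitting into a linear part $\|g\|_{s+2\tau+5,\g}$ and a product part $\|u\|_{s+\z,\g}\|g\|_{\gots_{0},\g}$.

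The main technical obstacle I anticipate is not conceptual---the strategy is essentially forced by the structure of Proposition \ref{teo2ham}---but combinatorial: the bookkeeping of the tame splitting through the composition of three operators is delicate, and in each step one must carefully track how the high-Sobolev norm of $u$ enters via the estimates on $W_{i}$ and on the eigenvalues $\mu_{\s,\pm j}$. The gain $\langle j\rangle^{-2}$ must be used efficiently to compensate the $\langle\ell\rangle^{2\tau}$ incurred by the Lipschitz estimate, and this interplay is what forces the specific choice of exponent $\z$ in \eqref{eq:4.4.18}.
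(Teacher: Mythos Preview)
Your proposal is correct and follows essentially the same route as the paper: factor $\calL = W_{1}\calL_{\infty}W_{2}^{-1}$, invert $\calL_{\infty}$ blockwise in Fourier using the first Melnikov condition in $\calP^{2\g}_{\infty}(u)$ (the paper obtains $\|\calL_{\infty}^{-1}g\|_{s,\g}\leq C\g^{-1}\|g\|_{s+2\tau+1,\g}$), and then compose tame estimates. The only slip is that you cite \eqref{eq:4.9madonna}, which bounds $W_{i}(\f)$ on the phase space ${\bf H}^{s}_{x}$ at fixed $\f$; for the space--time inversion you need the tame bounds on $W_{i}^{\pm1}:{\bf H}^{s}\to{\bf H}^{s}$, which the paper assembles separately (Lemma~\ref{lemma5.8ham}) from the estimates \eqref{eq:3.2.3ham} on $\VV_{i}$ and \eqref{eq:4.8ham} on $\Phi_{\infty}$.
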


Proposition \ref{inverseofl}, combined with Lemma \ref{lemmaccio}, guarantees 
the invertibility (with suitable estimates) of the linearized of $\calF$. Of course $d_{z}\calF$ can be inverted only in a 
suitable set of parameters depending on the function $z$ on which we linearize.
In principle it can be empty and moreover it is not sufficient to prove that it has positive measure.
 Indeed  we need to invert the linearized operator
in any approximate solution $u_{n}$ (see Proposition \ref{teo2ham}) so that 
$\cap_{n\geq0}\left(\Lambda_{\infty}^{2\g}(u_{n})\cap\calP^{2\g}_{\infty}({u}_{n})\right)$ can have zero measure.
 The last part of the paper is
devoted to give some measure estimates of such set. 
In the Nash-Moser proposition \ref{teo4} we defined in an implicit way the sets $\calG_n$ in order to ensure bounds on the inverse of  $\calL({u}_{n})$. The following Proposition is the main result of Section \ref{sec6ham}. 
\begin{proposition}[{\bf Measure estimates}]\label{STIMEmisura}
Set $\g_n:=(1+2^{-n})\g$ and consider the set $\calG_{\infty}$ of Proposition \ref{teo4}
with $\mu=\zeta$ defined in Lemma \ref{inverseofl} and fix $\g:=\e^{a}$ for some $a\in(0,1)$. We have
\begin{subequations}\label{eq136tot}
\begin{align}
&\cap_{n\geq0}\calP^{2\g_n}_{\infty}({u}_n)\cap \Lambda^{2\g_n}_{\infty}({ u}_n)\subseteq \calG_{\infty}, \label{eq136b}\\
&|\Lambda\backslash\calG_{\infty}|\to 0, \;\; {\rm as} \;\; \e\to0. \label{eq136}
\end{align}
\end{subequations}
\end{proposition}
 Formula (\ref{eq136b}) is essentially trivial. One just need to look at Definition \ref{invertibility}
 which fix the sets $\calG_{n}$. It is important because gives us the connection between $\calG_{\infty}$ and the sets we have constructed at each step of the iteration. The (\ref{eq136}) is more delicate. The first point is that we reduce to computing the measure of the left hand side of (\ref{eq136b}).

The strategy described above is similar to that followed in \cite{BBM} and \cite{FP}. It is quite general and can be applied to various case.
The main differences are in the proof
of Proposition \ref{teo2ham}. 
Clearly it depends on the unperturbed eigenvalues and on the symmetries one ask for on the system.

\subsection{Proof of Theorem \ref{teo1}}

\noindent
Theorem \ref{teo1} essentially follows by Propositions \ref{teo4} and \ref{STIMEmisura}. The measure estimates performed in the last section guarantee that the ``good'' sets defined in Prop. \ref{teo4} are not empty,
but on the contrary have ``full'' measure. 
In particular one uses the result of Proposition \ref{teo4} in order to prove Lemma \ref{inverseofl}. Indeed one one has diagonalized the linearized operator it is trivial to get estimate \eqref{eq:4.4.23}. 
From formula \eqref{eq:4.4.23} essentially follows \eqref{eq136b}.
Concerning the proof of Proposition \ref{teo4}, we have omitted since it is the same of Proposition $1.6$ in \cite{FP}. The only differences is that in \cite{FP} the authors deal with
a functional that is diagonal plus a non linear perturbation. In this case the situation is slightly different.
However the next Lemma guarantees  that the subspaces $H_{n}$ in \eqref{trig} are preserved by the linear part of our functional $\calF$ in \eqref{totale},  
%
\begin{lemma}\label{lemma2}
One has that
\begin{equation}
D_{\oo} : H_{n} \to H_{n}.
\end{equation}
\end{lemma}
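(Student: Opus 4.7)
The plan is to exploit the fact that every scalar operator appearing as an entry of the matrix $D_{\oo}$ is a Fourier multiplier, so it acts diagonally on the exponential basis $e^{i(\ell\cdot\f+jx)}$ indexing the space $H_{n}$ in \eqref{trig}. Since Fourier multipliers preserve the Fourier support of a function, they cannot move a mode $(\ell,j)$ with $|(\ell,j)|\leq N_{n}$ to a mode outside this ball, and hence they map $H_{n}$ into itself. The extension to the $2\times 2$ matrix operator $D_{\oo}$ on the product space is then immediate, component by component.

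Concretely, I would first observe that for a basis element one has
\begin{equation*}
\oo\cdot\del_{\f}\,e^{i(\ell\cdot\f+jx)} = i(\oo\cdot\ell)\,e^{i(\ell\cdot\f+jx)}, \qquad
(\del_{xx}+m)\,e^{i(\ell\cdot\f+jx)} = (m-j^{2})\,e^{i(\ell\cdot\f+jx)}.
\end{equation*}
Thus for any $u\in H_{n}$ with expansion $u(\f,x)=\sum_{|(\ell,j)|\leq N_{n}}u_{j}(\ell)\,e^{i(\ell\cdot\f+jx)}$ the image $\oo\cdot\del_{\f}u$ and $(\del_{xx}+m)u$ are again supported only on the indices $\{(\ell,j):|(\ell,j)|\leq N_{n}\}$, and so they both lie in $H_{n}$.

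For the matrix operator itself, writing a vector $w=(w^{(1)},w^{(2)})\in H_{n}\times H_{n}$, each of the four entries of the $2\times 2$ matrix sends $w^{(i)}\in H_{n}$ into $H_{n}$ by the observation above, so the linear combinations forming the components of $D_{\oo}w$ remain in $H_{n}$. Therefore $D_{\oo}$ maps $H_{n}\times H_{n}$ into itself, which is exactly the statement of the lemma once $H_{n}$ is read on the relevant product space as in \eqref{spaces}. There is no real obstacle here; the only thing to be careful about is keeping track of the fact that the projections $\Pi_{n}$ used in the Nash--Moser scheme commute with $D_{\oo}$, which is an immediate consequence of this invariance and which is the reason the lemma is actually used.
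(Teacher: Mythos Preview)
Your proof is correct and follows essentially the same approach as the paper: both arguments expand $u=(u^{(1)},u^{(2)})\in H_{n}\times H_{n}$ in the Fourier basis, compute the action of the scalar entries $\oo\cdot\del_{\f}$ and $\del_{xx}+m$ on each mode $e^{i(\ell\cdot\f+jx)}$, and observe that the Fourier support $\{|(\ell,j)|\leq N_{n}\}$ is preserved. The paper's proof is just the explicit one-line matrix computation, while you phrase the same content in terms of Fourier multipliers; there is no substantive difference.
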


\begin{proof}
Let us consider $u=(u^{(1)},u^{2})\in H_{n}$, then
\begin{equation*}
\begin{aligned}
D_{\oo}u&=D_{\oo}\sum_{|(\ell,j)|\leq N_{n}}u^{(i)}_{j}(\ell)e^{i\ell\cdot\f+ijx}\\
&=
\left(\begin{matrix}
\sum_{|(\ell,j)|\leq N_{n}}(i\oo\cdot\ell)u^{1}(\ell)_{j}-[(ij)^{2}+m]u^{(2)}_{j}e^{i\ell\cdot\f+ijx}\\
\sum_{|(\ell,j)|\leq N_{n}}(i\oo\cdot\ell)u^{2}(\ell)_{j}+[(ij)^{2}+m]u^{(1)}_{j}e^{i\ell\cdot\f+ijx}
\end{matrix}\right)\in H_{n}.
\end{aligned}
\end{equation*}
\end{proof}
%
%
%
%
We fix $\g:=\e^{a}$, $a\in(0,1)$. Then the smallness condition $\e\g^{-1}=\e^{1-a}<\epsilon_{0}$ of Proposition
\ref{teo4} is satisfied. Then we can apply it with $\mu=\zeta$ in (\ref{eq:4.4.18}) (see
Proposition \ref{STIMEmisura}).
 Hence by (\ref{teo42}) we have that the function
${u}_{\infty}$ in ${\bf H}^{\gots_{0}+\zeta}$ is a solution of the perturbed NLS with frequency
$\oo$. Moreover, one has
\begin{equation}\label{eq:6.1}
|\Lambda\backslash\calG_{\infty}|\stackrel{(\ref{eq136})}{\to}0,
\end{equation}
as $\e$
 tends to zero. To complete the proof of the theorem, it remains to prove 
 the linear stability of the solution. 
Since the eigenvalues $\mu_{\s,j}$ are purely imaginary, we know that
the Sobolev norm of the solution ${ v}(t)$ of (\ref{1.2.6ham}) is constant in time. We just need to show
that the Sobolev norm of ${ h}(t)=W_{2}{ v}(t)$, solution of $\calL h=0$
does not grow on time. 
Again to do this one can follow the same strategy used in \cite{FP}. In particular one uses the results of Lemma
\ref{lem:3.9ham} in Section \ref{sec:3ham} and estimates \eqref{eq:4.9ham} in Proposition \ref{KAMalgorithmham} in order to get the estimates

%
\begin{subequations}\label{eq:6.14}
\begin{align}
||{ h}(t)||_{H_{x}^{s}}&\leq K||{ h}(0)||_{H_{x}^{s}},\label{eq:6.14a}\\
\!\!\!\!||{ h}(0)||_{H_{x}^{s}}-\e^{b}K||{ h}(0)||_{H_{x}^{s+1}}\leq&||{h}(t)||_{H_{x}^{s}}\leq 
||{ h}(0)||_{H_{x}^{s}}+\e^{b}K||{h}(0)||_{H_{x}^{s+1}}, \label{eq:6.14b}
\end{align}
\end{subequations}
for $b\in(0,1)$.
Clearly the (\ref{eq:6.14}) imply the linear stability of the solution, so we concluded the proof of Theorem \ref{teo1}.
The rest of the paper is devoted to the proof of Propositions \ref{teo2ham},\ref{STIMEmisura} and  Lemma \ref{inverseofl}.
\zerarcounters
\section{ Regularization of the linearized operator}\label{sec:3ham}
 

In this section and in Section \ref{sec:4ham} we apply a reducibility scheme in order to 
conjugate the linearized operator to a linear, constant coefficients differential operator.  
Here we consider the linearized operator $\calL$ in (\ref{lemmaccio4})
and we construct two operators $\VV_{1}$ and $\VV_{2}$ in order to semi-conjugate
$\calL$ to an operator $\calL_{c}$ of the second order with constant coefficients 
plus a remainder of order $O(\del_{x}^{-1})$.
We look for such transformations because, in order to apply a KAM-type algorithm to diagonalize $\calL$, we need first a precise control of the asymptotics of the eigenvalues,  and  also
some estimates of the transformations 
$\VV_{i}$ with $i=1,2$ and their inverse.

The principal result we prove is the following.

\begin{lemma}\label{lem:3.88}
Let $\ff\in C^{q}$ satisfy  the Hypotheses of Proposition \ref{teo4} and assume $q>\h_{1}+\gots_{0}$  
where 
\begin{equation}\label{eq:3.2.0ham}
\h_{1}:=d+2\gots_{0}+10.
\end{equation}
There exists $\epsilon_0>0$ such that, if $\e\g_{0}^{-1}\leq \epsilon_0$ (see (\ref{dio} for the definition of $\g_0$) then, for any $\g\le \g_0$ and for all 
 ${u} \in{\bf H}^{0}$ depending in a Lipschitz way on  $\oo\in \Lambda$,
 if
\begin{equation}\label{eq:3.2.1bham}
||{ u}||_{\gots_{0}+\h_{1},\g}\leq\e\g^{-1},
\end{equation}
 then, for $\gots_{0}\leq s\leq q-\h_{1}$, 
the following holds.

\noindent (i) 
 There exist invertible maps $\VV_1, \VV_2: {\bf H}^{0}\to{\bf H}^{0}$ such that
 $\calL_{7}:=\VV_{1}^{-1}\calL\VV_{2}=$
 \begin{equation}\label{eq:3.5.9ham}
\oo\cdot\del_{\f}\uno+
i\left(\begin{matrix}m_{2} \!\!\!& 0 \\ 0 & \!\!\!-m_{2}\end{matrix}\right)\del_{xx}+
i\left(\begin{matrix}m_{1} \!\!\!& 0 \\ 0 & \!\!\!-\bar{m}_{1}\end{matrix}\right)\del_{x}
+i \left(\begin{matrix}m_{0} \!\!\!& q_{0}(\f,x) \\ -\bar{q}_{0}(\f,x) & \!\!\!-m_{0}\end{matrix}\right)+
\RR
 \end{equation}
 with $m_{2},m_{0}\in \RRR$, $m_{1}\in i\RRR$ and $\RR$ is a pseudo-differential operator of order 
$O(\del_{x}^{-1})$ (see \eqref{pseudo}).
The  $\VV_{i}$ are symplectic maps and moreover for all ${ h}\in {\bf H}^{0}$
 \begin{equation}\label{eq:3.2.3ham}
 ||\VV_{i}{ h}||_{s,\g}+||\VV_{i}^{-1}{ h}||_{s,\g}\leq 
 C(s)(|| { h}||_{s+2,\g}+
 ||{ u}||_{s+\h_{1},\g}||{ h}||_{\gots_{0}+2,\g}), \quad i=1,2.
\end{equation}
\noindent
(ii) The coefficient $m_{i}:=m_{i}({ u})$ for $i=0,1,2$ of $\calL_{7}$ satisfies
\begin{equation}\label{eq:3.2.44}
\begin{aligned}
|m_{2}({ u})-1|_\g,|m_{0}({ u})-\mathtt{m}|_\g&\leq \e C,\;\;
|d_{{ u} }m_{i}({ u})[{ h}]|\leq \e C||{ h}||_{\h_{1}}, 
\;\; i=0,2, \\
|m_{1}({ u})|&\leq \e C,
|d_{{ u} }m_{1}({ u})[{ h}]|\leq \e C||{ h}||_{\h_{1}}, 
\end{aligned}
\end{equation}
and moreover the constant $m_{1}:=m_{1}(\oo,u(\oo))$ satisfies 
\begin{subequations}\label{mammamia}
\begin{align}
&\e c \leq |m_{1}({ u})|, \label{mammamiaa}\\
& \sup_{\oo_{1}\neq\oo_{2}}\frac{|m_{1}(\oo_{1},u(\oo))-m_{1}(\oo_{2},u(\oo))|}{|\la_{1}-\la_{2}|} \leq \e^{2} C\g^{-1}
\label{mammamiab}
\end{align}
\end{subequations}
for some $C>0$.

\noindent
(iii) The operator $\RR:=\RR(u)$ is such that
\begin{eqnarray}\label{eq:3.2.7ham}
\|\RR({ u}){h}\|_{s,\g}&\leq& \e C(s) (\|{ h}\|_{s,\g}+\|{ u}\|_{s+\h_{1},\g}\|{ h}\|_{\gots_{0}}), \label{eq:B15aham}\\
\|d_{{ u}}\RR({ u})[{ h}]{ g}\|_{s}&\leq& \e C(s)\big(\|{ g}\|_{s+1}\|{ h}\|_{\gots_{0}+\h_{1}}+
\|{ g}\|_{2}\|{ h}\|_{s+\h_{1}}\nonumber\\
&+&\|{ u}\|_{s+\h_{1}}\|{ g}\|_{2}\|{ h}\|_{\gots_{0}}\big),\label{eq:B15bham}
\end{eqnarray}
and moreover 
\begin{subequations}\label{eq:3.2.7bisham}
\begin{align}
\|q_{0}\|_{s,\g}&\leq \e C(s)(1+\|{ u}\|_{s+\h_{1},\g}),\label{eq:3.2.6aham}\\
\|d_{{ u} }q_{0}({u})[{ h}]\|_{s}&\leq \e C(s)(\|{ h}\|_{s+\h_{1}}+\|{ u}\|_{s+\h_{1}}+
\|{h}\|_{\gots_{0}+\h_{1}}),\label{eq:3.2.6bham}
\end{align}
\end{subequations}
Finally 
$\calL_{7}$ is Hamiltonian.
\end{lemma}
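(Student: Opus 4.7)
The plan is to conjugate $\calL$ to the desired form $\calL_7$ by a finite sequence of symplectic transformations, each designed to normalize one coefficient at a time, following the ``regularization procedure'' scheme of Baldi--Berti--Montalto adapted to the Hamiltonian matrix-valued setting. The transformations $\VV_1, \VV_2$ in the statement will be the compositions of all these intermediate changes of variables. I would carry out the steps in the standard order of decreasing principal symbol order.

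First I would normalize the coefficient of $\partial_{xx}$. Start by conjugating $\calL$ by the composition operator induced by a $\varphi$-dependent torus diffeomorphism $x\mapsto x+\beta(\varphi,x)$, with a multiplicative factor chosen so that the resulting map is symplectic with respect to $\Omega$. Solving $(1+a_2)(1+\beta_x)^2 = m_2(\varphi)$ in $x$ (possible since $\e\g^{-1}$ is small, so $1+a_2$ is bounded away from $0$) makes the $\partial_{xx}$ coefficient depend only on $\varphi$. Next, a quasi-periodic reparametrization of time $\f \mapsto \f + \omega\alpha(\f)$, solving a scalar small-divisor equation against $\omega\cdot\partial_\f m_2(\f)$, turns $m_2(\varphi)$ into a constant $m_2$ close to $1$; the resulting time-rescaling is symplectic up to trivial factors. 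Now the operator is $\omega\cdot\partial_\f + im_2 E\partial_{xx} + i\tilde A_1(\f,x)\partial_x + i\tilde A_0(\f,x) + \text{l.o.t.}$

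Second, I would normalize the $\partial_x$ term. Apply a symplectic multiplier $u \mapsto \rho(\f,x) u$ (with $\rho$ chosen in the appropriate matrix-valued form dictated by the Hamiltonian constraints \eqref{eq:1111}) to replace the diagonal entries of $\tilde A_1$ by a constant $m_1 \in i\RRR$. The Hamiltonian structure forces the off-diagonal entries $b_1 = \partial_x b_2$ to be exact derivatives, so they can be absorbed into a conjugation that also affects the order-zero term in a controlled way. The value of $m_1$ turns out to be, at leading order in $\e$, proportional to $\int (\del_{\bar z_1}\ff)(\f,x,0,0,0)\,dx\,d\f = \gote$, which is exactly where Hypothesis \ref{hyp3ham} gives the lower bound $|m_1|\geq\e c$; its Lipschitz bound $\e^2\g^{-1}$ in $\oo$ comes from the fact that the $\oo$-dependence enters only through the solution $u(\oo)$, which is $\e\g^{-1}$-small and $\e$-perturbative.

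Third, I would use a block-diagonal ``descent lemma'' of Egorov type: conjugate by time-$1$ flows of order $-1$ pseudo-differential operators (in matrix form, preserving the Hamiltonian structure, i.e.\ the generators are of the form $J\mathcal{S}$ with $\mathcal{S}$ self-adjoint) to cancel the diagonal order-zero part up to a constant $m_0$, and to push the remaining non-normalized terms into a pseudo-differential remainder $\RR$ of order $O(\partial_x^{-1})$. The off-diagonal piece $q_0(\f,x)$ cannot be removed at order zero without breaking the Hamiltonian/reality constraint (this is precisely the obstruction due to the multiplicity of eigenvalues: the pair $(j,-j)$ forms a $2\times 2$ block that the order-zero conjugation cannot decouple), so it remains as the stated off-diagonal in $\calL_7$. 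Tame estimates \eqref{eq:3.2.3ham}, \eqref{eq:3.2.44}, \eqref{eq:3.2.7ham}, \eqref{eq:3.2.7bisham} are obtained by composing the tame bounds for each elementary transformation, using the smoothing estimates \eqref{smoothing} and the algebra property of $H^s$ for $s\geq \gots_0$, tracking the total loss of derivatives $\h_1 = d+2\gots_0+10$.

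The main obstacle is preserving the Hamiltonian structure through each step while still obtaining clean asymptotic expansions of the eigenvalues. In particular, after the multiplier step, one must verify carefully that the resulting matrix of coefficients still satisfies the algebraic identities of the form \eqref{eq:1111}, so that the final operator is of the form \eqref{eq:3.5.9ham} with $m_1\in i\RRR$ and $m_0,m_2\in\RRR$, and the order-$-1$ remainder $\RR$ is generated by Hamiltonian vector fields. A secondary difficulty is extracting the sharp lower bound $|m_1|\geq \e c$: because $m_1$ is defined implicitly through the successive conjugations, one must track the leading order contribution back to $\int \partial_{\bar z_1}\ff(\f,x,0,0,0)\,dx\,d\f$, and apply Hypothesis \ref{hyp3ham} to conclude.
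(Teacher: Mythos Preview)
Your overall scheme is right in spirit --- a sequence of symplectic conjugations normalizing coefficients order by order --- and steps two and three of your plan match the paper's procedure fairly closely. But there is a genuine gap at the very first step.

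The operator $\calL$ in \eqref{lemmaccio4} has second-order coefficient matrix $E+A_2$ with \emph{off-diagonal} entries $b_2,-\bar b_2$; it is not of the form $(1+a_2)E$. A scalar torus diffeomorphism $x\mapsto x+\beta(\f,x)$ (together with the Jacobian factor that makes it symplectic) conjugates $\partial_{xx}$ into $(1+\beta_x)^2\partial_{xx}$ plus lower order, so it multiplies the \emph{entire} matrix $E+A_2$ by the scalar $(1+\beta_x)^2$. Your equation $(1+a_2)(1+\beta_x)^2=m_2(\f)$ normalizes only the diagonal part and leaves an off-diagonal second-order term $b_2(1+\beta_x)^2\partial_{xx}$ untouched; the subsequent time reparametrization cannot kill it either. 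You seem aware that $b_2$ is present, since you invoke $b_1=\partial_x b_2$ later, but you never say how $b_2$ itself is removed at order two. In the paper this is done by a preliminary \emph{pointwise} $2\times2$ symplectic matrix transformation $\TT_1(\f,x)$ (Step~1) that diagonalizes $E+A_2$ to $\mathrm{diag}(1+a_2^{(1)},-(1+a_2^{(1)}))$ with $1+a_2^{(1)}=\sqrt{(1+a_2)^2-|b_2|^2}$; only after this is the scalar change of variable applied. Without that step your procedure does not reach a diagonal $im_2E\partial_{xx}$.

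A second, smaller point: your explanation of why the off-diagonal order-zero piece $q_0(\f,x)$ survives is off target. The coupling $q_0$ is the $\s\leftrightarrow-\s$ (i.e.\ $u\leftrightarrow\bar u$) component, not a $(j,-j)$ block; the $(j,-j)$ pairing only becomes relevant in the KAM reduction of Section~\ref{sec:4ham}. In the regularization one simply stops once all variable-coefficient pieces are bounded: the diagonal order-zero part is made constant (Steps~6--7), while the off-diagonal multiplication by $b_0^{(7)}=q_0$ is already a bounded operator, so there is no need (and the paper makes no attempt) to remove it here. No $(j,-j)$ obstruction is involved at this stage.
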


\begin{rmk}\label{trenonapoli}
The estimate in \eqref{mammamia} is different from that in  \eqref{eq:3.2.44}. As we will see, it is very important 
to estimate the Lipschitz norm of the constant $m_{1}$ in order to get the measure estimates in Section 6.  
The constant $m_{1}$ depends 
in $\oo$ in two way: the first is trough the dependence on $\oo$ of the function $u$; secondly it presents also an explicit dependence on the external parameters. Clearly by \eqref{eq:3.2.44} we can get a bound only on the variation
$
|m_{1}(\oo,u(\oo_{1}))-m_{1}(\oo,u(\oo_{2}))|.
$
To estimate the $|\cdot|^{lip}$ seminorm we need also the \eqref{mammamia}.
\end{rmk}

 The proof of Lemma \ref{lem:3.88} is based on the following strategy.
 At each step we construct a transformation $\TT_{i}$ that
conjugates $\calL_{i}$ to $\calL_{i+1}$. We fix $\calL_{0}=\calL$. Moreover
the $\TT_{i}$ are symplectic, hence $\calL_{i}$ is Hamiltonian and has the form
\begin{equation}\label{elleham}
\begin{aligned}
\calL_{i}:=
\oo\cdot\del_{\f}\uno+
i(E+A_{2}^{(i)})\del_{xx}+iA_{1}^{(i)}\del_{x}+i(mE+A_{0}^{(i)})+\RR_{i},
\end{aligned}
\end{equation}
with $E$ defined in \eqref{lemmaccio44}, 
\begin{equation}\label{elle2ham}
A_{j}^{(i)}=A_{j}^{(i)}(\f,x):=
\left(\begin{matrix}a^{(i)}_{j} \!\! &\!\! b^{(i)}_{j} \\ \!\! -\bar{b}^{(i)}_{j} &\!\! -\bar{a}^{(i)}_{j} \end{matrix}\right),\;\;\;\;
j=0,1,2
\end{equation}
and $\RR_{i}$ is a pseudo-differential operator of order $\del_{x}^{-1}$.
Essentially we need to prove bounds like
\begin{eqnarray}\label{eq:B15aaaa}
\|(\TT_{i}^{\pm1}({u})-\uno){ h}\|_{s,\g}
&\!\!\!\leq\!\!\!& \e C(s) (\|{ h}\|_{s,\g}+\|{ u}\|_{s+\ka_{i},\g}\|{ h}\|_{\gots_{0}}), \label{eq:B15cham}\\
\|d_{{u}}(\TT^{\pm1}_{i})({u})[{ h}]{ g}\|_{s}
&\!\!\!\leq\!\!\!& \e C(s)\big(\|{ g}\|_{s+1}\|{ h}\|_{\gots_{0}+\ka_{i}}+
\|{g}\|_{2}\|{ h}\|_{s+\ka_{i}}\nonumber\\
&\!\!\!+\!\!\!&\|{u}\|_{s+\ka_{i}}\|{ g}\|_{2}\|{h}\|_{\gots_{0}}
\big),\label{eq:B15dham}
\end{eqnarray}
for suitable $\ka_{i}$ and on the 
coefficients in (\ref{elleham}) we need 
\begin{subequations}\label{eq:3.10ham}
\begin{align}
\|a_{j}^{(i)}({u})\|_{s,\g}, \|b_{j}^{(i)}({ u})\|_{s,\g}&\leq \e C(s)(1+\|{ u}\|_{s+\ka_{i},\g}),\label{eq:3.10aham}\\
\!\!\!\!\!\|d_{u}a_{j}^{(i)}({ u})[h]\|_{s}, \|d_{u}b_{j}^{(i)}({ u})[h]\|_{s}&\leq \!\e C(s)(\|{ h}\|_{s+\ka_{i}}\!+\!\|{u}\|_{s+\ka_{i}}\!+\!
\|{ h}\|_{\gots_{0}+\ka_{i}}),\label{eq:3.10bham}
\end{align}
\end{subequations}
for $j=0,1,2$ and $i=1,\ldots,7$ and on $\RR_{i}$ bounds like \eqref{eq:3.2.7ham} with $\ka_{i}$ instead of $\h_{1}$.

The bounds are based on repeated use of  classical tame bounds and interpolation estimates of the Sobolev norms. The proof of such properties of the norm can be found in \cite{BBM} in Appendix A.
To conclude one combine the bounds of each transformation to obtain estimates on the compositions. It turn out that
the constant $\h_{1}$ contains all the loos of regularity of each step. We present only the construction
of the transformation that, in the Hamiltonian case, are more involved.
Moreover the difference between Lemma \ref{lem:3.88} and 
the result contained in Section 3 of \cite{FP}
is also in equation \eqref{mammamia}. Indeed, in this case we need to prove that non degeneracy hypothesis \ref{hyp3ham}
persists during the steps in order to obtain the same lower bound (possibly with a worse constant) for the constant $m_{1}$ in \eqref{eq:3.2.44}. This fact will be used in Section 6 in order to perform measure estimates. 

\paragraph{Step 1. Diagonalization of the second order coefficient}

In this section we want to diagonalize the second order term $(E+A_{2})$ in \eqref{lemmaccio4}.
By a direct calculation one can see that the matrix $(E+A_{2})$ has eigenvalues
$\la_{1,2}:=\sqrt{(1+a_{2})^{2}-|b_{2}|^{2}}$. if we set $a_{2}^{(1)}:=\la_{1}-1$ we have
that $a_{2}^{(1)}\in \RRR$ since $a_{2}\in \RRR$ for any $(\f,x)\in\TTT^{d+1}$ and $a_{i}, b_{i}$ are small.
We define the transformation $\TT^{-1}_{1} : {\bf H}^{0}\to{\bf H}^{0}$ as the matrix 
$\TT_{1}^{-1}=\big((\TT_{1}^{-1})_{\s}^{\s'}\big)_{\s,\s'=\pm1}$ with
\begin{equation}\label{11}
\TT^{-1}_{1}:=
 \left(\begin{matrix} (2+a_{2}+a_{2}^{(1)})(i\la_{0})^{-1} & b_{2}(i\la_{0})^{-1} \\
-\bar{b}_{2}(i\la_{0})^{-1} &  -(2+a_{2}+a_{2}^{(1)})(i\la_{0})^{-1}\end{matrix}
\right),
\end{equation}
where $\la_{0}:=i\sqrt{2\la_{1}(1+a_{2}+\la_{1})}$. Note that ${\rm det}\TT^{-1}_{1}=1$.
One has that 
\begin{equation}\label{fiorellinoham}
 \TT^{-1}_{1}(E +A_{2})\TT_{1}=\left(\begin{matrix}1+a^{(1)}_{2}(\f,x) & 0\\ 0 & -1-a_{2}^{(1)}(\f,x)\end{matrix}\right).
\end{equation}
Moreover, we have that the transformation is symplectic. We can think that $\TT_{1}$ act on the function of $H^{s}(\TTT^{d+1};\CCC)$ is the following way.
Set $U=(u,\bar{u}),V=(v,\bar{v})\in {\rm H}^{s}$ and let $(MZ)_{\s}$ for $\s\in\{+1,-1\}$ be the first or
the second (respectively) component.
Given a function $u\in H^{s}(\TTT^{d+1};\CCC) $ we define, with abuse of notation, $\TT^{-1}_{1}u:=(\TT^{-1}_{1}U)_{+1}:=
((\TT^{-1}_{1})_{1}^{1})u+((\TT^{-1}_{1})_{1}^{-1})\bar{u}$.
With this notation one has that
\begin{equation*}
\begin{aligned}
\Omega\left(\TT^{-1}_{1}{ u},\TT^{-1}_{1}{ v} \right):={\rm Re}\int_{\TTT}
&i\left((\TT^{-1}_{1})_{1}^{1}(\TT^{-1}_{1})_{-1}^{1}uv+(\TT^{-1}_{1})_{1}^{-1}(\TT^{-1}_{1})_{-1}^{-1}\bar{u}\bar{v}\right)\\
&+i\left((\TT^{-1}_{1})_{1}^{1}(\TT^{-1}_{1})_{-1}^{-1}u\bar{v}+(\TT^{-1}_{1})_{1}^{-1}(\TT^{-1}_{1})_{-1}^{1}\bar{u}v\right){\rm d}x\\
&\!\!\!\!\!\!\!\!\!\!\!\!\!\!\!\!\!\!\!\!\!\!\! ={\rm Re}\int_{\TTT} i{\rm Re}((\TT^{-1}_{1})_{1}^{1}(\TT^{-1}_{1})_{-1}^{1}uv)\\
&+i\left((\TT^{-1}_{1})_{1}^{-1}(\TT^{-1}_{1})_{-1}^{1}(u\bar{v}+\bar{u}v)\right)\\
&+i\left((\TT^{-1}_{1})_{1}^{1}(\TT^{-1}_{1})_{-1}^{-1}-(\TT^{-1}_{1})_{1}^{-1}(\TT^{-1}_{1})_{-1}^{1}\right)u\bar{v}{\rm d} x\\
&\!\!\!\!\!\!\!\!\!\!\!\!\!\!\!\!\!\!\!\!\!\!\! ={\rm Re}\int_{\TTT} iu\bar{v}{\rm d}x=:\Omega(u,v).
\end{aligned}
\end{equation*}
%
which implies that $\TT_{1}^{-1}$ is symplectic.

Now we can conjugate the operator $\calL$ to an operator $\calL_{1}$ with
a diagonal coefficient of the second order spatial differential operator. Indeed, one has
\begin{equation}\label{eq:3.122}
\begin{aligned}
\calL_{1}&:=\TT_{1}^{-1}\calL \TT_{1}=\oo\cdot\del_{\f}\uno+i \TT_{1}^{-1}(E+A_{2})\TT_{1}\del_{xx}\\
&+
i(2\TT_{1}^{-1}(E+A_{2})\del_{x}\TT_{1}+\TT_{1}^{-1}A_{1}\TT_{1})\del_{x}\\
&+i\left[-i\TT_{1}^{-1}(\oo\cdot\del_{\f}\TT_{1})+\TT_{1}^{-1}(E+A_{2})\del_{xx}\TT_{1}\right.\\
&\left.+
\TT_{1}^{-1}A_{1}\del_{x}\TT_{1}+\TT_{1}^{-1}(mE+A_{0})\TT_{1}
\right];
\end{aligned}
\end{equation}
the (\ref{eq:3.122}) has the form \eqref{elleham}. This identify uniquely the coefficients
$a_{j}^{(1)},b_{j}^{(1)}$ for $j=0,1,2$ and $\RR_{1}$. In particular we have that
$b_{2}^{(1)}\equiv0$ and $\RR_{1}\equiv0$. 
Moreover, since the transformation is symplectic, then the new operator $\calL_{1}$ is 
Hamiltonian, with an Hamiltonian function
\begin{equation}\label{ham1}
\begin{aligned}
\!\!\!\!\!\!H_{1}(u,\bar{u})&=\int_{\TTT}(1+a_{2}^{(1)})|u_{x}|^{2}-\frac{i}{2}{\rm Im}(a_{1}^{(1)})(u_{x}\bar{u}-u\bar{u}_{x})\!-\!
{\rm Re}(a_{0}^{(1)})|u|^{2}{\rm d}x\\
&+\int_{\TTT}\!\!-m|u|^{2}\!-\!\frac{1}{2}(b_{0}^{(1)}\bar{u}^{2}+\bar{b}_{0}^{(1)}u^{2}){\rm d}x
:=\int_{\TTT}f_{1}(\f,x,u,\bar{u},u_{x},\bar{u}_{x}){\rm d}x,
\end{aligned}
\end{equation}
hence, since $f_{1}$ depends only linearly on $\bar{u}_{x}$, thanks to the Hamiltonian structure, 
one has
\begin{equation}
b_{1}^{(1)}(\f,x)=\frac{\rm d}{{\rm d}x}\left(\del_{\bar{z}_{1}\bar{z_{1}}}f_{1}\right)\equiv0.
\end{equation}
This means that we have diagonalized also the matrix of the first order spatial differential operator.

\begin{rmk}\label{rmk23}
It is important to note that $a_{1}^{(1)}(\f,x)$ as the form 
$$
a_{1}^{(1)}(\f,x)=\frac{\rm d}{{\rm d}x}a_{2}^{(1)}(\f,x)+\del_{z_{0}\bar{z}_{1}}f_{1}-\del_{z_{1}\bar{z}_{0}}f_{1}
$$
so that the real part of $a_{1}^{(1)}$ depends only on the spatial derivative of $a_{2}^{(1)}$.
\end{rmk}

\paragraph{Step 2. Change of the space variable}

We consider a $\f-$dependent family of diffeomorphisms of the $1-$dimensional torus $\TTT$
of the form
\begin{equation}\label{20ham}
y=x+\x(\f,x),
\end{equation}
where $\x$ is as small real-valued funtion, $2\pi$ periodic in all its arguments. We define the change of variables
on the space of functions as 
\begin{equation}\label{21ham}
\begin{aligned}
(\TT_{2} h)(\f,x)&:=\sqrt{1+\x_{x}(\f,x)}h(\f,x+\x(\f,x)), \; {\rm with\;\; inverse} \;\;\;\;\\
(\TT_{2}^{-1}v)(\f,y)&:=\sqrt{1+\widehat{\x}_{x}(\f,y)}v(\f,y+\widehat{\x}(\f,y))
\end{aligned}
\end{equation}
where 
\begin{equation}\label{20bisham}
x=y+\widehat{\x}(\f,y),
\end{equation}
%
is the inverse  diffeomorphism of $(\ref{20ham})$.
With a slight abuse of notation we  extend the operator to ${\bf H}^s$:
\begin{equation}\label{eq:3.14ham}
{{\TT_{2}}} : {\bf H}^{s}\to {\bf H}^{s}, \quad {\TT_{2}}\left(\begin{matrix} h \\ \bar{h}\end{matrix}\right)=\left(
\begin{matrix} (\TT_{2} h)(\f,x) \\ (\TT_{2} \bar{h})(\f,x)\end{matrix}\right).
\end{equation}

Now we have to calculate the conjugate ${\TT_{2}}^{-1}\calL_{1}{\TT_{2}}$ 
of the operator $\calL_{1}$ in 
$(\ref{eq:3.122})$.

\noindent
The conjugate $\TT_{2}^{-1} a\TT_{2}$ of any multiplication operator
$a : h(\f,x) \to a(\f,x)h(\f,x)$ is the multiplication operator 
\begin{equation}\label{con}
v(\f,y)\mapsto(\TT_{2}^{-1}a\sqrt{1+\x_{x}})(\f,y)v(\f,y)=a(\f,y+\widehat{\x}(\f,y))v(\f,y).
\end{equation}
In \eqref{con} we have used the relation
\begin{equation}\label{con2}
0\equiv \x_{x}(\f,x)+\widehat{\x}_{y}(\f,y)+\x_{x}(\f,x)\widehat{\x}_{y}(\f,y),
\end{equation}
that follow by \eqref{20ham} and \eqref{20bisham}.
The conjugate of the differential operators will be
\begin{equation}\label{22ham}
\begin{aligned}
\TT_{2}^{-1}\oo\cdot\del_{\f}\TT_{2}&=\oo\cdot\del_{\f}+[\TT_{2}^{-1}(\oo\cdot\del_{\f}\x)]\del_{y}
-\TT_{2}^{-1}\left(\frac{\oo\cdot\del_{\f}\x_{x}}{2(1+\x_{x})}\right)
,\\
\TT_{2}^{-1}\del_{x}\TT_{2}&=[\TT_{2}^{-1}(1+\x_{x})]\del_{y}-\TT_{2}^{-1}\left(
\frac{\x_{xx}}{2(1+\x_{x})}\right),\\
\TT_{2}^{-1}\del_{xx}\TT_{2}&=[\TT_{2}^{-1}(1+\x_{x})^{2}]\del_{yy}-\TT_{2}^{-1}\left(\frac{2\x_{xxx}+\x_{xx}^{2}}{4(1+\x_{x})^{2}} \right),
\end{aligned}
\end{equation}
where all the coefficients
are periodic functions of $(\f,x)$.
%
%
%
\noindent
Thus, by conjugation, we have that $\calL_{2}=\TT_{2}^{-1}\calL_{1}\TT_{2}$ has the form (\ref{elleham}) 
with
\begin{equation}\label{24ham}
\begin{aligned}
&\!\!\!\!\!\!1+a_{2}^{(2)}(\f,y)=\TT_{2}^{-1}[(1+a_{2}^{(1)})(1+\x_{x})^{2}],
 \\ 
&\!\!\!\!\!\! a_{1}^{(2)}(\f,y)=
\TT_{2}^{-1}(a_{1}^{(1)}(1+\x_{x}))-i \TT_{2}^{-1}(\oo\cdot\del_{\f}\x),
\\
&\!\!\!\!\!\! a_{0}^{(2)}(\f,y)=i\TT_{2}^{-1}\!\!\left(\frac{\oo\cdot\del_{\f}\x_{x}}{2(1+\x_{x})}\right)
\!-\!\TT_{2}^{-1}\!\left(\frac{\x_{xx}}{2(1+\x_{x})}\right)
\!-\!\TT_{2}^{-1}\!\left(\frac{2\x_{xxx}+\x_{xx}^{2}}{4(1+\x_{x})^{2}} \right)\!,
\\
&\!\!\!\!\!\! b^{(2)}_{0}(\f,y)=\TT_{2}^{-1}(b^{(1)}_{0}),
\end{aligned}
\end{equation}
and $b_{2}^{(2)}=b_{1}^{(2)}=0$.
We are looking for $\x(\f,x)$ such that the coefficient $a_{2}^{(2)}(\f,y)$ does not depend on
$y$, namely
\begin{equation}\label{255}
1+a_{2}^{(2)}(\f,y)=\TT_{2}^{-1}[(1+a_{2}^{(1)})(1+\x_{x})^{2}]=1+a_{2}^{(2)}(\f),
\end{equation}
for some function $a_{2}^{(2)}(\f)$. Since $\TT_{2}$ operates only on the space variables, the $(\ref{255})$
is equivalent to 
\begin{equation}\label{266}
(1+a_{2}^{(1)}(\f,x))(1+\x_{x}(\f,x))^{2}=1+a_{2}^{(2)}(\f).
\end{equation}
Hence we have to set
\begin{equation}\label{277}
\x_{x}(\f,x)=\rho_{0}, \qquad \rho_{0}(\f,x):=(1+a_{2}^{(2)}(\f))^{\frac{1}{2}}(\f)(1+a_{2}^{(1)}(\f,x))^{-\frac{1}{2}}-1,
\end{equation}
that has solution $\g$ periodic in $x$ if and only if $\int_{\TTT}\rho_{0} dy=0$. This condition
implies
\begin{equation}\label{288}
a_{2}^{(2)}(\f)=\left(\frac{1}{2\pi}\int_{\TTT}(1+a_{2}^{(1)}(\f,x))^{-\frac{1}{2}}\right)^{-2}-1.
\end{equation}
Then we have the solution (with zero average) of $(\ref{277})$
\begin{equation}\label{299}
\x(\f,x):=(\del_{x}^{-1}\rho_{0})(\f,x),
\end{equation}
where $\del_{x}^{-1}$ is defined by linearity as
\begin{equation}\label{300}
\del_{x}^{-1}e^{ikx}:=\frac{e^{ikx}}{ik}, \quad \forall \; k\in\ZZZ\backslash\{0\}, \quad \del_{x}^{-1}=0.
\end{equation}
In other word $\del_{x}^{-1}h$ is the primitive of $h$ with zero average in $x$.
Moreover, the  map  $\TT_{2}$ is canonical with respect to the $NLS-$symplectic form, indeed, for any $u,v\in H^{s}(\TTT^{d+1};\CCC)$,
  \begin{equation*}
  \begin{aligned}
  \Omega(\TT_{2} u, \TT_{2} v)=&
{\rm Re}\int_{\TTT}(i\sqrt{1+\x_{x}}u(\f,x
+\f(\f,x)))\sqrt{1+\x_{x}}\bar{v}(\f,x
+\f(\f,x)){\rm d}x\\
&={\rm Re}\int_{\TTT}(1+\x_{x}(\f,x))(iu(\f,x+\x(\f,x)))\bar{v}(\f,x+\x(\f,x)){\rm d}x\\
&={\rm Re}\int_{\TTT}(iu(\f,y))\bar{v}(\f,y){\rm d}y=:\Omega(u,v).
\end{aligned}
  \end{equation*}
  Thus, conjugating $\calL_{1}$ through the operator $\TT_{2}$ in (\ref{eq:3.14ham})
  we obtain the Hamiltonian operator 
 $\calL_{2}=  \TT_{2}^{-1}\calL_{1} \TT_{2}$
%
with Hamiltonian function given by
\begin{equation}\label{linham2}
\begin{aligned}
\!\!\!\!\!\!\!H_{2}(u,\bar{u})&\!=\!\int_{\TTT}(1+a_{2}^{(2)}(\f))|u_{x}|^{2}\!-\!\frac{i}{2}{\rm Im}(a_{1}^{(2)})(u_{x}\bar{u}\!-\!u\bar{u}_{x})
\!-\!
{\rm Re}(a_{0}^{(2)})|u|^{2}{\rm d}x\\
&\!+\!\int_{\TTT}\!-\!m|u|^{2}\!-\!\frac{1}{2}(b_{0}^{(2)}\bar{u}^{2}+\bar{b}^{(2)}_{0}u^{2}){\rm d}x
\!:=\!\int_{\TTT}f_{2}(\f,x,u,\bar{u},u_{x},\bar{u}_{x}){\rm d}x,
\end{aligned}
\end{equation}

\begin{rmk}\label{rmk24}
As in Remark \ref{rmk23}, the real part of coefficients $a_{1}^{(2)}$ depends on the spatial derivatives
of $a_{2}^{(2)}$, then in this case, again thanks the Hamiltonian structure of the problem,
one has that
$a_{1}^{(2)}(\f,y)=i{\rm Im}(a_{1}^{(2)})(\f,y)$, i.e. it is purely imaginary.
Moreover $b_{2}^{(2)}=b_{1}^{(2)}\equiv 0$ and $\RR_{2}\equiv0$.
\end{rmk}

\paragraph{Step 3: Time reparametrization}

In this section we want to make constant the coefficient of the highest order spatial derivative
operator $\del_{yy}$ of $\calL_{2}$, by a quasi-periodic reparametrization of time.
We consider a diffeomorphism of the torus $\TTT^{d}$ of the form
\begin{equation}\label{32ham}
\theta=\f+\oo\al(\f), \quad \f\in\TTT^{d}, \quad \al(\f)\in\RRR,
\end{equation}
where $\al$ is a small real valued function, $2\pi-$periodic in all its arguments. The induced
linear operator on the space of functions is
\begin{equation}\label{332}
(\TT_{3} h)(\f,y):=h(\f+\oo\al(\f),y),
\end{equation}
whose inverse is
\begin{equation}\label{333bis}
(\TT_{3}^{-1}v)(\theta,y)=v(\theta+\oo\tilde{\al}(\theta),y),
\end{equation}
where $\f=\theta+\oo\tilde{\al}(\theta)$ is the inverse diffeomorphism of
$\theta=\f+\oo\al(\f)$. We extend  the operator to ${\bf H}^{s}$:
\begin{equation}\label{eq:3.15ham}
 \TT_{3} : {\bf H}^{s}\to {\bf H}^{s}, \quad  \TT_{3}\left(\begin{matrix} h \\ \bar{h}\end{matrix}\right)=\left(
\begin{matrix} (\TT h)(\f,x) \\ (\TT_{3} \bar{h})(\f,x)\end{matrix}
\right).
\end{equation}
\noindent
 By conjugation, we have that the differential operator become
\begin{equation}\label{333}
\TT_{3}^{-1}\oo\cdot\del_{\f}\TT_{3}=\rho(\theta)\oo\cdot\del_{\theta},\;\;
\TT_{3}^{-1}\del_{y}\TT_{3}=\del_{y}, \;\;\rho(\theta):=\TT_{3}^{-1}(1+\oo\del_{\f}\al).
\end{equation}
Hence we have $\TT_{3}^{-1}\calL_{2}\TT_{3}=\rho \calL_{3}$ where $\calL_{3}$ has the form \eqref{elleham}
and 
\begin{equation}\label{eq:3.166}
\begin{aligned}
1+a_{i}^{(3)}(\theta)&:=\frac{(\TT_{3}^{-1}(1+a_{i}^{(2)}))(\theta)}{\rho(\theta)},
\quad a_{i}^{(3)}(\theta):=\frac{(\TT_{3}^{-1}a_{i}^{(2)})(\theta)}{\rho(\theta)}, \;\; i=0,1, \\
& b_{0}^{(3)}(\theta,y):=\frac{(\TT_{3}^{-1}b_{0}^{(2)})(\theta,y)}{\rho(\theta)}, 
\end{aligned}
\end{equation}
We look for solution $\al$ such that the coefficient $a_{2}^{(3)}$ is constant in time, namely
\begin{equation}\label{355}
(\TT_{3}^{-1}(1+a_{2}^{(2)}))(\theta)=m_{2}\rho(\theta)=m_{2}\TT_{3}^{-1}(1+\oo\cdot\del_{\f}\al)
\end{equation}
for some constant $m_{2}$, that is equivalent to require that
\begin{equation}\label{366}
1+a_{2}^{(2)}(\f)=m_{2}(1+\oo\cdot\del_{\f}\al(\f)),
\end{equation}
By setting 
\begin{equation}\label{377}
m_{2}=\frac{1}{(2\pi)^{d}}\int_{\TTT^{d}}(1+a_{2}^{2}(\f))d\f,
\end{equation}
we can find the (unique) solution of $(\ref{366})$ with zero average
\begin{equation}\label{388}
\al(\f):=\frac{1}{m_{2}}(\oo\cdot\del_{\f})^{-1}(1+a_{2}^{(2)}-m_{2})(\f),
\end{equation}
where $(\oo\cdot\del_{\f})^{-1}$ is defined by linearity
$$
(\oo\cdot\del_{\f})^{-1}e^{i\ell\cdot\f}:=\frac{e^{i\ell\cdot\f}}{i\oo\cdot\ell}, \; \ell\neq0, \quad
(\oo\cdot\del_{\f})^{-1}1=0.
$$
Moreover, the operator $\TT_{3}$ acts only on the time variables, then it is clearly symplectic, since
$$
\Omega(\TT_{3} u, \TT_{3} v)=\Omega(u,v).
$$
Then the operator $\calL_{3}$ is Hamiltonian with hamiltonian function $H_{3}$ 
\begin{equation}\label{linham3}
\begin{aligned}
H_{3}(u,\bar{u})&=\int_{\TTT}m_{2}|u_{x}|^{2}\!-\!\frac{i}{2}{\rm Im}(a_{1}^{(3)})(u_{x}\bar{u}-u\bar{u}_{x})\!-\!
{\rm Re}(a_{0}^{(3)})|u|^{2}{\rm d}x\\
&+\int_{\TTT}-\frac{1}{2}(b_{0}^{(3)}\bar{u}^{2}+\bar{b}^{(3)}_{0}u^{2}){\rm d}x
:=\int_{\TTT}f_{3}(\f,x,u,\bar{u},u_{x},\bar{u}_{x}){\rm d}x,
\end{aligned}
\end{equation}

 \begin{rmk}\label{rmk25}
 Also in this case, thanks to the hamiltonian structure of the operator, we have that
 the coefficient $a_{1}^{(3)}\in i\RRR$, $b_{2}^{(3)}=b_{1}^{(3)}\equiv 0$ and $\RR_{3}\equiv0$.
\end{rmk}

\paragraph{Step 4. Change of space variable (translation)}

The goal of this section, is to conjugate 
$\calL_{3}$ in (\ref{elleham}) with coefficients in (\ref{eq:3.166}) 
to an operator in which the coefficients of the first order spatial derivative
operator, has zero average in $y$.

Consider the change of the space variable
\begin{equation}\label{eq:3.4.2ham}
z=y+\be(\theta)
\end{equation}
which induces the operators on functions
\begin{equation}\label{eq:3.4.3ham}
\TT_{4} h(\theta,y):=h(\theta,y+\be(\theta)), \quad \TT_{4}^{-1}v(\theta,z-\be(\theta)).
\end{equation}
We extend the operator $ \TT_{4}$ to  ${\bf H}^{s}$ as
\begin{equation}\label{eq:3.4.4ham}
 \TT_{4}\left(\begin{matrix} h \\ \bar h\end{matrix}\right)=\left(\begin{matrix}(\TT_{4} h)(\theta,y) \\ (\TT_{4}\bar{h})(\theta,y) \end{matrix}\right).
\end{equation}
By conjugation,  the differential operators become
\begin{equation}\label{eq:3.4.5ham}
\TT_{4}^{-1}\oo\cdot\del_{\theta}\TT_{4}=\oo\cdot\del_{\theta}+(\oo\cdot\del_{\theta}\be(\theta))\del_{z}, \qquad
\TT_{4}^{-1}\del_{y}\TT_{4}=\del_{z}.
\end{equation}
%
Hence one has that $\calL_{4}:=\TT_{4}^{-1}\calL_{3}\TT_{4}$ has the form \eqref{elleham} 
where 
\begin{equation}\label{eq:3.4.7ham}
\begin{aligned}
a_{1}^{(4)}(\theta,z)&:=-i\oo\cdot\del_{\theta}\be(\theta)+(\TT_{4}^{-1}a_{1}^{(3)})(\theta,z), \\
a_{0}^{(4)}(\theta,z):=(&\TT_{4}^{-1}a_{0}^{(3)})(\theta,z), \quad
b_{0}^{(4)}(\theta,z):=(\TT_{4}^{-1}b_{0}^{(4)})(\theta,z).
\end{aligned}
\end{equation}
The aim is to find a function $\be(\theta) $ such that
\begin{equation}\label{eq:3.4.8ham}
\frac{1}{2\pi}\int_{\TTT}a_{1}^{(4)}(\theta,z){\rm d}z=m_{1}, \quad \forall \; \theta\in\TTT^{d},
\end{equation}
for some constant $m_{1}\in\CCC$, independent on $\theta$. By using the (\ref{eq:3.4.7ham}) we have that
the (\ref{eq:3.4.8ham}) become
\begin{equation}\label{eq:3.4.9ham}
-i\oo\cdot\del_{\theta}\be(\theta)=m_{1}-\int_{\TTT}a_{1}^{(3)}(\theta,y){\rm d}y=:V(\theta).
\end{equation}
This equation has a solution periodic in $\theta$ if and only if $V(\theta)$ 
has zero average in $\theta$. So that
we have to define
\begin{equation}\label{eq:3.4.10ham}
m_{1}:=\frac{1}{(2\pi)^{d+1}}\int_{\TTT^{d+1}}a_{1}^{(3)}(\theta,y){\rm d}\theta{\rm d}y.
\end{equation} 
Note also that $m_{1}\in i\RRR$ (see Remark \ref{rmk25}). Then the function $V$ 
is purely imaginary. Now we can set
\begin{equation}\label{eq:3.4.11ham}
\be(\theta):=i(\oo\cdot\del_{\theta})^{-1}V(\theta),
\end{equation}
to obtain a real diffeomorphism of the torus $y+\be(\theta)$. 
Morover one has, for any $u,v\in H^{s}(\TTT^{d+1};\CCC)$
\begin{equation}\label{eq:3.4.13ham}
\Omega(\TT_{4} u,\TT_{4} v)={\rm Re}\int_{\TTT}iu(\f,x+\be(\f))\bar{v}(\f,x+\be(\f))=\Omega(u,v),
\end{equation}
hence $\TT_{4}$ is symplectic. This implies that $\calL_{4}$ is Hamiltonian with
hamiltonian function of the form
\begin{equation}\label{linham4}
\begin{aligned}
\!\!\!\!\!\!H_{4}(u,\bar{u})&=\int_{\TTT}m_{2}|u_{x}|^{2}\!-\!\frac{i}{2}{\rm Im}(a_{1}^{(4)})(u_{x}\bar{u}-u\bar{u}_{x})\!-\!
{\rm Re}(a_{0}^{(4)})|u|^{2}-m|u|^{2}{\rm d}x\\
&+\int_{\TTT}-\frac{1}{2}(b_{0}^{(4)}\bar{u}^{2}+\bar{b}_{0}^{(4)}u^{2}){\rm d}x
:=\int_{\TTT}f_{4}(\f,x,u,\bar{u},u_{x},\bar{u}_{x}){\rm d}x,
\end{aligned}
\end{equation}
\begin{rmk}\label{rmk35ham}
 Again one has $b_{2}^{(4)}=b_{1}^{(4)}\equiv 0$ and $\RR_{4}\equiv0$.
\end{rmk}

For simplicity we rename the variables $z=x$ and $\theta=\f$.

\paragraph{Step 5. Descent Method: conjugation by multiplication operator}

In this section we want to eliminate the dependance on $\f$ and $x$ on the coefficient
$c_{9}$ of  the operator $\calL_{4}$.
To do this, we consider an operator
of the form
\begin{equation}\label{eq:3.5.1ham}
\TT_{5}:=\left(\begin{matrix}1+z(\f,x) & 0 \\ 0 & 1+\bar{z}(\f,x) \end{matrix}\right),
\end{equation}
where $z : \TTT^{d+1}\to \CCC$.  By a direct calculation we have that
\begin{equation}\label{eq:3.5.2ham}
\begin{aligned}
\calL_{4}\TT_{5}-\TT_{5}&\left[\oo\cdot\del_{\f}\uno+
i\left(\begin{matrix}m_{2} \!\!\! & 0 \\ 0 &\!\!\! -m_{2} \end{matrix}\right)\del_{xx}+i
\left(\begin{matrix}m_{1} \!\!\! & 0 \\ 0 &\!\!\! -\ol{m}_{1} \end{matrix}\right)\del_{x}
\right]=\\
&=i\left(\begin{matrix}r_{1}(\f,x) \!\!\! & 0 \\ 0 &\!\!\! -\ol{r}_{1}(\f,x) \end{matrix}\right)\del_{x}
+i\left(\begin{matrix}m+c(\f,x) \!\!\! & d(\f,x) \\ -\ol{d}(\f,x) &\!\!\!-m -
\ol{c}(\f,x) \end{matrix}\right)
\end{aligned}
\end{equation}
where
\begin{equation}\label{eq:3.5.3ham}
\begin{aligned}
r_{1}(\f,x)&:=2m z_{x}(\f,x)+(a_{1}^{(4)}(\f,x)-m_{1})(1+z(\f,x)),\\
c(\f,x)&:=-i(\oo\cdot\del_{\f}z)(\f,x)+a_{0}^{(4)}(\f,x)(1+z(\f,x)),\\
d(\f,x)&:=b_{0}^{(4)}(\f,x)(1+\bar{z}(\f,x)).
\end{aligned}
\end{equation}
We look for $z(\f,x)$ such that $r_{1}\equiv0$. If we look for solutions of the form
$1+z(\f,x)=\exp(s(\f,x))$ we have that $r_{1}=0$ become
\begin{equation}\label{eq:3.5.7ham}
2m_{2} s_{x}+a_{1}^{(4)}-m_{1}=0,
\end{equation}
that has solution
\begin{equation}\label{eq:3.5.8ham}
s(\f,x):=\frac{1}{2m}\del_{x}^{-1}(a_{1}^{(4)}-m_{1})(\f,x)
\end{equation}
where $\del_{x}^{-1}$ is defined in (\ref{300}). Moreover, since 
$a_{1}^{(4)}\in i\RRR$, one has that $s(\f,x)\in i\RRR$. Clearly the operator $\TT_{5}$ is invertible for
$\e$ small, then we obtain $\calL_{5}:=\TT_{5}^{-1}\calL_{4}\TT_{5}$ with
\begin{equation}\label{eq:3.5.9bisham}
\calL_{5}:=
\oo\cdot\del_{\f}\uno+i\left(\begin{matrix}m_{2}\!\!\! & 0 \\ 0 & \!\!\! -m_{2}\end{matrix}\right)\del_{xx}+
i\left(\begin{matrix}m_{1}\!\!\! & 0 \\ 0 & \!\!\! -\bar{m}_{1}\end{matrix}\right)\del_{x}+imE
+iA_{0}^{(5)}
\end{equation}
that has the form  \eqref{elleham} with
 $m_{2}$ and $m_{1}$ are defined respectively in (\ref{377}) and (\ref{eq:3.4.10ham}), while the coefficients of $A_{5}^{(i)}$ are
\begin{equation}\label{eq:3.5.10ham}
\begin{aligned}
a_{0}^{(5)}(\f,x)&:=(1+z(\f,x))^{-1}c(\f,x),\\
b_{0}^{(5)}(\f,x)&:=(1+z(\f,x))^{-1}d(\f,x).
\end{aligned}
\end{equation}
It remains to check that the transformation $\exp(s(\f,x))=1+z$ is symplectic. 
One has
\begin{equation}\label{eq:3.5.11ham}
\Omega(e^{s}u,e^{s}v)={\rm Re}\int_{\TTT}ie^{s(\f,x)}u(\f,x)e^{-s(\f,x)}\bar{v}(\f,x){\rm d}x=
\Omega(u,v),
\end{equation}
where we used that $\bar{s}=-s$, that follows by $s\in i\RRR$.
Hence the operator $\calL_{5}$ is Hamiltonian, with corresponding hamiltonian function
\begin{equation}\label{linham5}
\begin{aligned}
\!\!\!\!\!\!\!H_{5}(u,\bar{u})&=\int_{\TTT}m_{2}|u_{x}|^{2}-\frac{i}{2}{\rm Im}(m_{1})(u_{x}\bar{u}-u\bar{u}_{x})-
{\rm Re}(a_{0}^{(5)})|u|^{2}{\rm d}x\\
&\!+\!\int_{\TTT}\!-\!m|u|^{2}\!-\!\frac{1}{2}(b_{0}^{(5)}\bar{u}^{2}+\bar{b}_{0}^{(5)}u^{2}){\rm d}x
:=\int_{\TTT}f_{5}(\f,x,u,\bar{u},u_{x},\bar{u}_{x}){\rm d}x.
\end{aligned}
\end{equation}
Again using the Hamiltonian structure, see (\ref{eq:1111}), we can conclude that
\begin{equation}\label{eq:3.5.12ham}
{\rm Im}(a_{0}^{(5)})(\f,x)=\frac{\rm d}{{\rm d}x}{\rm Im}(m_{1})\equiv0,
\end{equation}
that implies $a_{0}^{(5)}\in \RRR$.

\begin{rmk}\label{rmk45}
 We have $b_{2}^{(5)}=b_{1}^{(5)}\equiv 0$ and $\RR_{5}\equiv0$.
\end{rmk}

\paragraph{Step 6. Descent Method: conjugation by pseudo-differential operator}

In this section we want to conjugate $\calL_{5}$ in (\ref{eq:3.5.9bisham}) to an operator of the 
form $\oo\cdot\del_{\f}+iM\del_{xx}+iM_{1}\del_{x}+\RR$ where
\begin{equation}\label{eq:3.6.1ham}
M=\left(\begin{matrix}m_{2} & 0 \\ 0 & -m_{2}\end{matrix}\right), \quad M_{1}=
\left(\begin{matrix}m_{1} & 0 \\ 0 & -\bar{m}_{1}\end{matrix}\right),
\end{equation}
and $\RR$ is a pseudo differential operator of order $0$.

We consider an operator of the form
\begin{equation}\label{eq:3.6.2ham}
\tilde{\SSSS}:=\left(\begin{matrix}1+w\Upsilon & 0 \\ 0 & 1+\bar{w}\Upsilon \end{matrix}\right),
\end{equation}
where $w: \TTT^{d+1}\to \RRR$ and $\Upsilon=(1-\del_{xx})\frac{1}{i}\del_{x}$ 
is defined by linearity as
$$
\Upsilon e^{ijx}=\frac{1}{1+j^{2}}je^{ijx}
$$
%
We have that the difference
\begin{equation*}\label{eq:3.6.3ham}
\begin{aligned}
\calL_{5}\tilde{\SSSS}-\tilde{\SSSS}&\left[\oo\cdot\del_{\f}\uno\!+\!i
\left(\begin{matrix}m_{2} \!\!\!& 0 \\ 0 & \!\!\!-m_{2}\end{matrix}\right)\del_{xx}\!+\!
i\left(\begin{matrix}m_{1} \!\!\!& 0 \\ 0 & \!\!\!-\bar{m}_{1}\end{matrix}\right)\del_{x}
\!+\!i \left(\begin{matrix}m\!+\!\hat{a}_{0}^{(5)} \!\!\!& b_{0}^{(5)} \\ -\bar{b}_{0}^{(5)} & \!\!\!-\!m\!-\!\hat{a}_{0}^{(5)}\end{matrix}\right)\right]=\\
&=i\left(\begin{matrix}r_{0} & 0 \\ 0 & -\bar{r}_{0} \end{matrix}\right)+
\RR 
\end{aligned}
\end{equation*}
where $b_{0}^{(5)}$ is defined in (\ref{eq:3.5.10ham}) and
\begin{equation}\label{eq:3.6.4ham}
\begin{aligned}
r_{0}(\f,x)&:=2m_{2}w_{x}\Lambda\del_{x}+(a_{0}^{(5)}(\f,x)-\hat{a}_{0}^{(5)}(\f)),
\quad \RR 
=i\left(\begin{matrix} \tilde{p}_{0} & \tilde{q}_{0} \\ -\bar{\tilde{q}}_{0} & -\bar{\tilde{p}}_{0} 
\end{matrix}\right)
\\
\tilde{p}_{0}(\f,x)&:=-i(\oo\cdot\del_{\f}w) \Upsilon +m_{2}w_{xx}\Upsilon +m_{1}w_{x}\Upsilon +
(a_{0}^{(5)}-\hat{a}_{0}^{(5)})w\Upsilon,\\
\tilde{q}_{0}(\f,x)&:=b_{0}^{(5)} \bar{w}\Upsilon-w\Upsilon b_{0}^{(5)}.
\end{aligned}
\end{equation}
We are looking for $w$ such that $r_{0}\equiv0$ or at least $r_{0}$ is ``small'' in some sense. The operator $\RR$ is a pseudo-differential operator of order $-1$.
We can also note that
$$
\Upsilon\del_{x}u=i u-i(1-\del_{xx})^{-1}u
$$
Since the second term is of order $-2$, we want to solve the equation
$$
2imw_{x} +(a_{0}^{(5)}-\hat{a}_{0}^{(5)})u\equiv0.
$$
This equation has solution if and only if
we define 
\begin{equation}\label{eq:3.6.5ham}
\hat{a}_{0}^{(5)}(\f):=\frac{1}{2\pi}\int_{\TTT}a_{0}^{(5)}(\f,x){\rm d}x,
\end{equation}
and it is real thanks to (\ref{eq:3.5.12ham}). Now, we define
\begin{equation}\label{eq:3.6.6ham}
w(\f,x):=i\frac{1}{2m}  \del_{x}^{-1}(a_{0}^{(5)}-\hat{a}_{0}^{(5)})(\f,x),
\end{equation}
that is a purely imaginary function. 
In this way we can conjugate the operator $\calL_{5}$ to an operator of the form
 $\oo\cdot\del_{\f}+iM\del_{xx}+iM_{1}\del_{x}+iM_{0}+O(\del_{x}^{-1})$ with the diagonal part of $M_{0}$ constant in the  space variable.
Unfortunately, this transformation in not symplectic. 
We reason as follow. Let $w=i(w+\bar{w}):=i a$ and consider the Hamiltonian function
$$
H(u,\bar{u})=\frac{1}{2}\int_{\TTT}-(a\Upsilon+\Upsilon a)u\cdot \bar{u}d x.
$$
Since the function $a$ is real, and the operator $\Upsilon : L^{2}(\TTT;\CCC)\to L^{2}(\TTT,\CCC)$
is self-adjoint, then the operator $a\Upsilon+\Upsilon a$ is self-adjont. As consequence the 
hamiltonian $H$ is real-valued on $L^{2}$. The corresponding (linear) vector field is
$$
\chi_{H}(u,\bar{u})=-i\left(\begin{matrix}\del_{\bar{u}}H\\ \del_{u}H\end{matrix}\right)
=\left(\begin{matrix}\frac{i}{2}(a\Upsilon+\Upsilon a)u \\ 
-\frac{i}{2}(a\Upsilon+\Upsilon a)\bar{u}\end{matrix}
\right).
$$
Then, the 1-flow of $\chi_{H}$ generates a symplectic transformation of coordinates, given by
\begin{equation}\label{megaop}
\begin{aligned}
\TT_{6}&:=\exp(\chi_{H}(u,\bar{u})):=\left(\begin{matrix}e^{i\chi} & 0 \\ 0& e^{-i\chi} \end{matrix}\right)\left(\begin{matrix}u\\ \bar{u}\end{matrix}
\right),\\
& e^{i\chi}u:=\left(\sum_{m=0}^{\infty}\frac{1}{m!}\Big(\frac{1}{2}(a\Upsilon+\Upsilon a)\Big)^{m}\right)u.
\end{aligned}
\end{equation}
We can easily check that, the operators in (\ref{megaop}) and (\ref{eq:3.6.2ham}) differs only for an operator
of order $O(\del_{x}^{-2})$. Indeed one has
\begin{equation}\label{megaop2}
\begin{aligned}
e^{i\chi}u&=u+\frac{i}{2}(a\Upsilon+\Upsilon a)u+O(\del_{x}^{-2})=u+\frac{i}{2}a\Upsilon u+
\frac{i}{2}\Upsilon(au)+O(\del_{x}^{-2})\\
&=u+\frac{i}{2}a\Upsilon u+\frac{i}{2}a \Upsilon u
+\frac{i}{2}\frac{1}{i}\del_{x}\Big((1-\del_{xx})^{-1}a_{xx}(1-\del_{xx})^{-1}u\\
&+2(1-\del_{xx})^{-1}a_{x}(1-\del_{xx})^{-1}\del_{x}u\Big)+O(\del_{x}^{-2})\\
&=(\uno+ia\Upsilon)u+O(\del_{x}^{-2}).
\end{aligned}
\end{equation}
In (\ref{megaop2}) we essentially studied the commutator of the pseudo-differential operator 
$(1-\del_{xx})^{-1}$ with the operator of multiplication by the function $a$. Since the transformation
$\TT_{6}$ is symplectic we obtain the hamiltonian operator
\begin{eqnarray}\label{pippopippoham}
\calL_{6}&=&\TT_{6}^{-1}\calL_{5}\TT_{6}=\tilde{\calL}_{5}+\tilde\RR,\\
\tilde{\calL}_{5}&:=&\oo\cdot\del_{\f}\uno+im_{2}E\del_{xx}+
i\left(\begin{matrix}m_{1} \!\!\!& 0 \\ 0 & \!\!\!-\bar{m}_{1}\end{matrix}\right)\del_{x}
+imE+i \left(\begin{matrix}\hat{a}_{0}^{(5)}(\f) & b_{0}^{(5)} \\ -\bar{b}_{0}^{(5)} & -\hat{a}_{0}^{(5)}(\f)\end{matrix}\right),\nonumber\\
\tilde\RR&:=&\TT_{6}^{-1}\left[\calL_{5}\TT_{6}-\TT_{6}\tilde{\calL}_{5}
\right],\nonumber
\end{eqnarray}
where $\RR$ is hamiltonian and of order $O(\del_{x}^{-1})$.
\begin{rmk}\label{rmk55}
 Here we have that $\calL_{6}$ has the form \eqref{elleham} where
  $b_{2}^{(6)}=b_{1}^{(6)}\equiv 0$, $a_{0}^{(6)}:=\hat{a}_{0}^{(5)}$, $b_{0}^{(6)}:=b_{0}^{(5)}$ and $\RR_{6}:=\tilde\RR$.
\end{rmk}

\paragraph{Step 7. Descent Method: conjugation by multiplication  operator II}

In this section we want to eliminate the dependance on the time variable of the coefficients $\hat{a}_{0}^{(5)}(\f)$ in (\ref{eq:3.6.5ham}).

Consider the operator
\begin{equation}\label{eq:3.7.1ham}
\TT_{7}:=\left(\begin{matrix} 1+k(\f) & 0 \\ 0 & 1+\bar{k}(\f) \end{matrix}\right),
\end{equation}
with $k : \TTT^{d}\to \CCC$.
By direct calculation we have that
\begin{equation}\label{eq:3.7.2ham}
\begin{aligned}
\!\!\!\!\!\!\!\calL_{6}\TT_{7}-\TT_{7}&\left[\oo\cdot\del_{\f}\uno+i m_{2}E+
i\left(\begin{matrix}m_{1} \!\!\!& 0 \\ 0 & \!\!\!-\bar{m}_{1}\end{matrix}\right)\del_{x}
+i \left(\begin{matrix}m_{0}\!\!\! & 0 \\ 0 &\!\!\! -m_{0} \end{matrix}\right)
\right]\\
&=i\left(\begin{matrix}r_{1} & 0 \\ 0 & -\bar{r}_{1} \end{matrix}\right)+
\left[i\left(\begin{matrix}0 & b_{0}^{(6)} \\ -\bar{b}_{0}^{(6)} & 0 \end{matrix}\right)+
\RR_{6}
\right]\TT_{7},
\end{aligned}
\end{equation}
where
\begin{equation}\label{eq:3.7.3ham}
r_{1}(\f)=\oo\cdot\del_{\f}k(\f)+i({a}_{0}^{(6)}(\f)-m_{0})(1+k(\f)),
\end{equation}
We are looking for $\Gamma$ such that $r_{1}\equiv0$. As done in step 5,
we write $1+k(\f)=\exp(\Gamma(\f))$, then equation $r_{1}\equiv0$ reads
\begin{equation}\label{eq:3.7.4ham}
\oo\cdot\del_{\f}\Gamma(\f)+i({a}_{0}^{(6)}(\f)+m-m_{0})=0,
\end{equation}
that has a unique solution if and only if we define
\begin{equation}\label{eq:3.7.5ham}
m_{0}:=m+\frac{1}{(2\pi)^{d}}\int_{\TTT^{d}}{a}_{0}^{(6)}(\f){\rm d}\f.
\end{equation}
Hence we can set
\begin{equation}\label{eq:3.7.6ham}
\Gamma(\f):=-i(\oo\cdot\del_{\f})^{-1}({a}_{0}^{(6)}+m-m_{0})(\f).
\end{equation}
It turns out that  the trasformation $\TT_{7}$ is 
 invertible, then, by conjugation,  we obtain $\calL_{7}:=\TT_{7}^{-1}\calL_{6}\TT_{7}$
with
\begin{equation}\label{eq:3.7.7ham}
\calL_{7}:=
\oo\cdot\del_{\f}\uno+
i\left(\begin{matrix}m \!\!\!& 0 \\ 0 & \!\!\!-m\end{matrix}\right)\del_{xx}+
i\left(\begin{matrix}m_{1} \!\!\!& 0 \\ 0 & \!\!\!-\bar{m}_{1}\end{matrix}\right)\del_{x}
+i \left(\begin{matrix}m_{0} \!\!\!& b_{0}^{(7)} \\ -\bar{b}_{0}^{(7)} & \!\!\!-m_{0}\end{matrix}\right)+
\RR_{7}
\end{equation}
where 
we have defined
\begin{equation}\label{eq:3.7.8ham}
\begin{aligned}
 \left(\begin{matrix}0 \!\!\!& b_{0}^{(7)} \\ -\bar{b}_{0}^{(7)} & \!\!\! 0\end{matrix}\right)&:=
 \TT_{7}^{-1} \left(\begin{matrix}0 & b_{0}^{(6)} \\ -\bar{b}_{0}^{(6)} & 0 \end{matrix}\right)\TT_{7},\quad
{\RR}_{7}:= \TT_{7}^{-1}\RR_{6}\TT_{7}.
\end{aligned}
\end{equation}
Moreover, since by (\ref{eq:3.7.6ham}) the function $\Gamma$ is purely imaginary, then
the transformation is symplectic. Indeed
\begin{equation}\label{eq:3.7.9}
\Omega(e^{\Gamma}u, e^{\Gamma}v):={\rm Re}\int_{\TTT}ie^{\Gamma}ue^{-\Gamma}\bar{v}{\rm d}x=
\Omega(u,v),
\end{equation}
hence the linearized operator $\calL_{7}$ is Hamiltonian.

\subsection{Non-degeneracy Condition}
Here we give the proof of formula \eqref{mammamia}
Let us study the properties of the average of the coefficients of the first order differential operator.
In particular we are interested in how these quantities depends explicitly on $\oo$, see Remark \ref{trenonapoli}.
Consider $a_{1}(\f,x)=a_{1}(\f,x,u)$ where $u$ satisfies \eqref{eq:3.2.1bham} and $a_{i}$ is defined in \eqref{5005}.
One has 
\begin{equation}\label{topotopo}
\left|\int_{\TTT^{d+1}}a_{1}(\f,x)\right|\geq \e \gote-C\e\|u\|_{\gots_{0}+\h_{1}}\geq \frac{\gote}{2}\e,
\end{equation}
if $\e\g_{0}^{-1}$ is small enough. Essentially, by using \eqref{eq:B15cham}, \eqref{eq:B15dham}, \eqref{eq:3.10aham} and \eqref{eq:3.10bham}, one can repeat the reasoning followed in \eqref{topotopo} for the average of $a_{1}^{(i)}$ for $i=1,2,3,4$ and prove the \eqref{mammamiaa} with a constant $c< \gote/16$. Let us check \eqref{mammamiab}.
At the starting point there is no explicit dependence on the parameters $\oo$ in $a_{1}$, hence
we get also for $\oo_{1}\neq\oo_{2}$
\begin{equation}\label{topotopo1}
0=\left|\int_{\TTT^{d+1}}a_{1}(\f,x,\oo_{1},u(\oo))-a_{1}(\f,x,\oo_{2},u(\oo))\right|\leq \e^{2}C|\oo_{1}-\oo_{2}|. 
\end{equation}
Now, by \eqref{eq:3.122} one has that
$$
a_{1}^{(1)}(\f,x,\oo,u(\oo)):=a_{1}^{(1)}(\f,x,u(\oo)):=i(2\TT_{1}^{-1}(E+A_{2})\del_{x}\TT_{1}+\TT_{1}^{-1}A_{1}\TT_{1})_{1}^{1},
$$
and again we do not have explicit dependence on $\oo$ since the matrix $\TT_{1}$ depends on the external parameters only trough the function $u$. Hence bound \eqref{topotopo1} holds.
\noindent
Now consider the coefficients $a_{1}^{(2)}$ in \eqref{24ham}. There is explicit dependence on $\oo$ only in the term
\begin{equation}\label{topotopo2}
\TT_{2}^{-1}(\oo\cdot\del_{\f}\x)=\sqrt{1+\hat{\x_{y}}(\f,y)}{\oo}\cdot\del_{\f}\x(\f,y+\hat{\x}(\f,y)).
\end{equation}
Recall that the functions $\x$ in \eqref{299} and $\hat{\x}$ depends on $\oo$ only through $u$. Hence one has
\begin{equation}\label{topotopo3}
\begin{aligned}
&\left|\int_{\TTT^{d+1}}\sqrt{1+\hat{\x}_{y}}(\oo_{1}-\oo_{2})\cdot\del_{\f}\x(\f,y+\hat{\x}(\f,y)){\rm d}\f {\rm d}y
\right|=\\
&=\left|\int_{\TTT_{d+1}}\frac{(\oo_{1}-\oo_{2})\cdot\del_{\f}\x(\f,x)}{\sqrt{1+\hat{\x}_{y}(\f,x+\x(\f,x)) }}{\rm d}\f{\rm d}x
\right|\\
&\leq|\oo_{1}-\oo_{2}|\left|\int_{\TTT^{d+1}}\del_{\f}\x(\f,x){\rm d}\f {\rm d}x
\right|\\
&+|\oo_{1}-\oo_{2}|\left|\int_{\TTT_{d+1}}\left(\frac{1}{\sqrt{1+\hat{\x}_{y}}}-1\right)\del_{\f}\x(\f,x)
{\rm d}\f{\rm d}x
\right|
\end{aligned}
\end{equation}

By defining $|u|_{s}^{\infty}:=||u||_{W^{s,\infty}}$ and using the standard estimates of the Sobolev embedding
on the function $\x$ in \eqref{299}
we get
\begin{subequations}
\begin{align}
|\x|^{\infty}_{s}\leq C(s)||\x||_{s+\gots_{0}}&\leq C(s)||\rho_{0}||_{s+\gots_{0}}\leq
\e C(s)(1+||{ u}||_{s+\gots_{0}+2}),\label{eq:3.50aham}.
\end{align}
\end{subequations}
The function $\hat{\x}$ satisfies the same bounds by Lemma \ref{change}. Hence, since the first integral in \eqref{topotopo3} is zero, using the interpolation estimates in Lemma \ref{A}, we get 
\begin{equation}\label{topotopo4}
\left|\int_{\TTT^{d+1}}a_{1}^{(2)}(\f,x){\rm d}\f{\rm d}x\right|^{lip}\leq C \e^{2}.
\end{equation}
Let us study the coefficients $a_{1}^{(3)}$ defined in \eqref{eq:3.166}. 
In particular one need to control the difference $a_{1}^{(3)}(\oo_1)-a_{1}^{(3)}(\oo_2)$. To do this one can uses standard formul\ae \; 
of propagation of errors for Lipschitz functions. 
In order to perform the quantitative estimates one can check that
the function $\al({\f})$ defined in (\ref{388}) satisfies the tame estimates (see also Lemma $3.20$ in \cite{FP} ):
\begin{subequations}\label{eq:B24ham}
\begin{align}
|\al|^{\infty}_{s }&\leq\e \g_{0}^{-1}C(s)(1+||{ u}||_{s+d+\gots_{0}+2}),\label{eq:B24aham}\\
|d_{{ u} }\al({ u})[{ h}]|^{\infty}_{s }&\leq \e \g_{0}^{-1}C(s)
(||{ h}||_{s+d+\gots_{0}+2}+
||{ u}||_{s+d+\gots_{0}+2}||{ h}||_{d+\gots_{0}+2}),\label{eq:B24bham}\\
|\al|^{\infty}_{s,\g} &\leq \e\g_{0}^{-1}C(s)
(1+||{ u}||_{s+d+\gots_{0}+2,\g}),\label{eq:B24cham}
\end{align}
\end{subequations}
\noindent
while by the (\ref{333}) one has $\rho=1+\TT_{3}^{-1}(\oo\cdot\del_{\f}\al)$. By using Lemma \ref{change}
and the bounds (\ref{eq:B24ham}) on $\al$ and (\ref{eq:3.2.1bham}) one can prove
\begin{subequations}\label{eq:B34ham}
\begin{align}
|\rho-1|^{\infty}_{s,\g}&\leq
 \e\g_{0}^{-1}C(s)(1+||{ u}||_{s+d+\gots_{0}+4,\g}) \label{eq:B34bham}\\
|d_{{ u} }\rho({ u})[{ h}]|^{\infty}_{s}&\leq 
\e\g_{0}^{-1}C(s)(||{ h}||_{s+d+\gots_{0}+3}+
||{u}||_{s+d+\gots_{0}+4}||{ h}||_{d+\gots_{0}+3}).\label{eq:B34cham}
\end{align}
\end{subequations}
The bounds above follows by classical tame estimates in Sobolev spaces, anyway the proof can be found 
in Section 3 of \cite{FP}. Now by taking the  integral of \eqref{topotopo4} and by using \eqref{eq:B24aham}-\eqref{eq:B34cham}, the tame estimates in Lemma \ref{change}
and the \eqref{eq:3.10aham}, \eqref{eq:3.10bham} one obtain the result on the . For the last step one can reason in the same way. Indeed the most important fact is to prove \eqref{topotopo4}. At the starting point we have no explicit dependence on $\la$ in the average of $a_{1}$, but, once that dependence appear, then we have the estimates \eqref{topotopo4} that is quadratic in $\e$. 


One has also the following result.
\begin{lemma}\label{lem:3.9ham}
Under the Hypotheses of Lemma \ref{lem:3.88} possibly with smaller $\epsilon_{0}$, 
if 
(\ref{eq:3.2.1bham}) holds, one has that the $\TT_i$, $i\neq3 $ identify operators $\TT_{i}(\f)$, 
 of the phase space ${\bf H}^{s}_{x}:={\bf H}^{s}(\TTT)$. Moreover they are invertible
and the following estimates hold for $\gots_{0}\leq s\leq q-\h_{1}$ and i=1,2,4,5,6,7:
\begin{subequations}
\begin{align}
||(\TT_{i}^{\pm1}(\f)-\uno){ h}||_{{\bf H}^{s}_{x}}&\leq \e 
C(s)(||{h}||_{{\bf H}^{s}_{x}}+||{ u}||_{s+d+2\gots_{0}+4}
||{ h}||_{{\bf H}^{1}_{x}}),\label{eq:3.93f}
\end{align}
\end{subequations}
\end{lemma}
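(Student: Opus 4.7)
The plan is to establish the estimate for each transformation $\TT_i(\f)$ with $i\in\{1,2,4,5,6,7\}$ separately. The fundamental observation is that all of these transformations act pointwise in the time variable $\f$: unlike $\TT_3$ (which reparametrizes time), each $\TT_i$ is built from multiplication by coefficients $a_j^{(i)}$, $b_j^{(i)}$ or from composition with an $x$-diffeomorphism at frozen $\f$, or from the exponential of an $x$-pseudo-differential operator with $\f$-dependent coefficients. Hence for every fixed $\f\in\TTT^d$ it defines a bounded linear map on ${\bf H}^s_x$.

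The first step is to convert the global Sobolev bounds on ${\bf H}^s(\TTT^{d+1})$ established in the proof of Lemma \ref{lem:3.88} (namely \eqref{eq:3.10aham}, \eqref{eq:3.50aham}, \eqref{eq:B24ham}, \eqref{eq:B34ham}) into pointwise-in-$\f$ bounds on the $H^s_x$ norms of the coefficients $a_j^{(i)},b_j^{(i)},\x,\widehat\x,\be,s,w,k$. This is done by the Sobolev embedding $H^{s+(d+1)/2+\epsilon}(\TTT^{d+1})\hookrightarrow L^\infty_\f H^s_x$, which costs $(d+1)/2$ additional derivatives. The loss is exactly what is absorbed into the constant $d+2\gots_0+4$ appearing on the right-hand side of \eqref{eq:3.93f}.

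The second step is the case-by-case verification. For $\TT_1$, $\TT_5$ and $\TT_7$ one has matrix-valued multiplication operators of the form $\uno+M_i(\f,\cdot)$ with $M_i$ of size $O(\e)$, so the tame product estimate
\[
\|fg\|_{H^s_x}\leq_s \|f\|_{H^s_x}\|g\|_{L^\infty_x}+\|f\|_{L^\infty_x}\|g\|_{H^s_x}
\]
applied to $(\TT_i(\f)-\uno)h$ immediately yields \eqref{eq:3.93f}; the $\TT_7$ case is trivial because the coefficient only depends on $\f$. For $\TT_2$ and $\TT_4$ the maps are composition with an $x$-diffeomorphism of size $O(\e)$ (possibly preceded by the multiplicative factor $\sqrt{1+\x_x}$), so I would invoke the change-of-variables tame estimate in Lemma \ref{change} at fixed $\f$, feeding in the pointwise bounds on $\x,\widehat\x,\be$ obtained in step one.

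The case I expect to be the main obstacle is $\TT_6$, the time-$1$ flow of a Hamiltonian vector field whose generator is a pseudo-differential operator of order $-1$. My plan is to write
\[
\TT_6(\f)=\exp\!\Bigl(\tfrac12\bigl(a(\f,\cdot)\Upsilon+\Upsilon a(\f,\cdot)\bigr)\Bigr)
\]
at fixed $\f$ and expand in power series. Since $\Upsilon:H^s_x\to H^{s+1}_x$ is bounded and multiplication by $a(\f,\cdot)$ of size $O(\e)$ acts tamely on $H^s_x$, each iterated term of order $k\geq 1$ contributes $O(\e^k)$ in operator norm, yielding \eqref{eq:3.93f} with the product/commutator structure needed. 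Invertibility of every $\TT_i(\f)$ then follows by Neumann series from $\|\TT_i(\f)-\uno\|_{\calL({\bf H}^1_x)}=O(\e)$ valid for $\e\g_0^{-1}$ small, and the estimate \eqref{eq:3.93f} for $\TT_i^{-1}(\f)$ is obtained by the identity $\TT_i^{-1}-\uno=-\TT_i^{-1}(\TT_i-\uno)$ combined with the same tame product and composition estimates just used.
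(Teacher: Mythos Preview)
Your proposal is correct and follows essentially the same approach as the paper: the paper's proof simply states that the lemma is a consequence of the preceding discussion and refers to Lemma~3.25 in \cite{FP} for the details, which amounts precisely to your case-by-case analysis using the Sobolev embedding $H^{s+\gots_0}(\TTT^{d+1})\hookrightarrow L^\infty_\f H^s_x$ to pass from the global tame bounds on the coefficients to pointwise-in-$\f$ bounds. Your write-up is in fact more explicit than what the paper provides.
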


The Lemma is essentially a consequence if the discussion above. We omit the details because the proof follows basically the same arguments used in 
Lemma $3.25$ in \cite{FP}.

\zerarcounters
\section{Reduction to constant coefficients}\label{sec:4ham}

In this Section we conclude the proof of Proposition \ref{teo2ham} through a  
reducibility algorithm. First we need to fix some notations.
Let $b\in \NNN$, we consider the exponential basis $\{ e_{i} : i\in \ZZZ^{b}\}$ 
of $L^{2}(\TTT^{b})$. In this way we have that $L^{2}(\TTT^{2})$ is the space
$\{u=\sum u_{i}e_{i} : \sum|u_{i}|^{2}<\infty\}$. A linear operator $A : L^{2}(\TTT^{b})\to L^{2}(\TTT^{b})$ 
can be written as an infinite dimensional matrix
$$
A=(A_{i}^{j})_{i,j\in\ZZZ^{b}}, \quad A_{i}^{j}=(Ae_{j},e_{i})_{L^{2}(\TTT^{b})}, \quad Au=\sum_{i,j}A_{i}^{j}u_{j}e_{i}.
$$

\noindent
where $(\cdot,\cdot)_{L^{2}(\TTT^{d+1})}$ is the usual scalar product on $L^{2}$.
In the following we also use the decay norm
\begin{equation}\label{decayham}
\begin{aligned}
|A|^{2}_{s}:=\sup_{\s,\s'\in \CC}|A_{\s}^{\s'}|^{2}_{s}&:=
\sup_{\s,\s'\in\CC}\sum_{h\in\ZZZ\times\ZZZ^{d}}\langle h\rangle^{2s}
\sup_{k-k'=h}|A_{\s,k}^{\s',k'}|^{2}.
\end{aligned}
\end{equation}

%
If one has that $A:=A(\oo) $ depends on  parameters  $\oo\in\Lambda\subset\RRR$ in a Lipschitz way,  we define
\begin{equation*}\label{2.1}
\begin{aligned}
|A|_{s}^{sup}&:=\sup_{\la\in\Lambda}|A(\la)|_{s}, \;\;\;
|A|^{lip}_{s}:=\sup_{\la_{1}\neq\la_{2}}\frac{|A(\la_{1})-A(\la_{2})|_{s}}{|\la_{1}-\la_{2}|}, \;\\
|A|_{s,\g}&:=|A|_{s}^{sup}+\g|A|^{lip}_{s}.
\end{aligned}
\end{equation*}
%
%
%
%

The decay norm we have introduced in (\ref{decayham}) is suitable for the problem we are studying.
 Note that
\begin{equation*}
\;\;\;
\forall\; s\leq s' \;\; \Rightarrow \;\; |A_{\s}^{\s'}|_{s}\leq|A_{\s}^{\s'}|_{s'}.
\end{equation*}
Moreover norm (\ref{decayham}) gives information on the polynomial off-diagonal decay of the matrices,
indeed $\forall\; k,k'\in\ZZZ_+\times\ZZZ^{d}$
\begin{equation}\label{decay2}
\begin{aligned}
&|A_{\s,k}^{\s,k'}|\leq \frac{|A_{\s}^{\s'}|_{s}}{\langle k-k' \rangle^{s}},\quad 
|A_{i}^{i}|\leq |A|_{0}, \quad |A_{i}^{i}|^{lip}\leq |A|_{0}^{lip}.
\end{aligned}
\end{equation}

In order to prove Prposition \ref{teo2ham} we first prove the following result. We see that
the operator $\calL_{7}$ cannot be diagonalized, but anyway can be block-diagonalized where 
the blocks on the diagonal have fixed size. This is sufficient for our analysis.

\begin{theorem}\label{KAMalgorithmham}
Let $\ff\in C^{q}$ satisfy the Hypotheses of Proposition \ref{teo2ham} with $q>\h_{1}+\be+\gots_{0}$
where $\h_{1}$ defined in (\ref{eq:3.2.0ham}) 
and $\be=7\tau+5$ for some $\tau>d$.
Let $\g\in(0,\g_{0})$,
$\gots_{0}\leq s\leq q-\h_{1}-\be$ and ${ u}(\la)\in{\bf H}^{0}$ be a family of functions depending 
on a Lipschitz way on a parameter
$\oo\in\Lambda_{o}\subset\Lambda:[1/2,3/2]$. 
Assume that
\begin{equation}\label{eq:4.2ham}
||{u}||_{\gots_{0}+\h_{1}+\be, \Lambda_{o},\g}\leq1.
\end{equation}
Then there exist constants $\epsilon_{0}$, $C$, depending only on the data of the problem,
such that,
if $\e\g^{-1}\leq\epsilon_{0}$,
then there exists a sequence of purely imaginary numbers as in Proposition \ref{teo2ham},
namely
$\Omega_{\s,j}^{\phantom{g} j}, \Omega_{\s,j}^{-j} : \Lambda \to \CCC$
 of the form
 \begin{equation}\label{1.2.2bisham}
\begin{aligned}
\Omega_{\s,j}^{\phantom{g} j}&:=-i \s m_{2}j^{2}-i\s|m_{1}|j+i\s m_{0}+i\s r_{j}^{j},\\
\Omega_{\s,j}^{-j}&:=i\s r_{j}^{-j},
\end{aligned}
\end{equation}
where
\begin{equation}\label{1.2.2trisham}
m_{2},m_{0}\in\RRR, \;\; m_{1}\in i\RRR, \quad \ol{r_{j}^{k}}=r_{k}^{j}, 
\;\; k=\pm j
\end{equation}
for any $\s\in\CC$, $j\in \NNN$, moreover
\begin{equation}\label{eq:4.5ham}
|r_{\s,j}^{\phantom{g} k}|_{\g}\leq \frac{\e C}{\langle j\rangle}, \quad \forall\; \s\in\CC,\; j\in\ZZZ, \;\; k=\pm j,
\end{equation}
and
such that, for any $\oo\in \Lambda_{\infty}^{2\g}({ u})$, defined in (\ref{martina10ham}),
there exists a bounded, invertible linear operator
$\Phi_{\infty}(\oo) : {\bf H}^{s}\to {\bf H}^{s}$, with bounded inverse $\Phi_{\infty}^{-1}(\oo)$,
such that
\begin{equation}\label{eq:4.6ham}
\begin{aligned}
\calL_{\infty}(\oo):=\Phi_{\infty}^{-1}(\oo)\circ\calL_{7}\circ\Phi_{\infty}(\oo)&=
{\oo}\cdot\del_{\f}\uno+i \DD_{\infty},\\
{\rm where}\;\;\;\;\;\;\;\;\;
 \DD_{\infty}:={\rm diag_{h=(\s,j)\in\CC\times\NNN}}\{\Omega_{\s, \und{j}}(\oo)\},& \quad 
\end{aligned}
\end{equation}
with $\calL_7$ defined in \eqref{eq:3.5.9ham} and where
\begin{equation}\label{bambaham}
\Omega_{\s,\underline{j}}:=\left(\begin{matrix}\Omega_{\s,j}^{\phantom{g}j} & \Omega_{\s,j}^{-j} \\
\Omega_{\s,-j}^{\phantom{g}j} & \Omega_{\s,-j}^{-j} \end{matrix}\right)
\end{equation}
Moreover, the transformations $\Phi_{\infty}(\la)$, $\Phi_{\infty}^{-1}$ are symplectic and satisfy
\begin{equation}\label{eq:4.8ham}
|\Phi_{\infty}(\la)-\uno|_{s,\Lambda_{\infty}^{2\g},\g}+
|\Phi_{\infty}^{-1}(\la)-\uno|_{s,\Lambda_{\infty}^{2\g},\g}\leq
\e \g^{-1} C(s)(1+||{ u}||_{s+\h_{1}+\be,\Lambda_{o},\g}).
\end{equation}
In addition to this, for any $\f\in\TTT^{d}$, for any $\gots_{0}\leq s\leq q-\h_{1}-\be$
the operator
$\Phi_{\infty}(\f) : {\bf H}^{s}_{x}\to{\bf H}^{s}_{x}$ is an invertible operator
of the phase space ${\bf H}_{x}^{s}:={\bf H}^{s}(\TTT)$ with inverse
$(\Phi_{\infty}(\f))^{-1}:=\Phi_{\infty}^{-1}(\f)$ and
\begin{equation}\label{eq:4.9ham}
||(\Phi_{\infty}^{\pm1}(\f)-\uno){ h}||_{{\bf H}^{s}_{x}}\leq
\e\g^{-1} C(s)(||{ h}||_{{\bf H}^{s}_{x}}+||{ u}||_{s+\h_{1}+\be+\gots_{0}}
||{ h}||_{{\bf H}^{1}_{x}}).
\end{equation}
\end{theorem}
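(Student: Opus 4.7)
The plan is to run a standard KAM reducibility iteration on $\calL_7$, adapted to a block-diagonal (rather than diagonal) target, because the unperturbed eigenvalues $-i\sigma(j^2+m)$ are double (as $j$ and $-j$ give the same value). I would set $\calL_0:=\calL_7$ and write $\calL_n=\omega\cdot\partial_\f\uno+i\DD_n+\RR_n$, where $\DD_n=\mathrm{diag}_{(\s,j)\in\CC\times\NNN}\{\Omega_{\s,\underline{j}}^{(n)}\}$ is block-diagonal with $2\times 2$ self-adjoint blocks $\Omega_{\s,\underline{j}}^{(n)}$ of the shape in \eqref{1.2.2bisham}, and $\RR_n$ is a remainder in the decay norm $|\cdot|_{s,\g}$ of \eqref{decayham}. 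The initial remainder $\RR_0$ consists of the off-diagonal coupling by $b_0^{(7)}$ plus the $O(\partial_x^{-1})$ pseudodifferential tail from Lemma \ref{lem:3.88}; both are $O(\e)$ in the appropriate decay norm by \eqref{eq:3.2.7ham}--\eqref{eq:3.2.7bisham}. At each step I conjugate by a symplectic map $\Phi_n=\exp(\Psi_n)$, where $\Psi_n$ is the Hamiltonian vector field of a quadratic Hamiltonian, so $\calL_{n+1}:=\Phi_n^{-1}\calL_n\Phi_n$ is again Hamiltonian.

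The generator $\Psi_n$ is chosen to solve the homological equation
\begin{equation*}
\omega\cdot\partial_\f\Psi_n+i[\DD_n,\Psi_n]=-\Pi_{N_n}\RR_n+[\RR_n]_{\mathrm{res}},
\end{equation*}
where $[\RR_n]_{\mathrm{res}}$ collects the ``resonant'' pieces which cannot be removed: namely the Fourier mode $\ell=0$ terms that couple indices $(\s,j)$ and $(\s,\pm j)$; these are absorbed into the new block $\Omega_{\s,\underline{j}}^{(n+1)}$. Fourier-expanding the equation in $(\ell,k,k')$ reduces it to solving, for each off-resonant $(\ell,\s,j,\s',j')$, a $4\times 4$ linear system whose matrix is $i\omega\cdot\ell\,\uno+i\,L_{\Omega^{(n)}_{\s,\underline{j}}}-i\,R_{\Omega^{(n)}_{\s',\underline{j'}}}$ (left and right multiplications). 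On the set $\SSSS_\infty^{2\g}({u})$ in \eqref{martina10ham}, its eigenvalues $\omega\cdot\ell+\mu_{\s,j}-\mu_{\s',j'}$ are bounded below by $2\g|\s j^2-\s'j'^2|\langle\ell\rangle^{-\tau}$, which controls the inverse. Restricting the block-diagonal part to $k=\pm j$ (where $\s j^2-\s j^2=0$) forces us to keep $2\times 2$ blocks; this is why the KAM step only block-diagonalizes. Standard tame estimates on $\Psi_n$ in the decay norm follow, with a loss of $\tau$ derivatives.

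The iteration is then standard: using the smoothing property of $\Pi_{N_n}^\perp$ together with a quadratic bound $|\RR_{n+1}|_{s,\g}\lesssim N_n^{-\alpha}|\RR_n|_{s+\beta,\g}+C(s)|\RR_n|_{s,\g}|\Psi_n|_{\gots_0,\g}$, one proves by induction that $|\RR_n|_{\gots_0,\g}\le \e\g^{-1}N_n^{-\alpha}$ for some $\alpha>1$, provided $\beta=7\tau+5$ (the high-norm estimate is kept bounded via a telescoping argument as in \cite{FP}). The blocks $\Omega^{(n)}_{\s,\underline{j}}$ converge to the limit blocks $\Omega_{\s,\underline{j}}$ of \eqref{1.2.2bisham}, with the corrections giving the $O(\e/\langle j\rangle)$ estimate \eqref{eq:4.5ham} because $\RR_0$ is $O(\partial_x^{-1})$ and each step preserves this asymptotic by the Hamiltonian (hence self-adjoint) structure of the blocks. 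The total transformation $\Phi_\infty:=\prod_n\Phi_n$ is symplectic and satisfies \eqref{eq:4.8ham} by summing the geometric-type estimates on each $\Phi_n-\uno$. Finally, since no $\Phi_n$ involves $\partial_\f$ in its exponent (only the space variable enters as a multiplication/Fourier block), each $\Phi_n(\f)$ acts on the phase space ${\bf H}^s_x$ and \eqref{eq:4.9ham} follows from the same estimates evaluated at fixed $\f$, exactly as in Lemma 4.1 of \cite{FP}.

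The main obstacle I expect is the block-KAM step: the homological equation involves inverting the operator $L_{\Omega^{(n)}_{\s,\underline{j}}}-R_{\Omega^{(n)}_{\s',\underline{j'}}}$ on $2\times 2$ matrices rather than a scalar divisor, so controlling its norm requires a sharp lower bound on the spectral gap of this super-operator. This is precisely what the second Melnikov condition in \eqref{martina10ham} provides once one checks that $\mu_{\s,\pm j}$ are exactly the eigenvalues of $\Omega_{\s,\underline{j}}$; the delicate point is to ensure that the self-adjointness of the blocks (inherited from the Hamiltonian structure, Definition \ref{hamilt}) keeps $\mu_{\s,j}$ purely imaginary at every step, which in turn justifies the measure-theoretic estimate \eqref{eq136} of Proposition \ref{measurebruttebrutte}.
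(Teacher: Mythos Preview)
Your outline matches the paper's strategy, but there are two genuine gaps in the homological-equation step.

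First, the small-divisor bound you invoke from $\SSSS_\infty^{2\g}$ alone is insufficient: it gives $|\omega\cdot\ell+\mu_{\s,j}-\mu_{\s',j'}|\geq 2\g|\s j^2-\s'j'^2|\langle\ell\rangle^{-\tau}$, which is \emph{zero} when $\s=\s'$ and $|j|=|j'|$. But for $\ell\neq 0$, $\s=\s'$, $j'=\pm j$ you must still solve the homological equation (only the $\ell=0$ intra-block piece is kept as resonant), and two of the four eigenvalues of your super-operator are $\omega\cdot\ell\pm(\mu_{\s,j}-\mu_{\s,-j})$. Controlling these requires the second set $\calO_\infty^{2\g}$ in \eqref{martina10ham}, which you never use; without it the $4\times 4$ system is singular on these modes. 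The paper's iteration runs on $\Lambda_\nu^\g=\SSSS_\nu^\g\cap\calO_\nu^\g$, cf.\ \eqref{eq:419ham} and the second line of \eqref{homeq3ham}.

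Second, and more seriously, the passage ``standard tame estimates on $\Psi_n$ in the decay norm follow'' hides the real obstruction. Inverting the block system bounds $\max_{h=\pm j,\,h'=\pm j'}|\Psi_{\s,h}^{\s',h'}(\ell)|$ by the inverse divisor times $\max_{h,h'}|\RR_{\s,h}^{\s',h'}(\ell)|$. But the decay norm \eqref{decayham} weights each entry by $\langle h-h'\rangle^{s}$: for instance $|\Psi_{\s,-j}^{\s',j'}|$ must be controlled after multiplying by $\langle j+j'\rangle^{s}$, whereas the max on the right may be $|\RR_{\s,j}^{\s',j'}|\sim\langle j-j'\rangle^{-s}$, and the ratio $\langle j+j'\rangle^{s}/\langle j-j'\rangle^{s}$ is unbounded. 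The paper resolves this by a two-regime argument: if $\min(|j|,|j'|)\leq C/\gote$ then $\langle j+j'\rangle\leq K(\gote)\langle j-j'\rangle$ trivially (cf.\ \eqref{quiquo}); if $|j|,|j'|\geq C/\gote$ then the non-degeneracy Hypothesis \ref{hyp3ham}, via \eqref{mammamia}, gives $|\mu_{\s,j}-\mu_{\s,-j}|\geq c\e\gote|j|$, which allows one to \emph{diagonalize} each block $\Omega_{\s,\underline{j}}$ by a bounded matrix $U_{\s,\underline{j}}$ (see \eqref{quiquo10}--\eqref{quiquo22}) and reduce to genuinely scalar divisors, after which the decay estimate is honest. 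This is the actual role of Hypothesis \ref{hyp3ham} in the proof of Theorem \ref{KAMalgorithmham}, and your sketch does not account for it.
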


\begin{rmk}
Note that since the $\Phi_{\infty}$ is symplectic then the operator $\calL_{\infty}$ is hamiltonian.
\end{rmk}
 
The main point of the Theorem \ref{KAMalgorithmham} is that 
the bound on the low norm of $u$ in (\ref{eq:4.2ham}) guarantees the bound 
on \emph{higher} norms (\ref{eq:4.8ham}) for the transformations $\Phi_{\infty}^{\pm1}$. This is 
fundamental in order  to get the estimates on the inverse of $\calL$ in high norms.

Moreover, the definition (\ref{martina10ham}) of the set where the second Melnikov conditions
hold, depends only on the final eigenvalues. Usually in KAM theorems, the non-resonance conditions
have to be checked, inductively, at each step of the algorithm. This 
formulation, on the contrary, allow us to discuss the measure estimates only once. 
Indeed, the functions $\m_{h}(\oo)$ are well-defined even if
$\Lambda_{\infty}=\emptyset$, so that, we will perform the measure estimates as the last step of the proof of Theorem \ref{teo1}.

 \subsection{Functional setting and notations}

\subsubsection{The off-diagonal decay norm }

Here we want to show some important properties of the norm $|\cdot|_{s}$. Clearly the same results hold for the 
norm $|\cdot|_{{\bf H}^{s}}:=|\cdot|_{H^{s}\times H^{s}}$. Moreover we will introduce 
some characterization  of the operators we have to deal with during the diagonalization procedure.
 
First of all we have following classical results.

\begin{lemma}\label{bubbole}{\bf Interpolation.} For all $s\geq s_{0}>(d+1)/2$ 
there are $C(s)\geq C(s_{0})\geq1$ 
such that 
if $A=A(\oo)$ and $B=B(\oo)$ depend on the parameter
 $\la\in\Lambda\subset\RRR$
in a Lipschitz way, then
\begin{subequations}
\begin{align}
|AB|_{s,\g}&\leq C(s)|A|_{s_0,\g}|B|_{s,\g}
+C(s_{0})|A|_{s,\g}|B|_{s_0,\g},\label{eq:2.11a}\\
|AB|_{s,\g}&\leq C(s)|A|_{s,\g}|B|_{s,\g}.\label{eq:2.11b}\\
\|Ah\|_{s,\g}&\leq C(s)(|A|_{s_0,\g}\|h\|_{s,\g}
+|A|_{s,\g}\|h\|_{s_0,\g}),\label{eq:2.13b}\end{align}
\end{subequations}
\end{lemma}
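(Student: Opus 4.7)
The plan is to prove all three estimates at the level of matrix elements, using the standard triangle-type inequality
\[
\langle k-k''\rangle^{s} \leq C(s)\bigl(\langle k-k'\rangle^{s} + \langle k'-k''\rangle^{s}\bigr)
\]
combined with the summability of $\langle h\rangle^{-2s_{0}}$ on $\ZZZ^{d+1}$, which holds precisely because $s_{0}>(d+1)/2$. Since the $|\cdot|_{s,\g}$ norm reduces to taking a sup over the block indices $\s,\s'\in \CC$, I will fix $\s,\s''$ throughout and estimate $|(AB)_{\s}^{\s''}|_{s}$ by inserting an intermediate index $\s'$ and summing only two contributions; this accounts for the factor $C(s_0)$ growing no worse than a multiplicative constant.

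First I would prove the $\sup$-norm version of (a). Writing $(AB)_{\s,k}^{\s'',k''} = \sum_{\s',k'} A_{\s,k}^{\s',k'} B_{\s',k'}^{\s'',k''}$ and letting $h=k-k''$, $h'=k-k'$, I split the weight $\langle h\rangle^{s}$ into two pieces according to the triangle inequality above, obtaining a bound of the form
\[
\langle h\rangle^{s}\,|(AB)_{\s,k}^{\s'',k''}| \leq C(s) \sum_{\s',h'} \bigl(\langle h'\rangle^{s}\, a_{\s,\s'}(h') \cdot b_{\s',\s''}(h-h') + a_{\s,\s'}(h')\cdot \langle h-h'\rangle^{s} b_{\s',\s''}(h-h')\bigr)
\]
where $a_{\s,\s'}(h'):=\sup_{k-k'=h'}|A_{\s,k}^{\s',k'}|$ and similarly for $b$. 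Taking $\ell^{2}$ in $h$ and applying Young's convolution inequality $\|f*g\|_{\ell^{2}}\leq \|f\|_{\ell^{1}}\|g\|_{\ell^{2}}$, I bound the $\ell^{1}$ factor by Cauchy--Schwarz: e.g.\ $\sum_{h'}a_{\s,\s'}(h') \leq \bigl(\sum_{h'}\langle h'\rangle^{-2s_{0}}\bigr)^{1/2}\cdot|A_{\s}^{\s'}|_{s_{0}}\leq C(s_{0})|A|_{s_{0}}$. Summing over the two block indices $\s'$ (a finite set) gives (a) in sup norm.

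Next I would upgrade to the weighted Lipschitz norm: writing the telegraphic identity
\[
A(\la_{1})B(\la_{1}) - A(\la_{2})B(\la_{2}) = \bigl(A(\la_{1})-A(\la_{2})\bigr)B(\la_{1}) + A(\la_{2})\bigl(B(\la_{1})-B(\la_{2})\bigr),
\]
applying the sup-norm estimate just proved to each summand, dividing by $|\la_{1}-\la_{2}|$ and multiplying by $\g$ yields the bound on $\g|AB|_{s}^{lip}$ in terms of $|A|_{s_{0},\g}|B|_{s,\g}+|A|_{s,\g}|B|_{s_{0},\g}$, and adding the sup-norm bound gives (a). Estimate (b) is then immediate from (a) and $|\cdot|_{s_{0},\g}\leq |\cdot|_{s,\g}$.

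Finally, for the action estimate (c), I would run the same argument with $h=\sum A_{k}^{k'} h_{k'}$ in place of the second operator $B$: writing $(Ah)_{\s,k}=\sum_{\s',k'}A_{\s,k}^{\s',k'} h_{\s',k'}$, applying the triangle inequality to $\langle k\rangle^{s}\leq C(s)(\langle k-k'\rangle^{s}+\langle k'\rangle^{s})$, and invoking Young/Cauchy--Schwarz exactly as before. The Lipschitz part is handled by the same telescoping identity applied to $A(\la)h(\la)$. The main (very mild) obstacle is purely bookkeeping — keeping track of the block indices $\s,\s'\in\CC$ and ensuring the constants $C(s_{0})$ absorb the finite sums over $\CC$ and the summability constant $\bigl(\sum_{h}\langle h\rangle^{-2s_{0}}\bigr)^{1/2}$ — since $\CC$ is finite and $s_{0}>(d+1)/2$, both are harmless.
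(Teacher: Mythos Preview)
Your proof is correct and is precisely the standard argument for tame product estimates on decay norms of this type. Note that the paper itself does not actually prove this lemma: it is introduced as a ``classical result'' and no proof is given (the surrounding lemmata defer to \cite{FP} and \cite{BBM}), so there is no alternative approach in the paper to compare against; your argument is essentially the one found in those references.
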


Lemma \ref{bubbole} implies that for any $n\geq0$ and $s\geq \gots_{0}$ one has
\begin{equation}\label{eq:2.12}
|A^{n}|_{s_{0},\g}\leq [C(s_{0})]^{n-1}|A|^{n}_{s_{0},\g},  \quad 
|A^{n}|_{s,\g}\leq n[C(s_{0})|A|_{\gots_{0},\g}]^{n-1}C(s)|A|_{s,\g}, 
\end{equation}
\begin{equation}\label{eq:2.12bis}
|[A,B]^{n}|_{s,\g}\leq nC(\gots_{0})^{n-1}|A|_{\gots_{0},\g}^{n-1}|B|^{n-1}_{\gots_{0},\g}
\left(|A|_{s,\g}|B|_{\gots_{0},\g}+|A|_{\gots_{0},\g}|B|_{s,\g}\right), 
\end{equation}

The following Lemma shows how to invert linear operators which 
are ''\emph{near}'' to the identity in norm $|\cdot|_{s}$. 

\begin{lemma}\label{inverse} Let $C(s_0)$ be as in Lemma \ref{bubbole}. Consider an operator of the form $\Phi=\uno+\Psi$
where $\Psi=\Psi(\la)$ depends in a Lipschitz way on $\la\in\Lambda\subset\RRR$. 
Assume that $C(s_{0})|\Psi|_{s_{0},\g}\leq1/2$. Then $\Phi$ is invertible and,
for all $s\geq s_{0}\geq (d+1)/2$, 
\begin{equation}\label{eq:2.14}
 \quad |\Phi^{-1}|_{s_{0},\g}\leq 2,\quad
|\Phi^{-1}-\uno|_{s,\g}\leq C(s)|\Psi|_{s,\g}
\end{equation}
Moreover, if one  has $\Phi_{i}=\uno+\Psi_{i}$, $i=1,2$ such that $C(s_{0})|\Psi_{i}|_{s_0,\g}\leq1/2$, then
\begin{equation}\label{eq:2.15}
\!\!\!|\Phi^{-1}_{2}-\Phi^{-1}_{1}|_{s,\g}\leq\! C(s)\left(|\Psi_{2}-\Psi_{1}|_{s,\g}\!+\!(|\Psi_{1}|_{s,\g}+|\Psi_{2}|_{s,\g})
|\Psi_{2}-\Psi_{1}|_{s_0,\g}\right).
\end{equation}
\end{lemma}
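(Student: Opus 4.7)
The plan is to construct $\Phi^{-1}$ as a Neumann series $\sum_{n\ge0}(-\Psi)^{n}$ and then read off all four assertions from the tame bounds \eqref{eq:2.11a}--\eqref{eq:2.12} proved in Lemma \ref{bubbole}. Under the smallness hypothesis $C(s_{0})|\Psi|_{s_{0},\g}\le 1/2$, the first inequality in \eqref{eq:2.12} gives $|\Psi^{n}|_{s_{0},\g}\le C(s_{0})^{-1}2^{-n}$ for $n\ge 1$, so the series converges in the Banach space of operators equipped with $|\cdot|_{s_{0},\g}$. A direct check confirms $\Phi\Phi^{-1}=\Phi^{-1}\Phi=\uno$, and summing the geometric series (enlarging $C(s_{0})$ by a harmless numerical factor if necessary) yields the first bound in \eqref{eq:2.14}, namely $|\Phi^{-1}|_{s_{0},\g}\le 2$.

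For the second inequality in \eqref{eq:2.14} I would write $\Phi^{-1}-\uno=\sum_{n\ge 1}(-\Psi)^{n}$ and apply the \emph{tame} version of \eqref{eq:2.12}, which gives $|\Psi^{n}|_{s,\g}\le n\,[C(s_{0})|\Psi|_{s_{0},\g}]^{n-1}C(s)|\Psi|_{s,\g}\le n\,2^{-(n-1)}C(s)|\Psi|_{s,\g}$. The elementary identity $\sum_{n\ge 1}n\,2^{-(n-1)}=4$ then produces $|\Phi^{-1}-\uno|_{s,\g}\le 4C(s)|\Psi|_{s,\g}$, as claimed, after absorbing the factor $4$ into $C(s)$.

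For the Lipschitz-type estimate \eqref{eq:2.15} the key input is the resolvent identity
\[
\Phi_{2}^{-1}-\Phi_{1}^{-1}=\Phi_{1}^{-1}(\Phi_{1}-\Phi_{2})\Phi_{2}^{-1}=\Phi_{1}^{-1}(\Psi_{1}-\Psi_{2})\Phi_{2}^{-1}.
\]
Applying the interpolation inequality \eqref{eq:2.11a} twice to this three-factor product, and using the already-proved bounds $|\Phi_{i}^{-1}|_{s_{0},\g}\le 2$ and $|\Phi_{i}^{-1}|_{s,\g}\le 1+C(s)|\Psi_{i}|_{s,\g}$ for $i=1,2$, one gets (up to combinatorial constants that are absorbed in $C(s)$)
\[
|\Phi_{2}^{-1}-\Phi_{1}^{-1}|_{s,\g}\le C(s)\Bigl(|\Psi_{1}-\Psi_{2}|_{s,\g}+\bigl(|\Phi_{1}^{-1}|_{s,\g}+|\Phi_{2}^{-1}|_{s,\g}\bigr)|\Psi_{1}-\Psi_{2}|_{s_{0},\g}\Bigr).
\]
Substituting the bound on $|\Phi_{i}^{-1}|_{s,\g}$ and noting that $|\Psi_{i}|_{s_{0},\g}\le 1/(2C(s_{0}))$ is a pure constant then yields exactly \eqref{eq:2.15}.

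Everything above is essentially routine algebra; no new idea beyond the two tame bounds of Lemma \ref{bubbole} is needed. The only mild bookkeeping point is to organise the two nested applications of \eqref{eq:2.11a} on the triple product $\Phi_{1}^{-1}(\Psi_{1}-\Psi_{2})\Phi_{2}^{-1}$ so that the high-norm factor $|\cdot|_{s,\g}$ appears on exactly one factor at a time, which is precisely the content of the interpolation inequality; this is what ensures the final estimate is linear (rather than bilinear) in $|\Psi_{1}-\Psi_{2}|_{s,\g}$ and in $|\Psi_{i}|_{s,\g}$.
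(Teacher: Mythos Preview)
Your argument is correct and is precisely the standard Neumann-series proof that the paper defers to \cite{FP}; the paper gives no in-text proof here, and your route via \eqref{eq:2.12} for the series bounds together with the resolvent identity $\Phi_{2}^{-1}-\Phi_{1}^{-1}=\Phi_{1}^{-1}(\Psi_{1}-\Psi_{2})\Phi_{2}^{-1}$ for \eqref{eq:2.15} is exactly what one expects. One small clean-up: you do not need to ``enlarge $C(s_{0})$'' for the bound $|\Phi^{-1}|_{s_{0},\g}\le 2$, since Lemma~\ref{bubbole} already guarantees $C(s_{0})\ge 1$, so $1+\sum_{n\ge 1}C(s_{0})^{-1}2^{-n}=1+C(s_{0})^{-1}\le 2$ directly.
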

\begin{proof} See \cite{FP}.
\end{proof}
%

\subsubsection{T\"opliz-in-time matrices}

We introduce now a special class of operators, the so-called 
{\em T\"opliz in time} matrices, i.e.
\begin{equation}\label{eq:2.16}
A_{i}^{i'}=A_{(\s,j,p)}^{(\s',j',p')}:=A_{\s,j}^{\s'j'}(p-p'), 
\quad {\rm for} \quad i,i'\in \CC\times\ZZZ\times\ZZZ^{d}.
\end{equation}
To simplify the notation in this case, we shall write
$A_{i}^{i'}=A_{k}^{k'}(\ell)$,
 $i=(k,p)=(\s,j,p)\in \CC\times\ZZZ\times \ZZZ^{d}$, 
$i'=(k',p')=(\s',j',p')\in \CC\times \ZZZ\times\ZZZ^{d}$,
 with  $k,k'\in  \CC\times\ZZZ$.

They are relevant because
one can identify the matrix $A$ with a one-parameter family of operators, acting on the space
${\bf H}^{s}_{x}$,
which depend on the time, namely
$$
A(\f):=(A_{\s,j}^{\s',j'}(\f))_{\substack{\s,\s'\in \CC \\ j,j'\in\ZZZ}}, \quad 
A_{\s,j}^{\s',j'}(\f):=\sum_{\ell\in\ZZZ^{d}}A_{\s,j}^{\s',j'}(\ell)e^{i\ell\cdot\f}.
$$

To obtain the stability result on the solutions we will strongly use this property.

\begin{lemma}\label{1.4} If $A$ is a T\"opliz in time matrix as in (\ref{eq:2.16}), and 
$\gots_{0}:=(d+2)/2$, then one has
\begin{equation}\label{aaaaa}
|A(\f)|_{s}\leq C(\gots_{0})|A|_{s+\gots_{0}}, \quad \forall \; \f\in\TTT^{d}.
\end{equation}

\end{lemma}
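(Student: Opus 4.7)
The plan is to bound the pointwise Fourier sum for each matrix entry using Cauchy--Schwarz with a weight $\langle\ell\rangle^{\pm\gots_0}$, and then recognize the resulting expression as the full space--time decay norm. Since $2\gots_0=d+2>d$, the weight $\langle\ell\rangle^{-2\gots_0}$ is summable on $\ZZZ^d$, which is precisely where the constant $C(\gots_0)$ comes from.

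Concretely, first I would unfold the definitions. For fixed $\sigma,\sigma'\in\CC$ and $j,j'\in\ZZZ$, the Toepliz assumption (\ref{eq:2.16}) gives $A_{\sigma,j}^{\sigma',j'}(\f)=\sum_{\ell\in\ZZZ^d}A_{\sigma,j}^{\sigma',j'}(\ell)\,e^{i\ell\cdot\f}$, so by Cauchy--Schwarz
\begin{equation*}
|A_{\sigma,j}^{\sigma',j'}(\f)|^{2}\le\Bigl(\sum_{\ell}\langle\ell\rangle^{-2\gots_0}\Bigr)\Bigl(\sum_{\ell}\langle\ell\rangle^{2\gots_0}|A_{\sigma,j}^{\sigma',j'}(\ell)|^{2}\Bigr)=:C(\gots_0)^{2}\sum_{\ell}\langle\ell\rangle^{2\gots_0}|A_{\sigma,j}^{\sigma',j'}(\ell)|^{2}.
\end{equation*}
The crucial point is that $\sum_{\ell\in\ZZZ^d}\langle\ell\rangle^{-2\gots_0}<\infty$ because $2\gots_0=d+2>d$, and this defines the finite constant $C(\gots_0)$.

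Next, I would plug this estimate into the decay norm of $A(\f)$ as an operator on $\mathbf{H}^{s}_{x}$ (indexed only by $j\in\ZZZ$):
\begin{equation*}
|A(\f)|_{s}^{2}=\sup_{\sigma,\sigma'}\sum_{h_j\in\ZZZ}\langle h_j\rangle^{2s}\sup_{j-j'=h_j}|A_{\sigma,j}^{\sigma',j'}(\f)|^{2}\le C(\gots_0)^{2}\sup_{\sigma,\sigma'}\sum_{h_j,\ell}\langle h_j\rangle^{2s}\langle\ell\rangle^{2\gots_0}\sup_{j-j'=h_j}|A_{\sigma,j}^{\sigma',j'}(\ell)|^{2},
\end{equation*}
after pulling the sup over $j-j'=h_j$ inside the $\ell$-sum (which only increases the right-hand side). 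Using the elementary bound $\langle h_j\rangle^{s}\langle\ell\rangle^{\gots_0}\le\langle(h_j,\ell)\rangle^{s+\gots_0}$ (valid for $s,\gots_0\ge 0$ since $\langle h_j\rangle$ and $\langle\ell\rangle$ are both $\le\langle(h_j,\ell)\rangle$), the right-hand side is dominated by $C(\gots_0)^2|A|_{s+\gots_0}^{2}$, once we identify $\ell$ with the time-shift index $h_p=p-p'$ and recall that for a Toepliz matrix the sup over $(j,p)-(j',p')=(h_j,h_p)$ in definition (\ref{decayham}) reduces to the sup over $j-j'=h_j$ of $|A_{\sigma,j}^{\sigma',j'}(h_p)|$. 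This gives the desired inequality.

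I do not foresee any genuine obstacle: the argument is a one-line Cauchy--Schwarz plus bookkeeping on the index $h=(h_j,h_p)\in\ZZZ\times\ZZZ^d$. The only point requiring a little care is the interchange of sup and sum in the step where we pass from $\sup_{j-j'=h_j}\sum_\ell$ to $\sum_\ell\sup_{j-j'=h_j}$; this is valid because all terms are nonnegative, and it is the step that effectively trades off the freedom of picking a different optimizing pair $(j,j')$ for each $\ell$ against the (bounded) loss in the constant. No hypotheses beyond the Toepliz structure and $\gots_0>d/2$ are needed.
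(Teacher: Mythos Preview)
Your argument is correct and complete: the Cauchy--Schwarz step with weight $\langle\ell\rangle^{\pm\gots_0}$, the sup--sum interchange, and the identification of $\ell$ with the time-shift index $h_p$ in the Toepliz structure all go through exactly as you describe. The paper itself does not prove this lemma but simply refers to \cite{FP} and \cite{BBM}; your direct computation is the standard proof one finds there.
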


\begin{proof} See \cite{FP} or \cite{BBM}.
\end{proof}

\begin{defi}\label{def:smooth} {\bf (Smoothing operator)}
Given $N\in\NNN$, we the define the \emph{smoothing operator} $\Pi_{N}$ as
\begin{equation}\label{smoothop}
(\Pi_{N}A)_{\s,j,\ell }^{\s',j',\ell'}=\left\{
\begin{aligned}
& A_{\s,j,\ell}^{\s',j',\ell} \,,\quad |\ell-\ell'|\leq N,\\
& 0 \quad  {\rm otherwise}
\end{aligned}\right.
\end{equation} 
\end{defi}

\begin{lemma}

Let $\Pi_{N}^{\perp}:=\uno-\Pi_{N}$,

 if $A=A(\la)$ is a Lipschitz family $\la\in\Lambda$, then
\begin{equation}\label{eq:2.22}
|\Pi_{N}^{\perp}A|_{s,\g}\leq N^{-\be}|A|_{s+\be,\g}, \quad \be\geq0.
\end{equation}
\end{lemma}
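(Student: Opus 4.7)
The plan is to unwind the definitions and use the trivial observation that $\Pi_N^{\perp}$ only keeps matrix entries whose ``time frequency gap'' $|\ell-\ell'|$ exceeds $N$, so we may trade $\langle h\rangle^{-2\be}$ for $N^{-2\be}$ under the sum.

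More precisely, fix $\oo\in\Lambda$ and write $B(\oo):=\Pi_N^{\perp}A(\oo)=(\uno-\Pi_N)A(\oo)$. By \eqref{smoothop} we have
\begin{equation*}
B_{\s,j,\ell}^{\s',j',\ell'}=\begin{cases} A_{\s,j,\ell}^{\s',j',\ell'} & \text{if }|\ell-\ell'|>N,\\ 0 & \text{otherwise.}\end{cases}
\end{equation*}
Writing $k=(j,\ell),k'=(j',\ell')$ and $h=k-k'=(j-j',\ell-\ell')\in\ZZZ\times\ZZZ^d$, for every $h$ with $|\ell-\ell'|\le N$ the quantity $\sup_{k-k'=h}|B_{\s,k}^{\s',k'}|$ vanishes, whereas for $|\ell-\ell'|>N$ one has $\langle h\rangle\ge|\ell-\ell'|>N$, and therefore $\langle h\rangle^{-2\be}\le N^{-2\be}$ for any $\be\ge 0$.

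Plugging this into the definition \eqref{decayham} of the decay norm,
\begin{equation*}
|B|_s^2=\sup_{\s,\s'}\sum_{\substack{h\in\ZZZ\times\ZZZ^d\\|\ell-\ell'|>N}}\langle h\rangle^{2s}\sup_{k-k'=h}|A_{\s,k}^{\s',k'}|^2\le N^{-2\be}\sup_{\s,\s'}\sum_{h\in\ZZZ\times\ZZZ^d}\langle h\rangle^{2(s+\be)}\sup_{k-k'=h}|A_{\s,k}^{\s',k'}|^2=N^{-2\be}|A|_{s+\be}^2,
\end{equation*}
which yields $|\Pi_N^{\perp}A|_s^{sup}\le N^{-\be}|A|_{s+\be}^{sup}$. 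Since $\Pi_N$ is a $\oo$-independent linear projection, the same computation applied to the Lipschitz increment $\Pi_N^{\perp}(A(\oo_1)-A(\oo_2))/|\oo_1-\oo_2|$ gives $|\Pi_N^{\perp}A|_s^{lip}\le N^{-\be}|A|_{s+\be}^{lip}$. Adding the sup bound to $\g$ times the Lipschitz bound produces the claimed $|\Pi_N^{\perp}A|_{s,\g}\le N^{-\be}|A|_{s+\be,\g}$.

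There is no genuine obstacle here: the estimate is a direct consequence of the Fourier-side definition of $\Pi_N$ together with the gain $\langle h\rangle^{-2\be}\le N^{-2\be}$ on the support of $\Pi_N^{\perp}$. The only point that deserves a line of justification is the passage from the sup norm to the $\g$-weighted norm, which follows because $\Pi_N^{\perp}$ commutes with taking differences in $\oo$.
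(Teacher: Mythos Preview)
Your proof is correct and is exactly the standard argument: restrict the sum in the decay norm to $|\ell-\ell'|>N$, trade $\langle h\rangle^{-2\be}$ for $N^{-2\be}$, and then use linearity and $\oo$-independence of $\Pi_N^{\perp}$ to pass to the Lipschitz part. The paper itself does not give a proof but refers to \cite{FP} and \cite{BBM}, where precisely this computation is carried out, so your approach coincides with the intended one.
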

\begin{proof}
 See \cite{FP} or \cite{BBM}.
\end{proof}

\begin{lemma}\label{multiop}
Consider $a=\sum_{i}a_{i}e_{i}\in H^{s}(\TTT^{b})$. Then the multiplication operator by the function $a $, i.e.
$h\mapsto a h$ is represented by the matrix $A$ defined as $A_{i}^{i'}=a_{i-i'}$. One has
\begin{equation}\label{multiop1} 
|A|_{s}=||a||_{s}.
\end{equation}
Moreover, if $a=a(\la)$ is a Lipschitz family of functions, then
\begin{equation}\label{multiop2} 
|A|_{s,\g}=||a||_{s,\g}.
\end{equation}
\end{lemma}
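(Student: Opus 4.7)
The plan is to unpack the definition of the decay norm \eqref{decayham} and observe that, for a multiplication operator, the matrix entry $A_i^{i'}$ depends only on the difference $i - i'$, so the inner supremum in \eqref{decayham} becomes trivial and the whole norm collapses exactly onto the Sobolev norm of the symbol $a$.

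First I would derive the matrix representation. Writing $h = \sum_{i'} h_{i'} e_{i'}$ and $a = \sum_j a_j e_j$, one has $a h = \sum_{j,i'} a_j h_{i'} e_{j+i'}$, so by setting $i = j + i'$ the Fourier coefficients of $ah$ are $(ah)_i = \sum_{i'} a_{i-i'} h_{i'}$; this identifies $A_i^{i'} = a_{i-i'}$. Next, I substitute this into \eqref{decayham}: the multiplication operator acts diagonally in the component $\sigma\in\CC$ (it is a scalar), so the $\sup_{\sigma,\sigma'}$ is attained on the diagonal, and
\begin{equation*}
|A|_s^2 = \sum_{h\in\ZZZ^b}\langle h\rangle^{2s}\sup_{i-i'=h}|a_{i-i'}|^2 = \sum_{h\in\ZZZ^b}\langle h\rangle^{2s}|a_h|^2 = \|a\|_s^2,
\end{equation*}
because along the level set $\{i-i'=h\}$ the entry $a_{i-i'}=a_h$ is constant, so the inner supremum degenerates to a single value. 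This proves \eqref{multiop1}.

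For the Lipschitz claim \eqref{multiop2}, I would use linearity: the map $a\mapsto A$ sending a function to its multiplication operator is linear, so for $\oo_1\neq\oo_2$ the difference $A(\oo_1)-A(\oo_2)$ is itself the multiplication operator by $a(\oo_1)-a(\oo_2)$. Applying \eqref{multiop1} to this difference gives $|A(\oo_1)-A(\oo_2)|_s=\|a(\oo_1)-a(\oo_2)\|_s$; dividing by $|\oo_1-\oo_2|$, taking the sup, and combining with the sup-norm identity yields $|A|_{s,\g}=\|a\|_{s,\g}$.

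There is no real obstacle here: the entire content is that the ``sup along anti-diagonals'' appearing in \eqref{decayham} is tautologically redundant for convolution-type (T\"opliz in all variables) matrices. The only point worth checking carefully is that the $\sup_{\sigma,\sigma'\in\CC}$ in \eqref{decayham} does not produce off-diagonal contributions for a scalar multiplication operator, which follows immediately since $A_{\sigma,k}^{\sigma',k'} = \delta_{\sigma\sigma'}\,a_{k-k'}$ when $a$ is a scalar function.
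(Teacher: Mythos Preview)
Your proof is correct. The paper itself does not give a proof of this lemma; it is stated as an elementary fact and the text moves directly on to the next result. Your argument---that the matrix entries $A_i^{i'}=a_{i-i'}$ are constant along each anti-diagonal $\{i-i'=h\}$, so the inner supremum in \eqref{decayham} degenerates and the decay norm reduces exactly to $\|a\|_s$, with the Lipschitz version following by linearity of $a\mapsto A$---is precisely the computation the paper is implicitly invoking. The remark about the $\sigma,\sigma'$ blocks is also correct and worth including, since the decay norm \eqref{decayham} is formulated on the product space while the lemma is phrased for scalar multiplication.
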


We need some technical lemmata on finite dimensional matrices.
\begin{lemma}\label{finitema}
Given a matrix $M\in\MM_{n}(\CCC)$, where $\MM_{n}(\CCC)$ is the space of the $n\times n$ matrix with coefficients in $\CCC$,
we define the norm $\|M\|_{\infty}:=\max_{i,j=1,\ldots,n} \{A_{i}^{j}\}$. One has
\begin{equation}\label{finitem1}
\|M\|_{\infty}\leq \| M\|_{2}\leq n\|M\|_{\infty},
\end{equation}
where $\|\cdot\|_{2}$ is the $L^{2}-$operatorial norm.
\end{lemma}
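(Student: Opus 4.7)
The plan is to prove the two inequalities separately using only elementary linear algebra, since both are classical facts about the relationship between entry-wise bounds and the $L^2$-operator norm on finite-dimensional matrices.

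For the lower bound $\|M\|_\infty \leq \|M\|_2$, the idea is to extract any single entry $M_i^{\,j}$ by testing $M$ against a standard basis vector. Letting $\{e_k\}_{k=1}^n$ be the canonical basis of $\CCC^n$, I would write $M_i^{\,j} = \langle M e_j, e_i\rangle$ and apply Cauchy–Schwarz together with the definition of $\|\cdot\|_2$ to get $|M_i^{\,j}|\leq \|M e_j\|_2 \|e_i\|_2 \leq \|M\|_2$. Taking the maximum over $i,j$ gives the claim.

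For the upper bound $\|M\|_2 \leq n\|M\|_\infty$, the most direct route is to dominate the operator norm by the Frobenius norm. For any $v\in\CCC^n$ with $\|v\|_2=1$, one estimates
\begin{equation*}
\|Mv\|_2^2 = \sum_{i=1}^n \Bigl|\sum_{j=1}^n M_i^{\,j} v_j\Bigr|^2 \leq \sum_{i=1}^n \Bigl(\sum_{j=1}^n |M_i^{\,j}|^2\Bigr)\|v\|_2^2 \leq n^2 \|M\|_\infty^2,
\end{equation*}
by Cauchy–Schwarz in the inner sum and the trivial bound $|M_i^{\,j}|\leq \|M\|_\infty$. Taking the supremum over unit vectors $v$ yields the desired estimate.

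There is no real obstacle here; both bounds are textbook and only the constant $n$ in the second inequality has any content, reflecting the fact that in finite dimension the entry-wise sup-norm and the operator norm are equivalent with a dimension-dependent constant. The lemma is stated in this form precisely because in the sequel it will be applied to the $2\times 2$ blocks $\Omega_{\s,\underline{j}}$ of \eqref{bambaham}, where $n=2$ is a harmless absolute constant.
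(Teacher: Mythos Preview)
Your proof is correct and complete; both inequalities follow exactly as you argue, via Cauchy--Schwarz and the Frobenius bound. The paper itself omits the argument entirely (``It follows straightforward by the definitions''), so your proposal simply supplies the standard details behind that one-line dismissal.
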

\begin{proof}
It follow straightforward by the definitions.
\end{proof}

\begin{lemma}\label{finitema3}
Take two self adjoint matrices $A,B\in\MM_{n}(\CCC)$. Let us define the operator $M : \MM_{n}(\CCC)\to \MM_{n}(\CCC)$
\begin{equation}\label{finitema4}
M : C \mapsto MC:= AC-CB.
\end{equation}
Let $\la_{j}$ and $\be_{j}$ for $j=1,\ldots,n$ be the eigenvalues respectively of $A$ and $B$. Then,
for any $R\in \MM_{n}(\CCC)$
one has that the equation $MC=R$ has a solution with
\begin{equation}\label{finitema5}
\|C\|_{\infty}\leq K \left(\min_{i,j=1,\ldots,n}\{\la_{j}-\be_{i}\}\right)^{-1}\|R\|_{\infty},
\end{equation}
where the constant $K$ depends only on $n$.
\end{lemma}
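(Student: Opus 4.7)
The plan is to use the spectral theorem to reduce the equation $AC - CB = R$ to a diagonal Sylvester equation, solve it componentwise, and then translate the $L^2$-operator bound back into the $\|\cdot\|_\infty$ bound via Lemma \ref{finitema}.

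First, since $A$ and $B$ are self-adjoint, there exist unitary matrices $U, V\in\MM_n(\CCC)$ such that $A = U D_A U^*$ and $B = V D_B V^*$, where $D_A = \mathrm{diag}(\la_1,\dots,\la_n)$ and $D_B=\mathrm{diag}(\be_1,\dots,\be_n)$. Setting $\tilde C := U^* C V$ and $\tilde R := U^* R V$, the equation $AC-CB = R$ is equivalent to the diagonal Sylvester equation $D_A \tilde C - \tilde C D_B = \tilde R$, which componentwise reads
\begin{equation*}
(\la_i - \be_j)\, \tilde C_i^{\,j} = \tilde R_i^{\,j}, \qquad i,j=1,\dots,n.
\end{equation*}
Under the non-resonance assumption that $\min_{i,j}|\la_i - \be_j|>0$, this is uniquely solvable and yields $|\tilde C_i^{\,j}| \leq (\min_{i,j}|\la_i-\be_j|)^{-1}\,|\tilde R_i^{\,j}|$, hence
\begin{equation*}
\|\tilde C\|_\infty \leq \bigl(\min_{i,j}|\la_i - \be_j|\bigr)^{-1} \|\tilde R\|_\infty .
\end{equation*}

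Second, I transfer this back to $C$ and $R$. Since $U,V$ are unitary, the $L^2$-operator norm is preserved: $\|C\|_2 = \|U\tilde C V^*\|_2 = \|\tilde C\|_2$ and likewise $\|\tilde R\|_2 = \|R\|_2$. Combining with Lemma \ref{finitema}, which gives $\|M\|_\infty \leq \|M\|_2 \leq n\|M\|_\infty$ for any $M\in\MM_n(\CCC)$, I obtain
\begin{equation*}
\|C\|_\infty \leq \|C\|_2 = \|\tilde C\|_2 \leq n \|\tilde C\|_\infty \leq n\bigl(\min_{i,j}|\la_i-\be_j|\bigr)^{-1}\|\tilde R\|_\infty \leq n^2 \bigl(\min_{i,j}|\la_i-\be_j|\bigr)^{-1}\|R\|_\infty.
\end{equation*}
This proves the claim with the explicit constant $K = n^2$, which depends only on $n$ as required. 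There is no real obstacle: the only subtlety is tracking the two conversions between the $\infty$-norm and the $L^2$-operator norm, which costs a factor $n$ each time and is precisely what Lemma \ref{finitema} was set up to handle.
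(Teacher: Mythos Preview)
Your proof is correct. Both your argument and the paper's reduce to the same key fact---that the Sylvester operator $C\mapsto AC-CB$ has eigenvalues $\lambda_i-\beta_j$ when $A,B$ are self-adjoint---and both invoke Lemma~\ref{finitema} to pass between $\|\cdot\|_\infty$ and $\|\cdot\|_2$; the difference is purely in packaging. The paper vectorizes via the Kronecker product, rewriting $MC=R$ as $(A\otimes\uno-\uno\otimes B^{T})\TT(C)=\TT(R)$ on $\CCC^{n^2}$ and then bounding the $L^2$-operator norm of the inverse by $(\min_{i,j}|\lambda_i-\beta_j|)^{-1}$, whereas you diagonalize $A$ and $B$ directly by unitaries and solve the resulting diagonal equation entrywise. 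Your route is slightly more elementary (no tensor-product formalism) and makes the constant $K=n^2$ explicit; the paper's route has the advantage of treating the Sylvester operator as a single linear map on $\CCC^{n^2}$, which can be convenient if one later needs to track Lipschitz dependence or perturbations of $A,B$ uniformly.
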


\begin{proof}
Define the operator $\TT : \MM_{n}(\CCC) \to \CCC^{n^{2}}$ that associate to a matrix the  vector of its components.
Then the equation $MC=R$ can be rewritten as 
$$
(A\otimes \uno-\uno\otimes B^{T})\TT(C)=\TT(R), 
$$
where $\uno$ is the $n\times n$ identity. Then, by using Lemma \ref{finitema}, one has
\begin{equation}\label{finitema6}
\begin{aligned}
\|C\|_{\infty}&=\max_{i=1,\ldots,n^{2}}\|[\TT(C)]_{i}\|_{\infty}\leq n \|(A\otimes \uno-\uno\otimes B^{T})^{-1}\|_{\infty}\max_{i=1,\ldots,n^{2}}|[\TT(R)]_{i}|\\
&\leq n^{2}\|(A\otimes \uno-\uno\otimes B^{T})^{-1}\|_{2}\max_{i=1,\ldots,n^{2}}|[\TT(R)]_{i}|\leq n^{2} c  \left(\min_{i,j=1,\ldots,n}\{\la_{j}-\be_{i}\}\right)^{-1}\|R\|_{\infty},
\end{aligned}
\end{equation}
that is the \eqref{finitema5}.
\end{proof}

\subsubsection{Hamiltonian operators}

Here we give a characterization, in terms of the Fourier coefficients, of hamiltonian linear operators.
This is important since we want to show that our algorithm is closed 
for such class of operators.

\begin{lemma}\label{lemFouham} Consider a linear operator $B:=(i\s R_{\s}^{\s'}) :{\bf H}^{s}\to {\bf H}^{s}$. Then,
$B$ is hamiltonian with respect to the symplectic form (\ref{simplectic}) if and only if
\begin{equation}\label{eq:2.25ham}
R_{\s,h}^{\s',h'}=R_{-\s',h'}^{-\s,h}, \qquad \ol{R_{\s,h}^{\s',h'}}=R_{\s',-h}^{\s,-h'}
\end{equation}


\end{lemma}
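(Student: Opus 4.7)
The plan is to unpack the definition of Hamiltonianness in the doubled complex coordinates of ${\bf H}^s$. By Definition \ref{hamilt} and Remark \ref{hamilt4}, $B$ is Hamiltonian with respect to $\Omega$ in (\ref{simplectic}) if and only if (i) $B$ preserves the constraint subspace $\calU$ (equivalently, $B$ maps ${\bf H}^s$ to itself), and (ii) $B$ is infinitesimally symplectic on ${\bf H}^s$, i.e.\ $\Omega(Bw_1,w_2) + \Omega(w_1,Bw_2) = 0$ for all $w_1,w_2$. Granted (i) and (ii), the quadratic form $H(w) := \frac{1}{2}\Omega(Bw,w)$ is a real-valued Hamiltonian whose symplectic gradient is $B$. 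Both (i) and (ii) will be translated into identities on the Fourier matrix elements $R_{\sigma,h}^{\sigma',h'}$, matching the two equations of (\ref{eq:2.25ham}).

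For (ii), I extend $\Omega$ to the full doubled space holomorphically: $\Omega(w_1,w_2) = \frac{1}{2i}\int_\TTT (w^1_+ w^2_- - w^1_- w^2_+)\,dx$, which restricts to (\ref{simplectic}) on $\calU$ via the identification $w = (u,\bar u)\leftrightarrow u$. Parseval yields the compact form $\Omega(w_1,w_2) = \frac{1}{2i}\sum_{\sigma,h}\sigma\, w^1_{\sigma,h}\,w^2_{-\sigma,-h}$. Substituting $B_{\sigma,h}^{\sigma',h'} = i\sigma R_{\sigma,h}^{\sigma',h'}$ (so that the factor $i\sigma$ from $B$ combines with $\sigma/(2i)$ of $\Omega$ to give $\frac{1}{2}$) turns the infinitesimal-symplecticity identity into a polynomial identity in the independent coordinates $w^1_{\sigma,h}, w^2_{\sigma',h'}$. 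After a relabelling of the summation indices in the second term (essentially $(\sigma,h,\sigma',h')\mapsto(-\sigma',h',-\sigma,h)$) and matching coefficients, one arrives at the symmetry-type identity $R_{\sigma,h}^{\sigma',h'} = R_{-\sigma',h'}^{-\sigma,h}$, which is the first equation in (\ref{eq:2.25ham}).

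For (i), I impose $(Bw)_+ = \overline{(Bw)_-}$ for $w = (u,\bar u)\in\calU$. Expanding both sides in Fourier using $\bar u_h = \overline{u_{-h}}$ produces an identity between two formal power series in $\{u_{h'}\}$ and $\{\overline{u_{h'}}\}$, which are linearly independent as free data. Matching coefficients of $u_{h'}$ and of $\overline{u_{h'}}$ separately, and substituting $B_\sigma^{\sigma'} = i\sigma R_\sigma^{\sigma'}$, produces identities relating $R_{\sigma,h}^{\sigma',h'}$ to $\overline{R_{-\sigma,-h}^{-\sigma',-h'}}$. Combined with the symmetry already derived in (ii), these rearrange into the second equation in (\ref{eq:2.25ham}), namely $\overline{R_{\sigma,h}^{\sigma',h'}} = R_{\sigma',-h}^{\sigma,-h'}$.

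Each of these computations is reversible. Conversely, assuming (\ref{eq:2.25ham}), the operator $B$ preserves $\calU$ and the bilinear form $\Omega(B\cdot,\cdot)$ is symmetric, whence $H(w)=\frac{1}{2}\Omega(Bw,w)$ is real-valued on $\calU$ and generates $B$ as its Hamiltonian vector field. The main technical subtlety will be tracking the signs from three interacting sources: the $\sigma$ factor built into $B_\sigma^{\sigma'} = i\sigma R_\sigma^{\sigma'}$, the antisymmetry of $\Omega$, and the Fourier-index flip $h\mapsto -h$ induced by complex conjugation; these must conspire to yield precisely the symmetry patterns in (\ref{eq:2.25ham}) rather than any of their sign-permuted cousins.
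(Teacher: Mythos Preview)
Your approach is correct and is the same computation as the paper's, organized from the vector-field side rather than the Hamiltonian side. The paper factors through the matrix $Q$ of the quadratic Hamiltonian $H=\sum Q_{\sigma,h}^{\sigma',h'} z_h^\sigma z_{h'}^{\sigma'}$ (with the convention $\overline{z_h^\sigma}=z_{-h}^{-\sigma}$): reality and symmetry of $H$ force $\overline{Q_{\sigma,h}^{\sigma',h'}}=Q_{-\sigma,-h}^{-\sigma',-h'}$ and $Q_{\sigma,h}^{\sigma',h'}=Q_{\sigma',h'}^{\sigma,h}$, and then the single index shift $R_\sigma^{\sigma'}=Q_{-\sigma}^{\sigma'}$ (coming from $B=iJQ$) yields (\ref{eq:2.25ham}) immediately. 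Your conditions (i) preservation of $\calU$ and (ii) infinitesimal symplecticity are exactly reality and symmetry of $Q$ read on $B$; the paper's detour through $Q$ replaces your two Fourier manipulations by one substitution, while your formulation makes the geometric content---compatibility with the real structure and skew-adjointness with respect to $\Omega$---more explicit.
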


\begin{proof}
In coordinates, an Hamiltonian function for such operator, is a quadratic form real and symmetric,
$$ 
H=\sum_{\substack{\s,\s'\in\CC \\ h,h'\in\ZZZ^{d+1}}}Q_{\s,h}^{\s',h'}z^{\s}_{h}z^{\s'}_{h'}, 
$$ 
where we denote $\ol{z^{\s}_{h}}=z^{-\s}_{-h}$ and $h=(j,p), h'=(j',p')$. This means that, $Q$ satisfies
\begin{equation}\label{antonio}
\ol{Q_{\s,h}^{\s',h'}}=Q_{-\s,-h}^{-\s',-h'}, \quad {Q_{\s,h}^{\s',h'}}=Q_{\s',h'}^{\s,h}
\end{equation}
%
Now, since the hamiltonian vector field associated to the Hamiltonian $H$ is given by
$B=iJ Q$, then 
writing
$$
B=i\left(\begin{matrix}1&0 \\  0&-1\end{matrix}\right)\left(\begin{matrix}1&0 \\  0&-1\end{matrix}\right)JQ
$$
we set
$R_{\s}^{\s'}=Q_{-\s}^{\s'}$ follow the (\ref{eq:2.25ham}).
\end{proof}

Since the operator $\calL_{\infty}$ in Theorem \ref{KAMalgorithmham}  is hamiltonian,
thanks to the characterization in Lemma \ref{lemFouham}
 we can note that the blocks $\Omega_{\s,\und{j}}$ defined in 
\eqref{bambaham} as purely imaginary eigenvalues. 

\subsection{Reduction algorithm}

We prove Theorem \ref{KAMalgorithmham} by means of the following 
Iterative Lemma on the class of linear operators
\begin{defi}\label{classeham}
\begin{equation}\label{eq:4.11ham}
\oo\cdot\del_{\f}\uno+\DD+\RR : \; {\bf H}^{0}\to {\bf H}^{0},
\end{equation}
where $\oo$ is as in \eqref{dio}, and
\begin{equation}\label{eq:4.12ham}
\begin{aligned}
\DD=diag_{(\s,j)\in\CC\times\ZZZ }\{\Omega_{\s,\underline{j}} \}:&=diag_{(\s,j)\in\CC\times \ZZZ}
\left\{\left(\begin{matrix}\Omega_{\s,j}^{\phantom{g}j} & \Omega_{\s,j}^{-j} \\
\Omega_{\s,-j}^{j} & \Omega_{\s,-j}^{-j} \end{matrix}\right)\right\},
 \end{aligned}
\end{equation}
where
\begin{equation}\label{eq:4.12bisham}
\begin{aligned}
\Omega_{\s,j}^{\phantom{g} j}&:=-i \s m_{2}j^{2}-i\s|m_{1}|j+i\s m_{0}+i\s r_{j}^{j},\\
\Omega_{\s,j}^{-j}&:=i\s r_{j}^{-j},
\end{aligned}
\end{equation}
and
\begin{equation}\label{eq:4.12trisham}
\begin{aligned}
m_{2},m_{0}\in\RRR, \;\; m_{1}\in i\RRR, \quad \ol{r_{j}^{k}}=r_{k}^{j}, \quad r_{j}^{k}= O(\frac{\e}{\langle j\rangle})
\;\; k=- j, \;\;  r_{j}^{k}= O(\frac{\e}{\langle j\rangle}),\; k=j
\end{aligned}
\end{equation}
for any $(\s,j)\in\CC\times\NNN$,
with $\RR$ is a T\"opliz in time Hamiltonian operator such that
$\RR_{\s}^{\s}=O(\e\del_{x}^{-1})$ and $\RR_{\s}^{-\s}=O(\e)$ for $\s=\pm1$.
Moreover we set $\mu_{\s,j}$ for $\s\in\CC$ the eigenvalues of $\Omega_{\s,\und{j}}$.
\end{defi}
%

 Note that the operator
$\calL_{7}$ has the form (\ref{eq:4.11ham}) and satisfies the (\ref{eq:4.12ham}) and (\ref{eq:4.12bisham}) as well as the estimates
(\ref{eq:B15aham}) and (\ref{eq:B15bham}). 
Note moreover that for $\calL_{7}$ the matrix $\DD$ is completely diagonal.
This fact is not necessary for our analysis, and it cannot be preserved during the algorithm.

Define
\begin{equation}\label{eq:4.13ham}
N_{-1}:=1, \quad N_{\nu}:=N_{\nu-1}^{\chi}=N_{0}^{\chi^{\n}}, \;\;\forall\;\n\geq0, \;\;\chi=\frac{3}{2}.
\end{equation}
and
\begin{equation}\label{eq:4.14ham}
\al=7\tau+3, \qquad \h_{3}:=\h_{1}+\be,
\end{equation}
where $\h_{1}$ is defined in (\ref{eq:3.2.0ham}) and $\be=7\tau+5$.
 Consider
 $ \calL_7=\calL_0$.
Note that $\calL_{7}$ belongs to the class of Definition \ref{classeham}. Indeed in this case we have
that 
\begin{equation*}
\RR_{0}:=\left(\begin{matrix}0 & q_{0}(\f,x) \\ -\bar{q}_{0}(\f,x) & 0 \end{matrix}\right) + \RR,
\end{equation*}
(see \eqref{eq:3.5.9ham}) and $\RR$ is a pseudo differential operator of order $O(\del_{x}^{-1})$.
We have the following lemma:

\begin{lemma}\label{stimeRham}
The operator $\RR$ defined in Lemma \ref{lem:3.88} satisfies the bounds
\begin{subequations}\label{stimeR1ham}
\begin{align}
|\RR({ u})|_{s,\g}&\leq \e C(s) (1+||{ u}||_{s+\h_{1},\g}), \label{eq:B15aaham}\\
|d_{{ u}}\RR({ u})[{ h}]|_{s}&\leq \e C(s)\left(||{ h}||_{\gots_{0}+\h_{1}}+
||{ h}||_{s+\h_{1}}+||{ u}||_{s+\h_{1}}||{ h}||_{\gots_{0}}
\right),\label{eq:B15bbham}
\end{align}
\end{subequations}
where $\h_{1}$ is defined in Lemma \ref{lem:3.88}.
\end{lemma}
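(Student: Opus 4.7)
The plan is to revisit the seven-step construction of $\calL_7$ in Lemma \ref{lem:3.88} and upgrade the operator-norm bounds \eqref{eq:B15aham}--\eqref{eq:B15bham} to the decay-norm bounds \eqref{stimeR1ham}. The remainder $\RR$ is a finite composition of (i) multiplication operators by the coefficients $a_j^{(i)}, b_j^{(i)}, \x, \al, \be, z, w, k, \Gamma$ produced along the way, (ii) the scalar Fourier multipliers $\del_x^{\pm 1}$, $\del_{xx}$ and the regularizing symbol $\Upsilon=(1-\del_{xx})^{-1}\del_x/i$, and (iii) the inverse transformations $\TT_i^{-1}$. For ingredients of type (i) Lemma \ref{multiop} identifies the decay norm with the Sobolev norm, so the tame estimates \eqref{eq:3.10aham}--\eqref{eq:3.10bham} on the coefficients and the analogous estimates \eqref{eq:B24aham}--\eqref{eq:B24cham}, \eqref{eq:B34bham}--\eqref{eq:B34cham} on $\al,\rho,\x,\be,w,z,k,\Gamma$ translate verbatim into decay-norm bounds. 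Ingredients of type (ii) are diagonal in Fourier, so their decay norm is trivially controlled and in particular $\Upsilon$ gains one derivative, which is what ensures $\RR=O(\del_x^{-1})$.

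Once each building block is shown to satisfy a tame decay-norm estimate, the bound \eqref{eq:B15aaham} on the composition follows from the interpolation inequality of Lemma \ref{bubbole}, applied inductively as in the classical scheme: at every step the low-norm piece is absorbed in the smallness factor $\|u\|_{\gots_0+\h_1,\g}\leq \e\g^{-1}\ll 1$, while the high-norm piece accumulates a linear factor $\|u\|_{s+\h_1,\g}$. The total loss of regularity is exactly the constant $\h_1=d+2\gots_0+10$ introduced in \eqref{eq:3.2.0ham}, since each of the seven steps loses at most a fixed number of derivatives (one for the reparametrizations in $\f,x$, two for the diffeomorphism of step 2, and the $\gots_0$ coming from the Sobolev embedding used in Lemma \ref{1.4} to pass from decay norms in $(\f,x)$ to bounds on $\f$-dependent operators).

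The derivative bound \eqref{eq:B15bbham} is obtained by differentiating the explicit expressions for the $\TT_i$ and their inverses in $u$ via the chain rule; each derivative of a coefficient in $u$ satisfies the tame bound \eqref{eq:3.10bham}, and the composition is again controlled by Lemma \ref{bubbole}, with an extra term $\|u\|_{s+\h_1}\|h\|_{\gots_0}$ arising from the super-linear coupling in the interpolation estimate. The only slightly non-trivial piece is the contribution of step 6, where one must check that the commutator $[\Upsilon,a]$ and the higher terms in the Lie expansion \eqref{megaop2} define operators whose matrix elements $\RR_{\s,j,\ell}^{\s',j',\ell'}$ decay in $\langle (j-j',\ell-\ell')\rangle$ at the required rate; this is a direct computation on the symbols since $\Upsilon$ is a Fourier multiplier and multiplication by $a$ shifts only in the spatial index.

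I expect the main obstacle to be purely bookkeeping: keeping track that the loss $\h_1$ suffices for \emph{all} seven conjugations simultaneously, and that Lipschitz dependence on $\oo$ propagates through the inversions $\TT_i^{-1}$ via Lemma \ref{inverse}. No new idea is needed beyond what is already used in Section 3 of \cite{FP}, so the proof is omitted in the paper by appealing to that reference.
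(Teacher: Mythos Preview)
Your plan is correct and follows the same strategy as the paper: reduce to Lemma \ref{multiop} for multiplication operators, control the pseudo-differential piece $\Upsilon$ directly as a Fourier multiplier, and chain everything with Lemma \ref{bubbole}. The paper's proof is shorter only because it exploits one observation you do not state explicitly: the remainder $\RR_i$ vanishes identically for $i=1,\dots,5$ (see Remarks \ref{rmk24}, \ref{rmk25}, \ref{rmk35ham}, \ref{rmk45}), so the zero-order part of $\calL_5$ is a pure multiplication operator by $a_0^{(5)},b_0^{(5)}$ and its decay norm is already the Sobolev norm by Lemma \ref{multiop}. Hence the decay-norm analysis only has to be carried out for $\TT_6$ and $\TT_7$, not for all seven steps; the dependence on $\x,\al,\be,z$ that you list is already absorbed into the Sobolev bounds \eqref{eq:3.2.7bisham} on $a_0^{(5)},b_0^{(5)}$ and need not be revisited. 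With that simplification your argument and the paper's coincide.
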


\begin{proof} By the proof of Lemma \ref{lem:3.88} we have that in the operator $\calL_{5}$
in \eqref{eq:3.5.9bisham} the remainder is just a multiplication operator by the functions $a_{0}^{(5)}, b_{0}^{(5)} $. Hence by Remark \ref{multiop} one has that 
the decay norm of the operator is finite. We need to check that the transformation $\TT_{6}$ has a finite decay norm.
First of all we have that the function $w$ in \eqref{eq:3.6.6ham} satisfies the following estimates:
\begin{equation}\label{stimeR2ham}
\begin{aligned}
||w||_{s,\g}&\leq_{s}\e(1+||u||_{s+\tau_{1},\g}),\\
||\del_{u}w(u)[h]||_{s}&\leq_{s} \e(||h||_{s+\tau_{1}}+||u||_{s+\tau_{1}}||h||_{\tau_{1}}),
\end{aligned}
\end{equation}
with $\tau_{1}$ a constant depending only on the data of the problem and much small than $\h_{1}$.\footnote{to prove Lemma \ref{lem:3.88} one prove bounds like \eqref{eq:3.2.7ham} and \eqref{eq:3.2.7bisham} on the coefficients of each $\calL_{i}$ with loss of regularity $\tau_{i}$ at each step. The constant $\h_{1}$ of the Lemma is obtained by collecting together the loss of regularity of each step.}

The operator $\tilde{S}=\uno+w\Upsilon $ defined in \eqref{eq:3.6.2ham}  satisfies the following estimates in norm $|\cdot|_{s}$ defined in \eqref{decayham}:
\begin{equation}\label{stimeR3ham}
\begin{aligned}
|\tilde{S}-\uno|_{s,\g}&\leq_{s}\e(1+\|u\|_{s+\tau_{1},\g}),\\
|\del_{u}\tilde{S}(u)[h]|_{s}&\leq_{s} \e(\|h\|_{s+\tau_{1}}+\|u\|_{s+\tau_{1}}\|h\|_{\tau_{1}}),
\end{aligned}
\end{equation}
The \eqref{stimeR3ham} follow by the \eqref{stimeR2ham} and the fact that $|\Upsilon|_{s}\leq1$ using Lemma \ref{multiop}.
Clearly also the transformation $\TT_{6}$ defined in \eqref{megaop} satisfies the same estimates as in 
\eqref{stimeR3ham}. Hence using  Lemma \ref{bubbole} one has that the remainder $\tilde{R}$ of the operator $\calL_{6}$
in \eqref{pippopippoham} satisfies bounds like \eqref{stimeR1ham} with a different constant $\tau_{2}$ (possibly greater than $\tau_{1}$) instead of $\h_{1}$. Now the last transformation $\TT_{7}$ is a multiplication operator, then, by using
again Lemmata \ref{bubbole} and  \ref{multiop} one obtain the \eqref{stimeR1ham} on the remainder of the operator 
$\calL_{7}$ in \eqref{eq:3.7.7ham}.
\end{proof}

\begin{lemma}\label{teo:KAMham}
Let $q>\h_{1}+\gots_{0}+\be$. There exist constant $C_{0}>0$, $N_{0}\in\NNN$ large, such that
if
\begin{equation}\label{eq:4.15ham}
N_{0}^{C_{0}}\g^{-1}|\RR_{0}|_{\gots_{0}+\be} \leq1,
\end{equation}
then, for any $\nu\geq0$:

\noindent
$({\bf S1})_{\nu}$ There exists operators
\begin{equation}\label{eq:4.16ham}
\calL_{\nu}:=\oo\cdot\del_{\f} \uno +\DD_{\nu}+\RR_{\nu},\;\;
\DD_{\nu}={\rm diag}_{h\in\CC\times\ZZZ}\{\Omega_{\s,\underline{j}}^{\nu}\},
\end{equation}
\noindent
where
\begin{equation}\label{eq:4.16pippoham}
\Omega_{\s,\underline{j}}^{\nu}(\oo)=
\left(\begin{matrix}\Omega_{\s,j}^{\nu,j} & \Omega_{\s,j}^{\nu,-j} \\
\Omega_{\s,-j}^{\nu,j} & \Omega_{\s,-j}^{,\nu-j} \end{matrix}\right), 
\end{equation}
\noindent
and
\begin{equation*}
\begin{aligned}
\Omega_{\s,j}^{\nu, j}&:=-i \s m_{2}j^{2}-i\s|m_{1}|j+i\s m_{0}+i\s r_{j}^{\nu,j}=:\Omega_{\s,j}^{0,j}+i\s r_{j}^{\nu,j},\\
\Omega_{\s,j}^{\nu,-j}&:=i\s r_{j}^{\nu,-j}=:\Omega_{\s,j}^{0,-j}+i\s r_{j}^{\nu,-j},
\end{aligned}
\end{equation*}
with $(\s,j)\in \CC\times\ZZZ$, and defined for   $\la\in\Lambda_{\nu}^{\g}:=\Lambda_{\nu}^{\g}$,
with
$\Lambda^{\g}_{0}:=\Lambda_{o}$ and for $\nu\geq1$, %
\begin{equation}\label{eq:419ham}
\begin{aligned}
\Lambda_{\nu}^{\g}&:=\calP_{\nu}^{\g}(u)\cap\calO_{\nu}^{\g},\\
\SSSS_{\nu}^{\g}(u)&:=
\left\{\oo\in\Lambda_{\nu-1}^{\g} : \begin{array}{ll}
&|i\oo\cdot\ell\!+\! \mu_{h}^{\nu-1}(\oo)\!-\! \mu_{h'}^{\nu-1}(\oo)|\geq
\frac{\g|\s j^{2}-\s'j'^{2}|}{\langle\ell\rangle^{\tau}},\\
&\forall |\ell|\leq N_{\nu-1}, \! h,h'\in\CC\times\ZZZ
\end{array}
\right\},\\
\calO_{\nu}^{\g}(u)&:=\left\{\oo\in\Lambda_{\nu-1}^{\g} : \begin{array}{ll}
&|i\oo\cdot\ell+\mu_{\s,j}^{\nu-1}-\mu_{\s,k}^{\nu-1}|\geq\frac{\g}{\langle\ell\rangle^{\tau}\langle j\rangle}, \\
&\; \ell\in\ZZZ^{d}\backslash\{0\}, j\in\ZZZ
, k=\pm j, \s\in\CC \end{array}
\right\},
\end{aligned}
\end{equation}
where 
  \begin{equation}\label{automondoschifo2}
  \begin{aligned}
  \mu^{\nu}_{\s,j}&:=i\s\left(-{m}_{2}j^{2}+{m_{0}}+{r}_{j}^{\nu,j}+{r}_{-j}^{\nu,-j}+\frac{1}{2}a_{j}{b}_{j}\right),\;  \\
  {b}^{\nu}_{j}&:=\sqrt{\left(-2|{m}_{1}|+\frac{{r}_{j}^{\nu,j}-{r}_{-j}^{\nu,-j}}{a_j}\right)^{2}+4\frac{|{r}_{j}^{\nu,-j}|^{2}}{(a_j)^{2}}}, \\
 &a_{j}=j, \; {\rm if} \; j\neq0, \;\; a_{j}=1, \; {\rm if} \; j=0,
  \end{aligned}
  \end{equation}
are the eigenvalues of the matrix $\Omega_{\s,\und{j}}^{\nu}$.
For $\nu\geq0$ one has $\ol{r_{j}^{\nu,k}}=r_{k}^{\nu,j}$, for $k=\pm j$ and
\begin{equation}\label{eq:4.20ham}
|r_{j}^{\nu,k}|_{\g}:=|r_{j}^{\nu,k}|_{\Lambda_{\nu}^{\g},\g}\leq\frac{\e C}{\langle j\rangle},\qquad 
|r_{j}^{\nu,-j}|_{\g}\leq \frac{\e C}{\langle j\rangle}, \quad
 |{b}^{\nu}_{j}|_{\g}\leq {\e C}. 
\end{equation}
The remainder $\RR_{\nu}$ satisfies $\forall\;s\in[\gots_{0},q-\h_{1}-\be]$ ($\al$ is defined in \eqref{eq:4.14ham})
\begin{equation}\label{eq:4.21ham}
\begin{aligned}
|\RR_{\nu}|_{s}&\leq |\RR_{0}|_{s+\be}N_{\nu-1}^{-\al},\\
|\RR_{\nu}|_{s+\be}&\leq |\RR_{0}|_{s+\be}N_{\nu-1},
\end{aligned}
\end{equation}
%
\begin{equation}\label{eq:4.21bisham}
|(\RR_{\nu})_{\s}^{-\s}|_{s}+|D(\RR_{\nu})_{\s}^{\s}|_{s}\lessdot|\RR_{\nu}|_{s},\; \s\in\CC, \quad {\rm where} \quad
D:={\rm diag}_{j\in\ZZZ}\{j\}.
\end{equation}

%
Moreover
there exists a map $\Phi_{\n-1}$ of the form $\Phi_{\nu-1}:=\exp{(\Psi_{\nu-1})}: {\bf H}^{s}\to{\bf H}^{s}$, where $\Psi_{\nu-1}$ is T\"oplitz in time, $\Psi_{\nu-1}:=\Psi_{\nu-1}(\f)$ (see (\ref{eq:2.16})), such that
\begin{equation}\label{eq:4.16hamham}
\calL_{\nu}:=\Phi_{\nu-1}^{-1}\calL_{\nu-1}\Phi_{\nu-1}
\end{equation}
%
and for $\nu\geq1$ one has:
\begin{equation}\label{eq:4.22ham}
|\Psi_{\nu-1}|_{s,\g}\leq 
|\RR_{0}|_{s+\be}^{0} N_{\nu-1}^{2\tau+1}N_{\nu-2}^{-\al}.
\end{equation}

One has that the operators $\Phi_{\nu-1}^{\pm1}$ are symplectic and
the operator $\RR_{\nu}$ is hamiltonian.
Finally  the eigenvalues $\mu_{\s,j}^{\nu}$ are purely imaginary.

\noindent
$({\bf S2})_{\nu}$ For all $j\in\ZZZ$ 
there exists  Lipschitz extensions $\tilde{\Omega}_{\s,j}^{\nu,k}: \Lambda \to i \RRR$ of 
$\Omega_{\s,j}^{\nu,k}: \Lambda_{\nu}^{\g} \to i \RRR$, for $k=\pm j$, and 
$\tilde{\mu}_{h}^{\nu}(\cdot) : \Lambda \to i\RRR$ of $\mu_{h}^{\nu}(\cdot):\Lambda_{\nu}^{\g}\to i\RRR$,
such that for $\nu\geq1$,
\begin{equation}\label{eq:4.23ham}
\begin{aligned}
&|\tilde{\Omega}_{\s,j}^{\nu,k}-\tilde{\Omega}_{\s,j}^{\nu-1,k}|_{\g}\leq |(\RR_{\nu-1})_{\s}^{\s}|_{\gots_{0}}, \quad  \s\in\CC, j\in\ZZZ,  k= \pm j,\\
&|\tilde{\mu}_{\s,j}^{\nu}-\tilde{\mu}_{\s,j}^{\nu-1}|^{{\rm sup }}\leq|(\RR_{\nu-1})_{\s}^{\s}|_{\gots_{0}}, \quad \s\in\CC, j\in\ZZZ .
\end{aligned}
\end{equation}

\noindent
$({\bf S3})_{\nu}$ Let ${ u}_{1}(\la)$, ${ u}_{2}(\la)$ be Lipschitz families of Sobolev functions, defined for $\la\in\Lambda_{o}$ such that (\ref{eq:4.2ham}), (\ref{eq:4.15ham}) hold with
$\RR_{0}=\RR_{0}({u}_{i})$ with $i=1,2$.
Then for $\nu\geq0$, for any $\la\in\Lambda_{\nu}^{\g_{1}}\cap\Lambda_{\nu}^{\g_{2}}$,
with $\g_{1},\g_{2}\in[\g/2,2\g]$, one has
\begin{subequations}\label{eq:4.24ham}
\begin{align}
|\RR_{\nu}({ u}_{1})-\RR_{\nu}({ u}_{2})|_{\gots_{0}}&\leq
\e N_{\nu-1}^{-\al}||{ u}_{1}-{ u}_{2}||_{\gots_{0}+\h_{3}},
\label{eq:4.24aham}\\
|\RR_{\nu}({ u}_{1})-\RR_{\nu}({ u}_{2})|_{\gots_{0}+\be}&\leq
\e N_{\nu-1}||{ u}_{1}-{ u}_{2}||_{\gots_{0}+\h_{3}},\label{eq:4.24bham}
\end{align}
\end{subequations}
and moreover, for $\nu\geq1$, for any $s\in[\gots_{0},\gots_{0}+\be]$, for any $(\s,j)\in\CC\times\ZZZ$ and $k=\pm j$,
\begin{eqnarray}\label{eq:4.24bisham}
|(r_{\s,j}^{\nu,k}({ u}_{2})-r_{\s,j}^{\nu,k}({ u}_{1}))-(r_{s,j}^{\nu-1,k}({ u}_{2})-r_{\s,j}^{\nu-1,k}({ u}_{1}))|
&\!\!\!\leq\!\!\!&|\RR_{\nu-1}({ u}_{1})-\RR_{\nu-1}({ u}_{2})|_{\gots_{0}},\nonumber\\
|(r_{\s,j}^{\nu,k}({ u}_{2})-r_{\s,j}^{\nu,k}({ u}_{1}))|& \!\!\!\leq\!\!\!& \e C||{ u}_{1}-{ u}_{2}||_{\gots_{0}+\h_{3}}, \label{aaaham}\\
|b^{\nu}_{j}(u_{1})-b^{\nu}_{j}(u_{2})|&\!\!\!\leq\!\!\!&\e C||{ u}_{1}-{ u}_{2}||_{\gots_{0}+\h_{3}}.\label{aaa2ham}
\end{eqnarray}

\noindent
$({\bf S4})_{\nu}$ Let $u_{1},u_{2}$ be as in $({\bf S3})_{\nu}$ and $0<\rho<\g/2$.
For any $\nu\geq0$ one has that, if
\begin{equation}\label{eq:4.25ham}
\begin{aligned}
 CN_{\nu-1}^{\tau}&||{ u}_{1}-{ u}_{2}||_{\gots_{0}+\h_{3}}^{{\rm sup}}\leq \rho \e \quad \Rightarrow \\
&P_{\nu}^{\g}({ u}_{1})\subseteq P_{\nu}^{\g-\rho}({ u}_{2}), \quad \calO_{\nu}^{\g}({ u}_{1})\subseteq \calO_{\nu}^{\g-\rho}({ u}_{2}).
\end{aligned}
\end{equation}

\end{lemma}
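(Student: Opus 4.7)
The proof proceeds by induction on $\nu$, with base case $\nu = 0$ realized by $\calL_0 := \calL_7$ from \eqref{eq:3.5.9ham}; the bounds \eqref{eq:4.21ham}--\eqref{eq:4.21bisham} at $\nu = 0$ follow from Lemma \ref{stimeRham} together with the fact that, in \eqref{eq:3.5.9ham}, the off-diagonal block $q_0$ contributes to $(\RR_0)_\s^{-\s}$ while the pseudo-differential part of order $\del_x^{-1}$ contributes to $(\RR_0)_\s^\s$. For the inductive step I seek $\Phi_\nu = \exp(\Psi_\nu)$ with $\Psi_\nu$ T\"oplitz-in-time and Hamiltonian (in the sense of Lemma \ref{lemFouham}), so that $\Phi_\nu$ is automatically symplectic. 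Expanding $\Phi_\nu^{-1}\calL_\nu \Phi_\nu$, the linear contribution in $\Psi_\nu$ is required to cancel $\Pi_{N_\nu}\RR_\nu$ modulo a block-diagonal piece absorbed into $\DD_{\nu+1}$; this yields the homological equation
\begin{equation*}
\oo \cdot \del_\f \Psi_\nu + [\DD_\nu, \Psi_\nu] = -\Pi_{N_\nu} \RR_\nu + [\RR_\nu],
\end{equation*}
where $[\RR_\nu]$ is the $(\ell = 0,\ \s' = \s,\ \und{j'} = \und{j})$ block-diagonal part of $\RR_\nu$, which defines the update $\DD_{\nu+1} := \DD_\nu + [\RR_\nu]$.

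Componentwise, for each non-resonant configuration and $|\ell| \leq N_\nu$ the equation reads as the $2\times 2$ Sylvester problem
\begin{equation*}
\bigl(i\oo\cdot\ell\,\uno + \Omega_{\s,\und{j}}^\nu\bigr)\Psi_\nu(\ell)_{\s,\und{j}}^{\s',\und{j'}} - \Psi_\nu(\ell)_{\s,\und{j}}^{\s',\und{j'}}\,\Omega_{\s',\und{j'}}^\nu = -\Pi_{N_\nu}\bigl(\RR_\nu(\ell)\bigr)_{\s,\und{j}}^{\s',\und{j'}}.
\end{equation*}
By Lemma \ref{finitema3} this is solvable with $\|\cdot\|_\infty$-bound on the block controlled by $\bigl(\min_{\iota,\iota'\in\{\pm\}} |i\oo\cdot\ell + \mu^\nu_{\s,\iota j} - \mu^\nu_{\s',\iota' j'}|\bigr)^{-1}$. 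The Melnikov sets $\SSSS_{\nu+1}^\g$ and $\calO_{\nu+1}^\g$ in \eqref{eq:419ham} provide precisely the required lower bound: $\SSSS_{\nu+1}^\g$ handles $\s j^2 \neq \s' j'^2$ (carrying the extra factor $|\s j^2 - \s' j'^2|$ in the numerator, used later to preserve \eqref{eq:4.21bisham}), while $\calO_{\nu+1}^\g$ handles the collision $j' = \pm j,\ \s' = \s,\ \ell \neq 0$. The remaining case $\ell = 0,\ \s' = \s,\ \und{j'} = \und{j}$ is exactly $[\RR_\nu]$ and is not solved for.

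Summing over $\ell, j, j'$ and using Lemma \ref{bubbole} (combined with the loss $\langle\ell\rangle^\tau$ from small divisors), one obtains the estimate \eqref{eq:4.22ham} for $\Psi_\nu$. The new remainder reads
\begin{equation*}
\RR_{\nu+1} = \Pi_{N_\nu}^\perp \RR_\nu + \sum_{n\geq 1}\tfrac{1}{(n+1)!}\,\mathrm{ad}_{\Psi_\nu}^n \RR_\nu - \sum_{n \geq 2}\tfrac{1}{n!}\,\mathrm{ad}_{\Psi_\nu}^{n-1}[\RR_\nu],
\end{equation*}
which is estimated via the smoothing \eqref{eq:2.22} for the truncation piece and via \eqref{eq:2.12bis} for the commutator tower; with the choice $\al = 7\tau + 3,\ \be = 7\tau + 5,\ \chi = 3/2$ and the smallness \eqref{eq:4.15ham}, the bounds \eqref{eq:4.21ham} propagate to step $\nu + 1$. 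The structural decomposition \eqref{eq:4.21bisham} is preserved because on same-sign blocks the additional $|\s(j^2 - j'^2)|$ in $\SSSS_{\nu+1}^\g$ restores one factor $\langle j \rangle^{-1}$, while on opposite-sign blocks the bound is uniform since $|\s j^2 + \s j'^2| \geq \langle j\rangle\langle j'\rangle$.

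Symmetry is preserved at each step: since $[\RR_\nu]$ inherits the Hamiltonian relations \eqref{eq:2.25ham} from $\RR_\nu$, the updated blocks $\Omega_{\s,\und{j}}^{\nu+1}$ still have the form \eqref{eq:4.16pippoham} with self-adjoint $R_{\s,\und{j}}$, so their eigenvalues $\mu_{\s,\pm j}^{\nu+1}$ remain purely imaginary --- this is essential for the very formulation of \eqref{eq:419ham} at the next step. The Lipschitz extensions in $(S2)_\nu$ come from Kirszbraun, with quantitative variation \eqref{eq:4.23ham} obtained by first-order perturbation of the eigenvalues of the $2\times 2$ blocks. Claims $(S3)_\nu$ and $(S4)_\nu$ are proved by running the construction for $u_1$ and $u_2$ in parallel, using the tame estimates of Lemma \ref{stimeRham} and the fact that $\mu_h^\nu$ depends on $u$ only through $\RR_\nu$; the inclusion \eqref{eq:4.25ham} then reduces to a Lipschitz estimate on the denominators. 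The main obstacle throughout is the matricial nature of the homological equation: one must simultaneously solve a $2\times 2$ Sylvester problem for each pair of blocks while preserving the Hamiltonian symmetry \eqref{eq:2.25ham}, the $\del_x^{-1}$-type off-diagonal decay \eqref{eq:4.21bisham}, and the purely imaginary character of all eigenvalues --- this is the principal new difficulty compared with the scalar reducibility of \cite{FP}.
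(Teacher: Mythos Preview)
Your overall architecture is correct, but there is a genuine gap in the passage from the blockwise Sylvester bound to the decay-norm estimate \eqref{eq:4.22ham} for $\Psi_\nu$. Lemma \ref{finitema3} controls the $2\times 2$ block $\Psi(\ell)_{\s,\und{j}}^{\s',\und{j'}}$ in $\|\cdot\|_\infty$ by the \emph{maximum} over the four entries $|\RR_{\s,h}^{\s',h'}(\ell)|$, $h=\pm j$, $h'=\pm j'$. But the decay norm \eqref{decayham} requires each scalar entry to decay in the difference of \emph{its own} indices: $\Psi_{\s,j}^{\s',j'}$ must decay like $\langle j-j'\rangle^{-s}$ while $\Psi_{\s,j}^{\s',-j'}$ must decay like $\langle j+j'\rangle^{-s}$. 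If, say, the max on the right is $|\RR_{\s,j}^{\s',j'}(\ell)|\sim\langle j-j'\rangle^{-s}$, the Sylvester bound gives you only $|\Psi_{\s,j}^{\s',-j'}(\ell)|\lesssim\langle j-j'\rangle^{-s}$, which is useless when $|j-j'|$ is small and $|j+j'|$ is large. ``Summing over $\ell,j,j'$'' does not close.

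The paper resolves this by splitting into two regimes. When $|j|\le C/\gote$ or $|j'|\le C/\gote$ (finitely many indices, with $\gote$ the non-degeneracy constant from Hypothesis \ref{hyp3ham}), one has $\langle j+j'\rangle\le K(\gote)\langle j-j'\rangle$ and the Sylvester bound suffices. When both $|j|,|j'|\ge C/\gote$, the paper uses the lower bound $|m_1|\ge c\e$ from \eqref{mammamia} to ensure the eigenvalue gap $|\mu_{\s,j}-\mu_{\s,-j}|\ge c\e\gote\langle j\rangle$, which allows one to \emph{explicitly diagonalize} each $\Omega_{\s,\und{j}}$ by a matrix $U_{\s,\und{j}}$ with controlled decay norm (see \eqref{quiquo10}--\eqref{quiquo23}); in the diagonalized variables the homological equation becomes scalar and the decay estimate is immediate. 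This is exactly where Hypothesis \ref{hyp3ham} enters the reducibility scheme, and your proposal never invokes it. Without this diagonalization step (or an equivalent device), the estimate \eqref{eq:4.1.33ham} --- and hence \eqref{eq:4.22ham} --- does not follow from the block Sylvester bound alone.
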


\proof
We start by proving that ${\bf (Si)_{0}}$ hold for $i=0,\ldots,4$.

${\bf (S1)_{0}}$. Clearly the properties (\ref{eq:4.20ham})-(\ref{eq:4.21ham}) hold by (\ref{eq:4.11ham}), (\ref{eq:4.12ham})
and the form of $\mu_{k}^{0}$ in (\ref{automondoschifo2}), recall that $r_{k}^{0}=0$ .
Moreover, $m_{2}, |m_{1}|$ and $m_{0}$ real  imply that $\mu_{k}^{0}$ are imaginary. In addition to this,  our hypotheses guarantee that $\RR_{0}$ and
$\calL_{0}$ are hamiltonian operators.

${\bf (S2)_{0}}$. We have to extend the eigenvalues $\mu_{k}^{0}$ from
the set $\Lambda_{0}^{\g}$ to the entire $\Lambda $. Namely we extend
the functions $m_{2}(\la), m_{1}(\la)$ and $m_{0}(\la)$ to a $\tilde{m}_{i}(\la)$ for $i=0,1,2$ which are Lipschitz in $\Lambda$, with the same sup norm
and Lipschitz semi-norm, 
by Kirszbraum theorem.

${\bf (S3)_{0}}$. It holds by (\ref{eq:B15aham}) and \eqref{eq:B15bham} for $\gots_{0}$, $\gots_{0}+\be$ using 
(\ref{eq:4.2ham}) and (\ref{eq:4.14ham}).

${\bf (S4)_{0}}$. By definition one has 
$\Lambda_{0}^{\g}({ u}_{1})=\Lambda_{o}=\Lambda_{0}^{\g-\rho}({ u}_{2})$, then the  
(\ref{eq:4.25ham}) follows trivially.

\subsubsection{Kam step}

In this Section we  show in detail one step of the KAM iteration. In other words we will
show how to define the transformation $\Phi_{\nu}$ and $\Psi_{\n}$ that trasform the operator
$\calL_{\n}$ in the operator $\calL_{\nu+1}$. For simplicity we shall avoid to write the index,
but we will only write $+$ instead of $\n+1$. 

 We consider a transformation of the form $\Phi=\exp{(\Psi)}$, with $\Psi:=(\Psi_{\s}^{\s'})_{\s,\s'=\pm1}$, acting on the operator 
$$\calL= \oo\cdot\del_{\f} \uno+\DD+\RR
$$
with $\DD$ and $\RR$  as in \eqref{eq:4.16ham}, 
We define the operator
$$
e^{ad(\Psi)} L:=\sum_{m=0}^{\infty}\frac{1}{m!}[\Psi, L]^{m}, \quad {\rm with} \quad [\Psi, L]^{m}=[\Psi, [\Psi,L]^{m-1}], \quad [\Psi,L]=\Psi L-L\Psi
$$
acting on the matrices $L$. One has that
\begin{equation}\label{pippoham}
e^{ad(\Psi)}L= e^{-\Psi}Le^{\Psi}.
\end{equation}
Clearly the \eqref{pippoham} hold since $\Psi$ is a linear operator.
Then, $\forall\; { h}\in {\bf H}^{s}$, by conjugation one has
\begin{equation}\label{eq:4.1.22ham}
\begin{aligned}
\Phi^{-1}\calL\Phi&=e^{ad(\Psi)}(\oo\cdot\del_{\f}\uno+\DD)+e^{ad(\Psi)}\RR\\
&=\oo\cdot\del_{\f}+\DD+[\Psi,\oo\cdot\del_{\f}\uno+\DD]+\Pi_{N}\RR\\
&+\sum_{m\geq2}\frac{1}{m!}[\Psi,\oo\cdot\del_{\f}\uno+\DD]^{m}+\Pi_{N}^{\perp}\RR+\sum_{m\geq1}\frac{1}{m!}[\Psi,\RR]^{m}
\end{aligned}
\end{equation}
where  $\Pi_{N}$ is defined in (\ref{smoothop}).
The smoothing operator $\Pi_{N}$ is necessary for technical reasons: it will be used in order to obtain
suitable estimates on the high norms of the transformation $\Phi$.

In the following Lemma we will show how to solve the \emph{homological} equation
\begin{equation}\label{homeqham}
\begin{aligned}
&[\Psi, \oo\cdot\del_{\f}\uno+\DD]+\Pi_{N}\RR=[\RR], \quad {\rm where}\\
&[\RR]_{\s,j}^{\s',j'}(\ell):=
\left\{
\begin{aligned}
& (\RR)_{\s,j}^{\s',k}(0), \quad \s=\s',\; k=j,-j,\; \ell=0\\
& 0 \quad \quad {\rm otherwise},
\end{aligned}\right.
\end{aligned}
\end{equation}
for $k,k'\in \CC\times\NNN\times\ZZZ^{d}$.

\begin{lemma}[{\bf Homological equation}]\label{lemhomham}
For any $\la\in\Lambda^{\g}_{\nu+1}$ there exists a unique solution $\Psi=\Psi(\f)$ of the homological equation
(\ref{homeqham}), such that
\begin{equation}\label{eq:4.1.33ham}
|\Psi|_{s,\g}\leq C N^{2\tau+1}\g^{-1}
|\RR|_{s,\g}
\end{equation}
Moreover, for $\g/2\leq \g_{1},\g_{2}\leq 2\g$, and if $u_{1}(\la), u_{2}(\la)$ are Lipschitz functions,
then $\forall \; s\in[\gots_{0},\gots_{0}+\be]$, 
$\la\in \Lambda^{\g_{1}}_{+}(u_{1})\cap\Lambda^{\g_{2}}_{+}(u_{2})$, one has
\begin{equation}\label{eq:4.1.44ham}
|\Delta_{12}\Psi|_{s}\leq C N^{2\tau+1}\g^{-1}\left(|\RR({u}_{2})|_{s}
||{ u}_{1}-{u}_{2}||_{\gots_{0}+\h_{2}}+
|\Delta_{12}\RR|_{s}\right),
\end{equation}
where we define $\Delta_{12}\Psi=\Psi({u}_{1})-\Psi({u}_{2})$.\\
Finally, one has $\Phi : {\bf  H}^{s}\to {\bf H}^{s}$ is symplectic.
\end{lemma}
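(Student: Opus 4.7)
The plan is to expand the homological equation in Fourier in time and in the block basis where $\DD$ is block-diagonal. Writing $\Psi=(\Psi_{\s,\und{j}}^{\s',\und{j'}}(\ell))$ as a collection of $2\times2$ matrices indexed by $(\s,\s')\in\CC^{2}$, by the block pair $(\und j,\und{j'})$ and by $\ell\in\ZZZ^{d}$, and using that $\RR$ and $\Psi$ are T\"oplitz in time, a direct computation shows that the homological equation \eqref{homeqham} decouples into the family of Sylvester-type matrix equations
\begin{equation*}
\Omega_{\s,\und j}\,X - X\,\Omega_{\s',\und{j'}} + i\,\oo\cdot\ell\, X \;=\; -(\Pi_{N}\RR)_{\s,\und j}^{\s',\und{j'}}(\ell),
\qquad X=\Psi_{\s,\und j}^{\s',\und{j'}}(\ell),
\end{equation*}
for all $|\ell|\leq N$ and all pairs of blocks $(\s,\und j)\neq(\s',\und{j'})$ (or with $\ell\neq 0$ when the blocks coincide). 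The resonant indices $\{\ell=0,\;\s=\s',\;\und j=\und{j'}\}$ are exactly those absorbed into $[\RR]$, so nothing need be solved there.

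Next, I would invert each of these matrix equations by applying Lemma \ref{finitema3} to $A=\Omega_{\s,\und j}+i\oo\cdot\ell\,\uno$ and $B=\Omega_{\s',\und{j'}}$, whose eigenvalues are $\mu_{\s,k}+i\oo\cdot\ell$ for $k=\pm j$ and $\mu_{\s',k'}$ for $k'=\pm j'$. The Melnikov conditions defining $\Lambda_{\nu+1}^{\g}=\SSSS_{\nu+1}^{\g}\cap\calO_{\nu+1}^{\g}$ in \eqref{eq:419ham} are designed precisely to give the required lower bound on $\min_{k,k'}|i\oo\cdot\ell+\mu_{\s,k}-\mu_{\s',k'}|$: for $(\s,j^{2})\neq(\s',j'^{2})$ one uses $\SSSS_{\nu+1}^{\g}$, while for same-block entries ($\s=\s',\;j'=\pm j$, $\ell\neq 0$) one uses $\calO_{\nu+1}^{\g}$. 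This yields the pointwise bound
\begin{equation*}
\|\Psi_{\s,\und j}^{\s',\und{j'}}(\ell)\|_{\infty}\leq C\,\langle\ell\rangle^{\tau}\langle j\rangle\,\g^{-1}\|(\Pi_{N}\RR)_{\s,\und j}^{\s',\und{j'}}(\ell)\|_{\infty}.
\end{equation*}
Summing against the weights $\langle m\rangle^{2s}$ in the decay norm \eqref{decayham}, and recalling that only $|\ell|\leq N$ contribute, gives the factor $N^{2\tau+1}$ in \eqref{eq:4.1.33ham}. The extra loss $\langle j\rangle$ in the same-block case is harmless because it is compensated by the bound $|D(\RR)_{\s}^{\s}|_{s}\lessdot|\RR|_{s}$ in \eqref{eq:4.21bisham}; in the distinct-block case $\langle j\rangle\lesssim\langle j-j'\rangle\leq\langle m\rangle$ and can be absorbed in the weight. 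The Lipschitz seminorm $|\Psi|_{s}^{lip}$ is obtained by differentiating the homological equation in $\oo$ and invoking $({\bf S2})_{\nu}$ for the Lipschitz regularity of the $\mu_{\s,j}^{\nu}$, producing an additional $N^{\tau}$ which is absorbed into $N^{2\tau+1}$.

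For the tame variation estimate \eqref{eq:4.1.44ham}, I would apply the same inversion procedure to
\begin{equation*}
M(u_{1})\,\De_{12}\Psi = -\Pi_{N}\De_{12}\RR + \De_{12}[\RR] - \bigl(M(u_{1})-M(u_{2})\bigr)\Psi(u_{2}),
\end{equation*}
control $|\De_{12}\RR|_{s}$ through $({\bf S3})_{\nu}$, and bound $M(u_{1})-M(u_{2})$ using \eqref{aaaham}--\eqref{aaa2ham}, together with the already proved bound on $\Psi(u_{2})$. Finally, the symplectic (Hamiltonian) character of $\Psi$ follows from the characterization of Lemma \ref{lemFouham}: since $\RR$ and $\DD$ are Hamiltonian, so is $[\RR]$, and the Sylvester equation preserves the symmetries \eqref{eq:2.25ham} of the matrix elements; hence $\Psi$ is Hamiltonian and $\Phi=\exp(\Psi)$ is symplectic.

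The main obstacle is the same-block case $\und{j'}=\und j$, where the $\SSSS_{\nu+1}^{\g}$ condition is vacuous (since $\s j^{2}-\s'j'^{2}=0$) and one has to rely on the much weaker $\calO_{\nu+1}^{\g}$ bound, with a loss of $\langle j\rangle$. The only reason this loss does not ruin the iteration is the regularizing structure of the diagonal part of $\RR$ encoded in \eqref{eq:4.21bisham} (ultimately inherited from Step 6 of the regularization in Lemma \ref{lem:3.88}); verifying that this structure is preserved at every KAM step, and that it precisely compensates the weak Melnikov loss, is the delicate coupling which makes the algorithm work.
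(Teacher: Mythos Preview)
Your overall architecture is right --- write the homological equation blockwise as a $2\times2$ Sylvester equation, invert via Lemma~\ref{finitema3} using the Melnikov conditions in $\SSSS_{\nu+1}^{\g}$ and $\calO_{\nu+1}^{\g}$, and recover the Hamiltonian structure from Lemma~\ref{lemFouham}. The treatment of the same-block case, where the $\langle j\rangle$ loss from $\calO_{\nu+1}^{\g}$ is absorbed by \eqref{eq:4.21bisham}, is also correct in spirit.

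The genuine gap is in the distinct-block case, and it is not the size of the divisor but the \emph{conversion from blockwise $\|\cdot\|_{\infty}$ bounds back to the decay norm} \eqref{decayham}. Lemma~\ref{finitema3} bounds $\|\Psi_{\s,\und j}^{\s',\und{j'}}(\ell)\|_{\infty}$ by $\max_{h=\pm j,\,h'=\pm j'}|\RR_{\s,h}^{\s',h'}(\ell)|$ divided by the minimal eigenvalue gap. But the four scalar entries of that block sit at Fourier distances $|j-j'|$ and $|j+j'|$ in the decay norm. If the maximum of $|\RR|$ over the block is attained at an entry at distance $|j-j'|$, you must then control $|\Psi_{\s,j}^{\s',-j'}(\ell)|\langle j+j'\rangle^{s}$ by $|\RR_{\s,j}^{\s',j'}(\ell)|\langle j-j'\rangle^{s}$, which is impossible for $|j+j'|\gg|j-j'|$; the divisor $|\s j^{2}-\s' j'^{2}|^{-1}$ does not save you for $s$ large. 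Your sentence ``in the distinct-block case $\langle j\rangle\lesssim\langle j-j'\rangle$'' is in fact false (take $j=j'$ large with $\s\neq\s'$, or $j=j'+1$ large with $\s=\s'$), and the real obstruction is this index-mixing, not a stray factor of $\langle j\rangle$.

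The paper resolves this by a dichotomy that you are missing and that uses Hypothesis~\ref{hyp3ham} essentially. If $\min(|j|,|j'|)\leq C/\gote$ then $\langle j+j'\rangle\leq K(\gote)\langle j-j'\rangle$ and the crude blockwise bound \eqref{homeq3ham} is enough (this is the computation \eqref{quiquo}). If instead $|j|,|j'|>C/\gote$, the non-degeneracy $|m_{1}|\geq c\e$ from \eqref{mammamia} forces $|\mu_{\s,j}-\mu_{\s,-j}|\gtrsim\e|j|$, so one can explicitly diagonalize each $\Omega_{\s,\und j}$ by a matrix $U_{\s,\und j}$ whose off-diagonal entries are $O(|r_{j}^{-j}|/|m_{1}|j)$ and hence bounded in decay norm (see \eqref{quiquo10}--\eqref{quiquo23}). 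Conjugating by $U_{\s}$ turns the Sylvester equation into a \emph{scalar} diagonal system \eqref{quiquo20}--\eqref{quiquo21}, which is solved entry by entry without ever mixing the $(j,j')$ and $(j,-j')$ components; the decay norm bound \eqref{homeq4ham} then follows by interpolation (Lemma~\ref{bubbole}). Without this second step your argument does not close, and it is precisely here that the non-degeneracy hypothesis on $\del_{\bar z_{1}}\ff$ enters the reducibility scheme.
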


\begin{proof}
We rewrite the equation \eqref{homeqham} on each component $k=(\s,j,p),k'=(\s',j',p')$ and we get the following
matricial equation
\begin{equation}\label{homeq2ham}
i\oo\cdot(p-p')\Psi_{\s,\und{j},p}^{\s',\und{j'},p'}+\Omega_{\s,\und{j}}\Psi_{\s,\und{j},p}^{\s',\und{j'},p'}-
\Psi_{\s,\und{j},p}^{\s',\und{j'},p'}\Omega_{\s',\und{j'}}=-\RR_{\s,\und{j}}^{\s',\und{j'}}(p-p')
\end{equation}
where $\Omega_{\s,\und{j}}$ is defined in \eqref{eq:4.16ham}
and where we have set
\begin{equation}\label{homeqhomeq}
\Psi_{\s,\und{j},p}^{\s',\und{j'},p'}:=\left(\begin{matrix} \Psi_{\s,j,p}^{\s',j',p'} & \Psi_{\s,j,p}^{\s',-j',p'}\\
\Psi_{\s,-j,p}^{\s',j',p'}  & \Psi_{\s,-j,p}^{\s',-j',p'}\end{matrix}
\right)
\end{equation}
the matrix block indexed by $(j,j')$. To solve equation \eqref{homeq2ham} we can use Lemma \ref{finitema3}
with $A:=i\oo\cdot p\uno+\Omega_{\s,\und{j}}$ and $B=i\oo\cdot p'\uno +\Omega_{\s',\und{j'}}$.
%
 Hence if we write $\mu_{\s,h}$ and $\mu_{\s',h'}$ with $h=j,-j$ and $h'=j',-j'$ the eigenvalues respectively of 
 $\Omega_{\s,\und{j}}$ and $\Omega_{\s',\und{j'}}$,
 %
 \begin{equation}\label{homeq3ham}
 \begin{aligned}
 \|\Psi_{\s,\und{j},p}^{\s',\und{j'},p'}\|_{\infty}&\stackrel{(\ref{eq:419ham})}{\leq}
 C \frac{\langle\ell\rangle^{\tau}\g^{-1}}{|\s j^{2}-\s' j'^{2}|} \max_{\substack{h=j,-j, h'=j',-j'}}|\RR_{\s,h}^{\s',h'}(\ell)|, \quad\\ 
 &\qquad \s=\s', j\neq j',\quad{\rm or} \;\s\neq\s', \; \forall j,j'\\
 \|\Psi_{\s,\und{j},p}^{\s',\und{j},p'}\|_{\infty}&\stackrel{(\ref{eq:419ham})}{\leq} C\langle\ell\rangle^{\tau}|j|\g^{-1} \max_{\substack{h=j,-j}}|\RR_{\s,h}^{\s',h}(\ell)|, \quad \s=\s',\; j=j',
 \end{aligned}
 \end{equation}
 where  we fixed $p-p'=\ell$. Clearly the solution $\Psi$ is T\"opliz in time.
 Unfortunately bounds \eqref{homeq3ham}
 are not sufficient in order to estimate the decay norm of the matrix $\Psi_{\s}^{\s'}$.  
 Roughly speaking one needs to prove, for any $\ell$, that
 $\Psi_{\s,j}^{\s',j'}(\ell)\approx o(1/\langle j-j'\rangle^{s})$, and $\Psi_{\s,j}^{\s',-j'}\approx o(1/\langle j+j'\rangle^{s})$.
 Actually we are able to prove the following.
 
 Assume that either $|j|\leq \frac{C}{\gote}$ or $|j'|\leq \frac{C}{\gote}$ for some large $C>0$ and $\gote$ defined in \eqref{nondeg}.
Assume also that
 \begin{equation}\label{quiquo3}
\max_{\substack{h=j,-j, h'=j',-j'}}|\RR_{\s,h}^{\s',h'}(\ell)|=|\RR_{\s,j}^{\s',j'}(\ell)|.
 \end{equation}
 
 By \eqref{homeq3ham} we have that
 \begin{equation}\label{quiquo}
 \begin{aligned}
( |\Psi_{\s,j}^{\s',j'}|^{2}+ |\Psi_{\s,-j}^{\s',-j'}|^{2})\langle j-j'\rangle^{2s}&+
 ( |\Psi_{\s,j}^{\s',-j'}|^{2}+ |\Psi_{\s,-j}^{\s',j'}|^{2})\langle j+j'\rangle^{2s}\\
& \leq C \frac{\langle\ell\rangle^{2\tau}\g^{-2}}{|\s j^{2}-\s' j'^{2}|^{2}} |\RR_{\s,j}^{\s',j'}(\ell)|^{2}\left(
\langle j-j'\rangle^{2s}+\langle j+j'\rangle^{2s}
\right)\\
&\leq  \tilde{C} \frac{\langle\ell\rangle^{2\tau}\g^{-2}}{|\s j^{2}-\s' j'^{2}|^{2}} |\RR_{\s,j}^{\s',j'}(\ell)|^{2}
\langle j-j'\rangle^{2s}
 \end{aligned}
\end{equation}
where we used the fact that, for a finite number of $j$ (or finite $j'$), one has
$$
\langle j+j'\rangle\leq K \langle j-j'\rangle,
$$
for some large $K=K(\gote)>0$. 
Note also that the smaller is $\gote$ the larger is the constant $K$.
If the \eqref{quiquo3} does not hold one can treat the other cases by reasoning
as done in \eqref{quiquo}.
Assume now that
\begin{equation}\label{quiquo5}
|j|,|j'|\geq \frac{C}{\gote}
\end{equation}
holds. Here the situation is more delicate. 
Consider the matrices $\Omega_{\s,\und{j}}, \Omega_{\s',\und{j'}}$ in equation \eqref{homeq2ham}
which have, by \eqref{automondoschifo2},  eigenvalues $\mu_{\s,j}$, $\mu_{\s,-j}$ and $\mu_{\s',j'}$, $\mu_{\s',-j'}$ respectively.
First of all one can note that by \eqref{quiquo5}
\begin{equation}\label{quiquo4}
|\mu_{\s,j}-\mu_{\s,-j}|,\;\geq |m_{1}|\langle j\rangle\geq c\e \gote\langle j\rangle, \quad  |\mu_{\s',j'}-\mu_{\s',-j'}|\geq |m_{1}|\langle j' \rangle
\end{equation}
 by the \eqref{nondeg}. Hence we can define the invertible matrices
 \begin{equation}\label{quiquo10}
 U_{\s,\und{j}}:=\left(
 \begin{matrix}
 \frac{\Omega_{\s,-j}^{-j}-\mu_{\s,j}}{\mu_{\s,j}-\mu_{\s,-j}} & \frac{-\Omega_{\s,j}^{-j}}{\mu_{\s,j}-\mu_{\s,-j}} \\
 \frac{-\Omega_{\s,-j}^{j}}{\mu_{\s,j}-\mu_{\s,-j}}  &
  \frac{\Omega_{\s,j}^{j}-\mu_{\s,-j}}{\mu_{\s,j}-\mu_{\s,-j}} 
 \end{matrix}
 \right), 
 \end{equation}
 and moreover one can check that
 \begin{equation}\label{quiquo7}
 U_{\s,\und{j}}^{-1}\Omega_{\s,\und{j}}U_{\s,\und{j}}=D_{\s,\und{j}}=\left(\begin{matrix} \mu_{\s,j} & 0 \\
 0 &\mu_{\s,-j}\end{matrix}
 \right),
\end{equation}
 In order to simplify the notation we set
 \begin{equation}
 f_{\s,j}^{(1)}:=\frac{\Omega_{\s,-j}^{-j}-\mu_{\s,j}}{\mu_{\s,j}-\mu_{\s,-j}}, \qquad f^{(2)}_{\s,j}:=  \frac{\Omega_{\s,j}^{j}-\mu_{\s,-j}}{\mu_{\s,j}-\mu_{\s,-j}},
 \qquad c_{\s,j}:=\frac{-\Omega_{\s,j}^{-j}}{\mu_{\s,j}-\mu_{\s,-j}}.
 \end{equation}
 First of all, by using \eqref{quiquo10}, \eqref{quiquo4} and \eqref{eq:4.20ham} one has
 \begin{equation}\label{quiquo11}
|f_{\s,j}^{(1)}|+|f_{\s,j}^{(2)}|
\leq 4\frac{C}{c\gote}, \qquad
|c_{\s,j}|\leq  \frac{1}{c\e\gote}|r_{j}^{-j}|.
 \end{equation}

Hence one has
\begin{equation}\label{quiquo23}
U_{\s}:={\rm diag}_{|j|\geq C/\gote, j\in \NNN}U_{\s,\und{j}}, \qquad |U_{\s}|_{s,\g}\leq \frac{C}{|m_{1}|}|\RR_{\s}^{\s'}|_{s,\g},
\end{equation}
and moreover $U_{\s}$ diagonalizes the matrix $\Omega_{\s}={\rm diag}_{|j|\geq C/\gote}\Omega_{\s,\und{j}}$. 
Setting $U_{\s}^{-1}\Psi_{\s}^{\s'}U_{\s}=Y_{\s}^{\s'}$, equation \eqref{homeq2ham}, for $\s,\s'=\pm1$, reads
\begin{equation}\label{quiquo20}
\ii \oo\cdot\del_{\f}Y_{\s}^{\s'}+D_{\s}Y_{\s}^{\s'}-\s' Y_{\s}^{\s'}D_{\s}=
U_{\s}^{-1}R_{\s}^{\s'}U_{\s}.
\end{equation}
For $|\ell|\leq N $ we set
\begin{equation}\label{quiquo21}
Y_{\s,j}^{\s',j'}(\ell)=\frac{(U_{\s}^{-1}\RR_{\s}^{\s'}U_{\s})_{\s,j}^{\s',j'}(\ell)}{\ii\oo\cdot\ell+\mu_{\s,j}-\mu_{\s',j'}}
\end{equation}
and hence 
we get the bound
\begin{equation}\label{quiquo22}
|Y_{\s}^{\s'}|_{s}\leq \g^{-1}N^{\tau}|U_{\s}^{-1}\RR_{\s}^{\s'}U_{\s}|_{s},
\end{equation}
where we used the estimates   \eqref{eq:419ham} on the small divisors.

 By the definition, the estimate \eqref{quiquo23} and the interpolation properties in Lemma \ref{bubbole}
  we can bound the decay norm of $\Psi$ as
 \begin{equation}\label{homeq4ham}
 |\Psi|_{s}\leq C(s)\g^{-1} N^{\tau}|\RR|_{s},
 \end{equation}
 using that $|\RR|_{s}/|m_{1}|\leq C$ for some constant $C>0$. 
 Moreover the following hold:
 \begin{lemma}\label{regulham}
 Define the operator $A$ as
\begin{equation}\label{eqrmk1ham}
A_{k}^{k'}=A_{\s,j}^{\s',j'}(\ell):=\left\{
\begin{aligned}
&\Psi_{\s,j}^{\s,j'}(\ell), \quad \s=\s'\in\CC,\;\; j=\pm j'\in \ZZZ
\;\; \ell\in\ZZZ^{d},\\
&0, \quad {\rm otherwise},
\end{aligned}\right.
\end{equation}
then the operator $\Psi-A$ is regularizing and hold
\begin{equation}\label{regul1ham}
|D(\Psi-A)|_{s}\leq \g^{-1}N^{\tau}|\RR|_{s},
\end{equation}
where $D:=diag\{j\}_{j\in\ZZZ}$.
 \end{lemma}
 This Lemma will be used in the study of the remainder of the conjugate operator. 
In particular we will use it to prove that the reminder is still in the class of operators described in $(\ref{eq:4.12ham})$.

Now we need a bound on the Lipschitz semi-norm of the transformation. Then, given $\oo_{1},\oo_{2}\in \Lambda_{\nu+1}^{\g}$, one has, for $k=(\s,j,p),k'=(\s',j',p')\in\CC\times\ZZZ\times\ZZZ^{d}$, and $\ell:=p-p'$,
\begin{equation}\label{homeq5ham}
\begin{aligned}
{\oo_1}\cdot\ell \Big[\Psi_{\s,\und{j}}^{\s',\und{j'}}(\ell,\oo_{1})&-\Psi_{\s,\und{j}}^{\s',\und{j'}}(\ell,\oo_{2})\Big]+
\Omega_{\s,\und{j}}(\oo_{1})\left[\Psi_{\s,\und{j}}^{\s',\und{j'}}(\ell,\oo_{1})-
\Psi_{\s,\und{j}}^{\s',\und{j'}}(\ell,\oo_{2})\right]+\\
&-\left[\Psi_{\s,\und{j}}^{\s',\und{j'}}(\ell,\oo_{1})-\Psi_{\s,\und{j}}^{\s',\und{j'}}(\ell,\oo_{2})\right]\Omega_{\s',\und{j'}}(\oo_{1})\\
&+(\oo_{1}-\oo_{2})\cdot\ell \Psi_{\s,\und{j}}^{\s',\und{j'}}(\ell,\oo_{2})+\\
&+\left[\Omega_{\s,\und{j}}(\oo_{1})-\Omega_{\s,\und{j}}(\oo_{2})\right]\Psi_{\s,\und{j}}^{\s',\und{j'}}(\ell,\oo_{2})\\
&+
\Psi_{\s,\und{j}}^{\s',\und{j'}}(\ell,\oo_{2})\left[\Omega_{\s',\und{j'}}(\oo_{1})-\Omega_{\s',\und{j'}}(\oo_{2})\right]=\\
&=\RR_{\s,\und{j}}^{\s',\und{j'}}(\ell,\oo_{1})-\RR_{\s,\und{j}}^{\s',\und{j'}}(\ell,\oo_{1}).
\end{aligned}
\end{equation}
First we can note that
\begin{equation}\label{homeq6ham}
\begin{aligned}
|\Omega_{\s,j}^{\phantom{g}j}(\oo_1)-\Omega_{\s',j'}^{\phantom{g}j'}(\oo_2)|&\leq|m_{2}(\oo_{1})-m_{2}(\oo_{2})||\s j^{2}-\s'j'^{2}|+\e\g^{-1}\\
&+
|m_{1}(\oo_{1})-m_{1}(\oo_{2})||\s j-\s'j' |+|m_{0}(\oo_{1})-m_{0}(\oo_{2})|\\
&\leq C |\oo_{1}-\oo_{2}|(\e\g^{-1}|\s j^{2}-\s' j'^{2}|+\e\g^{-1}+\e\g^{-1})
\end{aligned}
\end{equation}
where we used the (\ref{eq:4.16ham}), (\ref{eq:4.20ham}) and (\ref{eq:3.2.44}) to estimate the Lipschitz semi-norm 
of the constants $m_{i}$. Following the same reasoning, one can estimate the sup norm of the matrix
$\Omega_{\s,\und{j}}(\oo_{1})-\Omega_{\s,\und{j}}(\oo_{2})$.
Therefore by triangular inequality one has
\begin{equation}\label{homeq7ham}
\begin{aligned}
&\|\Psi_{\s,\und{j}}^{\s',\und{j'}}(\ell,\oo_{1})-\Psi_{\s,\und{j}}^{\s',\und{j'}}(\ell,\oo_{2})\|_{\infty}\lessdot |\RR_{\s,\und{j}}^{\s',\und{j'}}(\ell,\oo_{1})-\RR_{\s,\und{j}}^{\s',\und{j'}}(\ell,\oo_{1})|_{max} N^{\tau}\g^{-1}
+\\
&+|\oo_{1}-\oo_{2}|\left(|\ell|+\e \g^{-1}|\s j^{2}-\s' j^{j'}|\right)\\
&+|\oo_{1}-\oo_{2}|\left(\e\g^{-1}|\s j-\s' j'|\e\g^{-1}\right)\|\RR_{\s,h}^{\s',h'}(\ell,\oo_{2})\|_{\infty}\frac{N^{2\tau+1}\g^{-2}}{|\s j^{2}-\s' j'^{2}|},
\end{aligned}
\end{equation}
for $|\ell|\leq N$, $j\neq j' $ and $\e\g^{-1}\leq1$. 
As done for the estimate \eqref{homeq4ham}  for a finite number of $j$ of a finite number of $j'$ the bound \eqref{homeq7ham} 
is sufficient to get,
 for $\oo\in\Lambda_{\nu+1}^{\g}$ and using also the bound \eqref{eq:419ham} with $j=j'$, the estimate
\begin{equation}\label{homeq8ham}
|\Psi|_{s,\g}:=|\Psi|_{s}^{\rm sup}+\g\sup_{\oo_{1}\neq\oo_{2}}\frac{|\Psi(\oo_{1})-\Psi(\oo_{2})|}{|\oo_{1}-\oo_{2}|}
\leq C\g^{-1}N^{2\tau+1}|\RR|_{s,\g},
\end{equation}
that is the \eqref{eq:4.1.33ham}.

On the other hand, in the case of \eqref{quiquo5}, we can reason as follows. Consider the diagonalizing matrix $U_{\s,\und{j}}$ defined in \eqref{quiquo7}
and recall  that by \eqref{quiquo23} also the lipschitz semi-norm of $U_{\s}$ is bounded by the lipschitz semi-norm of $\RR_{\s}^{\s'}$. 
Hence by \eqref{quiquo20}, \eqref{quiquo21}, using again the interpolation properties of the decay norm in Lemma \eqref{bubbole}
one get the Lipschitz bound in \eqref{homeq8ham}.
 Note also that the Lemma \ref{regulham} holds  with $|\cdot|_{s,\g}$ and $N^{2\tau+1}$ instead 
of $|\cdot|_{s}$ and $N^{\tau}$.

The proof of the bound \eqref{eq:4.1.44ham} is based on the same strategy used to proof \eqref{homeq8ham}. We refer to 
the proof of Lemma $4.39$ of \cite{FP}.


%

Finally we show that $\Psi$ is an hamiltonian vector field, and hence the transformation $\Phi$ is symplectic.
By hypothesis $\RR$ is hamiltonian, hence by Lemma \ref{lemFouham} we have
\begin{equation}\label{homeq11ham}
\left(\ol{\RR_{\s}^{\s}}\right)^{T}=-\RR_{\s}^{\s}, \quad \ol{\RR_{\s}^{-\s}}=\RR_{-\s}^{\s}, \quad
\ol{\RR_{\s}^{\s'}}=\RR_{-\s}^{-\s'},
\quad \forall \; \s,\s'\in\CC.
\end{equation}
Moreover, by inductive hypothesis $({\bf S1})_{\nu}$ one can note that
\begin{equation}\label{homeq12ham}
\left(\ol{\Omega_{\s}}\right)^{T}=-\Omega_{\s}=\Omega_{-\s}.
\end{equation}
By \eqref{homeq11ham}, \eqref{homeq12ham} one can easily note that the solution of the equation
$$
\oo\cdot\del_{\f}\Psi_{\s}^{\s'}+\Omega_{\s}\Psi_{\s}^{\s'}-\Psi_{\s}^{\s'}\Omega_{\s'}=\RR_{\s}^{\s'},
$$
satisfies conditions in \eqref{homeq11ham}, hence, again by Lemma \ref{lemFouham}, $\Psi$ is hamiltonian. This concludes the proof of Lemma \ref{lemhomham}.
\end{proof}

Next Lemma concludes one step of our KAM iteration.
\begin{lemma}[{\bf The new operator $\calL_{+}$}]\label{newopham}
Consider the operator $\Phi=\exp(\Psi)$ defined in Lemma \ref{lemhomham}. Then the operator
$\calL_{+}:=\Phi^{-1}\calL\Phi$ has the form
\begin{equation}\label{newop1ham}
\calL_{+}:=\oo\cdot\del_{\f}\uno+\DD_{+}+\RR_{+},
\end{equation}
where the diagonal part is
\begin{equation}\label{newop2ham}
\begin{aligned}
\DD_{+}&={\rm diag}_{(\s,j)\in\CC\times\ZZZ}\{\Omega_{\s,\underline{j}}^{+}\},
 \quad
\Omega_{\s,\underline{j}}^{+}(\la)=
\left(\begin{matrix}\Omega_{\s,j}^{+,j} & \Omega_{\s,j}^{+,-j} \\
\Omega_{\s,-j}^{+,j} & \Omega_{\s,-j}^{+,-j} \end{matrix}\right), \\
&
\begin{aligned}
\Omega_{\s,j}^{+, j}&:=-i \s m_{2}j^{2}-i\s|m_{1}|j+i\s m_{0}+i\s r_{j}^{+,j},\\
\Omega_{\s,j}^{+,-j}&:=i\s r_{j}^{+,-j},\\
r_{j}^{+,h}&:=r_{j}^{h}+\RR_{\s,j}^{\s,h}(0), \quad h=\pm j.
\end{aligned}
\end{aligned}
\end{equation}
with $(\s,j)\in \CC\times\ZZZ,  \la\in\Lambda$. The eigenvalues $\mu_{\s,h}^{+}$, with $h=j,-j$, of $\Omega_{\s,\underline{j}}$ satisfy
\begin{equation}\label{newop3ham}
\begin{aligned}
&|r_{j}^{+,h}-r_{j}^{h}|^{lip}\leq |(\RR)_{\s}^{\s}|^{lip}_{\gots_{0}},\\
&|\mu_{\s,h}^{+}-\mu_{\s,h}|^{{sup}}\leq |(\RR)_{\s}^{\s}|_{\gots_{0},\g}, \quad h=j,-j.
\end{aligned}
\end{equation}
The remainder $\RR_{+}$ is such that
\begin{equation}\label{newop4ham}
\begin{aligned}
|\RR_{+}|_{s}&\leq_{s} N^{-\be}|\RR|_{s+\be,\g}+N^{2\tau+1}\g^{-1}|\RR|_{s,\g}|\RR|_{\gots_{0},\g},\\
|\RR_{+}|_{s+\be}&\leq_{s+\be}|\RR|_{s+\be,\g}+N^{2\tau+1}\g^{-1}|\RR|_{s+\be,\g}|\RR|_{\gots_{0},\g},
\end{aligned}
\end{equation}
and $(\RR_{+})_{\s}^{\s}=O(\e\del_{x}^{-1})$ while $(\RR_{+})_{\s}^{-\s}=O(\e)$ for $\s=\pm1$. More precisely,
\begin{equation}\label{eq:4.21trisham}
|(\RR_{+})_{\s}^{-\s}|_{s}+|D(\RR_{+})_{\s}^{\s}|_{s}\lessdot|\RR_{+}|_{s},\; \s\in\CC, \quad {\rm where} \quad
D:={\rm diag}_{j\in\ZZZ}\{j\}.
\end{equation}
Finally, for $\g/2\leq \g_{1},\g_{2}\leq 2\g$, and for $u_{1}(\la),u_{2}(\la)$ Lipschitz functions, then for any
$s\in[\gots_{0},\gots_{0}+\be]$ and $\la\in \Lambda_{+}^{\g_{1}}(u_{1})\cap\Lambda_{+}^{\g_{2}}(u_{2})$ 
one has
\begin{eqnarray}\label{newop5ham}
|\Delta_{12}\RR_{+}|_{s}&\!\!\!\!\!\!\leq\!\!\!\!\!\!& |\Pi_{N}^{\perp}\Delta_{12}\RR|_{s}+
+N^{2\tau+1}\g^{-1}\Big(|\RR(u_{1})|_{s}+|\RR(u_{2})|_{s} \Big)|\Delta_{12}\RR|_{\gots_{0}}\nonumber\\
&\!\!\!\!\!\!+\!\!\!\!\!\!&N^{2\tau+1}\g^{-1}\Big(\!|\RR(u_{1})|_{s}\!+\!|\RR(u_{2})|_{s}\! \Big)\Big(|\RR(u_{1})|_{\gots_{0}}\!+\!|\RR(u_{2})|_{\gots_{0}} \Big)||u_{1}\!-\!u_{2}||_{\gots_{0}+\h_{3}}\nonumber\\
&\!\!\!\!\!\!+\!\!\!\!\!\! &N^{2\tau+1}\g^{-1}\Big(|\RR(u_{1})|_{\gots_{0}}+|\RR(u_{2})|_{\gots_{0}} \Big)|\Delta_{12}\RR|_{s}
\end{eqnarray}

\end{lemma}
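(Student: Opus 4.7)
The starting point is the conjugation identity \eqref{eq:4.1.22ham} which, combined with the homological equation \eqref{homeqham}, gives
\[
\calL_{+}=\oo\cdot\del_{\f}\uno+\DD+[\RR]+\Pi_{N}^{\perp}\RR+\sum_{m\geq 2}\tfrac{1}{m!}[\Psi,\oo\cdot\del_\f\uno+\DD]^m+\sum_{m\geq 1}\tfrac{1}{m!}[\Psi,\RR]^m.
\]
The first line of the proof is then to read off $\DD_+:=\DD+[\RR]$, which by the definition of $[\RR]$ only modifies the blocks $\Omega_{\s,\underline j}$ by the corrections $\RR_{\s,j}^{\s,h}(0)$ for $h=\pm j$; this gives \eqref{newop2ham} immediately. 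Using the homological equation once more, I would rewrite the first commutator series as $\sum_{m\geq 2}\frac{1}{m!}[\Psi,\cdot]^{m-1}([\RR]-\Pi_N \RR)$, so that every term in $\RR_+$ is ultimately a polynomial in $\Psi$ and $\RR$ with at least one factor of $\RR$.

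For the norm estimates \eqref{newop4ham}, the plan is to apply the smoothing estimate \eqref{eq:2.22} to the term $\Pi_N^\perp \RR$, producing the $N^{-\be}|\RR|_{s+\be,\g}$ loss, and to bound the commutator series by iterating \eqref{eq:2.11a}--\eqref{eq:2.12bis} together with the main bound \eqref{eq:4.1.33ham} for $\Psi$, which converts each factor of $\Psi$ into a factor $N^{2\tau+1}\g^{-1}|\RR|_{\bullet,\g}$. The smallness condition \eqref{eq:4.15ham} ensures $N^{2\tau+1}\g^{-1}|\RR|_{\gots_0,\g}\ll 1$, so the resulting geometric series converges and only the quadratic term $N^{2\tau+1}\g^{-1}|\RR|_{s,\g}|\RR|_{\gots_0,\g}$ survives; this yields both lines of \eqref{newop4ham}. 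The eigenvalue bounds \eqref{newop3ham} follow from the fact that the corrections to $\Omega_{\s,\underline j}$ are precisely $i\s\,\RR_{\s,j}^{\s,h}(0)$, which are controlled by $|\RR_\s^\s|_{\gots_0,\g}$; the $|\mu_{\s,h}^+-\mu_{\s,h}|$ estimate is then a Weyl-type bound using the self-adjoint structure of the blocks (cf.\ Lemma \ref{finitema3}) together with the fact that both perturbed and unperturbed matrices are Hermitian up to the factor $i\s$.

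The preservation of the structural decomposition $(\RR_+)_\s^\s=O(\e\del_x^{-1})$, $(\RR_+)_\s^{-\s}=O(\e)$ encoded in \eqref{eq:4.21trisham} is the crucial point, and is what I expect to be the main obstacle. Here the key input is Lemma \ref{regulham}: writing $\Psi=A+(\Psi-A)$, the block $A$ collects precisely the $(\s,j)\!\to\!(\s,\pm j)$ matrix elements, while $\Psi-A$ is one derivative smoother, i.e. $|D(\Psi-A)|_s\lessdot \g^{-1}N^{2\tau+1}|\RR|_s$. The commutators $[\Psi,\DD]$ and $[\Psi,\RR]$ therefore inherit the regularising property of $\RR$ on the diagonal sector, since pairing two objects of type $A$ already produces a block-diagonal correction absorbed into $\DD_+$, and any commutator involving $\Psi-A$ gains the inverse derivative needed for $(\RR_+)_\s^\s=O(\e\del_x^{-1})$. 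A careful bookkeeping of $\s$-$\s$ versus $\s$-$(-\s)$ blocks, together with the fact that $[\RR]$ is block-diagonal by construction, closes this step.

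Finally, the Hamiltonian character of $\calL_+$ follows from the symplecticity of $\Phi=\exp(\Psi)$ already established in Lemma \ref{lemhomham}, so the algebraic structure \eqref{eq:2.25ham} of $\RR$ is preserved. For the Lipschitz variation \eqref{newop5ham}, the plan is to apply $\Delta_{12}$ termwise to the explicit expression for $\RR_+$, combining the standard Leibniz rule for $\Delta_{12}$ on products, the estimate \eqref{eq:4.1.44ham} of Lemma \ref{lemhomham} on $\Delta_{12}\Psi$, and the inductive estimates \eqref{eq:4.24ham} on $\Delta_{12}\RR$; the smoothing $\Pi_N^\perp$ produces the first term on the right-hand side of \eqref{newop5ham}, while the three remaining contributions come respectively from variation of $\Psi$ (bounded via $\|u_1-u_2\|$), variation of the explicit $\RR$ factor, and variation of the coefficient $\RR$ inside $\Psi$ itself. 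As usual, convergence of the commutator series is ensured by the smallness of $|\RR|_{\gots_0,\g}$.
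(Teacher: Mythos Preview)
Your proposal is correct and follows essentially the same approach as the paper: identical use of the homological equation to reduce the commutator series, the smoothing estimate \eqref{eq:2.22} for $\Pi_N^\perp\RR$, iterated interpolation via \eqref{eq:2.12bis} together with \eqref{eq:4.1.33ham} for \eqref{newop4ham}, and Lemma \ref{regulham} for the structural property \eqref{eq:4.21trisham}. The only presentational differences are that the paper obtains the eigenvalue bound \eqref{newop3ham} via the explicit $2\times2$ formula through $b_j$ rather than a Weyl inequality, and packages your ``Leibniz rule'' for $\Delta_{12}\RR_+$ into explicit commutator identities (Lemma \ref{newop10ham}).
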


\begin{proof} The \eqref{newop2ham} follow by the \eqref{homeqham}. Note that the term $\RR_{\s,j}^{\s,k}(0)=\RR_{-\s,j}^{-\s,k}$
for $k=j,-j$ and hence the new correction $r_{j}^{+,h}$ does not depend on $\s$. Moreover, by \eqref{decay2} one has
\begin{equation}\label{newop6ham}
|\Omega_{\s,j}^{+,k}-\Omega_{\s,j}^{\phantom{g}k}|^{lip}\leq |(\RR)_{\s}^{\s}|_{\gots_{0}}^{lip}, \quad k=j,-j.
\end{equation}
Moreover, one has
\begin{equation}\label{newop6bisham}
\begin{aligned}
|\mu_{\s,j}^{+}-\mu_{\s,j}|&\leq 2\sup_{h=\pm j}|r_{h}^{+,h}-r_{h}^{h}|+|j||b_{j}^{+}-b_{j}|\\
&\leq 2\sup_{h=\pm j}|r_{h}^{+,h}-r_{h}^{h}|+\frac{|j|}{|j|}\sup_{h=\pm j}|r_{j}^{+,h}-r_{j}^{h}|
\stackrel{(\ref{newop6ham})}{\lessdot}|(\RR)_{\s}^{\s}|,
\end{aligned}
\end{equation}
then the \eqref{newop3ham} follows. Now, by \eqref{eq:4.1.22ham}
one has that
\begin{equation}\label{newop7ham}
\RR_{+}:=\Pi_{N}^{\perp}\RR+\sum_{n\geq2}\frac{1}{n!}[\Psi,\oo\cdot\del_{\f}\uno+\DD]^{n}+
\sum_{n\geq1}\frac{1}{n!}[\Psi,\RR]^{n}:=\Pi_{N}^{\perp}\RR+\BB.
\end{equation}
Here we used the simple fact that $[A,B]^{n}=[A,[A,B]]^{n-1}$ for any $n\geq1$.
Hence we can estimate
\begin{equation*}\label{newop8ham}
\begin{aligned}
|\RR_{+}|_{s,\g}&\leq_{s}|\Pi_{N}^{\perp}\RR|_{s,\g}+\sum_{k\geq2}\frac{1}{k!}|[\Psi,\Pi_{N}\RR]^{k-1}|_{s,\g}+
\sum_{n\geq1}\frac{1}{n!}|[\Psi,\RR]^{n}|_{s,\g}\\
&\leq_{s}
|\Pi_{N}^{\perp}\RR|_{s,\g}+\sum_{n\geq1}\frac{1}{n!}|[\Psi,\RR]^{n}|_{s,\g}
\leq_{s}\!|\Pi_{n}^{\perp}\RR|_{s}\!\\
&+\!\sum_{n\geq1}\frac{(nC(\gots_{0}))^{n-1}}{n!}|\Psi|_{\gots_{0},\g}^{n-1}|\RR|^{n-1}_{\gots_{0},\g}\!\left(|\Psi|_{s,\g}|\RR|_{\gots_{0},\g}\!+\!|\Psi|_{\gots_{0},\g}|\RR|_{s,\g}\right)\\
&\stackrel{(\ref{eq:2.22}),(\ref{eq:4.1.33ham})}{\leq}N^{-\be}|\RR|_{s+\be,\g}+N^{2\tau+1}\g^{-1}|\RR|_{s,\g}|\RR|_{\gots_{0},\g},
\end{aligned}
\end{equation*}
where we assumed that
\begin{equation}\label{newop9ham}
\sum_{n\geq1}\frac{n^{n-1}}{n!}C(\gots_{0})^{n-1}|\Psi|_{\gots_{0},\g}^{n-1}|\RR|^{n-1}_{\gots_{0},\g}<1.
\end{equation}

Now we have to estimate $\Delta_{12}\RR_{+}$ defined for $\la\in\La^{\g_{1}}(u_{1})\cup\Lambda^{\g_{2}}(u_2)$.
We write $\RR_{i}:=\RR(u_{i})$ for $i=1,2$. We first need a technical Lemma used to study the variation 
with respect to the function $u$, of the commutator between two operators.

\begin{lemma}\label{newop10ham}
Given operators $A(u), B(u)$ one has that the following identities hold for any $n\geq1$:
\begin{equation}\label{newop11ham}
[A_{1},B_{1}]^{n}=[A_{1},\Delta_{12}B]^{n}+[A_{1},B_{2}]^{n};
\end{equation} 
\begin{equation}\label{newop12ham}
[A_{1},B_{2}]^{n}=\Big[A_{1},[A_{2},B_{2}]\Big]^{n-1}+\Big[A_{1},[\Delta_{12}A,B_{2}]\Big]^{n-1};
\end{equation} 
\begin{equation}\label{newop13ham}
\begin{aligned}
&\Big[A_{1},[A_{2},B_{2}]\Big]^{n-1}-[A_{2},B_{2}]^{n}=(n-2)\Big[A_{1},\big[\Delta_{12}A,[A_{2},B_{2}]
\big]\Big]^{n-2}\\
&+\Big[\Delta_{12}A,[A_{2},B_{2}]^{n-1}
\Big].
\end{aligned}
\end{equation} 

\end{lemma}

\prova We prove the identities by induction. Let us start from the \eqref{newop11ham}. For $n=1$ it clearly holds.
We prove it for $n+1$ assuming that \eqref{newop11ham} holds for $n$. One has
\begin{equation}\label{newop11bisham}
\begin{aligned}
&\left[A_{1},\Delta_{12}B\right]^{n+1}+
[A_{1},B_{2}]^{n+1}=\Big[A_{1},[A_{1},\Delta_{12}B]^{n}\Big]+\Big[A_{1},[A_{1},B_{2}]^{n}\Big]\\
&\stackrel{(\ref{newop11ham})}{=}\Big[A_{1},[A_{1},B_{1}]^{n}\Big]=:[A_{1},B_{1}]^{n+1}.
\end{aligned}
\end{equation}
The remaining formul{\ae}   can be proved in the same way.
\EP

By using Lemma \ref{newop10ham}, one can rewrite the term $\BB$ in \eqref{newop7ham}. Then
setting $A_{s}:=|\RR_{1}|_{s}+|\RR_{2}|_{s}$ for any $s\geq0$, 
and using  \eqref{eq:2.12} and \eqref{newop9ham},  one obtains 
\begin{equation*}\label{newop15ham}
\begin{aligned}
|\Delta_{12}\BB|_{s}&
\stackrel{(\ref{eq:4.1.33ham}),(\ref{eq:4.1.44ham})}{\leq_{s}}
N^{2\tau+1}\g^{-1}A_{s}|\Delta_{12}\RR|_{\gots_{0}}+N^{2\tau+1}\g^{-1}A_{\gots_{0}}|\Delta_{12}\RR|_{s}
\\
&+2N^{4\tau+2}\g^{-1}A_{s}A_{\gots_{0}}^{2}||u_{1}-u_{2}||_{\gots_{0}+\h_{2}}\\
&+
2N^{4\tau+2}\g^{-2}A_{s}A_{\gots_{0}}|\Delta_{12}\RR|_{\gots_{0}}
+ N^{4\tau+2}\g^{-2}A_{s}A^{2}_{\gots_{0}}||u_{1}-u_{2}||_{\gots_{0}+\h_{2}}\\
&+
N^{4\tau+2}\g^{-2}A_{\gots_{0}}^{2}|\Delta_{12}\RR|_{s},
\end{aligned}
\end{equation*}
where we used the (\ref{eq:4.1.33ham}) and (\ref{eq:4.1.44ham}). If we assume that
\begin{equation}\label{newop16ham}
N^{2\tau+1}\g^{-1}A_{\gots_{0}}\leq1,
\end{equation}
then, using also \eqref{eq:2.22} we obtain the \eqref{newop5ham}.
Finally by using Lemma \ref{regulham} one can note that $[\Psi,\RR]_{\s}^{\s}=O(\e\del_{x}^{-1})$ while $[\Psi,\RR]_{\s}^{-\s}=O(\e)$ for $\s=\pm1$, this implies that the new remainder $\RR_{+}$ has the same properties.
\end{proof}

Clearly we proved Lemma \ref{newopham} by assuming the \eqref{newop9ham} and \eqref{newop16ham}. These hypotheses 
have to be verified inductively at each step. In the next Section we prove that the procedure described
above, can be iterated infinitely many times.

\subsection{Conclusions and Proof of Theorem \ref{KAMalgorithmham}}
To complete the proof of Lemma \ref{teo:KAMham} we proceed by induction. The proof of the iteration is essentially standard and based on the estimates of the previous Section.

We omit the proof of properties $({\bf S1})_{\nu+1},({\bf S2})_{\nu+1}$ and $({\bf S3})_{\nu+1}$ since one can repeat 
almost word by word the proof of Lemma $4.38$
in Section $4$
of  \cite{FP}.
The $(S4)_{\nu+1}$ is fundamentally  different. The difference depends on the multiplicity of the eigenvalues. Moreover the
result is weaker. This is why, in this case, the set of good parameters is smaller. We will see this fact in Section 6.

 \noindent
${\bf (S4)}_{\nu+1}$ Let $\oo\in\Lambda_{\nu+1}^{\g}$, then
by (\ref{eq:419ham}) and the inductive hypothesis ${\bf (S4)_{\nu}}$ one has
that $\Lambda_{\nu+1}^{\g}({ u}_{1})\subseteq\Lambda_{\nu}^{\g}({ u}_{1})\subseteq\Lambda_{\nu}^{\g-\rho}({u}_{2})\subseteq\Lambda_{\nu}^{\g/2}({ u}_{2})$.
Hence the eigenvalues $\mu_{h}^{\nu}(\oo,{u}_{2}(\oo))$ are well defined by the ${\bf (S1)_{\nu}}$.
Now, since $\la\in\Lambda_{\nu}^{\g}({u}_{1})\cap\Lambda_{\nu}^{\g/2}({u}_{2})$,
we have for $h=(\s,j) \in\CC\times\ZZZ$
and setting $h'=(\s',j')\in\CC\times\ZZZ$
\begin{equation}\label{eq:4.2.22ham}
\begin{aligned}
|(\mu_{h}^{\nu}&-\mu_{h'}^{\nu})(\oo,{ u}_{2}(\oo))-(\mu_{h}^{\nu}-\mu_{h'}^{\nu})(\oo,{ u}_{1}(\oo))|\leq
|\s j^{2}-\s'j'^{2}| |m_{2}(u_1)-m_{2}(u_2)|\\
&+|m_{0}(u_1)-m_{0}(u_2)||\s-\s'|
+\max_{\substack{j}}|r_{j}^{\nu,j}(\oo,{ u}_{2}(\oo))-r_{j}^{\nu,j}(\oo,{ u}_{1}(\oo))|\\
&+|j||b^{\nu}_{j}(u_{1})-b^{\nu}_{j}(u_{2})|+|j'||b^{\nu}_{j'}(u_{1})-b^{\nu}_{j'}(u_{2})|\\
&\stackrel{(\ref{eq:3.2.44}),(\ref{eq:4.24bisham}),(\ref{aaa2ham})}{\leq} 
\e C\left(|\s j^{2}-\s'j'^{2}|+||j|+|j'||\right) ||{u}_{2}-{ u}_{1}||_{\gots_{0}+\h_{2}},
\end{aligned}
\end{equation}
The (\ref{eq:4.2.22ham}) implies that for any $|\ell|\leq N_{\nu}$ and $j\neq \pm j'$,
\begin{equation}\label{eq:4.2.23ham}
\begin{aligned}
|i\oo\cdot\ell+\mu_{\s,j}^{\nu}({ u}_{2})-\mu_{\s',j'}^{\nu}({u}_{2})|
&\stackrel{(\ref{eq:419ham}),(\ref{eq:4.2.22ham})}{\geq}
\g|\s j^{2}-\s' j'^{2}|\langle\ell\rangle^{-\tau}\\
&-C|\s j^{2}-\s'j'^{2}| ||{ u}_{2}-{ u}_{1}||_{\gots_{0}+\h_{2}}\\
&\stackrel{{\bf (S4)_{\nu}}}{\geq}(\g-\rho)|\s j^{2}-\s' j'^{2}|\langle\ell\rangle^{-\tau},
\end{aligned}
\end{equation}
where we used that, for any $\la\in\Lambda_{0}$, one has $C\e N^{\tau}_{\nu}||{ u}_{1}-{ u}_{2}||_{\gots_{0}+\h_{2}}\leq \rho$ (note that this condition is weaker with respect to the hypothesis in $({\bf S4})_{\nu}$.
Now, the (\ref{eq:4.2.23ham}), 
 imply 
that if $\la\in \calP_{\nu+1}^{\g}({u}_{1})$ then 
$\la\in \calP_{\nu+1}^{\g-\rho}({ u}_{2})$.
Now assume that $\la\in \calO_{\nu+1}^{\g}(u_{1})$. We have two cases: if $|j|\geq 4|\oo| |\ell|/\e \gote$, then
we have no small divisors. Indeed one has
\begin{equation*}\label{nanonano}
\begin{aligned}
b_{j}^{\nu}(u)^{2}&=\left(-2|m_{1}|+\frac{r_{j}^{\nu,j}-r_{-j}^{\nu,-j}}{j}\right)^{2}+4\frac{|r_{j}^{\nu,-j}|^{2}}{|j|^{2}}\geq\left(2|m_{1}|-\frac{\e C}{|j|}\right)^{2}\\
&\stackrel{(\ref{eq:3.2.44})}{\geq}|m_{1}|^{2}\left(2-\frac{\e C}{|j|\e \gote}\right)^{2}\geq
|m_{1}|^{2}\left(2-\frac{\e \gote}{4|\oo||\ell|}\right)^{2}\geq \frac{|m_{1}|^{2}}{4}\geq \frac{(\e \gote)^{2}}{4},
\end{aligned}
\end{equation*}
for any $u$.
Hence it is obvious that 
\begin{equation}\label{eq:4.2.23bisham}
\begin{aligned}
|i\oo\cdot\ell+\mu_{\s,j}^{\nu}({ u}_{2})-\mu_{\s,-j}^{\nu}({u}_{2})|&\geq \frac{4|\oo||\ell|}{\e \gote}|b_{j}^{\nu}(u_{2})|-|\oo\cdot\ell|\\
&{\geq} |\oo||\ell|   \geq \frac{\g-\rho}{\langle \ell\rangle^{\tau}\langle j\rangle}.
\end{aligned}
\end{equation}
Let us consider the case $|j|\leq 4|\oo| |\ell|/\e \gote$: one has
\begin{equation*}\label{eq:4.2.23trisham}
\begin{aligned}
|i\oo\cdot\ell+\mu_{\s,j}^{\nu}({ u}_{2})-\mu_{\s,-j}^{\nu}({u}_{2})|
&\stackrel{(\ref{eq:419ham}),(\ref{eq:4.2.22ham})}{\geq}
\g\langle\ell\rangle^{-\tau}\langle j\rangle^{-1}-\e C|j| ||{ u}_{2}-{ u}_{1}||_{\gots_{0}+\h_{2}}\\
&{\geq}\frac{1}{\langle\ell\rangle^{\tau}\langle j\rangle}\left(\g-\e|j|^{2}{CN_{\nu}^{-\al+\tau+2}}\right)\geq\frac{\g-\rho}{\langle\ell\rangle^{\tau}\langle j\rangle}
\end{aligned}
\end{equation*}
 that is the ${\bf (S4)}_{\nu+1}$.

\noindent
\emph{Proof of Theorem \ref{KAMalgorithmham}}


We want apply Lemma \ref{teo:KAMham} to the linear operator
$\calL_{0}=\calL_{7}$ defined in (\ref{eq:3.5.9ham})
where 
$$
\RR_{0}:=\left(\begin{matrix}0 & q_{0}(\f,x) \\ -\bar{q}_{0}(\f,x)& 0 \end{matrix}\right)+\RR_{7},
$$
with $\RR_{7}$ defined in \eqref{eq:3.7.8ham}. One has that $\RR_{0}$ satisfies the $(iii)$ of Lemma \ref{lem:3.88}.
Then
\begin{equation}\label{eq:4.1.2ham}
\begin{aligned}
|\RR_{0}|_{\gots_{0}+\be}&\stackrel{(\ref{eq:3.2.7ham})}{\leq}\e C(\gots_{0}+\be)(1+||{\bf u}||_{\be+\gots_{0}+\h_{1},\g})
\stackrel{(\ref{eq:4.2ham})}{\leq}2\e C(\gots_{0}+\be), \qquad \Rightarrow \\
&N_{0}^{C_0}|\RR_{0}|_{\gots_{0}+\be}^{0}\g^{-1}\leq 1,
\end{aligned}
\end{equation}
if $\e\g^{-1}\leq \epsilon_{0}$ is small enough, that is the (\ref{eq:4.15ham}). Then we have to prove that 
in the set $\cap_{\nu\geq0}\Lambda_{\nu}^{\g}$
there exists
a final transformation 
%
\begin{equation}\label{eq:4.1.3ham}
\Phi_{\infty}=\lim_{\nu\to\infty}\tilde{\Phi}_{\nu}=\lim_{\nu\to\infty}\Phi_{0}\circ\Phi_{1}\circ\ldots\circ\Phi_{\nu}.
\end{equation}
and the normal form

\begin{equation}\label{eq:4.1.1ham}
\Omega_{\s,\und{j}}^{\infty}:=\Omega_{\s,\und{j}}^{\infty}(\la)=\lim_{\nu\to+\infty}\tilde{\Omega}^{\nu}_{\s,\und{j}}(\la)
=\tilde{\Omega}^{0}_{\s,\und{j}}(\la)+
\lim_{\nu\to+\infty}\left(\begin{matrix} i\s \tilde{r}_{j}^{\n,j} & i\s \tilde{r}_{j}^{\nu,-j} \\ i\s \tilde{r}_{-j}^{\nu,j} &
i\s \tilde{r}_{-j}^{\nu,-j}\end{matrix}\right).
\end{equation}
The proof that limits in \eqref{eq:4.1.3ham} and \eqref{eq:4.1.1ham} exist uses the bounds of Lemma \ref{teo:KAMham}. We refer the reader 
to \cite{FP} 
for more details.

%
%
%
%


The following Lemma gives us a connection between the Cantor sets defined in Lemma
\ref{teo:KAMham} and Theorem \ref{KAMalgorithmham}.
Again the proof is omitted since it is essentially the same of Theorem $4.27$ in Section 4 of \cite{FP}.

\begin{lemma}\label{lem:4.4ham}
One has that
\begin{equation}\label{eq:4.1.12ham}
\Lambda^{2\g}_{\infty}\subset\cap_{\nu\geq0}\Lambda_{\nu}^{\g}.
\end{equation}
\end{lemma}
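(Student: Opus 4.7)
The plan is to use a perturbation argument: the finite-step eigenvalues $\mu^{\nu-1}_{\s,j}$ are uniformly close to the limit eigenvalues $\mu_{\s,j}=\mu^{\infty}_{\s,j}$, with a super-exponentially small error in $\nu$, so the II Melnikov conditions with constant $2\g$ imposed on the limit eigenvalues (that is, $\oo\in\Lambda^{2\g}_\infty$) imply the corresponding conditions with constant $\g$ on the finite-step ones at every scale $\nu$.

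First I would establish the eigenvalue convergence rate. Combining $({\bf S2})_{\nu}$ with the bound $|\RR_\nu|_{\gots_0}\leq|\RR_0|_{\gots_0+\be}N_{\nu-1}^{-\al}$ from \eqref{eq:4.21ham} and telescoping, one obtains
\begin{equation*}
|\mu_{\s,j}-\mu^{\nu-1}_{\s,j}|^{\rm sup}\;\leq\;\sum_{k\geq\nu-1}|\mu^{k+1}_{\s,j}-\mu^{k}_{\s,j}|^{\rm sup}\;\leq\;C\e\,N_{\nu-2}^{-\al},
\end{equation*}
where the super-exponential decay of $N_k=N_0^{(3/2)^k}$ absorbs the geometric series in the last constant.

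Next I would verify, for any fixed $\oo\in\Lambda^{2\g}_\infty$, the inclusion $\oo\in\SSSS^{\g}_\nu$ for every $\nu\geq1$. For indices with $\s j^2=\s'j'^2$ the right-hand side of \eqref{eq:419ham} vanishes so the inequality is automatic. For $\s j^2\neq\s'j'^2$ one has $|\s j^2-\s'j'^2|\geq 1$, and the triangle inequality with the eigenvalue estimate above gives, for $|\ell|\leq N_{\nu-1}$,
\begin{equation*}
|i\oo\cdot\ell+\mu^{\nu-1}_{\s,j}-\mu^{\nu-1}_{\s',j'}|\;\geq\;\frac{2\g|\s j^2-\s'j'^2|}{\langle\ell\rangle^\tau}-2C\e\,N_{\nu-2}^{-\al}\;\geq\;\frac{\g|\s j^2-\s'j'^2|}{\langle\ell\rangle^\tau},
\end{equation*}
since $N_{\nu-1}^\tau N_{\nu-2}^{-\al}=N_{\nu-2}^{3\tau/2-\al}\to 0$ when $\al=7\tau+3$ and $N_0$ is taken large enough relative to $\e\g^{-1}$.

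The main obstacle is the $\calO^{\g}_\nu$ condition, because there $\ell$ ranges over all of $\ZZZ^d\setminus\{0\}$ with no truncation at $N_{\nu-1}$, and the denominator $\langle\ell\rangle^\tau\langle j\rangle$ can be arbitrarily large, so a naive perturbation bound fails. I would split into three regimes as in the proof of $({\bf S4})_{\nu+1}$: (i) the case $k=j$, where $\mu^{\nu-1}_{\s,j}-\mu^{\nu-1}_{\s,j}=0$ and the required bound reduces to the Diophantine condition \eqref{dio} on $\oo$, with $\g\leq\g_0$ and $\tau\geq\tau_0$ ensuring $|\oo\cdot\ell|\geq\g_0\langle\ell\rangle^{-\tau_0}\geq\g\langle\ell\rangle^{-\tau}\langle j\rangle^{-1}$; (ii) the case $k=-j$ with $|j|\geq 4|\oo||\ell|/(\e\gote)$, where the non-degeneracy \eqref{nondeg} combined with $|m_1|\geq c\e$ from \eqref{mammamia} forces $b^{\nu-1}_j\geq c\e\gote/2$ and one estimates directly $|i\oo\cdot\ell+\mu^{\nu-1}_{\s,j}-\mu^{\nu-1}_{\s,-j}|\geq|j||b^{\nu-1}_j|-|\oo||\ell|\geq 2|\oo||\ell|$, again far bigger than $\g\langle\ell\rangle^{-\tau}\langle j\rangle^{-1}$; (iii) the complementary case $k=-j$ with $|j|\leq 4|\oo||\ell|/(\e\gote)$, in which $|j|$ is controlled polynomially in $|\ell|$, so $\langle\ell\rangle^\tau\langle j\rangle$ admits a polynomial upper bound and the perturbation argument of the previous paragraph goes through, using the smallness of $C\e N_{\nu-2}^{-\al}\langle\ell\rangle^\tau\langle j\rangle$ against $\g$ for $N_0$ large enough. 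Finally $\nu=0$ is trivial because $\Lambda_0^\g=\Lambda_o$, concluding the inclusion.
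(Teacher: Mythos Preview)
Your overall strategy, the treatment of $\SSSS_\nu^\g$, and cases (i)--(ii) of $\calO_\nu^\g$ are correct and match the standard argument the paper refers to. However, case (iii) does not close. You correctly note that in \eqref{eq:419ham} the set $\calO_\nu^\g$ is written with $\ell$ ranging over all of $\ZZZ^d\setminus\{0\}$, with no truncation; but then your assertion that ``the perturbation argument of the previous paragraph goes through, using the smallness of $C\e N_{\nu-2}^{-\al}\langle\ell\rangle^\tau\langle j\rangle$ against $\g$ for $N_0$ large enough'' is false. For each fixed $\nu$ the factor $N_{\nu-2}^{-\al}$ is a fixed number (equal to $1$ when $\nu=1$), while $\langle\ell\rangle^{\tau+1}/\e$ is unbounded, so no choice of $N_0$ rescues the inequality. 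Making $N_0$ large only helps once $\langle\ell\rangle$ is itself bounded by a power of $N_{\nu-1}$, which is exactly the truncation you declared absent.

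The fix is twofold. First, the $\calO_\nu^\g$ condition is only ever invoked in the homological equation (Lemma~\ref{lemhomham}) for $|\ell|\le N_{\nu-1}$, parallel to $\SSSS_\nu^\g$; the unrestricted range in \eqref{eq:419ham} is an oversight of the paper, and with $|\ell|\le N_{\nu-1}$ your three-case split becomes viable. Second, even with truncation your crude bound $|\mu_{\s,j}-\mu_{\s,j}^{\nu-1}|\le C\e N_{\nu-2}^{-\al}$ is too weak at the first step ($\nu=1$, where $N_{-1}=1$ but $\langle j\rangle$ may be of order $N_0/\e$). One must instead control the specific combination $|j(b_j-b_j^{\nu-1})|$ and exploit the smoothing property $|D(\RR_\nu)_\s^\s|_s\lessdot|\RR_\nu|_s$ from \eqref{eq:4.21bisham}, which yields an extra factor $1/\langle j\rangle$; this cancels the $\langle j\rangle$ in the Melnikov denominator and reduces the requirement to $C(\e/\g)N_{\nu-2}^{-\al}N_{\nu-1}^\tau\le 1$, compatible with \eqref{eq:4.15ham}.
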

%

Since one prove that in $\Lambda^{2\g}_{\infty}$ the limit in \eqref{eq:4.1.3ham} exists in norm $|\cdot|_{s,\g}$
one has
%
%
\begin{equation}\label{101ham}
\begin{aligned}
\calL_{\n}&\stackrel{(\ref{eq:4.16ham})}{=}\oo\cdot\del_{\f}\uno+\DD_{\n}+\RR_{\n}\stackrel{|\cdot|_{s,\g}}{\to}
\oo\cdot\del_{\f}\uno+\DD_{\infty}=:\calL_{\infty}, \\
& \DD_{\infty}:=diag_{(\s,j)\in C\times\ZZZ}\Omega_{\s,\und{j}}^{\infty}.
\end{aligned}
\end{equation}
and moreover
%
\begin{equation}\label{102ham}
\calL_{\infty}=\Phi_{\infty}^{-1}\circ\calL_{0}\circ\Phi_{\infty},
\end{equation}
that is the (\ref{eq:4.6ham}), 
while the (\ref{eq:4.5ham}) follows by the smallness in \eqref{eq:4.20ham} and the convergence
Finally, Lemma \ref{bubbole},
Lemma \ref{1.4} and (\ref{eq:4.8ham}) implies the bounds (\ref{eq:4.9ham}). 
This concludes the proof.
\EP

\zerarcounters
 \section{Inversion of the linearized operator}\label{sec:5ham}
 
 In this Section we prove the invertibility of $\calL(u)$, and consequently of $d_{u}F(u)$ (see \ref{lemmaccio}), 
by showing the appropriate tame estimates on the inverse. 
The following Lemma resume the results obtained in the previous Sections.

We have the following result.
\begin{lemma}\label{lemma5.8ham}
Let $\calL=W_{1}\calL_{\infty}W_{2}^{-1}$ where
\begin{equation}\label{eq:4.4.1ham}
 W_{i}=\VV_{i}\Phi_{\infty}, \quad 
 \VV_{1}:=\TT_{1}\TT_{2}\TT_{3}\rho\TT_{4}\TT_{5}\TT_{6}\TT_{7}, \quad
\VV_{2}=\TT_{1}\TT_{2}\TT_{3} \TT_{4}\TT_{5}\TT_{6}\TT_{7}.
\end{equation}
where $\VV_{i}$ and $\Phi_{\infty}$ are defined in Lemmata \ref{lem:3.88} and \ref{KAMalgorithmham}.
Let $\gots_{0}\leq s\leq q-\be-\h_{1}-2$, with $\h_{1}$ define in (\ref{eq:3.2.0ham}) and $\be$  in Theorem (\ref{KAMalgorithmham}). Then, for $\e\g^{-1}$ small enough, and
\begin{equation}\label{eq:4.4.2ham}
||{ u}||_{\gots_{0}+\be+\h_{1}+2,\g}\leq1,
\end{equation}
one has for any $\la\in\Lambda^{2\g}_{\infty}$,
\begin{equation}\label{eq:4.4.3ham}
\begin{aligned}
||W_{i}{h}||_{s,\g}+||W_{i}^{-1}{h}||_{s,\g}&
\leq C(s)\left(||{ h}||_{s+2,\g}+
||{u}||_{s+\be+\h_{1}+4,\g}||{ h}||_{\gots_{0},\g}\right),
\end{aligned}
\end{equation}
for $i=0,1$. Moreover, $W_{i}$ and $W_{i}^{-1}$ symplectic.
\end{lemma}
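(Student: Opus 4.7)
The plan is to split $W_i = \VV_i \Phi_\infty$ and invoke the two blocks of estimates produced in the previous sections separately, then glue them using the tame composition bound \eqref{eq:2.13b} of Lemma \ref{bubbole}. First I would recall that Lemma \ref{lem:3.88}(i)--(ii) gives the tame bound \eqref{eq:3.2.3ham} for each $\VV_i^{\pm 1}$, with a loss of two spatial derivatives on $h$ and a loss of $\h_1$ on the high norm of $u$; in particular the action on $h$ has the standard tame structure with low index $\gots_0+2$. Second, Theorem \ref{KAMalgorithmham} controls $\Phi_\infty^{\pm 1}-\uno$ in decay norm via \eqref{eq:4.8ham}, which together with the embedding Lemma \ref{1.4} and the estimate \eqref{eq:2.13b} promotes to the Sobolev tame bound
\begin{equation*}
\|\Phi_\infty^{\pm 1} h\|_{s,\g} \le \|h\|_{s,\g} + \e\g^{-1} C(s)\bigl(\|h\|_{s,\g} + \|u\|_{s+\h_1+\be,\g}\|h\|_{\gots_0,\g}\bigr),
\end{equation*}
valid on $\Lambda^{2\g}_\infty$ for $\gots_0 \le s \le q-\h_1-\be$.

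Next I would substitute this bound for $\Phi_\infty^{\pm 1} h$ into \eqref{eq:3.2.3ham} applied to $\VV_i^{\pm 1}$. This gives three contributions: a pure $\|h\|_{s+2,\g}$ term coming from the identity part of $\Phi_\infty$ composed with $\VV_i$; a mixed term of the form $\|u\|_{s+\h_1,\g}\|h\|_{\gots_0+2,\g}$ from the $\VV_i$ high-norm loss composed with the identity part of $\Phi_\infty$; and a genuinely bilinear term $\e\g^{-1}\|u\|_{s+\h_1+\be+2,\g}\|h\|_{\gots_0,\g}$ arising from composing the two high-norm losses. Using the smallness $\e\g^{-1}\le \epsilon_0$ and the low-norm hypothesis \eqref{eq:4.4.2ham} to reabsorb the $u$-quadratic corrections, and interpolating the mixed term via Lemma \ref{A}-type inequalities (classical Sobolev interpolation), everything fits under the single loss $s+\be+\h_1+4$ on $u$ and $s+2$ on $h$, which is precisely \eqref{eq:4.4.3ham}. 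This is where the choice $s\le q-\be-\h_1-2$ in the statement comes from.

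The symplecticity is more structural than quantitative. Each of the seven steps in Section \ref{sec:3ham} verified individually that $\TT_1,\ldots,\TT_7$ preserves the symplectic form \eqref{simplectic}; the multiplicative factor $\rho$ in $\VV_1$ accounts for the time-reparametrization identity $\TT_3^{-1}\calL_2\TT_3=\rho\calL_3$ of Step 3, so $\VV_1$ and $\VV_2$ together implement the exact semi-conjugation $\calL = \VV_1 \calL_7 \VV_2^{-1}$. On the KAM side, Lemma \ref{lemhomham} ensures that each generator $\Psi_\n$ solving the homological equation is Hamiltonian, hence $\Phi_\n=\exp(\Psi_\n)$ is symplectic, and the convergence \eqref{eq:4.1.3ham} in the decay norm $|\cdot|_{s,\g}$ preserves the symplectic character in the limit. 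Composition of symplectic maps is symplectic, yielding the claim for $W_1, W_2$.

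The main obstacle is just the careful bookkeeping of the loss of regularity through the two independent reductions — the finite-step regularization of $\VV_i$ and the infinite KAM loop producing $\Phi_\infty$ — making sure that at no intermediate stage a high-norm term of $u$ appears that cannot be absorbed by \eqref{eq:4.4.2ham}. No genuinely new analytical input is needed beyond what is already collected in Lemma \ref{lem:3.88} and Theorem \ref{KAMalgorithmham}; the point of the statement is precisely to package the two results into a single clean estimate which will be the workhorse for the proof of Lemma \ref{inverseofl} in the next section.
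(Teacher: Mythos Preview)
Your proposal is correct and follows essentially the same route as the paper: combine the tame bound \eqref{eq:3.2.3ham} for $\VV_i^{\pm1}$ with the decay-norm bound \eqref{eq:4.8ham} for $\Phi_\infty^{\pm1}$ (converted to a Sobolev action via \eqref{eq:2.13b}), then compose. The only cosmetic difference is that the paper invokes the packaged composition Lemma~\ref{lem5} rather than substituting one estimate into the other by hand as you do; the symplecticity argument is identical.
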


\begin{proof}
Each $W_i$  is composition of two operators, the $\VV_i$ satisfy the (\ref{eq:3.2.3ham}) while $\Phi_\infty$ satisfies (\ref{eq:4.8ham}). We use  
Lemma \ref{bubbole}
in order to pass to the operatorial norm. Then Lemma \ref{lem5} 
implies the bounds (\ref{eq:4.4.3ham}). Moreover the transformations $W_{i}$ and $W_{i}^{-1}$ symplectic because they are composition of symplectic transformations $\VV_{i}$,$\VV_{i}^{-1}$ and
 $\Phi_{\infty}$, $\Phi_{\infty}^{-1}$ .
 \end{proof}

\begin{proof}[{\bf Proof of Proposition \ref{teo2ham}}.]
Thanks to Lemma \ref{lemma5.8ham} the proof of Proposition \ref{teo2ham} is almost concluded.
We fix the constants $\h=\h_{1}+\be+2$ (the constant $\h$ has to be chosen) and $q>\gots_{0}+\h$.
Let $\Omega_{\s,j}^{\phantom{g},j}$ and $\Omega_{\s,j}^{-j}$ be the functions defined in \eqref{eq:4.1.1ham}, and consequently $\mu_{\s,j}$ the eigenvalues of the matrices $\Omega_{\s,\und{j}}$. Therefore
by Lemmata \ref{KAMalgorithmham} and \ref{lemma5.8ham} item $(i)$ in Proposition \ref{teo2ham} hold. 
\end{proof}

Now we prove the following Lemma that is the equivalent result of Lemma \ref{inverseofl} in the Hamiltonian case. 

\begin{lemma}\label{inverselinftyham}
For  ${ g}\in{\bf H}^{s}$, consider the equation
\begin{equation}\label{eq:4.4.7ham}
\calL_{\infty}({u}){h}={ g}.
\end{equation}
If $\oo\in \Lambda^{2\g}_{\infty}({\bf u})\cap \calP^{2\g}_{\infty}(u)$  (defined in \eqref{martina10ham} and \eqref{primedimham}),  then there exists a unique solution 
$\calL_{\infty}^{-1}{g}:={h}\in{\bf H}^{s}$. Moreover, for all Lipschitz family ${ g}:=
{g}(\oo)\in {\bf H}^{s}$ one has
\begin{equation}\label{eq:4.4.8ham}
||\calL_{\infty}^{-1}{ g}||_{s,\g}\leq C \g^{-1}||{ g}||_{s+2\tau+1,\g}.
\end{equation}
\end{lemma}

\begin{proof} 
One can follows the same strategy used for Lemma 
$5.44$ in \cite{FP} 
and conclude using Lemma \ref{finitema}.
\end{proof}

\begin{proof}[{\bf Proof of Lemma \ref{inverseofl}}]

A direct consequence of Lemma \ref{lemma5.8ham} is that, once one has conjugated the operator
$\calL$ in \eqref{lemmaccio4} to a block-diagonal operator $\calL_{\infty}$  in \eqref{eq:4.6ham} is essentially
trivial to invert it:

In order to conclude the proof of Lemma \ref{inverseofl} it is sufficient
to collect the results of Lemmata \ref{lemma5.8ham} and \ref{inverselinftyham}. In particular one uses \eqref{eq:4.4.3ham} and \eqref{eq:4.4.8ham}
to obtain the estimate
\begin{equation}\label{eq:4.4.25ham}
\begin{aligned}
||{h}||_{s,\g}&=\|W_{2}\calL_{\infty}^{-1}W_{1}^{-1}{ g}\|_{s,\g}\\
&\leq C(s)\g^{-1} \left( ||{ g}||_{s+2\tau+5,\g}+
||{ u}||_{s+4\tau+\be+10+\h_{1},\g}||{ g}||_{\gots_{0},\g}
\right),
\end{aligned}
\end{equation}
%
\end{proof}
Note that by Lemma \ref{lemmaccio} the estimates \eqref{eq:4.4.25ham} holds also for the linearized operator 
$d_{u}\calF(u)$.

\zerarcounters
\section{Measure estimates}\label{sec6ham}
In Section \ref{sec:3ham}, \ref{sec:4ham} and \ref{sec:5ham} we prove that in the set
$\Lambda^{2\g}_{\infty}({ u}_{n})\cap \calP^{2\g}_{\infty}({ u}_n)$ we have good bounds 
on the inverse of $\calL(u_{n})$. 
We also give a precise characterization of this set in terms of the eigenvalues of $\calL$.
Now in the Nash-Moser proposition \ref{teo4} we defined in an implicit way the sets $\calG_n$ in order to ensure bounds on the inverse of  $\calL({ u}_{n})$. 
In this section we prove Proposition \ref{STIMEmisura} which  is the analogous
analysis performed in Section $6$ of \cite{FP}.
\begin{proposition}[{\bf Measure estimates}]\label{measurebruttebrutte}
Set $\g_n:=(1+2^{-n})\g$ and consider the set $\calG_{\infty}$ of Proposition \ref{teo4}
with $\mu=\zeta$ defined in Lemma \ref{inverseofl} and fix $\g:=\e^{a}$ for some $a\in(0,1)$. We have
\begin{subequations}\label{eq136totham}
\begin{align}
&\cap_{n\geq0}\calP^{2\g_n}_{\infty}({u}_n)\cap \Lambda^{2\g_n}_{\infty}({ u}_n)\subseteq \calG_{\infty}, \label{eq136bham}\\
&|\Lambda\backslash\calG_{\infty}|\to 0, \;\; {\rm as} \;\; \e\to0. \label{eq136ham}
\end{align}
\end{subequations}
\end{proposition}

{\bf Proof of Proposition \ref{measurebruttebrutte}}.
 Let $({ u}_{n})_{\geq0}$ be the sequence of approximate solutions introduced in Proposition  
\ref{teo4} which  is well defined in $\calG_{n}$ and satisfies the hypothesis
of Proposition \ref{teo2ham}.   $\calG_{n}$ in turn is defined in 
Definition \ref{invertibility}. 
We now define inductively a sequence of nested sets $G_{n}\cap H_{n}$ for $n\geq0$. 
Set $G_{0}\cap H_{0}=\Lambda$ and 
\begin{equation}\label{eq142bis}
\begin{aligned}
G_{n+1}&:=
\left\{\begin{aligned}
 \oo\in G_{n} \; :\; &|i\oo\cdot\ell+\mu_{\s,j}({ u}_{n})-\mu_{\s',j'}({ u}_{n})|
\geq\frac{2\g_{n}|\s j^{2}-\s' j'^{2}|}{\langle\ell\rangle^{\tau}}, \\
&\;\forall\ell\in\ZZZ^{n},\;\;  \s,\s'\in\CC, \;\; j,j'\in\ZZZ
\end{aligned}\right\},\\
H_{n+1}&:=
\left\{\begin{aligned}
 \oo\in H_{n} \; :\; &|i\oo\cdot\ell+\mu_{\s,j}({ u}_{n})-\mu_{\s,-j}({ u}_{n})|
\geq\frac{2\g_{n}}{\langle\ell\rangle^{\tau}\langle j\rangle}, \\
&\;\forall\ell\in\ZZZ^{n}\backslash\{0\},\;\;  \s\in\CC, \;\; j\in\ZZZ
\end{aligned}\right\},\\
P_{n+1}&:=
\left\{\begin{aligned}
 \oo\in P_{n} \; :\; &|i\oo\cdot\ell+\mu_{\s,j}({ u}_{n})|
\geq\frac{2\g_{n}\langle j\rangle^{2}}{\langle\ell\rangle^{\tau}}, \\
&\;\forall\ell\in\ZZZ^{n},\;\;  \s\in\CC, \;\; j\in\ZZZ
\end{aligned}\right\},
\end{aligned}
\end{equation}
Recall that $\mu_{\s,j}(u_{n})$ and $\mu_{\s,-j}(u_{n})$ are the eigenvalues of the matrices $\Omega_{\s,\und{j}}$
defined in Proposition \ref{teo2ham} in \eqref{1.2.2bis}.
The following Lemma implies \eqref{eq136bham}.
\begin{lemma}\label{megalemma} Under the Hypotheses of Proposition \ref{measurebruttebrutte},
for any $n\geq0$, one has
\begin{equation}\label{eq137}
P_{n+1}\cap G_{n+1}\cap H_{n+1}\subseteq \calG_{n+1}.
\end{equation}
\end{lemma}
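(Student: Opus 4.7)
The plan is a straightforward induction on $n$. The base case is immediate, since $G_0 \cap H_0 \cap P_0 = \Lambda = \calG_0$ by definition. For the inductive step, since each of $G_{n+1}$, $H_{n+1}$, $P_{n+1}$ is defined as a subset of its $n$-th counterpart, the induction hypothesis gives $G_{n+1} \cap H_{n+1} \cap P_{n+1} \subseteq \calG_n$. Because $\calG_{n+1} = \calG_n \cap \calG_{N_n}(u_n)$, it suffices to show that the triple intersection also lies in $\calG_{N_n}(u_n)$.

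The key observation will be that the explicit sets in \eqref{eq142bis} are, up to the cushion provided by $\g_n = (1+2^{-n})\g \geq \g$, exactly the Cantor sets $\SSSS^{2\g}_\infty(u_n)$, $\calO^{2\g}_\infty(u_n)$, $\calP^{2\g}_\infty(u_n)$ of \eqref{martina10ham} and \eqref{primedimham}. Term-by-term comparison of the small-divisor conditions yields the inclusions $G_{n+1} \subseteq \SSSS^{2\g}_\infty(u_n)$, $H_{n+1} \subseteq \calO^{2\g}_\infty(u_n)$ and $P_{n+1} \subseteq \calP^{2\g}_\infty(u_n)$, whence the triple intersection lies in $\Lambda^{2\g}_\infty(u_n) \cap \calP^{2\g}_\infty(u_n)$, which is precisely the set on which Lemma \ref{inverseoflham} guarantees invertibility of $\calL(u_n)$.

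The next step is to check the hypothesis of Lemma \ref{inverseoflham}: since Proposition \ref{measurebruttebrutte} is applied with $\mu = \zeta$, Proposition \ref{teo4} furnishes $\|u_n\|_{\gots_0 + \zeta, \g} \leq 1$, which is exactly \eqref{eq:4.4.19ham}. Applying the lemma produces a right inverse satisfying \eqref{eq:4.4.23ham}; since $\zeta \geq 2\tau + 5$, this bound is of the form required in \eqref{eq104a} of Definition \ref{invertibility} with $\mu = \zeta$. Specializing to $s = \gots_0$ and absorbing $\|u_n\|_{\gots_0+\zeta,\g} \leq 1$ into the constant recovers the low-norm estimate \eqref{eq104b}. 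This yields $\oo \in \calG_{N_n}(u_n)$ and closes the induction.

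There is essentially no obstacle here, as the difficult analytic content has already been packaged into Lemma \ref{inverseoflham} (which in turn rests on the reducibility Proposition \ref{teo2ham}). The only point to monitor is the compatibility of the exponent $\zeta = 4\tau + \h + 8$: it is large enough that $\zeta \geq 2\tau + 5$ so that the tame inverse bound \eqref{eq:4.4.23ham} matches the shape required by \eqref{eq104a}, and the range $\gots_0 \leq s \leq \gots_0 + \ka_2 - \zeta$ of Definition \ref{invertibility} fits inside the range $\gots_0 \leq s \leq q - \zeta$ of Lemma \ref{inverseoflham} provided $q$ is chosen sufficiently large in Proposition \ref{teo4}.
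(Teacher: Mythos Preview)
Your proof is correct and follows the same approach as the paper: identify the triple intersection as a subset of $\Lambda^{2\g}_\infty(u_n)\cap\calP^{2\g}_\infty(u_n)$ via the cushion $\g_n\geq\g$, invoke Lemma \ref{inverseoflham} to obtain the tame inverse bound \eqref{eq:4.4.23ham}, and then match this against Definition \ref{invertibility} with $\mu=\zeta$. The paper's proof is more telegraphic (it does not spell out the induction needed to ensure $u_n$ is defined on the set in question, nor the term-by-term comparison of the Cantor conditions), but the content is identical.
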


\begin{proof}
For any $n\geq0$
and if $\la\in G_{n+1}$,  one has 
by Lemmata
\ref{inverselinftyham} and \ref{inverseofl}, (recalling that $\g\leq \g_{n}\leq 2\g$ and $2\tau+5<\zeta$)
\begin{equation}\label{eq139}
\begin{aligned}
||\calL^{-1}({u}_{n}){h }||_{s,\g}&\leq C(s)\g^{-1}\left(
||{ h}||_{s+\zeta,\g}+||{u}_{n}||_{s+\zeta,\g}||{h}||_{\gots_{0},\g}\right),\\
||\calL^{-1}({u}_{n})||_{\gots_{0},\g}&\leq C(\gots_{0})\g^{-1}
N_{n}^{\zeta}||{ h}||_{\gots_{0},\g},
\end{aligned}
\end{equation}
for $\gots_{0}\leq s\leq q-\mu$, for any ${h}(\la)$ Lipschitz family. 
The (\ref{eq139}) are nothing but the (\ref{eq104}) in Definition \ref{invertibility} with $\mu=\zeta$ .
It represents the loss of regularity that you have when you perform the
regularization procedure in Section \ref{sec:3ham} and during the diagonalization algorithm in
Section \ref{sec:4ham}. This justifies our choice of $\mu$ in Proposition \ref{measurebruttebrutte}.
\end{proof}

\noindent
Now we prove formula \eqref{eq136ham} that is the most delicate point. 
It turns out, by an explicit computation, that we can write for $j\neq0$,
\begin{equation}\label{luna2}
\mu_{\s,j}-\mu_{\s,-j}{:=}i\s\sqrt{
(-2|m_{1}|j+r_{j}^{j}-r_{-j}^{-j})^{2}+4|r_{j}^{-j}|}:=j b_{j}=jb_{j}(u_{n}),
\end{equation}
where $r_{j}^{k}$, for $j,k\in \NNN$ are the coefficients of the matrix $R_{\s,\und{j}}$ in \eqref{1.2.2bisham}, 
and we define
\begin{equation}\label{luna3}
\psi(\oo,u_{n}):={\oo}\cdot\ell+j b_{j}(u_{n}).
\end{equation}
Now we write
for any $\ell\in \ZZZ^{d}\backslash\{0\}$ and $j\in \ZZZ$, 
\begin{equation}\label{luna}
H_{n}:=\bigcap_{\substack{\s\in\CC, \\(\ell,j)\in\ZZZ^{d+1}}}\!\!\! A^{\s}_{\ell,j}(u_{n})
:=\bigcap_{\substack{\s\in\CC,\\ (\ell,j)\in\ZZZ^{d+1}}}\left\{\oo\in H_{n-1} : |i\oo\cdot\ell+j b_{j}(u_{n})|\geq\frac{\g_{n}}{\langle j\rangle\langle\ell\rangle^{\tau}}
\right\}.
\end{equation}

Clearly one need to estimate the measure of $\bigcap_{n\geq0}H_{n}$. The strategy to get such estimate is quite standard and it is the following:
\begin{itemize}
\item[{\bf a}.] First one give an estimate of the resonant set for fixed $(\s,j,\ell)\in \CC\times\ZZZ\times\ZZZ^{d}$ (namely $(A^{\s}_{\ell,j})^{c}$).
This point require a lower bound
 on the Lipschitz sub-norm of the function $\psi$ in \eqref{luna3}. In this way we can give an estimate of the measure of the bad set using the standard arguments to estimate the measure of sub-levels of Lipschitz functions. This is in general non trivial but
in the case of the sets $G_{n}$ and $P_{n}$ there is a well established strategy to follow
that uses that $\mu_{\s,j}\sim O(j^{2})$. In the case of the sets $H_{n}$ the problem
is more difficult since $\mu_{\s,j}\sim O(\e j)$, hence, even if $j$ is large, it could happen that $\mu_{\s,j}\sim \oo\cdot\ell$.
However we prove such lower bound (see \eqref{luna10}) using result of Lemma \ref{nani2} and 
non-degeneracy condition on $m_{1}$ (see \eqref{mammamia}). Moreveor we use deeply the fact that we have $d$ parameters $\oo_{i}$ for $i=1,\ldots,d$ to move. On the contrary in Section $6$ of \cite{FP}
 the authors 
 performed the estimates by choosing a diophantine direction 
$\bar{\oo}$ and using as frequency the vector $\oo=\la \bar{\oo}$ with  $\la\in[1/2,3/2]$, hence using  just  one parameter. 
In this case this is not possible.
\item[{\bf b}.] Item ${\bf a.}$ provides and estimate like $|(A^{\s}_{j,\ell})^{c}|\sim \g/(j|\ell|^{\tau})$.
The second point is to have some summability of the series in $j$ since one need to control 
$\bigcup_{j,\ell}(A^{\s}_{\ell,j})^{c}$. One can sum over $\ell$ by choosing  $\tau$ large enough. In principle on can think
to weaker the Melnikov conditions and ask for a lower bound of the type 
\begin{equation}\label{albero}
|\psi|\geq\g/|j|^{2}|\ell|^{\tau}.
\end{equation}
This can cause two problems. If one ask \eqref{albero} it may be very difficult to prove the lower bound on the Lipschitz norm.
Secondly in the reduction algorithm one must have a remainder $\RR$ that support 
the loss of $2$ derivatives in the space. Our strategy is different: we use results in  Lemmata \ref{luna4} and \ref{luna6} to prove
that the number of $j$ for which $(A^{\s}_{\ell,j})^{c}\neq \emptyset$ is controlled by $|\ell|$.
\item[{\bf c}.] Finally one has to prove some ``relation'' between the sets $H_{n}$ and $H_{k}$ for $k\neq n$. Indeed
the first two points imply only that the set $H_{n}$ has large measure as $\e\to 0$. But in principle as $n$ varies this sets can be unrelated, so that the intersection can be empty. Roughly speaking in Lemma \ref{luna11} we prove that 
lots of resonances at the step $n$ have been already removed at the step $n-1$. In other words we prove that, if $|\ell|$ is sufficiently small, if $\psi(u_{n-1})$ satisfies the Melnikov conditions, then also $\psi(u_{n})$ automatically has the good bounds.
Again this point is different from the case studied in Section $6$ of \cite{FP}.
Indeed with double eigenvalues one is able to prove the previous claim 
only for $n$ large enough and not for any $n$. This is the reason in this case the set of good parameters is small, but in any case of full measure.
\end{itemize}
\noindent
In the following Lemma we resume the key result one need to prove Proposition \ref{measurebruttebrutte}.
\begin{lemma}\label{cavallo2}
For any $n\geq0$ one has
\begin{equation}\label{frodo5}
|P_{n}\backslash P_{n+1}|,|G_{n}\backslash G_{n+1}|,|H_{n}\backslash H_{n+1}|\leq C\sqrt{\g}.
\end{equation}
Moreover, if $n\geq\bar{n}(\e)$ (where $\bar{n}(\e)$ is defined in Lemma \ref{luna11}), then one has
\begin{equation}\label{frodo6}
|P_{n}\backslash P_{n+1}|,|G_{n}\backslash G_{n+1}|,|H_{n}\backslash H_{n+1}|\leq C\sqrt{\g} N_{n}^{-1}.
\end{equation}
In particular $\bar{n}(\e)$ has the form
\begin{equation}\label{frodo10}
\bar{n}(\e):=a {\rm log}{ {\rm log}\left[b\frac{1}{c \g \e}\right]},
\end{equation}
with $a,b,c>0$ independent on $\e$.
%
%
\end{lemma}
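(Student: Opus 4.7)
The plan is to treat each of the three families of bad sets separately. For $P_n\setminus P_{n+1}$ and $G_n\setminus G_{n+1}$ one follows the classical pattern (as in \cite{BBM,FP}) because the eigenvalues have leading behaviour $\mu_{\s,j}=-i\s(m_{2}j^{2}+m_{0})-i\s|m_{1}|j+O(\e/\langle j\rangle)$ from \eqref{1.2.2bis}--\eqref{1.2.2tris}, with $m_{2}$ bounded away from zero; the genuine difficulty concentrates on $H_n\setminus H_{n+1}$, where the difference $\mu_{\s,j}-\mu_{\s,-j}=jb_j$ is only of size $O(\e j)$ and the non-degeneracy Hypothesis \ref{hyp3ham}, quantified in \eqref{mammamia}, becomes indispensable.

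For $P_n$ and $G_n$, fix $(\ell,\s,\s',j,j')$ and let $\psi(\oo)$ denote the quantity inside the absolute value in \eqref{eq142bis}. By \eqref{eq:3.2.44} the Lipschitz quotient of $\psi$ along the direction $\ell/|\ell|$ is at least $|\ell|-C\e\g^{-1}|\s j^{2}-\s'j'^{2}|$, which is $\geq|\ell|/2$ whenever $|\s j^{2}-\s'j'^{2}|\leq c(\g/\e)|\ell|$; for larger $|\s j^{2}-\s'j'^{2}|$ the main term $\mu_{\s,j}-\mu_{\s',j'}$ dominates $|\oo\cdot\ell|$ and no resonance occurs. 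On the resonant regime the classical sub-level bound yields $|\{\psi\leq 2\g|\s j^{2}-\s'j'^{2}|/\langle\ell\rangle^{\tau}\}|\leq C\g|\s j^{2}-\s'j'^{2}|/\langle\ell\rangle^{\tau+1}$; summation over the admissible $(j,j',\s,\s',\ell)$ with $\tau>d$ converges and gives $|P_n\setminus P_{n+1}|,|G_n\setminus G_{n+1}|\leq C\sqrt{\g}$, once $\g=\e^{a}$ is exploited to absorb the polynomial loss coming from the dependence on $\e\g^{-1}$.

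For $H_n\setminus H_{n+1}$, set $\psi(\oo):=\oo\cdot\ell+jb_{j}(\oo,u_{n}(\oo))$ as in \eqref{luna3}. Two regimes arise: for $|j|\geq K|\ell|/\e$ with $K$ sufficiently large, the lower bound $|b_{j}|\geq c|m_{1}|\geq c\e$ from \eqref{mammamiaa} forces $|jb_{j}|\geq 2|\oo\cdot\ell|$, hence $|\psi|\geq c\e|j|/2$, which is never resonant; for $|j|\leq K|\ell|/\e$, the Lipschitz estimate along $\ell/|\ell|$ combined with \eqref{mammamiab} gives $|\psi^{lip}|\geq |\ell|-C|j|\e^{2}\g^{-1}\geq |\ell|/2$ provided $|j|\leq c\g|\ell|/\e^{2}$, a condition consistent with the choice $\g=\e^{a}$, $a\in(0,1)$. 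This yields a measure bound $|(A_{\ell,j}^{\s})^{c}|\leq C\g/(|j|\langle\ell\rangle^{\tau+1})$; summing the harmonic-like series in the range $|j|\leq K|\ell|/\e$ produces a $\log(|\ell|/\e)$ factor, and the sum over $\ell$ (with $\tau>d$) converges. Choosing $\g=\e^{a}$ the logarithm is absorbed in a fractional power of $\g$, giving $|H_n\setminus H_{n+1}|\leq C\sqrt{\g}$.

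The improved bound \eqref{frodo6} for $n\geq\bar{n}(\e)$ rests on the fact that only contributions coming from $|\ell|>N_{n}$ survive. Indeed, by the Nash--Moser estimate \eqref{teo41} one has $\|u_{n}-u_{n-1}\|\lesssim\e\g^{-1}N_{n}^{-\kappa}$, and the Lipschitz dependence of the eigenvalues on $u$ (through \eqref{eq:3.2.44} and the KAM iteration) gives $|\mu_{\s,j}(u_{n})-\mu_{\s,j}(u_{n-1})|\lesssim \e N_{n}^{-\kappa}$. For $|\ell|\leq N_{n}$ this shift is smaller than the threshold gap $(\g_{n-1}-\g_{n})/\langle\ell\rangle^{\tau}\sim \g 2^{-n}/N_{n}^{\tau}$ as soon as $N_{n}^{\kappa-\tau}\gg \e/\g$; since $N_{n}=N_{0}^{(3/2)^{n}}$ this is equivalent to $(3/2)^{n}\gtrsim \log(1/(\e\g))/\log N_{0}$, i.e. $n\geq\bar{n}(\e)$ of the double-logarithmic form \eqref{frodo10}. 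The remaining sum over $|\ell|>N_{n}$ then yields the extra factor $N_{n}^{-1}$ in \eqref{frodo6}. The main obstacle throughout is obtaining the lower bound on the Lipschitz seminorm of $\psi$ for $H_n$, where the weakly non-degenerate leading term $jb_{j}$ replaces the simple $j^{2}$-asymptotics available in the $P_n$ and $G_n$ cases; this is precisely the role played by Hypothesis \ref{hyp3ham} via the quantitative bound \eqref{mammamiab}.
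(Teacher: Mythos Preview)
Your proof plan is correct in outline and follows the same overall strategy as the paper: restrict the relevant $j$ via a resonance window $|j|\lesssim |\ell|/\e$, obtain a Lipschitz lower bound $|\psi|^{lip}\gtrsim |\ell|$ using the non-degeneracy of $m_1$, estimate each resonant set by a sub-level argument, and then use the inclusion $(A^{\s}_{\ell,j}(u_n))^c\subseteq (A^{\s}_{\ell,j}(u_{n-1}))^c$ for $|\ell|\le N_n$ and $n\ge\bar n(\e)$ to get the extra factor $N_n^{-1}$.

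There is, however, one genuine technical difference and two imprecisions worth noting.

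\textbf{Summation over $j$.} You sum the bound $|(A^{\s}_{\ell,j})^c|\le C\g/(\langle j\rangle\langle\ell\rangle^{\tau+1})$ directly over $|j|\le K|\ell|/\e$, paying a $\log(|\ell|/\e)$ factor which you then absorb into $\sqrt{\g}$ via $\g=\e^{a}$. The paper instead splits $j$ into a bounded range $A_\ell=\{|j|\le 4C|\ell|/\gote\}$ (cardinality independent of $\e$) and the complementary range. For $j\in(A_\ell)^c$ the paper replaces the threshold $\g/\langle j\rangle\langle\ell\rangle^{\tau}$ by the $j$-independent threshold $\g'\al_n/\langle\ell\rangle^{\tau_1}$ with $\al_n=\inf_j|b_j|\le C\e$; this makes the $1/\e$ from the cardinality of the $j$-range cancel against $\al_n$, so no logarithm appears. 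The $\sqrt{\g}$ then comes from the choice $\g=(\g')^{2}$. Both routes work, but the paper's auxiliary-set trick is cleaner and explains directly why the exponent is exactly $1/2$.

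\textbf{Lipschitz bound on $\psi$.} When you write $|\psi|^{lip}\ge|\ell|-C|j|\e^2\g^{-1}$ you invoke only \eqref{mammamiab}, which controls the \emph{explicit} $\oo$-dependence of $m_1$. The full Lipschitz variation of $m_1(\oo,u_n(\oo))$ also picks up the contribution through $u_n$, namely $|d_u m_1|\cdot\|u_n\|^{lip}$; you should check that this is also dominated by $c|\ell|$ in the relevant $j$-range (it is, using $\e\g^{-1}$ small, cf. the paper's equations around \eqref{nani6}--\eqref{nani7}).

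\textbf{Eigenvalue shift.} For the inclusion argument you quote $|\mu_{\s,j}(u_n)-\mu_{\s,j}(u_{n-1})|\lesssim \e N_n^{-\ka}$; the correct bound for the quantity that matters here, $\mu_{\s,j}-\mu_{\s,-j}=jb_j$, carries an extra factor $|j|$ (coming from $|m_1(u_n)-m_1(u_{n-1})|\cdot|j|$); see the paper's \eqref{eq167}. This is harmless because $|j|\le C|\ell|/\e\le CN_n/\e$ and $\al$ (or $\ka$) is large enough, but you should include it to close the condition $N_n^{\tau-\al+2}\lesssim \g\e 2^{-n}$ that produces the double-logarithmic threshold \eqref{frodo10}.
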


By Lemma \ref{cavallo2} follows the \eqref{eq136}. Indeed on one hand we have
\begin{equation}\label{frodo11}
\begin{aligned}
|\Lambda\backslash \cap_{n\geq0} H_{n}|&\leq
\sum_{n=0}^{\bar{n}(\e)}|H_{n}\backslash H_{n+1}|+\sum_{n> \bar{n}(\e)}|H_{n}\backslash H_{n+1}|\leq C \g\bar{n}(\e).
\end{aligned} 
\end{equation}
The same bounds holds for $|\Lambda\backslash \cap_{n\geq0}G_{n}|,|\Lambda\backslash \cap_{n\geq0}P_{n}|.$
Now, fixing $\g:=\g(\e)=\e^{a}$ with $a\in (0,1)$, one has that 
$$
|\Lambda\backslash \calG_{\infty}|\leq C\sqrt{\g(\e)}(1+\bar{n}(\e))\to0, \quad {\rm as} \quad \e\to0.
$$
This concludes the proof of Proposition \ref{measurebruttebrutte}.  It remains to check Lemma \ref{cavallo2} following the strategy in three point explained above.
We will give the complete proof only for the sets $H_{n}$ that is more difficult. 
The inductive estimates on $G_{n}$ and $P_{n}$ is very similar, anyway one can follows essentially word by word  the proof of Proposition $1.10$ in Section $6$ of \cite{FP}. Similar measure estimates can be also found in \cite{BBM}. 


\begin{lemma}\label{luna4}
If $|b_{j}||j|\geq 2|{\oo}\cdot\ell|$ or $|b_{j}||j|\leq |{\oo}\cdot\ell|/2$ then 
$(A^{\s}_{\ell,j}(u_{n}))^{c}=\emptyset$.
\end{lemma}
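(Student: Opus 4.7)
}
The plan is to exploit the fact that, by definition \eqref{luna2}, the number $jb_{j}(u_n)$ is purely imaginary: writing
\[
jb_{j}(u_n) = i\sigma\,\alpha_{j}, \qquad \alpha_{j}:=\sqrt{(-2|m_{1}|j+r_{j}^{j}-r_{-j}^{-j})^{2}+4|r_{j}^{-j}|^{2}} \in \mathbb{R},
\]
one has $|jb_{j}(u_n)|=\alpha_{j}$ and
\[
|i\omega\cdot\ell + jb_{j}(u_n)| \;=\; |\,i(\omega\cdot\ell + \sigma\alpha_{j})\,|\;=\;|\omega\cdot\ell + \sigma\,|jb_{j}(u_n)|\,|.
\]
So the condition defining $A^{\sigma}_{\ell,j}(u_n)$ in \eqref{luna} is the purely real small-divisor inequality $|\omega\cdot\ell + \sigma|jb_{j}(u_n)|| \geq \gamma_{n}/(\langle j\rangle\langle\ell\rangle^{\tau})$, and I will show that in each of the two regimes specified in the statement this inequality is automatic, so the bad set $(A^{\sigma}_{\ell,j}(u_n))^{c}$ is empty.

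The argument splits into two cases via the reverse triangle inequality. First, if $|b_{j}|\,|j|\geq 2|\omega\cdot\ell|$, then
\[
|\omega\cdot\ell + \sigma|jb_{j}|| \;\geq\; |jb_{j}| - |\omega\cdot\ell| \;\geq\; \tfrac{1}{2}|jb_{j}| \;\geq\; |\omega\cdot\ell|.
\]
Second, if $|b_{j}|\,|j|\leq |\omega\cdot\ell|/2$, then
\[
|\omega\cdot\ell + \sigma|jb_{j}|| \;\geq\; |\omega\cdot\ell| - |jb_{j}| \;\geq\; \tfrac{1}{2}|\omega\cdot\ell|.
\]
In both cases the quantity is bounded below by $\tfrac12|\omega\cdot\ell|$, so it suffices to estimate $|\omega\cdot\ell|$ from below.

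For this I use that $H_{n-1}\subset \Lambda$ and every $\omega\in\Lambda$ is diophantine as in \eqref{dio}; since $\ell\in\mathbb{Z}^{d}\setminus\{0\}$ (recall the definition of $H_n$ in \eqref{eq142bis}/\eqref{luna}), we have $|\omega\cdot\ell|\geq \gamma_{0}/\langle\ell\rangle^{\tau_{0}}$. Combining with the previous bounds yields
\[
|i\omega\cdot\ell + jb_{j}(u_n)| \;\geq\; \frac{\gamma_{0}}{2\,\langle\ell\rangle^{\tau_{0}}}.
\]
Since $\tau>d$ will be chosen with $\tau\geq\tau_{0}+1$ (the same choice used throughout the measure-estimate section) and since $\gamma_{n}=(1+2^{-n})\gamma\leq 2\gamma\leq 2\gamma_{0}$, one has
\[
\frac{\gamma_{0}}{2\langle\ell\rangle^{\tau_{0}}} \;\geq\; \frac{\gamma_{n}}{\langle j\rangle\langle\ell\rangle^{\tau}}
\qquad\Longleftrightarrow\qquad
\langle j\rangle\,\langle\ell\rangle^{\tau-\tau_{0}} \;\geq\; \frac{2\gamma_{n}}{\gamma_{0}},
\]
and the right-hand side is $\leq 4$, so the inequality holds trivially for every $j\in\mathbb{Z}$, $\ell\in\mathbb{Z}^{d}\setminus\{0\}$ (if needed, one absorbs the constant $4$ by enlarging $\tau$ by one more unit, as is already assumed in \eqref{eq:4.14ham}). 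Therefore $\omega\in A^{\sigma}_{\ell,j}(u_n)$, which gives $(A^{\sigma}_{\ell,j}(u_n))^{c}=\emptyset$ and completes the proof.

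Since the argument is a one-line triangle inequality plus the diophantine bound on $\omega$, no step is really an obstacle; the only subtlety is remembering that $jb_{j}$ is purely imaginary so that $|i\omega\cdot\ell+jb_{j}|$ becomes a real small-divisor, which is what makes the dichotomy $|jb_{j}|\gtrless|\omega\cdot\ell|$ useful. The lemma will then be used in the subsequent measure estimates to reduce the summation over $j$ to the "resonant window" $|jb_{j}|\sim|\omega\cdot\ell|$, which for $|b_{j}|\asymp \epsilon$ (via Hypothesis \ref{hyp3ham} and \eqref{mammamiaa}) restricts $j$ to $|j|\lesssim |\omega||\ell|/\epsilon$, giving the summability in $j$ needed for \eqref{frodo5}--\eqref{frodo6}.
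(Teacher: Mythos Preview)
Your proof is correct and is exactly the paper's approach: reverse triangle inequality in the two regimes, then the diophantine lower bound $|\omega\cdot\ell|\ge \gamma_0/\langle\ell\rangle^{\tau_0}$ combined with $\tau>\tau_0$. The paper's own proof is a single sentence invoking precisely these ingredients.

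One small slip at the very end: you reduce to $\langle j\rangle\langle\ell\rangle^{\tau-\tau_0}\ge 2\gamma_n/\gamma_0$ and say this holds trivially because the right side is $\le 4$, proposing to ``absorb the constant $4$ by enlarging $\tau$ by one more unit''. That fix does not work when $\langle\ell\rangle=1$ (e.g.\ $|\ell|=1$, $j=0$), since then $\langle\ell\rangle^{\tau-\tau_0}=1$ for any $\tau$. The correct and immediate repair is that in this section $\gamma=\e^{a}$ while $\gamma_0$ is a fixed constant, so $2\gamma_n/\gamma_0\le 4\e^{a}/\gamma_0<1$ for $\e$ small enough, and the inequality is then genuinely automatic.
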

\begin{proof} Lemma follows by the fact that ${\oo}$ is diophantine with constant $\tau_{0}$ and $\tau>\tau_{0}$
and from the  smallness of $|m_{1}|$.
\end{proof}

\noindent
Thanks Lemma \ref{luna4} in the following we will consider only the $j\in \SSSS_{\ell,n}\subseteq\ZZZ$ where
\begin{equation}\label{luna5}
\SSSS_{\ell,n}:=\left\{j\in\ZZZ \; \frac{|{\oo}\cdot\ell|}{2}\leq |j|b_{j}(u_n)\leq2|{\oo}\cdot\ell|
\right\}
\end{equation}
for some constant $C>0$. In order to estimate the measure of $(A^{\s}_{\ell,j}(u_{n}))^{c}$ we need the following technical Lemma.
\begin{lemma}\label{luna6}
If $j\in \SSSS_{\ell,n}\cap (A_{\ell,n})^{c}$, where
$$A_{\ell}:=\{j\in\ZZZ : |j|\leq4|\ell| C/\gote\},$$ then
one has that $|b_{j}(u_{n})|\geq |m_{1}(u_{n})|/2$. 
\end{lemma}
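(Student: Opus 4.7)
\medskip

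\noindent\textbf{Plan of proof.} The plan is to argue by a direct computation using the explicit formula for $b_j$ from \eqref{automondoschifo2} together with the non-degeneracy lower bound on $|m_1|$.

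First I would recall that for $j\ne 0$, by \eqref{automondoschifo2},
\begin{equation*}
b_j(u_n)^{2}=\Bigl(-2|m_{1}|+\frac{r_{j}^{n,j}-r_{-j}^{n,-j}}{j}\Bigr)^{2}+4\,\frac{|r_{j}^{n,-j}|^{2}}{j^{2}},
\end{equation*}
so, dropping the nonnegative second summand,
\begin{equation*}
b_j(u_n)\;\ge\;\Bigl|\,-2|m_{1}|+\alpha_j\,\Bigr|,\qquad \alpha_j:=\frac{r_{j}^{n,j}-r_{-j}^{n,-j}}{j}.
\end{equation*}
By the KAM estimate \eqref{1.2.2tris} (equivalently \eqref{eq:4.5ham}) we have $|r_{j}^{n,k}|\le \e C_0/\langle j\rangle$ for $k=\pm j$, so, since $\langle j\rangle=|j|$ for $|j|\ge 1$,
\begin{equation*}
|\alpha_j|\;\le\;\frac{2\e C_{0}}{|j|\,\langle j\rangle}\;\le\;\frac{2\e C_{0}}{|j|^{2}}.
\end{equation*}
On the other hand, \eqref{mammamia} together with the discussion after \eqref{topotopo} gives the lower bound $|m_{1}(u_n)|\ge \e\gote/16$.

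Next I would split into two cases according to the size of $|\alpha_j|$:

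\emph{Case 1: $|\alpha_j|\le |m_1|$.} By the reverse triangle inequality, $b_j(u_n)\ge 2|m_1|-|\alpha_j|\ge |m_1|\ge |m_1|/2$, and the lemma is proved.

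\emph{Case 2: $|\alpha_j|>|m_1|$.} Using the upper bound on $|\alpha_j|$ and the lower bound $|m_1|\ge c\e$ (with $c=\gote/16$), we get
\begin{equation*}
\frac{2\e C_{0}}{|j|^{2}}\;\ge\;|\alpha_j|\;>\;|m_{1}|\;\ge\;\frac{\e\,\gote}{16},
\end{equation*}
which forces $|j|^{2}<32\,C_{0}/\gote$. But since $\ell\neq 0$ in the definition of $H_{n+1}$ we have $|\ell|\ge 1$, so $j\in (A_{\ell})^{c}$ means $|j|>4|\ell|C/\gote\ge 4C/\gote$. Choosing the constant $C$ in the definition of $A_\ell$ large enough that $(4C/\gote)^{2}\ge 32 C_{0}/\gote$, i.e.\ $C\ge\sqrt{2C_{0}\gote}$, Case~2 is vacuous, and we are reduced to Case~1.

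The argument is essentially a bookkeeping exercise: the only real input is the non-degeneracy bound $|m_{1}|\ge c\e$ provided by Hypothesis~\ref{hyp3ham} via \eqref{mammamia}, which guarantees that the constant term $-2|m_1|$ dominates the $O(\e/j^{2})$ perturbation as soon as $|j|$ is above a threshold depending only on $\gote$. The hypothesis $j\in\SSSS_{\ell,n}$ plays no role in this particular lemma — it is carried along for use in the subsequent measure estimates. The one subtlety, and the only place care is needed, is making sure the constant $C$ in the definition of $A_\ell$ is chosen after (and in terms of) the remainder constant $C_0$ of Proposition~\ref{teo2ham} and the non-degeneracy constant $\gote$.
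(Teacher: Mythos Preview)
Your proof is correct and follows essentially the same approach as the paper: both drop the nonnegative off-diagonal term in the formula for $b_j^2$, bound the perturbation $\alpha_j$ using the decay $|r_j^k|\le \e C/\langle j\rangle$, and then invoke the non-degeneracy lower bound $|m_1|\ge c\e$ together with $|j|>4|\ell|C/\gote$ to conclude. The only cosmetic differences are that the paper does the computation in a single chain of inequalities (using the slightly weaker bound $|\alpha_j|\le \e C/|j|$, which makes the same constant $C$ appear in both the remainder estimate and the definition of $A_\ell$), whereas you use the sharper $|\alpha_j|\le 2\e C_0/|j|^2$ and an explicit case split that forces you to choose the constant $C$ in $A_\ell$ \emph{a posteriori} in terms of $C_0$; your remark that the hypothesis $j\in\SSSS_{\ell,n}$ is not used here is also correct.
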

\prova
It follow by
\begin{equation}\label{nano}
\begin{aligned}
b_{j}^{2}&=\left(-2|m_{1}|+\frac{r_{j}^{j}-r_{-j}^{-j}}{j}\right)^{2}+4\frac{|r_{j}^{-j}|^{2}}{|j|^{2}}\geq\left(2|m_{1}|-\frac{\e C}{|j|}\right)^{2}\\
&\stackrel{(\ref{eq:3.2.44})}{\geq}|m_{1}|^{2}\left(2-\frac{\e C}{|j|\e \gote}\right)^{2}\geq
|m_{1}|^{2}\left(2-\frac{1}{4|\ell|}\right)^{2}\geq \frac{|m_{1}|^{2}}{4}.
\end{aligned}
\end{equation}
\EP

\noindent
 An consequence of Lemmata \ref{luna4} and \ref{luna6} is that we need to study the sets $A^{\s}_{\ell,j}$ only for 
\begin{equation}\label{padre}
|j|\leq \frac{C|\ell|}{\e \gote}.
\end{equation}
It is essentially what explained in item ${\bf b}.$ Note the here we used the non-degeracy of the constant $m_{1}$.

\begin{lemma}\label{nani2}
For any $n\geq0$ and $j\in \SSSS_{\ell,n}$ one has
\begin{equation}\label{nani3}
|b_{j}(u_n)|^{lip}\leq K \frac{1}{|j|}\left[|m_{1}|^{lip}|j|+\e C\right],
\end{equation}
for some $K>0$.
\end{lemma}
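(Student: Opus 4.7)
The plan is to write $b_j^2$ as a sum of two squares and exploit the fact that each summand is pointwise dominated by $b_j^2$ to get a clean Lipschitz estimate via a difference-of-squares trick. Set
$$
A_j := -2|m_1| + \frac{r_j^{j}-r_{-j}^{-j}}{j}, \qquad B_j := \frac{2|r_j^{-j}|}{|j|},
$$
so that by \eqref{automondoschifo2} (with $a_j=j$) we have $b_j^2 = A_j^2 + B_j^2$, and in particular $|A_j| \leq b_j$ and $|B_j| \leq b_j$ pointwise.

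First I would use, for any $\oo_1 \neq \oo_2$ in the parameter set where $b_j(\oo_1)+b_j(\oo_2)>0$ (which holds on $\SSSS_{\ell,n}$ since $|j|b_j \geq |\oo\cdot\ell|/2>0$ there), the identity
$$
b_j(\oo_1)-b_j(\oo_2)=\frac{\bigl(A_j(\oo_1)+A_j(\oo_2)\bigr)\bigl(A_j(\oo_1)-A_j(\oo_2)\bigr)+\bigl(B_j(\oo_1)+B_j(\oo_2)\bigr)\bigl(B_j(\oo_1)-B_j(\oo_2)\bigr)}{b_j(\oo_1)+b_j(\oo_2)}.
$$
Using $|A_j(\oo_i)|, |B_j(\oo_i)|\leq b_j(\oo_i)$ to absorb the sum-factors into the denominator yields the crucial inequality
$$
|b_j|^{lip} \leq |A_j|^{lip} + |B_j|^{lip},
$$
with no blow-up coming from the square root (the degenerate case $b_j(\oo_1)=b_j(\oo_2)=0$ forces $A_j,B_j=0$ at both points, hence nothing to prove).

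The remaining step is bookkeeping with the a priori bounds from Proposition~\ref{teo2ham}. By the triangle inequality,
$$
|A_j|^{lip}\leq 2|m_1|^{lip}+\frac{|r_j^{j}|^{lip}+|r_{-j}^{-j}|^{lip}}{|j|},\qquad |B_j|^{lip}\leq \frac{2|r_j^{-j}|^{lip}}{|j|}.
$$
Inserting the weighted estimate $|r_j^k|_\g \leq \e C/\langle j\rangle$ from \eqref{1.2.2tris} (equivalently \eqref{eq:4.5ham}), which implies $|r_j^k|^{lip}\leq \e C\g^{-1}/\langle j\rangle$, and absorbing the factor $\g^{-1}$ into the generic constant $C$ (so that the remainder contribution is bounded by $\e C/|j|$), produces the claimed form
$$
|b_j|^{lip}\leq K\bigl(|m_1|^{lip}+\e C/|j|\bigr)=\frac{K}{|j|}\bigl(|m_1|^{lip}|j|+\e C\bigr).
$$

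The only non-routine ingredient is the opening observation that $b_j=\sqrt{A_j^2+B_j^2}$ with $|A_j|,|B_j|\leq b_j$ is automatically Lipschitz with constant $\leq |A_j|^{lip}+|B_j|^{lip}$, despite the square root. Once this is in hand the remaining calculation is a direct substitution of the bounds on $m_1$ and the $r_j^k$ that were already established in the reducibility algorithm; no new analytic input is required. I expect the main obstacle in writing this out cleanly to be simply verifying that the inequality $|A_j|,|B_j|\leq b_j$ can legitimately be propagated through the Lipschitz definition on the set $\SSSS_{\ell,n}$, which is settled by the positivity observation above.
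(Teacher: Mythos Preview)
Your proposal is correct and follows essentially the same route as the paper: both arguments write $b_j(\oo_1)-b_j(\oo_2)=\dfrac{b_j^2(\oo_1)-b_j^2(\oo_2)}{b_j(\oo_1)+b_j(\oo_2)}$, expand $b_j^2=A_j^2+B_j^2$ via difference of squares, and then use the pointwise bound $|A_j|,|B_j|\le b_j$ to cancel the denominator (this is exactly the paper's inequality \eqref{nani5}). Your explicit observation that $b_j>0$ on $\SSSS_{\ell,n}$ justifies the division and is a small clarification over the paper's write-up; the remaining bookkeeping with $|m_1|^{lip}$ and $|r_j^{k}|^{lip}$ is identical.
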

\prova
One can note that,
\begin{equation}\label{nani4}
\begin{aligned}
|b_{j}(\oo_1)-&b_{j}(\oo_{2})|=\left|\frac{ b^2_{j}(\oo_1)-b^{2}_{j}(\oo_{2})}{b_{j}(\oo_1)+b_{j}(\oo_{2})}\right|\leq\\
&\leq|\oo_{1}-\oo_{1}|\left[|m_{1}|^{lip}+\frac{1}{|j|}(|r_{j}^{j}|^{lip}+|r_{-j}^{-j}|^{lip}+|r_{j}^{-j}|^{lip})\right],
\end{aligned}
\end{equation}
using  that
\begin{equation}\label{nani5}
\frac{|(-2|m_{1}(\oo_{1})|+(r_{j}^{j}-r_{-j}^{-j})(\oo_{1})/j)|+|(-2|m_{1}(\oo_{1})|+(r_{j}^{j}-r_{-j}^{-j})(\oo_{1})/j)|}{b_{j}(\oo_1)+b_{j}(\oo_{2})}\leq2,
\end{equation} 
and that the same bound holds also for $|(r_{j}^{-j})(\oo_1)|/|j|(b_{j}(\oo_1)+b_{j}(\oo_{2}))$.
\EP

\noindent
An immediate consequence of \eqref{nani3} is that 
\begin{equation}\label{nani6}
|j||b_{j}|^{lip}\stackrel{(\ref{eq:3.2.44})}{\leq}4|\ell|\frac{C}{\gote}2K\e C, \qquad j\in \SSSS_{\ell,n}\cap A_{\ell}
\end{equation}
\begin{equation}\label{nani7}
|j||b_{j}|^{lip}\stackrel{(\ref{mammamia})}{\leq}K|j|\frac{1}{|j|}\left[\e|m_{1}(0)|C\frac{|\ell|}{\e \gote}+\e C\right]
\leq \tilde{K}\e |\ell|, \qquad j\in\SSSS_{\ell,n} \cap(A_{\ell})^{c}
\end{equation}

\noindent
By Lemmata \ref{luna6} and \ref{nani2} we deduce the following fundamental estimates on the function $\psi$ defined in  \eqref{luna3}. First we note that,
since there exists $i\in\{1,\ldots,d\}$ such that $|\ell_{i}|\geq |\ell|/2d$, one has
$$
|\del_{\oo_i}\oo\cdot\ell|\geq\frac{|\ell|}{2d}.
$$
Hence one has
\begin{equation}\label{luna10}
\begin{aligned}
|\psi|^{lip}&\geq \left(\frac{|\ell|}{2d}-|j||b_{j}|^{lip}
\right)\stackrel{(\ref{nani6})}{\geq}\frac{|\ell|}{4d},
\end{aligned}
\end{equation}
for $\e$ small enough for any $j\in \SSSS_{\ell,n}$. The \eqref{luna10} is fundamental in order to estimate 
the measure of a single resonant set and this is what we claimed in item ${\bf a}$. The following Lemma 
is the part ${\bf c}.$ of the strategy,

\begin{lemma}\label{luna11}
For $|\ell|\leq N_{n}$ one has that for any $\e>0$ there exists $\bar{n}:=\bar{n}(\e)$ such that  
if $n\geq\bar{n}(\e)$ then
\begin{equation}\label{luna122}
(A^{\s}_{\ell,j}(u_{n}))^{c}\subseteq (A^{\s}_{\ell,j}(u_{n-1}))^{c}.
\end{equation}
\end{lemma}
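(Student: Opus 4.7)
The plan is to proceed by a direct triangle-inequality comparison between $\psi(\oo, u_n)$ and $\psi(\oo, u_{n-1})$. Fix $\oo \in (A^{\s}_{\ell,j}(u_n))^c$, so that
\[
|i\oo\cdot\ell + j\, b_j(u_n)| < \frac{\g_n}{\langle j\rangle \langle \ell\rangle^\tau}.
\]
By the triangle inequality one has
\[
|i\oo\cdot\ell + j\, b_j(u_{n-1})| \le |i\oo\cdot\ell + j\, b_j(u_n)| + |j|\,|b_j(u_n) - b_j(u_{n-1})|,
\]
and since $\g_{n-1} - \g_n = 2^{-n}\g$, it suffices to show that the last term is bounded by $2^{-n}\g/(\langle j\rangle\langle \ell\rangle^\tau)$; this would give $\oo \in (A^{\s}_{\ell,j}(u_{n-1}))^c$, which is the desired inclusion.

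The first step is to reduce $|j|$ to the regime that actually matters. Lemma \ref{luna4} restricts us to $j \in \SSSS_{\ell,n}$, and combining \eqref{padre} (itself a consequence of Lemma \ref{luna6} together with the non-degeneracy \eqref{mammamiaa}) we may assume $|j| \le C|\ell|/(\e\gote)$. Second, I would estimate $|b_j(u_n) - b_j(u_{n-1})|$ via \eqref{aaa2ham}, yielding
\[
|b_j(u_n) - b_j(u_{n-1})| \le \e C \,\|u_n - u_{n-1}\|_{\gots_0 + \h_3}.
\]
Finally, using the Nash–Moser convergence estimate \eqref{teo41} from Proposition \ref{teo4} with $\mu=\zeta$ (hence $\h_3\le \gots_0+\mu$), one gets $\|u_n-u_{n-1}\|_{\gots_0+\h_3,\g} \le C_\star \e\g^{-1} N_n^{-\ka}$.

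Combining these three inputs, together with the bound $|j| \le C|\ell|/\e$ on the left and $1/\langle j\rangle \ge c\e/|\ell|$ on the right, the inclusion reduces to the arithmetic inequality
\[
C'\,\frac{|\ell|}{\e\gote}\cdot \e^2 C_\star \g^{-1} N_n^{-\ka} \;\le\; \frac{2^{-n}\g\,\e\gote}{C\,|\ell|\langle\ell\rangle^\tau},
\]
i.e.\ $|\ell|^{\tau+2} \le c\,2^{-n}\g^2\, N_n^\ka$. Since $|\ell|\le N_n$, it suffices that
\[
N_n^{\ka - \tau - 2} \ge c^{-1}\, 2^n \g^{-2}.
\]
Because $\ka = 18+2\mu$ with $\mu=\zeta = 4\tau+\h+8$, the exponent $\ka-\tau-2$ is large and positive, so the inequality is certainly true once $n$ is large enough. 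Recalling $N_n = N_0^{(3/2)^n}$ and $\g=\e^a$, solving $(3/2)^n(\ka-\tau-2)\log N_0 \ge n\log 2 + 2a|\log\e| + \mathrm{const}$ for $n$ gives precisely the double-logarithmic threshold
\[
\bar n(\e) \sim a\log\log\!\bigl(b/(c\g\e)\bigr),
\]
matching the form \eqref{frodo10}.

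The main obstacle is conceptual rather than computational: one would \emph{like} the inclusion \eqref{luna122} to hold for every $n\ge 0$, since that is what happens in the simple-eigenvalue case of \cite{FP}. However, here the only small-divisor bound available on $\psi$ is the weak one of size $\g/(\langle j\rangle\langle\ell\rangle^\tau)$ (rather than $\g/\langle\ell\rangle^\tau$), and $|j|$ can be as large as $|\ell|/\e$, producing an unavoidable factor $|\ell|^{\tau+2}/\e^2$ in the comparison. This factor cannot be absorbed for small $n$ and forces the threshold $\bar n(\e)$; this is ultimately the reason why, in Lemma \ref{cavallo2}, only the tail sum $\sum_{n>\bar n(\e)}|H_n\setminus H_{n+1}|$ is controlled by the improved $N_n^{-1}$ bound, while the first $\bar n(\e)$ differences contribute the $\sqrt{\g}\,\bar n(\e)$ term that still vanishes as $\e\to 0$.
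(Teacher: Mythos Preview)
Your overall strategy---triangle inequality, restricting $j$ via $\SSSS_{\ell,n}$ and \eqref{padre}, then invoking the Nash--Moser smallness \eqref{teo41}---matches the paper's. However, there is a genuine gap in the second step. The estimate \eqref{aaa2ham} you invoke is stated for the step-$\nu$ quantity $b^\nu_j$, not for the limit $b_j$ of \eqref{luna2}, and it holds only for $\oo \in \Lambda_\nu^{\g_1}(u_1) \cap \Lambda_\nu^{\g_2}(u_2)$. To pass to the limit you would need $\oo \in \bigcap_\nu \Lambda_\nu^{\g}(u_n)$, i.e.\ membership in the Cantor set built on $u_n$; but at this stage you only know $\oo \in G_n \cap H_n \subseteq \Lambda_\infty^{2\g_{n-1}}(u_{n-1})$, the Cantor set built on $u_{n-1}$. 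The Kirszbraun extensions in $(S2)_\nu$ preserve Lipschitz dependence in $\oo$, not in $u$, so the extended eigenvalues carry no a priori Lipschitz-in-$u$ control outside the Cantor intersection.

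The paper closes this gap in two moves. First, it applies $(S4)_{n+1}$---whose hypothesis $CN_n^\tau\|u_n-u_{n-1}\|_{\gots_0+\h_3}^{\sup}\le \e\g 2^{-n}$ is precisely what defines $\bar n(\e)$---to obtain $\Lambda_{n+1}^{\g_{n-1}}(u_{n-1}) \subseteq \Lambda_{n+1}^{\g_n}(u_n)$, placing $\oo$ in the required intersection. Second, it splits $|r_j^k(u_n) - r_j^k(u_{n-1})|$ into the step-$(n{+}1)$ difference (now controlled by $(S3)_{n+1}$) plus two tails $|r_j^k - r_j^{n+1,k}|$ (controlled by the KAM rate $N_n^{-\al}$), yielding $|j||b_j(u_n)-b_j(u_{n-1})|\le C\e|j|N_n^{-\al}$. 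The final arithmetic then uses the exponent $\al-\tau-2$ rather than your $\ka-\tau-2$; both are positive and produce the same double-logarithmic form for $\bar n(\e)$. So your arithmetic is right in spirit, but $\bar n(\e)$ enters the logic earlier than you place it: it is needed to land in the domain where the Lipschitz-in-$u$ estimate is available, not only in the final comparison.
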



\begin{proof}
We first have to estimate
\begin{equation}\label{speriamobene}
\begin{aligned}
|j||b_{j}(u_{n})-b_{j}(u_{n-1})|&\leq
4\max_{h=\pm j}\{|r_{j}^{-h}(u_{n})-r_{j}^{ -h}(u_{n-1})|\}\\
&+2|m_{1}(u_{n})-m_{1}(u_{n-1})| |j|.
\end{aligned}
\end{equation}
By Lemma \ref{teo:KAMham}, using  the $({\bf S4})_{n+1}$ with
$\g=\g_{n-1}$ and $\g-\rho=\g_{n}$,  and with ${ u}_{1}={u}_{n-1}$, ${ u}_{2}={u}_{n}$,
we have
\begin{equation}\label{eq162}
\Lambda_{n+1}^{\g_{n-1}}({ u}_{n-1})\subseteq\Lambda_{n+1}^{\g_{n}}({ u}_{n}),
\end{equation}
since, for $\e\g^{-1}$ small enough, and $n\geq \bar{n}(\e)$ defined as
\begin{equation}
\bar{n}(\e):=\frac{1}{\log(3/2)}\log\left[\frac{1}{(\ka-\tau-3)\log N_{0}}\log\left(\frac{1}{C\g \e}\right)\right]
\end{equation}
\begin{equation}\label{eq163}
 C N_{n}^{\tau}\sup_{\la\in G_{n}}||{ u}_{n}-{ u}_{n-1}||_{\gots_{0}+\mu}
\leq \e (\g_{n-1}-\g_{n})=:\e \rho=\e \g2^{-n}.
\end{equation}
where $\ka$ is defined in (\ref{teo41}) with
$\nu=2$, $\mu=\zeta$ defined in (\ref{eq:4.4.18}) with $\h=\h_{1}+\be$, $\mu>\tau$ 
(see Lemmata \ref{measurebruttebrutte}, \ref{megalemma} and (\ref{eq:4.14ham}), (\ref{eq:3.2.0ham})).
We also  note that,
\begin{equation}\label{eq164}
G_{n}\cap H_{n}\stackrel{(\ref{eq142bis}), (\ref{martina10ham})}{\subseteq}\Lambda_{\infty}^{2\g_{n-1}}({u}_{n-1})
\stackrel{(\ref{eq:4.1.12ham})}{\subseteq}\Lambda_{n+1}^{\g_{n-1}}({ u}_{n-1})
\stackrel{(\ref{eq162})}{\subseteq}
\Lambda_{n+1}^{\g_{n}}({ u}_{n}).
\end{equation}
This means that 
$\la\in H_{n}\cap G_{n}\subset \Lambda_{n+1}^{\g_{n-1}}({u}_{n-1})\cap\Lambda_{n+1}^{\g_{n}}({ u}_{n})$,
and hence, we can apply the ${\bf (S3)}_{\nu}$, with $\nu=n+1$, in Lemma \ref{teo:KAMham} to get
for any $h,k=\pm j$,
\begin{equation*}\label{eq165}
\begin{aligned}
\!\!\!\!\!\!|r_{h}^{k}({ u}_{n})-&r_{h}^{k}({ u}_{n-1})|\leq
|r_{h}^{n+1,k}({ u}_{n})-r_{h}^{n+1,k}({ u}_{n-1})|\\
&+|r_{h}^{k}({ u}_{n})-r_{h}^{n+1,k}({ u}_{n})|
+
|r_{h}^{k}({ u}_{n-1})-r_{h}^{n+1,k}({ u}_{n-1})|\\
\stackrel{(\ref{eq:3.2.6aham}),(\ref{eq:4.24bisham}),(\ref{teo41})}\leq& C\e^{2}\g^{-1} N_{n}^{-\ka}
+\e\left(1+||{u}_{n-1}||_{\gots_{0}+\h_{1}+\be}+||{ u}_{n}||_{\gots_{0}+\h_{1}+\be}\right)N_{n}^{-\al}.
\end{aligned}
\end{equation*}
Now, first of all $\ka>\al$ by (\ref{teo41}), (\ref{eq:4.14ham}), moreover
$\h_{1}+\be<\h_{5}$ then by ${\bf (S1)}_{n}$, $({\bf S1})_{n-1}$, one has 
$||{u}_{n-1}||_{\gots_{0}+\h_{5}}+||{u}_{n}||_{\gots_{0}+\h_{5}}\leq 2$, we obtain
\begin{equation}\label{eq166}
|r_{h}^{k}({u}_{n})-r_{h}^{k}({ u}_{n-1})|\stackrel{(\ref{eq165})}{\leq}
\e N_{n}^{-\al}.
\end{equation}
Then, by (\ref{speriamobene}), (\ref{eq:3.2.44}) and (\ref{eq166}) one has that 
\begin{equation}\label{eq167}
|(\mu_{\s,j}-\mu_{\s,-j})(u_{n})-(\mu_{\s,j}-\mu_{\s,-j})(u_{n-1})|\leq C\e |j| N_{n}^{-\al},
\end{equation}
hence  for $|\ell|\leq N_{n}$, and $\la\in G_{n}\cap H_{n}$, we have
\begin{equation}\label{eq168}
\begin{aligned}
|i\oo\cdot\ell+\mu_{\s,j}({ u}_{n})-\mu_{\s,j}({ u}_{n})|
&\stackrel{(\ref{eq167})}{\geq}\frac{2\g_{n-1}}{\langle\ell\rangle^{\tau}\langle j\rangle}
-C\e |j|N_{n}^{-\al}
\geq\frac{2\g_{n}}{\langle\ell\rangle^{\tau}\langle j\rangle},
\end{aligned}
\end{equation}
since $j\in\SSSS_{\ell,n}$, hence $|j|\leq 4|\oo||\ell|/\e\gote$, and $n$ is such that $N_{n}^{\tau-\al+2}\lessdot\g2^{-n}\e$.
The \eqref{eq168} implies the \eqref{luna122}.
\end{proof}
\noindent
An immediate consequence of Lemma \ref{luna11} is the following.
\begin{proof}{\emph{Proof of Lemma \ref{cavallo2}.}}
First of all, write
\begin{equation}\label{luna14}
\begin{aligned}
&H_{n}\backslash H_{n+1}:=\bigcup_{\substack{\s\in\CC, j\in\ZZZ \\ \ell\in\ZZZ^{d}}} 
(A_{\ell, j}^{\s}({ u}_{n}))^{c}.\\
\end{aligned}
\end{equation}
By using Lemma \ref{luna11} and equation \eqref{luna5},
we obtain 
\begin{equation}\label{luna15}
\begin{aligned}
&H_{n}\backslash H_{n+1}\subseteq H_{n}^{(1)}\cup H_{n}^{(2)}\cup H_{n}^{(3)}\cup H_{n}^{(4)}\\
&H_{n}^{(1)}:=\Big(\bigcup_{\substack{\s\in\CC,\\
 \; 
j\in \SSSS_{\ell}\cap A_{\ell} \\ |\ell|\leq N_{n} }}(A_{\ell, j}^{\s}({ u}_{n}))^{c}\Big),
\quad  H_{n}^{(2)}:=\Big(\bigcup_{\substack{\s\in\CC,\\
 \; 
j\in \SSSS_{\ell}\cap A_{\ell} \\ |\ell|> N_{n} }} 
(A_{\ell, j}^{\s}({ u}_{n}))^{c}\Big), \\
&H_{n}^{(3)}:=\Big(\bigcup_{\substack{\s\in\CC,\\
 \; 
j\in \SSSS_{\ell}\cap (A_{\ell})^{c} \\ |\ell|\leq N_{n} }} 
(A_{\ell, j}^{\s}({ u}_{n}))^{c}\Big), \quad H_{n}^{(4)}:=\Big(\bigcup_{\substack{\s\in\CC,\\
 \; 
j\in \SSSS_{\ell}\cap (A_{\ell})^{c} \\ |\ell|> N_{n} }} 
(A_{\ell, j}^{\s}({ u}_{n}))^{c}\Big).
\end{aligned}
\end{equation}
One has that the cardinality if the set $\SSSS_{\ell,n}\cap A_{\ell}$ is less than $4|\ell|C/\gote$. This implies that
\begin{equation}\label{stime1}
|H^{(2)}|\leq\sum_{|\ell|> N_{n}}\frac{4|\ell|C\g_{n}}{\gote\langle j\rangle\langle\ell\rangle^{\tau}}\frac{4d}{|\ell|}\lessdot
C\g N_{n}^{-1}.
\end{equation}
Let us estimate the measure of the sets $H^{(i)}$ for $i=3,4$.
The cardinality of $\SSSS_{\ell,n}\cap(A_{\ell})^{c}$ is less than $K|\ell|/\e \gote$, hence we have to study the case $j\in\SSSS_{\ell,n}\cap(A_{\ell})^{c}$ more carefully. We introduce the sets  
\begin{equation}\label{nani77}
B_{\ell,j}^{\s}:=\left\{\oo\in H_{n-1} : |i\oo\cdot\ell+j b_{j}(u_{n})|\geq\frac{\g'_{n}\al_n}{ \langle\ell\rangle^{\tau_{1}}}
\right\},
\end{equation}
for $\ell\in\ZZZ^{d}\backslash\{0\}$, $j\in\SSSS_{\ell,n}\cap (A_{\ell})^{c}$,
where $\al_{n}:={\rm inf}_{j}|b_{j}(u_{n})|$, $\g'_{n}=(1+2^{-n})\g'$, $\g'\leq\g_0$ and $\tau_1>0$.
We have the following result.
\begin{lemma}\label{nani10}
Given $\g'$ and $\tau_1$, there exist $\g$ and $\tau$ such that
if $\la\in B_{\ell,j}^{c}$ then $\la\in A_{\ell,j}^{\s}$ for $j\in\SSSS_{\ell,n}\cap (A_{\ell})^c$.
\end{lemma}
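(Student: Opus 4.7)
The lemma is a threshold comparison: with the right choice of $\tau$ and $\g$, the two non-resonance conditions defining $A_{\ell,j}^\s$ and $B_{\ell,j}^\s$ become ordered, so the stronger condition implies the weaker one (equivalently, $A_{\ell,j}^{\s,c}\subset B_{\ell,j}^{c}$, which is the useful direction for the subsequent measure estimate). The whole argument reduces to comparing the two bounds $\g_n/(\langle j\rangle\langle \ell\rangle^{\tau})$ and $\g'_n\al_n/\langle \ell\rangle^{\tau_1}$ on the relevant range of $j$.

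The first step is to bound $\al_n\langle j\rangle$ from below uniformly on $\SSSS_{\ell,n}\cap (A_\ell)^{c}$. For $j\in(A_\ell)^{c}$ Lemma \ref{luna6} gives $|b_j(u_n)|\geq |m_1(u_n)|/2$, and the non-degeneracy bound \eqref{mammamiaa} obtained from Hypothesis \ref{hyp3ham} yields $|m_1(u_n)|\geq c\e$. Hence
\begin{equation*}
\al_n \;=\; \inf_j |b_j(u_n)| \;\geq\; \frac{c\e}{2}.
\end{equation*}
On the other hand, by definition of $A_\ell$, $j\in(A_\ell)^{c}$ means $\langle j\rangle\geq 4C|\ell|/\gote$, so combining,
\begin{equation*}
\al_n\,\langle j\rangle \;\geq\; \frac{2cC\e}{\gote}\,|\ell|.
\end{equation*}

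The second step is the threshold comparison itself. The desired inequality
\begin{equation*}
\frac{\g_n}{\langle j\rangle\,\langle \ell\rangle^{\tau}}\;\leq\;\frac{\g'_n\,\al_n}{\langle \ell\rangle^{\tau_1}}
\end{equation*}
rearranges to $\g_n\,\langle \ell\rangle^{\tau_1-\tau-1}\leq \g'_n\,(\al_n\langle j\rangle)/|\ell|$, which by the first step is implied by $\g_n\,\langle \ell\rangle^{\tau_1-\tau-1}\leq (2cC\e/\gote)\,\g'_n$. Choose $\tau\geq \tau_1-1$ so that $\langle \ell\rangle^{\tau_1-\tau-1}\leq 1$ for every $\ell\neq 0$, and then take $\g\leq \kappa\,\e\,\g'$ with a constant $\kappa=\kappa(c,C,\gote)$ absorbing the inessential factors. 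Since $\g_n,\g'_n$ are comparable to $\g,\g'$, the comparison holds uniformly in $\ell\in\ZZZ^{d}\setminus\{0\}$ and $j\in\SSSS_{\ell,n}\cap (A_\ell)^{c}$, and the claimed inclusion follows.

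The only delicate point — and the reason Hypothesis \ref{hyp3ham} is essential — is the lower bound $\al_n\gtrsim \e$. Without a uniform lower bound on $|m_1|$ the constant $\al_n$ could degenerate and the $B$-threshold would collapse below the $A$-threshold, breaking the comparison. This is also why the argument forces the scaling $\g = O(\e\g')$ between the two weights: it reflects the fact that the spectral gap $jb_j$ between the two eigenvalues in each $2\times2$ block is of size $\e|j|$ rather than $|j|$, which is precisely the technical signature of the double-eigenvalue situation and the reason the measure estimates in this Hamiltonian case are more delicate than in the reversible case of \cite{FP}.
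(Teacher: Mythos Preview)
Your overall strategy — reducing the inclusion to a comparison of the two thresholds — is exactly the paper's, and you correctly identify the useful direction $(A_{\ell,j}^{\s})^{c}\subset (B_{\ell,j}^{\s})^{c}$. Where you diverge is in the lower bound you feed into the comparison, and this difference is not cosmetic: it produces a constraint on $(\g,\tau)$ that, although it proves the lemma as literally stated, is useless for the measure estimate that follows.

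You bound $\alpha_n\langle j\rangle$ via Lemma~\ref{luna6} and the non-degeneracy $|m_1|\ge c\e$, obtaining $\alpha_n\langle j\rangle\gtrsim \e|\ell|$ and hence the constraint $\g\le \kappa\,\e\,\g'$. But in the global scheme $\g=\e^{a}$ with $a\in(0,1)$ is fixed, the Nash--Moser iteration requires $\e\g^{-1}$ small, and the final measure bound \eqref{frodo3}--\eqref{frodo11} needs $\g'\to0$. Your constraint forces $\g'\ge \g/(\kappa\e)=\e^{a-1}/\kappa\to\infty$ as $\e\to0$, so no admissible $\g'$ exists. The $\g$ your argument produces is simply not the $\g$ of the problem.

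The paper instead exploits the \emph{other} restriction on $j$, namely $j\in\SSSS_{\ell,n}$, which gives $|j|\,b_j\ge|\oo\cdot\ell|/2$; combined with the diophantine bound \eqref{dio} this yields
\[
b_j\;\ge\;\frac{\g_0}{2\langle j\rangle\langle\ell\rangle^{\tau_0}}.
\]
The crucial point is that the factor $1/\langle j\rangle$ here cancels exactly against the $\langle j\rangle$ in the $A$-threshold $\g_n/(\langle j\rangle\langle\ell\rangle^{\tau})$, leaving the $\e$-independent constraint $\g'\g_0\ge 2\g$, $\tau\ge\tau_1+\tau_0$, which is satisfied by the closing choice $\g=(\g')^{2}$. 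Note in particular that Hypothesis~\ref{hyp3ham} plays \emph{no role} in the paper's proof of this lemma; the non-degeneracy of $m_1$ enters the measure argument elsewhere (summability over $j$ via \eqref{padre}, and Lemma~\ref{luna6}), but the threshold comparison itself rests only on the diophantine property of $\oo$.

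A minor additional point: your inference $\alpha_n=\inf_j|b_j|\ge c\e/2$ is only justified by Lemma~\ref{luna6} for $j\in(A_\ell)^{c}$, not for all $j$. The paper is equally loose here (it writes a $j$-dependent lower bound for $\alpha_n$), and in effect both arguments are really bounding $b_j$ for the specific $j$ under consideration rather than a global infimum.
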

\begin{proof}
First of all
\begin{equation*}
j\in\SSSS_{\ell}, \;\; \Rightarrow b_{j}\geq\frac{|\oo\cdot\ell|}{2|j|}, \;\; \Rightarrow \al_{n}\geq \frac{\g_{0}}{2\langle\ell\rangle^{\tau_0}\langle j\rangle},
\end{equation*}
hence
\begin{equation*}
|\oo\cdot\ell+jb_{j}|\geq \frac{\g'_{n}\al_{n}}{\langle\ell\rangle^{\tau_1}}\geq \frac{\g'_n\g_{0}}{\langle j\rangle\langle\ell\rangle^{\tau_{1}+\tau_0}2}\geq \frac{\g_{n}}{\langle j\rangle\langle \ell\rangle^{\tau}},
\end{equation*}
if $\g'\g_{0}\geq 2\g$ and $\tau\geq \tau_{1}+\tau_0$.
\end{proof}

\noindent
By Lemma \ref{nani10} follows that 
\begin{equation}\label{frodo3}
|H_{n}^{(4)}|\leq \sum_{|\ell|>N_{n}}\sum_{j\in\SSSS_{\ell,n}\cap (A_{\ell})^{c}}|B_{\ell j}^{\s}|\leq
 \sum_{|\ell|>N_{n}}\frac{4|\ell|K\g'_{n}\al_{n}}{\e\gote \langle\ell\rangle^{\tau_{1}}}\frac{4d}{|\ell|}
 \lessdot C\g' N_{n}^{-1}
\end{equation}

Unfortunately, for the sets $H_{n}^{(1)}$ and $H_{n}^{(3)}$ we cannot provide an estimate like \eqref{frodo3}; by the summability of the series in $\ell$ we can only conclude
\begin{equation}\label{frodo4}
|H_{n}^{(1)}|, |H^{(3)}_{n}|\leq C\g'.
\end{equation}
This implies the \eqref{frodo5} for any $n\geq0$. Moreover by Lemma \ref{luna11} we have that
if $n\geq \bar{n}(\e)$ then $H_{n}^{(1)}=H_{n}^{(3)}=\emptyset$, hence the \eqref{frodo6} follows by \eqref{stime1} and \eqref{frodo3}.
Lemma \ref{cavallo2} implies
 (\ref{eq136ham}) by choosing, for instance, $\g:=(\g')^{2}\leq\g_0\leq1$.
 \end{proof}

\appendix
\section{Technical Lemmata}\label{techtech}
%
%
%
%
Now we recall classical tame estimates for composition of functions.

\begin{lemma}\label{lemA2}{\bf Composition of functions} Let $f : \TTT^{d}\times B_{1}\to\CCC$, where
$B_{1}:=\left\{y\in\RRR^{m} : |y|<1\right\}$. it induces the composition operator on $H^{s}$
\begin{equation}\label{A11}
\tilde{f}(u)(x):=f(x,u(x),Du(x),\ldots,D^{p}u(x))
\end{equation}
where $D^{k}$ denotes the partial derivatives $\del_{x}^{\al}u(x)$ of order $|\al|=k$.

Assume $f\in C^{r}(\TTT^{d}\times B_{1})$. Then

\noindent
$(i)$ For all $u\in H^{r+p}$ such that $|u|_{p,\infty}<1$, the composition operator $(\ref{A11})$
is well defined and
\begin{equation}\label{A12}
||\tilde{f}(u)||_{r}\leq C||f||_{C^{r}}(||u||_{r+p}+1),
\end{equation}
where the constant $C$ depends on $r,p,d$. If $f\in C^{r+2}$, then, for all $|u|^{\infty}_{s },
|h|^{\infty}_{p }<1/2$, one has
\begin{equation}\label{A13}
\begin{aligned}
||\tilde{f}(u+h)-\tilde{f}(u)||_{r}&\leq C||f||_{C^{r+1}}(||h||_{r+p}+|h|^{\infty}_{p }||u||_{r+p}),\\
||\tilde{f}(u+h)-\tilde{f}(u)-\tilde{f}'(u)[h]||_{r}&\leq
C||f||_{C^{r+2}}|h|^{\infty}_{p}(||h||_{r+p}+|h|^{\infty}_{p }||u||_{r+p}).
\end{aligned}
\end{equation}

\noindent
$(ii)$ the previous statement also hold replacing $||\cdot||_{r}$ with the norm $|\cdot|_{\infty}$.
\end{lemma}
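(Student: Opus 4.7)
These are classical Moser-type tame estimates, and the plan is to follow the standard proof via Faà di Bruno's formula combined with the tame product bound of Lemma \ref{A}. I would not actually reprove these in a paper of this length; since the author writes ``we recall'' and the constants match Moser's classical lemma exactly, a reference to an exposition in the KAM literature (e.g.\ Appendix~A of \cite{BBM}) would suffice. Still, here is how I would organise a self-contained argument.

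For part (i), estimate \eqref{A12}: by Faà di Bruno, for any multi-index $\al$ with $|\al|\leq r$ the derivative $\del_x^{\al}\tilde f(u)$ is a finite sum of terms of the form
\[
(\del^{\mu}f)(x,u,Du,\dots,D^pu)\,\prod_i \del_x^{\be_i} D^{k_i}u,
\]
with $|\mu|\leq r$ and $\sum_i(|\be_i|+k_i)\leq r+p$. Since $|u|^{\infty}_{p}<1$, the argument of $f$ stays in $\TTT^d\times B_1$ and every composition $(\del^{\mu}f)(x,u,\ldots)$ is bounded in $L^{\infty}$ by $\|f\|_{C^r}$. One then iterates the tame product \eqref{A5} on the resulting finite product of $H$-functions: the factor of highest Sobolev order is absorbed into $\|u\|_{r+p}$, while the others are bounded in $L^{\infty}$ by $|u|^{\infty}_{p}\leq 1$. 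The term with no $u$-derivatives accounts for the $+1$ on the right-hand side of \eqref{A12}.

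For the difference estimates \eqref{A13}, I would use the fundamental theorem of calculus
\[
\tilde f(u+h)-\tilde f(u)=\int_0^1 \sum_{k=0}^p (\del_{z_k}f)(x,u+th,\ldots)\,D^k h\,\der t,
\]
apply the first step to each composition $(\del_{z_k}f)$ (which belongs to $C^r$ because $f\in C^{r+1}$) to control its $H^r$ norm by $\|f\|_{C^{r+1}}(1+\|u\|_{r+p}+\|h\|_{r+p})$, and then invoke \eqref{A5} on the product with $D^k h$. Keeping $\|h\|_{r+p}$ on the ``high-Sobolev'' factor and $|h|^{\infty}_{p}$ on the ``low-norm'' one gives the stated form of the bound. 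The quadratic Taylor-remainder estimate is analogous, starting from
\[
\tilde f(u+h)-\tilde f(u)-d\tilde f(u)[h]=\int_0^1(1-t)\,d^2\tilde f(u+th)[h,h]\,\der t,
\]
which brings in second partials of $f$ and hence the hypothesis $f\in C^{r+2}$; the extra factor of $|h|^{\infty}_{p}$ in the bound reflects that two copies of $h$ appear and one of them is placed in $W^{p,\infty}$.

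Item (ii) is easier: in the Faà di Bruno expansion every factor may be estimated directly in $|\cdot|^{\infty}_{r}$ using that $W^{s,\infty}$ is a Banach algebra, so the high/low Sobolev splitting is not needed and the bounds follow from the same integral representations. The only real obstacle in writing out a full proof is the combinatorial bookkeeping of the Faà di Bruno expansion and of the iterated application of \eqref{A5}: one must track which factor absorbs the highest Sobolev norm and verify that the smallness $|u|^{\infty}_{p}<1$ suffices to control all remaining factors uniformly. No new analytic ingredient is needed; the result is Moser's classical composition lemma.
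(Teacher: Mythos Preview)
Your sketch is correct and in fact more detailed than what the paper does: the paper's own proof of this lemma is simply ``For the proof see \cite{Ba2} and \cite{Moser-Pisa-66}.'' Your Fa\`a di Bruno / Taylor-remainder outline is exactly the standard route taken in those references, so there is nothing to add.
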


\begin{proof} For the proof see \cite{Ba2} and \cite{Moser-Pisa-66}.
\end{proof}


\begin{lemma}\label{change}{\bf (Change of variable)} Let $p : \RRR^{d}\to \RRR^{d}$
be a $2\pi-$periodic function in $W^{s,\infty}$, $s\geq1$, with $|p|^{\infty}_{1 }\leq1/2$.
Let $f(x)=x+p(x)$. Then one has
$(i)$ $f$ is invertible, its inverse is $f^{-1}(y)=g(y)=y+q(y)$ where $q$ is $2\pi-$periodic,
$q\in W^{s,\infty}(\TTT^{d};\RRR^{d})$ and $|q|^{\infty}_{s }\leq C|p|^{\infty}_{s }$. More precisely,
\begin{equation}\label{A18}
|q|_{L^{\infty}}=|p|_{L^{\infty}}, \; |Dq|_{L^{\infty}}\leq 2|Dp|_{L^{\infty}},
\; |Dq|^{\infty}_{s-1 }\leq C|Dp|^{\infty}_{s-1 },
\end{equation}
where the constant $C$ depends on $d,s$. 

Moreover, assume that $p=p_{\la}$ depends in a Lipschitz way by a parameter 
$\la\in\Lambda\subset\RRR^{d}$, an suppose, as above, that $|D_{x}p_{\la}|_{L^{\infty}}\leq1/2$
for all $\la$. Then $q=q_{\la}$ is also Lipschitz in $\la$, and
\begin{equation}\label{A19}
|q|^{\infty}_{s,\g}\leq C\left(|p|^{\infty}_{s,\g}+
\left[\sup_{\la\in\Lambda}|p_{\la}|^{\infty}_{s+1 }\right]
|p|_{L^{\infty},\g}
\right)\leq C|p|^{\infty}_{s+1,\g},
\end{equation}
the constant $C$ depends on $d,s$ (it is independent on $\g$).

$(ii)$ If $u\in H^{s}(\TTT^{d};\CCC)$, then $u\circ f(x)=u(x+p(x))\in H^{s}$, and, with the same $C$ as
in $(i)$ one has
\begin{subequations}\label{A20}
\begin{align}
||u\circ f-u||_{s}&\leq C(|p|_{L^{\infty}}||u||_{s+1}+|p|^{\infty}_{s }||u||_{2}),\label{A20b}\\
||u\circ f||_{s,\g}&\leq 
C(|u|_{s+1,\g}+|p|^{\infty}_{s,\g}||u||_{2,\g})\label{A20c}.
\end{align} 
\end{subequations}
The 
(\ref{A20b}) and (\ref{A20c}) hold also for $u\circ g$ and if one replace norms $||\cdot||_{s}$, $||\cdot||_{s,\g}$ with $|\cdot|_{s}^{\infty}$, $|\cdot|_{s,\g}^{\infty}$.
\end{lemma}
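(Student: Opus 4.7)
\medskip

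\noindent\emph{Proof proposal.}
The plan is to treat $(i)$ and $(ii)$ separately, reducing each estimate to the defining fixed-point identity for $q$ and then applying the classical tame and composition estimates of Lemmata \ref{A} and \ref{lemA2}.

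For part $(i)$, I would first establish the existence of $g$ via the Banach fixed point theorem: for each fixed $y$, the map $\Phi_y:x\mapsto y-p(x)$ is a contraction on $\RRR^d$ with Lipschitz constant $|Dp|_{L^\infty}\leq 1/2$, so there is a unique $x=g(y)$ with $f(g(y))=y$, and $g(y)=y+q(y)$ with
\begin{equation*}
q(y)=-p(g(y))=-p(y+q(y)).
\end{equation*}
The identity $q=-p\circ g$ immediately gives $|q|_{L^\infty}=|p|_{L^\infty}$. Differentiating $f\circ g=\mathrm{id}$ yields $Dg=(I+Dp\circ g)^{-1}$, whence $Dq=Dg-I=-(I+Dp\circ g)^{-1}(Dp\circ g)$ and the Neumann expansion gives $|Dq|_{L^\infty}\leq |Dp|_{L^\infty}/(1-|Dp|_{L^\infty})\leq 2|Dp|_{L^\infty}$. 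The bound $|Dq|^\infty_{s-1}\leq C|Dp|^\infty_{s-1}$ is proved by induction on $s$: differentiating $q=-p\circ g$ a further $s-1$ times via Fa\`a di Bruno and applying the tame product \eqref{A7} reduces the $W^{s-1,\infty}$ norm of $Dq$ to the $W^{s-1,\infty}$ norm of $Dp$ plus lower-order terms already controlled by the inductive hypothesis. For the Lipschitz-in-$\lambda$ estimate \eqref{A19}, I would subtract the two identities $g_{\lambda_1}+p_{\lambda_1}\circ g_{\lambda_1}=y=g_{\lambda_2}+p_{\lambda_2}\circ g_{\lambda_2}$, insert $p_{\lambda_2}\circ g_{\lambda_1}$ and rearrange to obtain
\begin{equation*}
(I+Dp_{\lambda_2}(\xi))(g_{\lambda_1}-g_{\lambda_2})=-(p_{\lambda_1}-p_{\lambda_2})\circ g_{\lambda_1}
\end{equation*}
for a suitable $\xi$ on the segment joining $g_{\lambda_1},g_{\lambda_2}$; inverting the factor on the left and taking $W^{s,\infty}$ norms, then using the bound on $|Dq|^\infty_{s-1}$ already proved and Lemma \ref{A}, gives \eqref{A19}.

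For part $(ii)$, the assertion $u\circ f\in H^s$ and the estimate \eqref{A20b} follow from the standard composition/tame estimates. I would first write
\begin{equation*}
u\circ f(x)-u(x)=\int_0^1 Du(x+tp(x))\cdot p(x)\,\der t,
\end{equation*}
and apply Lemma \ref{lemA2}(i) to the inner factor $Du(x+tp(x))$ (with the change of variable $y=x+tp(x)$, whose Jacobian is uniformly bounded above and below by the contraction-constant hypothesis on $p$) together with the tame product \eqref{A7}, obtaining \eqref{A20b}. The Lipschitz-in-$\lambda$ bound \eqref{A20c} follows by the same scheme, using also \eqref{A19} and \eqref{A14aa}–\eqref{A14b} to propagate the Lipschitz dependence through the composition. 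The analogous statement for $u\circ g$ is identical because $g$ satisfies $g(y)=y+q(y)$ with $q$ enjoying the very same estimates as $p$ by part $(i)$.

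The main technical obstacle is the inductive proof of $|Dq|^\infty_{s-1}\leq C|Dp|^\infty_{s-1}$ and, more delicate, of the sharp form of \eqref{A19}: the implicit character of $q=-p\circ g$ means that every derivative of $q$ produces, via Fa\`a di Bruno, a polynomial in the derivatives of $q$ itself, so one must close the induction by paying attention to which factor absorbs the highest-order derivative. The smallness $|Dp|_{L^\infty}\leq 1/2$ is precisely what allows the Neumann series for $(I+Dp\circ g)^{-1}$ to be resummed without losing derivatives, and the tame interpolation estimates of Lemma \ref{A} are exactly what guarantee that the terms coming from Fa\`a di Bruno obey the bound $C(|p|^\infty_{s,\g}+|p|^\infty_{s+1}|p|_{L^\infty,\g})$ rather than some weaker estimate with a loss of derivatives.
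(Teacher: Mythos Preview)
The paper does not actually prove this lemma: it is listed in the appendix of technical lemmata and stated without proof, as a classical change-of-variables result (versions of it appear, with proof, in \cite{BBM} and \cite{Ba2}). Your proposal follows the standard route --- fixed-point construction of $g$, the identity $q=-p\circ g$, Neumann series for $(I+Dp\circ g)^{-1}$, and Fa\`a di Bruno plus tame interpolation for the higher-order and Lipschitz bounds --- and is correct; this is precisely how the result is established in the references the paper relies on.
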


\begin{lemma} \label{lem5} {\bf (Composition).} Assume that
for any $||u||_{s_{0}+\mu_{i},\g}\leq1$ the operator $\calQ_{i}(u)$ satisfies
\begin{equation}\label{A35}
||\calQ_{i}h||_{s,\g}\leq C(s)(||h||_{s+\tau_{i},\g}+||u||_{s+\mu_{i},\g}||h||_{s_{0}+\tau_{i}\g}),
\quad i=1,2.
\end{equation}
Let $\tau:=\max\{\tau_{1},\tau_{2}\}$, and $\mu:=\max\{\mu_{1},\mu_{2}\}$. Then,
for any
\begin{equation}\label{A36}
||u||_{s_{0}+\tau+\mu,\g}\leq1,
\end{equation}
one has that the composition operator $\calQ:=\calQ_{1}\circ \calQ_{2}$ satisfies
\begin{equation}\label{A37}
||\calQ h||_{s,\g}\leq C(s)(||h||_{s+\tau_{1}+\tau_{2},\g}+||u||_{s+\tau+\mu,\g}||h||_{s_{0}+\tau_{1}+\tau_{2},\g}).
\end{equation}
\end{lemma}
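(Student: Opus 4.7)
\noindent\textit{Proof proposal for Lemma \ref{lem5}.} The plan is a direct two-step application of the tame estimates, followed by a bookkeeping of the four resulting terms using the smallness hypothesis \eqref{A36}.

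\noindent\textbf{Step 1: Outer estimate.} First I would apply \eqref{A35} with $i=1$ to the function $\calQ_{2}h$, which is legitimate provided $\|u\|_{s_{0}+\mu_{1},\g}\le 1$; this follows from \eqref{A36} since $\mu_{1}\le\mu\le\tau+\mu$. This gives
\begin{equation*}
\|\calQ h\|_{s,\g}\le C(s)\bigl(\|\calQ_{2}h\|_{s+\tau_{1},\g}+\|u\|_{s+\mu_{1},\g}\|\calQ_{2}h\|_{s_{0}+\tau_{1},\g}\bigr).
\end{equation*}

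\noindent\textbf{Step 2: Inner estimate.} Next I would apply \eqref{A35} with $i=2$ at the two regularity levels $s+\tau_{1}$ and $s_{0}+\tau_{1}$ (again valid because $\|u\|_{s_{0}+\mu_{2},\g}\le 1$), obtaining
\begin{align*}
\|\calQ_{2}h\|_{s+\tau_{1},\g}&\le C(s)\bigl(\|h\|_{s+\tau_{1}+\tau_{2},\g}+\|u\|_{s+\tau_{1}+\mu_{2},\g}\|h\|_{s_{0}+\tau_{2},\g}\bigr),\\
\|\calQ_{2}h\|_{s_{0}+\tau_{1},\g}&\le C\bigl(\|h\|_{s_{0}+\tau_{1}+\tau_{2},\g}+\|u\|_{s_{0}+\tau_{1}+\mu_{2},\g}\|h\|_{s_{0}+\tau_{2},\g}\bigr).
\end{align*}
Inserting these into the bound from Step 1 produces four terms.

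\noindent\textbf{Step 3: Reorganisation.} The remaining work is purely combinatorial. I would handle the four terms as follows. The first term already has the desired form $\|h\|_{s+\tau_{1}+\tau_{2},\g}$. The second and third terms are of mixed type $\|u\|\cdot\|h\|$; since all exponents $s+\mu_{1}$, $s+\tau_{1}+\mu_{2}$ appearing on $u$ are bounded by $s+\tau+\mu$, and since $s_{0}+\tau_{2}\le s_{0}+\tau_{1}+\tau_{2}$, these get absorbed into $\|u\|_{s+\tau+\mu,\g}\|h\|_{s_{0}+\tau_{1}+\tau_{2},\g}$. The fourth, quadratic-in-$u$ term has the shape $\|u\|_{s+\mu_{1},\g}\|u\|_{s_{0}+\tau_{1}+\mu_{2},\g}\|h\|_{s_{0}+\tau_{2},\g}$; using $s_{0}+\tau_{1}+\mu_{2}\le s_{0}+\tau+\mu$ together with \eqref{A36} the second factor is $\le 1$, so this term is also dominated by $\|u\|_{s+\tau+\mu,\g}\|h\|_{s_{0}+\tau_{1}+\tau_{2},\g}$.

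There is no real obstacle here; the estimate is essentially a corollary of \eqref{A35} and the monotonicity $\|\cdot\|_{s,\g}\le\|\cdot\|_{s',\g}$ for $s\le s'$. The only point to watch is verifying that the quadratic $u$-term absorbs cleanly, which is precisely what the choice $\mu=\max\{\mu_{1},\mu_{2}\}$ in the hypothesis \eqref{A36} is tailored to permit.
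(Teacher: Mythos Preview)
Your proposal is correct and follows exactly the approach indicated in the paper, which simply says ``apply the estimates (\ref{A35}) to $\calQ_{1}$ first, then to $\calQ_{2}$ and use the condition (\ref{A36}).'' You have merely spelled out the bookkeeping of the four resulting terms that the paper leaves implicit.
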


\begin{proof} It is sufficient to apply the estimates (\ref{A35}) to $\calQ_{1}$ first, then to $\calQ_{2}$ and 
using the condition (\ref{A36}).
\end{proof}

\smallskip
\noindent
{{\bf Proof of Lemma \ref{lemmaccio}.}} We first show that $T$ is symplectic. Consider $W=(w^{(1)},w^{(2)}),V=(v^{(1)},v^{(2)})\in H^{s}(\TTT^{d+1};\RRR)\times H^{s}(\TTT^{d+1};\RRR) $ and set $w=w^{(1)}+iw^{(2)}$, $v=v^{(1)}+iv^{(2)}$,
then one has
\begin{equation}\label{lemmaccio5}
\begin{aligned}
\tilde{\Omega}(TW,TV)&:=\int_{\TTT}\left(\begin{matrix}\frac{i}{\sqrt{2}} w \\ \frac{1}{\sqrt{2}} \bar{w}\end{matrix}\right)\cdot J\left(\begin{matrix}\frac{i}{\sqrt{2}} v \\ \frac{1}{\sqrt{2}} \bar{v}\end{matrix}\right)d x=
\int_{\TTT}W JVd x=:\tilde{\Omega}(W,V).
\end{aligned}
\end{equation}

To show the (\ref{lemmaccio3}) is sufficient to apply the definition of $T_{1}$.
First of all consider the linearized operator in some $z=(z^{(1)},z^{(2)})$
\begin{equation}\label{linop}
D_{z}\calF(\oo t, x, z)=D_{\oo}+\e D_{z}g(\oo t,x, z)=D_{\oo}+\e \del_{z_{0}}g+\e\del_{z_{1}}g \del_{x}+\e\del_{z_{2}}g\del_{xx}
\end{equation}
where $D_{\oo}$  and $g$ are defined in (\ref{totale}) and (\ref{13}) and
\begin{equation}\label{linop2}
\del_{z_{i}}g:=(a^{(i)}_{jk})_{j,k=1,2}:=(\del_{z^{(j)}_{i}}g_{k})_{j,k=1,2}.
\end{equation}
All the coefficients $a^{(i)}_{jk}$ are evaluated in $(z^{(1)},z^{(2)},z^{(1)}_{x},z^{(2)}_{x},z^{(1)}_{xx},z^{(2)}_{xx})$.
%
By using the definitions (\ref{linop}), (\ref{linop2}) and recalling that $g=(g_{1},g_{2})=(-f_{1},f_{2})$ and
$\ff=f_{1}+if_{2}$, one can check with an explicit computation that
$$
\calL(z)=T_{1}^{-1}T d_{z}\calF(\oo t,x,z)T^{-1}T_{1}
$$
has the desired form.
\EP

\end{document}